\definecolor{colorG}{rgb}{0, 0.5, 0.0} 
\definecolor{colorH}{rgb}{0.7, 0.7, 0.0} 
\definecolor{colorF}{rgb}{1.0, 0.5, 0.0} 
\definecolor{colorR}{rgb}{0.4, 0.5, 0.13} 
\definecolor{colorT}{rgb}{0.7, 0.5, 0.13} 
\definecolor{colorQ}{rgb}{0.6, 0.3, 0} 
\definecolor{rosepink}{rgb}{1.0, 0.4, 0.8}
\definecolor{darkgreen}{rgb}{0.0, 0.4, 0.0}
\definecolor{Lightgray}{rgb}{0.7, 0.7, 0.7}
\definecolor{LLightgray}{rgb}{0.9, 0.9, 0.9}
\definecolor{yellow2}{rgb}{0.9, 0.9, 0.0}
\definecolor{yellow3}{rgb}{0.8, 0.8, 0.0}
\definecolor{yellow4}{rgb}{0.7, 0.7, 0.0}
\newtheorem{thm}{Theorem}[section]
\newtheorem{lemma}[thm]{Lemma}
\newtheorem{cor}[thm]{Corollary}
\newtheorem{prop}[thm]{Proposition}
\newtheorem{defn}[thm]{Definition}
\newtheorem{ex}[thm]{Example}
\newtheorem{remark}[thm]{Remark}
\newtheorem{notation}[thm]{Notation}
\numberwithin{equation}{section}
\newcommand{\AI}{A_\infty}
\newcommand{\Hom}{{\rm Hom}}
\newcommand{\Fuk}{\operatorname{Fuk}}
\newcommand{\sing}{\operatorname{sing}}
\newcommand{\BD}{\operatorname{BD}}
\newcommand{\sign}{\operatorname{sign}}
\newcommand{\hol}{\operatorname{hol}}
\newcommand{\x}{\operatorname{x}}
\newcommand{\XY}{\operatorname{xy}}
\newcommand{\YZ}{\operatorname{yz}}
\newcommand{\ZX}{\operatorname{zx}}
\newcommand{\OL}[1]{\overline{#1}}
\newcommand{\Z}{\mathbb{Z}}
\newcommand{\Tri}{\operatorname{Tri}}
\newcommand{\CM}{\operatorname{CM}}
\newcommand{\im}{\operatorname{im}}
\newcommand{\cok}{\operatorname{coker}}
\newcommand{\id}{\operatorname{id}}
\newcommand{\MF}{\mathrm{MF}}
\newcommand{\Spec}{\mathrm{Spec}}
\newcommand{\bL}{\mathbb{L}}
\newcommand{\field}{\mathbb{C}}
\newcommand{\Span}{\operatorname{Span}_{\field}}
\newcommand{\POP}{\Sigma}
\newcommand{\hash}{}
\newcommand{\smat}[1]{%
  \let~=&
  \begin{smallmatrix}#1\end{smallmatrix}
}}
\newcommand{\spmat}[1]{%
  \left(
  \let~=&
  \begin{smallmatrix}#1\end{smallmatrix}
  \right)
}}
\def\l@subsection{\@tocline{2}{0pt}{3pc}{5pc}{}}
\def\l@subsubsection{\@tocline{3}{0pt}{5pc}{7.5pc}{}}
\newcommand{\primemu}{\rho}
\newcommand{\primelambda}{\eta}
\newcommand{\primeLambda}{H}
\newcommand{\operatorm}{\mathfrak{m}}
\newcommand{\LocalF}{\mathcal{F}^\mathbb{L}} 
\newcommand{\LocalPhi}{\varPhi^\mathbb{L}} 
\newcommand{\LocalPsi}{\varPsi^\mathbb{L}} 
\def\mathunderbar#1{\underline{\sbox\tw@{$#1$}\dp\tw@\z@\box\tw@}}
\newenvironment{xsmallmatrix}[1] 
  {\renewcommand\thickspace{\kern#1}\smallmatrix}
  {\endsmallmatrix}
\begin{document}

\title[Canonical form of matrix factorizations from Fukaya category of surface]{Canonical form of matrix factorizations from Fukaya category of surface}
\author[Cho]{Cheol-Hyun Cho}
\address{Department of Mathematical Sciences, Research Institute in Mathematics\\ Seoul National University\\ Gwanak-gu\\Seoul \\ South Korea}
\email{chocheol@snu.ac.kr}
\author[Rho]{Kyungmin Rho}
\address{Institut f\"ur Mathematik\\ Universit\"at Paderborn\\ Warburger Str. 100\\Paderborn \\ Germany}
\email{rho@math.uni-paderborn.de}


\begin{abstract}

This paper concerns homological mirror symmetry for the pair-of-pants surface (A-side) and the non-isolated surface singularity $xyz=0$ (B-side). Burban-Drozd classified indecomposable maximal Cohen-Macaulay modules on the B-side. We prove that higher-multiplicity band-type modules correspond to higher-rank local systems over closed geodesics on the A-side, generalizing our previous work for the multiplicity one case. This provides a geometric interpretation of the representation tameness of the band-type maximal Cohen-Macaulay modules, as every indecomposable object is realized as a geometric object.

We also present an explicit canonical form of matrix factorizations of $xyz$ corresponding to Burban-Drozd's canonical form of band-type maximal Cohen-Macaulay modules. As applications, we give a geometric interpretation of algebraic operations such as AR translation and duality of maximal Cohen-Macaulay modules as well as certain mapping cone operations.

\end{abstract}

\maketitle

\vspace{-5mm}

\begin{spacing}{0}
\tableofcontents
\end{spacing}
\vspace{-0mm}

\section{Introduction}

A version of Homological mirror symmetry (HMS) conjecture of Kontsevich
\cite{kontsevich94,kontsevich98}
says that
the derived wrapped Fukaya category of a symplectic manifold $\Sigma$ (A-side) and 
 the singularity category of its mirror
Landau-Ginzburg (LG) model
$
\left(X,W:X\rightarrow\mathbb{C}\right)
$
(B-side) are equivalent:
$$
D^{\pi}\left(W\Fuk\left(\Sigma\right)\right) \simeq D_{\sing}\left(W^{-1}\left(0\right)\right).
$$
%
%
%
%
%
HMS
between punctured Riemann spheres
$\Sigma = S^2\setminus\left\{n\text{ points}\right\}$
($n\ge3$)
and the corresponding LG models
was  established
in \cite{AAEKO}.

In the case of the $3$-punctured sphere,
the corresponding LG model is given by $\left(\field^3,xyz\right)$.
$$

\end{matrix}
$$
%
%
%
%
%
This equivalence has been shown on the level of generators, which in this case consist of
any two of three
non-closed
curves
$L_{\XY}$, $L_{\YZ}$ and $L_{\ZX}$ on the A-side
(Figure \ref{fig:GeneratingArcs}),
and the corresponding objects on the B-side
(Remark \ref{rmk:StringCorrespondence}).
This work of Abouzaid-Auroux-Efimov-Katzarkov-Orlov \cite{AAEKO} inspired a lot of further developments in homological mirror symmetry.
However,
it is hard to compare more complicated objects in both sides
directly from this equivalence.

On the other hand, it is known that the following three categories are equivalent \cite{E80,Buc87,Orlov03}:
$$
\setlength\arraycolsep{2pt}
\color{blue}
\begin{matrix}
  \underline{\operatorname{MF}}(xyz)
&
  \color{black}
  \overset{
  \small
  \begin{matrix}
    \text{\tiny Eisenbud}
  \\[-1mm]
    \simeq
   \\[-1mm]
  \end{matrix}
  }{\longrightarrow}
&
  \underline{\operatorname{CM}}(A)
&
  \color{black}
  \overset{
  \small
  \begin{matrix}
    \text{\tiny Buchweitz}
  \\[-1mm]
    \simeq
   \\[-1mm]
  \end{matrix}
  }{\longrightarrow}
&
  D_{\sing}\left(\hat{X_0}\right)
\end{matrix}
$$
\hspace{-1.9mm}
Here,
$
\underline{\MF}(xyz)
$
is the homotopy category of \textbf{matrix factorizations} of $xyz$,
$
\underline{\CM}(A)
$
is the stable category of \textbf{maximal Cohen-Macaulay modules} over $A:=\left.\field[[x,y,z]]\right/(xyz)$,
and
$D_{\sing}\left(\hat{X_0}\right)$
is the singularity category of
$
\hat{X_0}:=\Spec(A).
$
In this paper,
we work with power series rings instead of polynomial rings (Remark \ref{rmk:Completion}).

In a recent work \cite{BD17},
Burban-Drozd developed a new representation-theoretic method
to deal with maximal Cohen-Macaulay modules over certain non-isolated surface singularities including
$
A
$.
As a consequence, they classified all indecomposable
classes of such
modules,
which fall into \textbf{band-type} (continuous series)
and \textbf{string-type} (discrete series).
This proves
that those
singularities have \textbf{tame} Cohen-Macaulay representation type. 

Thus, a natural question is which objects of the Fukaya category correspond to the indecomposable maximal Cohen-Macaulay modules over $A$
(Remark 9.8.8 in \cite{BD17}).
Especially, it is of great interest whether their symplectic counterparts are realized as \textbf{geometric objects} in the Fukaya category.
This question will be answered in the present paper
by giving an explicit correspondence: 
\newpage
\begin{thm}\label{thm:ModuleGeodesic}
Under homological mirror symmetry,
there is a one-to-one correspondence
$$
\left\{
\textnormal{\textnormal{\textbf{closed geodesics}}}
\right.
\hspace{-0.5mm}
\footnote{
In our paper,
a closed geodesic is always oriented, non-periodic,
immersed
(i.e., not necessarily simple)
and considered up to orientation-preserving reparameterization.
}
\hspace{-0.5mm}
\left.\left.
\textnormal{\textnormal{\textbf{
in $\POP$ with an indecomposable local system}}}
\right\}
\right/\sim_\textnormal{gauge equivalence}
\quad\overset{1:1}{\leftrightarrow}
$$
$$
\left.\left\{\textnormal{\textnormal{\textbf{band-type indecomposable objects in $\underline{\CM}(A)$}}}
\right\}
\right/
\sim_\textnormal{isomorphism},
$$
where $\POP$ is given a hyperbolic metric with three cusps.

\end{thm}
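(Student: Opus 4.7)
The strategy is to combine Burban-Drozd's classification of band-type indecomposable MCM modules with the multiplicity-one correspondence established in our previous work, bridging the two sides via an explicit canonical form of matrix factorizations.

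First I would fix an ideal triangulation of $\POP$ whose edges are (isotopic to) the three generating arcs $L_{\XY}, L_{\YZ}, L_{\ZX}$. Any closed geodesic $\gamma$ then lifts to a cyclic sequence of arc-crossings, producing a cyclic word in the symbols $\{\XY,\YZ,\ZX\}$ with orientation decorations at each letter. The first task is to match this combinatorial datum with Burban-Drozd's notion of a \textbf{band} for the decorated bunch associated to $A$. This matching should be forced by the generator-level HMS of \cite{AAEKO}, under which the three arcs correspond to the indecomposable B-side generators whose $\Hom$/$\ext$ structure produces exactly Burban-Drozd's quiver.

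Next, given a closed geodesic $\gamma$ with an indecomposable rank-$n$ local system (monodromy $J_n(\lambda)$ with $\lambda\in\field^*$), I would build the corresponding A-side object as a twisted complex on $n$ parallel copies of $\gamma$, assembled along the arc decomposition of Step~1, with the boundary identifications twisted by $J_n(\lambda)$ at one distinguished \emph{gluing seam}. Transporting through HMS yields an explicit matrix factorization of $xyz$: a block matrix whose off-diagonal blocks record the arc-crossing sequence of $\gamma$ and whose seam block carries $J_n(\lambda)$. This is the canonical form announced in the abstract. The remaining content of the theorem is then to identify this matrix factorization, via Eisenbud's equivalence $\underline{\MF}(xyz)\simeq\underline{\CM}(A)$, with Burban-Drozd's canonical-form band-type module $M(\gamma,n,\lambda)$; since both constructions begin from the same cyclic combinatorial data and insert the same Jordan block in a single seam, the identification reduces to a direct comparison of canonical presentations. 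Bijectivity then follows: closed geodesics (up to orientation-preserving reparameterization) biject with bands (up to cyclic rotation), gauge-equivalence classes of indecomposable local systems biject with conjugacy classes of Jordan blocks $J_n(\lambda)$, and Burban-Drozd's theorem supplies surjectivity onto band-type indecomposables.

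The main obstacle will be the explicit matching of canonical forms in Step~3, in particular tracking how the single Jordan block on the geometric side---which reflects the choice of gauge representative of the local system---lands in a single seam of the matrix factorization rather than being smeared across every arc-crossing. A related subtlety is ensuring that orientation conventions on geodesics and the direction convention of Burban-Drozd's bands are compatible, so that the continuous parameter $\lambda$ is not inadvertently replaced by $\lambda^{-1}$ somewhere in the translation. Once these bookkeeping issues are settled, the multiplicity-one base case (our previous work) ensures that the translation dictionary is already correct on the ``spine'' of the construction, and the higher-rank case reduces to checking that inserting $J_n(\lambda)$ on both sides is compatible with the dictionary.
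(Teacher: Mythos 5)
Your overall architecture matches the paper's: parametrize both sides by explicit canonical forms built from a cyclic combinatorial datum plus a Jordan block, compute the mirror matrix factorization, pass through Eisenbud's equivalence, and invoke Burban--Drozd's classification for surjectivity. The paper routes the combinatorics through \emph{loop words} (winding numbers around the three holes, i.e.\ conjugacy classes in $\pi_1(\POP)$ put in a normal form) and computes the matrix factorization by counting polygons against the Seidel Lagrangian via the localized mirror functor, rather than through an ideal triangulation; and it handles higher rank not by a twisted-complex assembly but by a $(\lambda,\Lambda)$-substitution principle (replace the scalar $\lambda$ by $J_\mu(\lambda)$ in every matrix, and prove this preserves exactness of the relevant resolutions). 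These are workable alternatives to what you propose, and your twisted-complex picture does appear in the paper, but only as an application after the fact.

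The genuine gap is in your final bijectivity paragraph. The dictionary ``geodesic with rank-$n$ local system $\leftrightarrow$ band with multiplicity $n$'' is \emph{false} for a countable family of exceptions, and without treating them your map is neither well-defined nor bijective onto band-type indecomposables in $\underline{\CM}(A)$. The band datum $\left((0,0,0),1,1\right)$ corresponds to the regular module $A$, which becomes zero in the stable category, so it has no geodesic partner; and for the whole degenerate family $\left((0,0,0),1,\mu\right)$ the corresponding local system has rank $\mu-1$, not $\mu$. Geometrically these are exactly the loops freely homotopic to (a power of) the reference Lagrangian, which bound an immersed cylinder with it, so the mirror computation itself requires a special perturbation there; algebraically, the module $M((0,0,0),1,\mu)$ has one exceptional Macaulayfying element that does not fit into the one-parameter family used everywhere else, and this is precisely what forces the shift $\primemu=\mu-1$. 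Your proposal would need a separate analysis of this locus (and of the periodic/non-primitive words, which must be excluded on both sides because they give decomposable objects) before the claimed one-to-one correspondence holds.
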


In our setting of the Fukaya category of $\Sigma$,
objects are oriented immersed curves in $\POP$ with a local system.
We call them \textbf{loop-type} or \textbf{arc-type}
according to whether the curve is a loop (closed curve)
or an arc (starting and ending at $\partial \POP$).
Closed geodesics are representatives in certain (but not all) free homotopy classes of oriented loops
in $\POP$.
Thus,
Theorem \ref{thm:ModuleGeodesic}
describes a correspondence between indecomposable objects of loop-type in the Fukaya category
and band-type in the category of maximal Cohen-Macaulay modules.
A similar correspondence between arc-type and string-type objects
can be also made,
but we do not cover them in the present paper (see Remark \ref{rmk:StringCorrespondence}).

In our previous work \cite{CJKR},
we already established a correspondence between
loop-type objects of rank $\primemu=1$ and
band-type objects of multiplicity $\mu=1$,
and found a canonical form of matrix factorizations for that case.
The main purpose of the present paper is to extend it to a correspondence between
loop-type objects of arbitrary \textbf{geometric rank $\primemu$}
and
band-type objects of arbitrary \textbf{algebraic multiplicity $\mu$}.

To convert objects of Fukaya category into matrix factorizations,
we use the \textbf{localized mirror functor} 
$$
\setlength\arraycolsep{2pt}
\color{blue}
\begin{matrix}
\LocalF:
&
  D^{\pi}\left(W\Fuk\left(\POP\right)\right)
&
  \color{black}
  \overset{
  }{\longrightarrow}
&
  \underline{\MF}\left(xyz\right)
\end{matrix}
$$
\hspace{-1.9mm}
developed by the first author with Hong-Lau in \cite{CHL}.
In the present work,
we elaborate its computational aspect to apply it to higher-rank local systems.
In particular,
we give an explicit formula (\ref{eqn:MatrixOfComponentOfDeformedDifferential})
for resulting matrix factorizations, and use it to deduce matrix factorizations of higher-rank local systems
directly from the result on rank $1$ cases
(Proposition \ref{prop:HigherRankLMF}).
It presents us a
\textbf{canonical form of matrix factorizations}
for higher-rank objects
in terms of \textbf{loop data},
extending the previous version for rank $1$ case in \cite{CJKR}.

Burban-Drozd also provided a canonical form of maximal Cohen-Macaulay modules
in terms of \textbf{band data}.
But the corresponding matrix factorizations under Eisenbud's equivalence were not known,
due to the complexity of
\emph{Macaulayfication} process.
With the help of homological mirror symmetry, we now have a candidate.
Indeed, we demonstrate that our canonical form of matrix factorizations fits perfectly into this framework under
an explicit \textbf{conversion formula} between loop data and band data.
The presence of this conversion formula suggests that it would have been hardly attainable otherwise.


For
the proof,
we define the notion of \textbf{$\left(\lambda,\Lambda\right)$-substitution pair}
and use its homological property.
It enables us to extend the Macaulayfication result as well as conversion formula obtained in \cite{CJKR}
to higher-multiplicity cases
(Theorem \ref{thm:MFCMCorrespondence}).

After all,
it turns out that 
the geometric rank $\primemu$
and
the algebraic multiplicity $\mu$
coincide in a majority of cases.
But surprisingly,
there are a few (countably many) exceptions called \textbf{degenerate cases},
where
two parameters differ by $1$ as $\primemu=\mu-1$.
This can be interpreted as
an inevitable phenomenon
following from
the elimination of the regular module $A$
that occurs
when we take the stable category
$\underline{\CM}(A)=\CM(A)\setminus\left\{A\right\}$.

Analyzing
the correspondence
of objects in those cases
is quite tricky both on geometric and algebraic sides:
The geometric loop is freely homotopic to the reference loop (Seidel Lagrangian),
so we perturb it to prevent an immersed cylinder (\S \ref{sec:ExceptionalCase}).
On algebraic side, the corresponding module has one exceptional Macaulayfying element,
which does not appear as a family of $\lambda$
and cannot be obtained from the above $\left(\lambda,\Lambda\right)$-substitution process.
So we perform an additional computation for this case and find that the existence of such an element causes the degeneration $\primemu=\mu-1$
(\S \ref{sec:DegenerateCase}).
\newpage

\begin{spacing}{0.98}
\noindent
\textbf{Applications.}
The
correspondence obtained in Theorem \ref{thm:ModuleGeodesic}
can be used to relate natural geometric symmetries to algebraic operations.
Here we present
some of them,
while expecting that there will be further interesting translations between two
languages.
The first two are from geometry to algebra,
and the last two are the other way around.
(These applications
were not presented and
have been postponed from \cite{CJKR} to include general relations between higher rank/multiplicity objects.)

First, taking the \textbf{duality functor} $\Hom_A\left(-,A\right)$ of modules in $\underline{\rm CM}\left(A\right)$ corresponds to
\textbf{flipping} loops in
$\Fuk\left(\POP\right)$
(see figures in Example \ref{ex:FlipDual}).
We show the commutativity of the following diagram of functors in ($\AI$-)categorical level
(Proposition \ref{prop:CommutativityFlipTranspose} + Proposition \ref{prop:CommutativityTransposeDual}).
Then
we give a clear description of these operations in terms of loop/band data
(\S \ref{sec:DualFlipCanonicalForms}).
\begin{equation}\label{eqn:FlipDualDiagramIntro}
\begin{tikzcd}[arrow style=tikz,>=stealth,row sep=2.5em,column sep=3em] 
&
H^0\Fuk\left(\POP\right)
  \arrow[r,"\LocalF"]
  \arrow[d,swap,"\textnormal{flip}"]
  \arrow[d,"\imath"]
&
\underline{\MF}(xyz)
  \arrow[r,"\simeq"]
  \arrow[d,swap,"\textnormal{transpose}"]
  \arrow[d,"-\operatorname{Tr}"]
&
\underline{\CM}(A)
  \arrow[d,swap,"\textnormal{dual}"]
  \arrow[d,"{\Hom_A\left(-,A\right)}"]
&
\\
&
H^0\Fuk\left(\POP\right)
  \arrow[r,"\LocalF"]
&
\underline{\MF}(xyz)
  \arrow[r,"\simeq"]
&
\underline{\CM}(A)
&
\end{tikzcd}
\end{equation}

Second,
we consider the \textbf{AR translation},
which is given by the \textbf{shift functor} of the
triangulated category $\underline{\rm CM}\left(A\right)$.
It is not easy to compute in terms of band data,
but it is equivalent to \textbf{reversing the orientation} of underlying loops in
$\Fuk\left(\POP\right)$
(Proposition \ref{prop:CommutativityReversingSwitching} + Proposition \ref{prop:CommutativitySwitchingShift}),
which we can compute in a geometric way.
We will give an algorithm to compute them
using conversion to the loop data
(Proposition \ref{prop:ShiftAlgorithm}).
\begin{equation}\label{eqn:ReversingTranslateDiagramIntro}
\begin{tikzcd}[arrow style=tikz,>=stealth,row sep=2.5em,column sep=3em] 
&
H^0\Fuk\left(\POP\right)
  \arrow[r,"\LocalF"]
  \arrow[d,swap,"\smat{\textnormal{orientation}\\ \textnormal{reversing}}"]
  \arrow[d,"\jmath"]
&
\underline{\MF}(xyz)
  \arrow[r,"\simeq"]
  \arrow[d,swap,"\smat{\textnormal{switching}\\ \textnormal{two factors}}"]
  \arrow[d,"{\left[1\right]}"]
&
\underline{\CM}(A)
  \arrow[d,swap,"\textnormal{shift}"]
  \arrow[d,"{\left[1\right]}"]
&
\\
&
H^0\Fuk\left(\POP\right)
  \arrow[r,"\LocalF"]
&
\underline{\MF}(xyz)
  \arrow[r,"\simeq"]
&
\underline{\CM}(A)
&
\end{tikzcd}
\end{equation}

We remark here that
an indecomposable object in degenerate cases
and its image under operations in
(\ref{eqn:ReversingTranslateDiagramIntro})
are invariant under operations in
(\ref{eqn:FlipDualDiagramIntro}).
Conversely, if an indecomposable object
is invariant under operations in
(\ref{eqn:FlipDualDiagramIntro}),
either it or its image under operations in
(\ref{eqn:ReversingTranslateDiagramIntro})
is of degenerate case.


Third, categories involved in HMS typically possess natural \textbf{triangulated structures}. Along with the parameterization by band or loop data,
\textbf{higher multiplicity/rank} objects are given by some
iterated \textbf{mapping cones}
(or \textbf{twisted complexes} in $\AI$-categories) involving lower multiplicity/rank objects.
Proposition \ref{prop:LMFTwistedComplexComputation}
gives an explicit way to understand
higher rank local systems in Fukaya category
as
twisted complexes of lower rank objects.

Finally,
\textbf{periodic objects} in both sides are \textbf{decomposed} into as many pieces as the number of repetitions
\footnote{
We place this discussion in the middle of the main text (\S \ref{sec:PeriodicCase})
since they are also needed to analyze degenerate cases.
}.
We give an explicit formula for this decomposition
(Theorem \ref{thm:PeriodicDecomposable}).
It shows that non-primitive loops in the Fukaya category are decomposable
(Corollary \ref{cor:PeriodicDecomposableFuk}),
which is not obvious on the A-side.

\noindent
\textbf{Geometric interpretation of tameness.}
In Theorem \ref{thm:ModuleGeodesic},
we showed that all band-type indecomposable maximal Cohen-Macaulay modules over $A$
correspond to explicit geometric objects
(rather than abstract twisted complexes)
in the Fukaya category.
It gives a geometric interpretation
\footnote{
But we are not giving a new proof,
as classification of objects in the Fukaya category here relies on the B-side result via HMS.
}
of their representation-tameness,
which was already proven algebraically in \cite{BD17}.
It is geometrically intuitive that
there are only countably many
closed geodesics
(or free homotopy classes of loops)
in $\POP$.
Then an indecomposable local system lying on a fixed loop
of rank $\primemu\in\mathbb{Z}_{\ge1}$
is determined by
its holonomy
up to gauge equivalence,
which can be represented (up to basis change) by the
$\primemu\times\primemu$
Jordan block
$J_{\primemu}\left(\primelambda\right)
\in\operatorname{GL}_{\primemu}\left(\field\right)$
with some eigenvalue $\primelambda\in\field^{\times}$.
As a result, the elements of sets in Theorem \ref{thm:ModuleGeodesic}
are parameterized by closed geodesics in $\POP$, a rank $\primemu$ (discrete parameters),
and an eigenvalue of holonomy $\primelambda$ (continuous parameter).
That is, they consist of countably many one-parameter families.
\end{spacing}

\noindent
\textbf{Relation with other mirror symmetries of surfaces.}
Recently, there have been many studies on homological mirror symmetry
between \textbf{$\mathbb{Z}$-graded} partially wrapped Fukaya categories
(or topological Fukaya categories) of \textbf{graded marked surfaces} (A-side),
derived categories of modules over \textbf{gentle algebras},
and derived categories of coherent sheaves on certain \textbf{non-commutative curves} (B-side).
Gentle algebras have long been an intriguing topic in representation theory since they are derived tame,
closed under derived equivalences,
and have well-understood indecomposable objects in their derived categories
(see \cite{SZ03}, \cite{BD-gentle} and references therein).
Their connection with the Fukaya categories was first established in \cite{HKK},
graded marked surfaces corresponding to them were constructed in \cite{LP20},
and independently,
a closely related algebraic model was constructed in \cite{OPS18}.
Their relation with certain non-commutative curves was first found in \cite{BD11},
extended to nodal stacky curves in \cite{LP18},
and again generalized to much broader class of non-commutative nodal curves in \cite{BD18}.

It is especially remarkable that
\textbf{indecomposable objects} on each side
are classified,
have a concrete one-to-one correspondence
and therefore
the derived tameness
of gentle algebras can be understood in a (symplectic) geometric way.
Also, many purely representation-theoretic problems
concerning derived equivalence of finite-dimensional algebras
have been attacked and solved
using geometric insights and techniques
(e.g. \cite{PPP19,APS23,Opp19,KS22,CJS22,CHS23,CK24,AP24}).

The triangulated categories which are the focus of the present work
are related to \textbf{$\mathbb{Z}_2$-graded} Fukaya categories
(using oriented Lagrangian curves as objects) of \textbf{surfaces} (A-side).
Their mirrors are usually given by certain categories of \textbf{matrix factorizations}
or equivalently, singularity categories of \textbf{Landau-Ginzburg models}
(B-side).
There
have
been many well-studied homological mirror symmetries.
For example,
the mirrors of genus two and higher genus closed surfaces were constructed in
\cite{S08} and \cite{Ef}, respectively.
In 
\cite{AS21},
spherical objects in $\mathbb{Z}_2$-graded Fukaya categories of closed surfaces were related with
simple closed curves with a rank $1$ local system.
Mirrors of punctured spheres and their cyclic covers were established in \cite{AAEKO}.
A non-commutative mirror model of punctured surfaces were also discovered in \cite{Boc16},
and a related functor was constructed in \cite{CHLnc}.
Going in a different direction,
\cite{AEK21} considers Fukaya categories of singular surfaces
and show the reverse direction (switching A- and B-sides) of homological mirror symmetry.

On the level of all \textbf{indecomposable objects}
in $\mathbb{Z}_2$-graded Fukaya categories,
nevertheless,
their classification and correspondence under mirror symmetry
are not known
in full generality.
Compared to the situation of $\mathbb{Z}$-graded mirror symmetry,
however,
it is apparent that there will be
much
utility of establishing such a strong bridge
between curves in Fukaya categories and matrix factorizations
(in the global sense of \cite{Orlov12}).
It will provide more applications of homological mirror symmetry,
relating
new tame triangulated categories arising from representation theory
(other than finite-dimensional algebras)
with Fukaya categories of surfaces.

\noindent
\textbf{Towards global $\mathbb{Z}_2$-graded mirror symmetry.}
Our previous work \cite{CJKR} and the present work
aim
to
ignite this new direction of development in the homological mirror symmetry program.
There are many good reasons to start with the {\textbf{pair-of-pants surface}}
and its mirror $xyz=0$,
which has been of great interest:


Most importantly,
the
pair serves as a \textbf{building block} to construct more complicated mirror pairs.
For example,
the idea of constructing the mirror of general Riemann surfaces using their pair-of-pants decompositions
appeared in 
many places in the literature including
\cite{Lee,Nad16,PS19,PS21,PS22}.
(The last four uses a sheaf-theoretic version of Fukaya categories,
which is different from (but equivalent to) the Floer-theoretic version used in this paper.)
See
also the well-written survey in \cite[\S 9.4]{Boc21} and references therein.
A common approach,
often referred to as a \textbf{local-to-global principle},
involves proving the compatibility of categorical gluing on both sides,
based on the mirror symmetry of the local pair.
Moreover,
as explained in \cite{CHL-glue},
copies of the localized mirror functor employed in this paper
(as its name implies)
can be also glued together
in order to obtain
a mirror functor in the global setting.

Independently of the above gluing formalism,
mirror symmetry of the pair-of-pants surface also played a central role
in \cite{AAEKO} and \cite{HJS24}.
The authors consider its cyclic and abelian covers, respectively,
and construct their mirror LG model using the symmetry given by the deck transformation groups.

%
%


On the other hand,
representation theory and classification of objects on B-side have been explicitly developed
only for the local model $\left.\field[[x,y,z]]\right/(xyz)$ in \cite{BD17}.
So we will need to work out the corresponding theory for
more general (non-affine) normal crossing surface singularities
appearing as mirrors of other Riemann surfaces.
After
establishing it,
we hope to generalize our present results to mirror symmetry of more general Riemann (orbi-)surfaces,
which will enhance our understanding of geometric and algebraic tame categories
and give many fruitful applications.

\subsection{Proof of main theorem}
In this subsection,
we deduce Theorem \ref{thm:ModuleGeodesic} from several results summarized from the body of the paper.
The approach to prove it
will be completed through the following
two
steps:

\begin{enumerate}[label=\Roman*.]
%

\item
Compute matrix factorizations corresponding to canonical forms of local systems over loops in $\POP$.

\item
Convert them into Burban-Drozd's canonical form of maximal Cohen-Macaulay modules.

\end{enumerate}

\noindent
\textbf{I. Canonical forms of loops with a local system and corresponding matrix factorizations.}
We take the following specific representatives of free homotopy classes of loops in $\POP$:
Given a \textbf{loop word}
$$
w' = \left(l_1',m_1',n_1',l_2',m_2',n_2',\dots,l_{\tau}',m_{\tau}',n_{\tau}'\right)
\in\mathbb{Z}^{3\tau}
$$
($\tau\in\mathbb{Z}_{\ge1}$),
consider the loop $L\left(w'\right)$ described in Figure \ref{fig:CanonicalForm}.
We
restrict to
\textbf{normal loop words}
(Definition \ref{defn:normal2})
so that
they
(up to shifting) produce only one loop in each
hyperbolic
free homotopy class.
Then they are also in one-to-one correspondence with closed geodesics in $\POP$
(Proposition \ref{prop:GeodesicNormalLoopWordCorrespondence}).

We introduce
a \textnormal{\textbf{loop datum}}
$
\left(w',\primelambda,\primemu\right)
$
to parameterize loops with a local system,
which consists of a \textnormal{\textbf{normal loop word}} $w'\in\mathbb{Z}^{3\tau}$ ($\tau\in\mathbb{Z}_{\ge1}$),
a \textnormal{\textbf{holonomy parameter}} $\primelambda\in\field^\times$,
and a \textnormal{\textbf{(geometric) rank}} $\primemu\in\mathbb{Z}_{\ge1}$.
%
The associated \textbf{canonical form of a loop with a local system},
denoted by $\mathcal{L}\left(w',\primelambda,\primemu\right)$,
is given by the loop $L\left(w'\right)$
together with a rank $\primemu$ local system whose holonomy is represented by $J_\primemu\left(\primelambda\right)$
\footnote{
We denote by 
$
J_{{\primemu}}\left(\primelambda\right)
\in
\operatorname{GL}_{{\primemu}}\left(\field\right)
$
the ${\primemu}\times{\primemu}$ Jordan block
with eigenvalue $\primelambda\in\field^\times$.
}
(up to basis change).

\begin{prop}[Corollary \ref{cor:GeodesicLocalSystemLoopData}]
\label{prop:GeodesicLocalSystemLoopDataIntro}
There is a one-to-one correspondence
$$

$}
\\[19mm]
\text{\color{red}\small where $x^a, y^a, z^a$ are regarded as $0$ if $a<0$}
\end{matrix}
\\
\end{matrix}
\\
\mathcal{L}\left({\color{blue}{{\color{black}w',
\primelambda,{\primemu}}}}\right)
&
&
\varphi\left({\color{blue}{{\color{black}w',
\lambda,{\primemu}}}}\right)
\end{matrix}
$
\captionsetup{width=1\linewidth}
\caption{Canonical form of a loop-type object in $\Fuk\left(\POP\right)$ and $\underline{\MF}(xyz)$}
\label{fig:CanonicalForm}
\end{figure}



%

The \textbf{localized mirror functor
}
converts each loop with a local system
$
\mathcal{L}\left(w',\primelambda,{\primemu}\right)
$
into a matrix factorization
$
\LocalF\left(\mathcal{L}\left(w',\primelambda,\primemu\right)\right).
$
The corresponding \textbf{canonical form of
matrix factorization}
of $xyz$
is
\begin{equation}\label{eqn:CanonicalFormMF}
\left(\varphi\left(w',\lambda,\primemu\right),\psi\left(w',\lambda,\primemu\right)\right)
\footnote{
It is
also
parameterized by loop data $\left(w',\lambda,\primemu\right)$,
but it is more convenient to distinguish two continuous parameters
$\primelambda$ (for loops with a local system)
and
$\lambda$ (for matrix factorizations).
They are related under the conversion formula in \S \ref{sec:ConversionFormula}.
}
\end{equation}
where $\lambda$ is either $\primelambda$ or $-\primelambda$ depending on $w'$
(see \textnormal{Definition \ref{def:ConversionFromLooptoBand}}).
Its
first component is shown in Figure \ref{fig:CanonicalForm}
and the second one is determined by the first (Definition \ref{defn:CanonicalFormMF}).

There are some exceptions called \textbf{degenerate cases} (i.e., $w'=(2,2,2)^{\hash\tau}$,
$\primelambda=(-1)^{\tau}$),
where the second factor $\psi\left(w',\lambda,\primemu\right)$ is not defined,
and
we use an alternative form
(\textnormal{Definition \ref{def:DegenerateCanonicalFormMF}})
\begin{equation}\label{eqn:DegenerateCanonicalFormMF}
\left(\varphi_{\deg}\left((2,2,2)^{\hash\tau},1,\primemu\right),\psi_{\deg}\left((2,2,2)^{\hash\tau},1,\primemu\right)\right).
\end{equation}


To integrate (\ref{eqn:CanonicalFormMF}) and (\ref{eqn:DegenerateCanonicalFormMF})
into a unified notation,
we denote them as
$\left(\varphi_{(\deg)}\left(w',\lambda,\primemu\right),\psi_{(\deg)}\left(w',\lambda,\primemu\right)\right)$.
Namely,
it defaults to (\ref{eqn:CanonicalFormMF}) in the general case
but adopts (\ref{eqn:DegenerateCanonicalFormMF}) only in the degenerate cases.
Then 
the relation between the canonical form of loops with a local system
and the canonical form of
matrix factorizations is summarized as:


\begin{thm}[Theorem \ref{thm:LagMFCorrespondenceNondegenerate} + Theorem \ref{thm:LagMFCorrespondenceDegenerate} + Proposition \ref{prop:DegenerateOriginalCanonicalForm}]
\label{thm:LagMFCorrespondence}
For 
a
loop datum $\left(w',\primelambda,\primemu\right)$
and
$
\lambda
=\pm\primelambda
$
determined by the \emph{conversion formula} Definition \ref{def:ConversionFromLooptoBand},
there is an isomorphism
in $\underline{\MF}\left(xyz\right)$
\begin{align*}
\LocalF\left(\mathcal{L}\left(w',\primelambda,\primemu\right)\right)
\ \ &\cong\ \
\left(\varphi_{(\deg)}\left(w',\lambda,\primemu\right),\psi_{(\deg)}\left(w',\lambda,\primemu\right)\right).
\end{align*}


\end{thm}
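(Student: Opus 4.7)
The plan is to establish the isomorphism by directly computing $\LocalF(\mathcal{L}(w',\primelambda,\primemu))$ using the definition of the localized mirror functor and comparing the result with the canonical form in Figure~\ref{fig:CanonicalForm}. First I would set up the generators: the loop $L(w')$ intersects the Seidel Lagrangian reference transversally in $3\tau$ points (one near each of the three punctures per period, corresponding to the letters $l_i', m_i', n_i'$ of the loop word); tensoring with a rank $\primemu$ local system gives $3\tau\primemu$ generators for the underlying $\mathbb{Z}_2$-graded module of the matrix factorization. These generators come in alternating degrees around the loop, giving the block structure with $\varphi$ and $\psi$ of size $3\tau\primemu \times 3\tau\primemu$.

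Next I would compute the differential entries. Each entry corresponds to a holomorphic polygon with one boundary component on $L(w')$, one corner at a generator, and the remaining boundary and corners on the Seidel Lagrangian; applying the explicit formula (\ref{eqn:MatrixOfComponentOfDeformedDifferential}) weights the polygon count by the monomial $x^a y^b z^c$ recording its corners at the three punctures. The polygons contributing to the matrix in Figure~\ref{fig:CanonicalForm} are the triangles/bigons running along consecutive arc segments of $L(w')$, giving the monomials $x^{l_i'-1}, y^{m_i'-1}, z^{n_i'-1}$ and $x^{-l_i'}, y^{-m_i'}, z^{-n_i'}$. The holonomy of the local system is picked up only by polygons that cross the marked point $\bigstar$, and by the cyclic structure of $L(w')$ this only affects the two corner blocks that wrap around, where the scalar gets multiplied by the holonomy matrix. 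For rank~$1$, this reproduces the canonical form of~\cite{CJKR}.

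The passage to rank $\primemu$ is handled by Proposition~\ref{prop:HigherRankLMF}: replacing each scalar monomial entry by the corresponding scalar multiple of $I_\primemu$ and replacing the holonomy scalar $\primelambda^{\pm 1}$ by $J_\primemu(\primelambda)^{\pm 1}$ gives precisely $\varphi(w',\lambda,\primemu)$, with $\lambda = \pm\primelambda$ pinned down by the sign convention of the conversion formula (Definition~\ref{def:ConversionFromLooptoBand}) that encodes the orientation data and the number of times $L(w')$ crosses the marked arc. The second factor $\psi(w',\lambda,\primemu)$ is then determined uniquely up to homotopy by the matrix factorization equation $\varphi\psi = xyz\cdot I$, and a direct verification shows that the cofactor-type formula defining $\psi$ in Definition~\ref{defn:CanonicalFormMF} indeed satisfies this.

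The main obstacle is the degenerate case $w' = (2,2,2)^{\hash \tau}$ with $\primelambda = (-1)^\tau$. Here the loop $L(w')$ is freely homotopic to a (multiple cover of the) Seidel Lagrangian, so the transversality argument underlying the polygon count breaks down and an immersed cylinder appears. I would resolve this by perturbing $L(w')$ off the Seidel Lagrangian as described in \S\ref{sec:ExceptionalCase}, then separately computing the localized mirror functor on the perturbed curve. The resulting matrix factorization has a different structure than the generic formula would predict, and the alternative canonical form $(\varphi_{\deg},\psi_{\deg})$ of (\ref{eqn:DegenerateCanonicalFormMF}) is designed to match it; verifying this match requires Proposition~\ref{prop:DegenerateOriginalCanonicalForm}, which identifies the perturbed polygon count with the prescribed canonical form. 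Combining the generic and degenerate computations gives the unified statement.
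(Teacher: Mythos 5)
Your proposal follows essentially the same route as the paper: compute the rank-$1$ matrix factorization by counting deformed strips, lift to rank $\primemu$ by the $(\primelambda,\primeLambda^T)$-substitution of Proposition \ref{prop:HigherRankLMF}, perform the sign/basis changes that produce $\lambda=(-1)^{l_1+\cdots+l_\tau+\tau}\primelambda$, and treat $w'=(2,2,2)^{\hash\tau}$ by perturbing the loop. One small correction: the match between the perturbed polygon count and $(\varphi_{\deg},\psi_{\deg})$ is the content of Theorem \ref{thm:LagMFCorrespondenceDegenerate}, whereas Proposition \ref{prop:DegenerateOriginalCanonicalForm} plays a purely algebraic role — it reduces the degenerate form back to the original canonical form when $\lambda\ne1$ (by eliminating unit entries via Lemma \ref{lem:MatrixReducingProcess}), which is what makes the unified notation $\varphi_{(\deg)}$ consistent.
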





\noindent
\textbf{II. Canonical forms of maximal Cohen-Macaulay modules and corresponding matrix factorizations.}
Burban-Drozd's classification and canonical form of maximal Cohen-Macaulay modules
are best handled in
the \textbf{category of triples} $\Tri(A)$,
which was introduced and shown to be equivalent to $\CM(A)$ in the same work.
The
\textbf{canonical form
$
\Theta\left(w,\lambda,\mu\right)
$
of a {band-type} indecomposable object} in $\Tri(A)$ is
described in Figure \ref{fig:CanonicalForm2}.
It is
parameterized by
a \textbf{band datum} $\left(w,\lambda,\mu\right)$,
which consists of
a \textbf{band word}
$$
w = \left(l_1,m_1,n_1,l_2,m_2,n_2,\dots,l_{\tau},m_{\tau},n_{\tau}\right)
\in\mathbb{Z}^{3\tau}
$$
($\tau\in\mathbb{Z}_{\ge1}$), an \textbf{eigenvalue} $\lambda\in\field^\times$,
and an \textbf{(algebraic) multiplicity} $\mu\in\mathbb{Z}_{\ge1}$.
\begin{figure}[H]
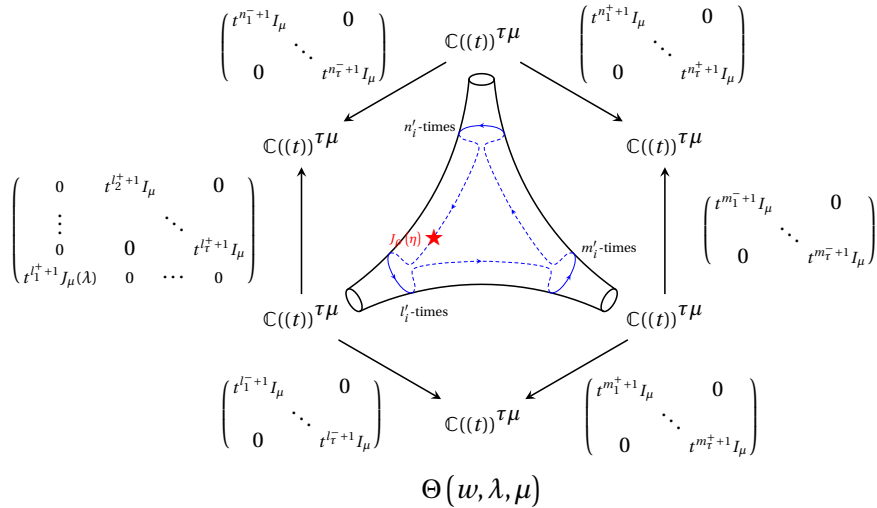

$

$
\captionsetup{width=1\linewidth}
\caption{
Canonical form of a band-type indecomposable object in $\Tri(A)$
\\
and the corresponding loop with a local system
}
\end{figure}
\vspace{-4mm}
\noindent
We denote by
$
M\left(w,\lambda,\mu\right)
$
the corresponding
object
in $\CM\left(A\right)$,
and refer to it as the
\textbf{canonical form of a band-type indecomposable maximal Cohen-Macaulay module}
over $A$.

When we take the stable category
$\underline{\CM}(A) = \left.\CM(A)\right/\left\{A\right\}$,
we lose exactly one isomorphism class $[A]$ of indecomposable objects containing the regular module $A$
(Definition \ref{def:StableCategories}).
In $\CM(A)$,
it is written in the canonical form as $A=M\left((0,0,0),1,1\right)$.
This implies that
for the band datum $\left((0,0,0),1,1\right)$,
there is no corresponding loop datum.



In \S \ref{sec:ConversionFormula},
we define a \textbf{conversion formula} between loop data and band data. 
It induces a bijection
\begin{align*}
\left.\left\{\textnormal{non-periodic loop data}\right\}\right/\sim_\textnormal{shifting}
\ \ \ &\overset{1:1}{\leftrightarrow}\ \ \
\left.\left(\left.\left\{\text{non-periodic band data}
\right\}
\right\backslash
\left\{\left((0,0,0),1,1\right)\right\}
\right)
\right/
\sim_{\text{shifting}}.
\\
\left(w',\primelambda,\primemu\right)\ \ \
\ \ &\leftrightarrow \ \
\ \ \ \left(w,\lambda,\mu\right)
\end{align*}
Note that
the set on the left side already appeared in
Proposition \ref{prop:GeodesicLocalSystemLoopDataIntro}.
The set on the right side is in bijection with
$$
\left.
\left\{\text{\text{band-type} indecomposable objects in }
\underline{\operatorname{CM}}(A)
\right\}
\right/
\sim_\text{isomorphism}
$$
by Burban-Drozd's classification (Theorem \ref{thm:BDClassification}).
In most cases,
we have $\primemu=\mu\in\mathbb{Z}_{\ge1}$.
However, for non-periodic degenerate cases,
the correspondence is given by
$$
\left(w'=(2,2,2),\primelambda=-1,\primemu\right)\ \ \
\ \ \leftrightarrow \ \
\ \ \ \left(w=(0,0,0),\lambda=1,\mu\right)
$$
with $\primemu=\mu-1\in\mathbb{Z}_{\ge1}$.

The conversion formula indeed relates
band-type indecomposable objects in $\underline{\MF}(xyz)$ and $\underline{\CM}(A)$
in their canonical forms under Eisenbud's equivalence:

\begin{thm}[Theorem \ref{thm:MFCMCorrespondence} + Theorem \ref{thm:MFCMCorrespondenceDegenerate}]
\label{thm:MFCMCorrespondenceIntro}
For a non-periodic loop datum $\left(w,\primelambda,\primemu\right)$
and band datum $\left(w,\lambda,\mu\right)$
related under the conversion formula,
there is an isomorphism
in
$\underline{\CM}\left(A\right)$
$$
\cok\mathunderbar{\varphi_{(\deg)}}\left(w',\lambda,\primemu\right)
\footnote{
We regard a matrix factorization $\left(\varphi,\psi\right)$ of $xyz$ as
a pair of homomorphisms between two free modules over
$S:=\field[[x,y,z]]$
(Definition \ref{defn:MatrixFactorization}),
then define $\mathunderbar{\varphi}:=\varphi\otimes \id_A$ as an $A$-module homomorphism
(See Theorem \ref{thm:EisenbudTheorem}).
}
\ \  \cong\ \
M\left(w,\lambda,\mu\right).
$$
%

\end{thm}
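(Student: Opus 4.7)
The plan is to translate both sides into Burban-Drozd's category of triples $\Tri(A)$, where the canonical form $\Theta(w,\lambda,\mu)$ is transparently described, and to verify that the triple associated to $\cok \mathunderbar{\varphi_{(\deg)}}(w',\lambda,\primemu)$ coincides with $\Theta(w,\lambda,\mu)$. Concretely, I would first invoke Eisenbud's equivalence $\underline{\MF}(xyz) \simeq \underline{\CM}(A)$ to view the cokernel as a maximal Cohen-Macaulay module, then apply the functor $\CM(A) \to \Tri(A)$ to obtain a triple, and finally compare it directly with $\Theta(w,\lambda,\mu)$. The combinatorial shape of $\varphi(w',\lambda,\primemu)$ (a cyclic block matrix of size $3\tau\primemu$ whose exponents are exactly $l_i'$, $m_i'-1$, $n_i'-1$, $-l_i'$, $-m_i'$, $-n_i'$) has been engineered to echo the cyclic monomial data appearing in $\Theta(w,\lambda,\mu)$, so once the Macaulayfication has been worked out the comparison should become essentially bookkeeping.

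For the generic (non-degenerate) case, I would bootstrap from the rank/multiplicity-one correspondence already established in \cite{CJKR}. The key observation is that $\varphi(w',\lambda,\primemu)$ is obtained from $\varphi(w',\lambda,1)$ by replacing each scalar entry involving $\lambda$ with the Jordan block $J_\primemu(\lambda)$ and the remaining scalars with $I_\primemu$. Introducing the notion of a $(\lambda,\Lambda)$-\emph{substitution pair} formalizes this replacement: starting from an identity of $A$-linear maps depending on the scalar parameter $\lambda$, one lifts it to a rank-$\primemu$ free-module identity by substituting $\Lambda = J_\primemu(\lambda)$. Once one verifies that this substitution commutes with taking cokernels, with cosyzygies, and with the comparison functor to $\Tri(A)$, the $\mu = 1$ result of \cite{CJKR} upgrades verbatim to general $\mu$, yielding the isomorphism of triples (and hence of $\underline{\CM}(A)$-objects) with $M(w,\lambda,\mu)$.

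The degenerate cases $w' = (2,2,2)^{\hash\tau}$, $\primelambda = (-1)^\tau$ have to be handled by a direct computation. Here $\varphi_{\deg}$ deviates from the generic pattern because the module $M((0,0,0),1,\mu)$ contains one exceptional Macaulayfying element that does not arise from any $(\lambda,\Lambda)$-substitution and is not visible in the $\mu=1$ template. I would compute the cokernel explicitly, identify this extra element, and check that its presence is precisely what enforces the dimension mismatch $\primemu = \mu - 1$ between the geometric rank and the algebraic multiplicity. Matching with $\Theta$ in this case requires an extra row/column of the triple built by hand, together with the decomposition analysis for periodic repetitions already carried out in the discussion of the periodic case.

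The main obstacle I anticipate is the Macaulayfication itself: from the raw cokernel of the cyclic block matrix one must produce the precise set of generators and relations appearing in $\Theta(w,\lambda,\mu)$, which entails a delicate change of basis over $A$ (equivalently, over the $t$-adic rings $\field((t))$ encoding each coordinate axis, as in the diagram defining $\Theta$). Proving that the $(\lambda,\Lambda)$-substitution commutes with this Macaulayfication is the crux of the non-degenerate case, and independently re-running the computation in the degenerate case is where the rank-drop phenomenon $\primemu = \mu - 1$ must be extracted carefully.
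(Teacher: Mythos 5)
Your core strategy for the non-degenerate case coincides with the paper's: start from the $\mu=1$ Macaulayfication result of \cite{CJKR} and upgrade it to arbitrary $\mu$ by substituting $J_\mu(\lambda)$ for the scalar $\lambda$ via the notion of a $(\lambda,\Lambda)$-substitution pair, and treat the degenerate case $w'=(2,2,2)^{\hash\tau}$, $\lambda=1$ by a separate direct computation in which an exceptional Macaulayfying element forces $\primemu=\mu-1$. Two points of divergence are worth flagging. First, your framing of the endgame through $\Tri(A)$ --- computing the triple of the cokernel and matching it against $\Theta(w,\lambda,\mu)$ --- is precisely the alternative route the authors explicitly defer to future work, because it requires additional algebro-geometric input about the functor $\mathbb{F}_{\BD}$; the paper never passes through $\Tri(A)$ in the proof. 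Instead, $M(w,\lambda,\mu)$ is \emph{defined} as the Macaulayfication of the explicit submodule $\tilde M(w,\lambda,\mu)\subset A^{\tau\mu}$, so the comparison reduces to showing (a) that the substituted Macaulayfying elements $F_i(J_\mu(\lambda))$ realize the Macaulayfication as $\im\pi(w,J_\mu(\lambda))$, which follows because the defining relations $\chi F(\lambda)=\tilde\pi(w,\lambda,1)a_\chi(\lambda)$ are Laurent-polynomial identities in $\lambda$ and hence substitute, and (b) that $\varphi(w',J_\mu(\lambda))$ resolves this image, after which maximal Cohen--Macaulayness is automatic since $\varphi$ is a matrix factor of $xyz$. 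Second, the step you describe as ``verifying that the substitution commutes with taking cokernels'' is the genuine technical content: exactness is \emph{not} preserved by substitution in general, and the paper's Proposition on substitution pairs requires exactness of the one-parameter family of sequences at every eigenvalue of $\Lambda$, with a proof that reduces $\Lambda$ to Jordan form and proceeds by induction on the block size using the perfect-shuffle similarity $A\otimes B\sim B\otimes A$. You correctly identify this as the crux but give no argument for it; supplying that lemma is what turns your outline into a proof.
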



\noindent
\textbf{Mirror symmetry correspondence.}
Summing up, we have equivalence of categories and correspondence between loop/band-type indecomposable objects as follows
(where $w'$ and $w$ are non-periodic):
\begin{equation}\label{eqn:MainCorrespondenceDiagram}
\setlength\arraycolsep{2pt}
\color{blue}
\begin{matrix}
  D^{\pi}\left(W\operatorname{Fuk}(\POP)\right)
&
  \color{black}
  \overset{
  \small
  \begin{matrix}
    \text{\tiny localized}
  \\[-1mm]
    \text{\tiny mirror functor}
  \\[0.5mm]
    \LocalPhi
  \\[-1mm]
   \\[-1mm]
  \end{matrix}
  }{\longrightarrow}
&
  \underline{\operatorname{MF}}(xyz)
&
  \color{black}
  \overset{
  \small
  \begin{matrix}
    \text{\tiny Eisenbud}
  \\[2mm]
    \cok
  \\[-1mm]
    \simeq
   \\[-1mm]
  \end{matrix}
  }{\longrightarrow}
&
  \underline{\operatorname{CM}}(A)
&
  \color{black}
  \overset{
  \small
  \begin{matrix}
    \text{\tiny Burban}
  \\[-1mm]
    \text{\tiny -Drozd}
  \\[0.5mm]
    \mathbb{F}_{\operatorname{BD}}
  \\[-1mm]
    \simeq
   \\[-1mm]
  \end{matrix}
  }{\longrightarrow}
&
  \underline{\operatorname{Tri}}\left(A\right)
\\[2mm]
\color{black}
\text{\color{black}\small$
\mathcal{L}
\text{$
\left(w',\primelambda,{\primemu}\right)
$}
$}
&
\color{black}\overset{\cong}{\mapsto}
&
\text{\color{black}\small$
\varphi_{\operatorname{(deg)}}
\text{$
\left({{{\color{black}w',\lambda,{\primemu}}}}\right)
$}
$}
&
\color{black}
\xleftrightarrow{\textbf{}}
&
\text{\color{black}\small$
M
\text{$
\left({{{\color{black}w,\lambda,\mu}}}\right)
$}
$}
&
\color{black}\mapsto
&
\text{\color{black}\small$
\Theta
\text{$
\left({{{\color{black}w,\lambda,\mu}}}\right)
$}
$}
\end{matrix}
\end{equation}
Note that objects in each category are parameterized by loop data or band data.
The conversion formula between loop data and band data realizes the one-to-one correspondence between loop/band-type indecomposable objects
(up to isomorphism)
in each category as
$$
\begin{matrix}
\left.
\left\{
\textnormal{\textnormal{{closed geodesics in $\POP$ with an indecomposable local system}}}
\right\}
\right/\sim_\textnormal{gauge equivalence}
\\[2mm]
\hspace{-55.5mm}
\overset{1:1}{\leftrightarrow}\quad
\left.\left\{\text{non-periodic loop data}
\right\}
\right/
\sim_{\text{shifting}}
\\[2mm]
\hspace{-27mm}
\overset{1:1}{\leftrightarrow}\quad
\left.\left(\left.\left\{\text{non-periodic band data}
\right\}
\right\backslash
\left\{\left((0,0,0),1,1\right)\right\}
\right)
\right/
\sim_{\text{shifting}}
\\[2mm]
\hspace{-12mm}
\overset{1:1}{\leftrightarrow}\quad
\left.
\left\{\text{\text{band-type} indecomposable objects in }
\underline{\operatorname{CM}}(A)
\right\}
\right/
\sim_\text{isomorphism}.
\end{matrix}
$$
It
proves our main Theorem \ref{thm:ModuleGeodesic}.

\begin{spacing}{0.953}
\noindent
\textbf{Periodic case.}
In each category,
objects corresponding to \textbf{periodic} loop/band data are \textbf{decomposable}:
Consider
a loop datum $\left(w',\primelambda,\primemu\right)$
with a periodic normal loop word
$w' = \left(\tilde{w}'\right)^{\hash N}\in \mathbb{Z}^{3\tau}$
($N\in\mathbb{Z}_{\ge2}$).
Its corresponding band datum $\left(w,\lambda,\mu\right)$
has also a periodic band word
$w=\tilde{w}^N \in\mathbb{Z}^{3\tau}$.
Denote by
$\lambda_{0},\dots,\lambda_{N-1}\in\field^\times$
and
$\primelambda_{0},\dots,\primelambda_{N-1}\in\field^\times$
the $N$-th roots of $\lambda$ and $\primelambda$,
respectively.
Then we have decompositions
$$
\mathcal{L}\left(w',\primelambda,\primemu\right)
\cong
\bigoplus_{k=0}^{N-1} \mathcal{L}\left(\tilde{w}',\primelambda_{k},\primemu\right),
\quad
\varphi_{(\deg)}\left(w',\lambda,\mu\right)
 \cong
 \bigoplus_{k=0}^{N-1} \varphi_{(\deg)}\left(\tilde{w}',\lambda_{k},\mu\right)
\footnote{
Note that if
the left side is in a degenerate case,
exactly one of direct summands in the right side is in a degenerate case.
},
\quad
M\left(w,\lambda,\mu\right)
\cong
\bigoplus_{k=0}^{N-1} M\left(\tilde{w},\lambda_k,\mu\right)
$$
in $\Fuk\left(\POP\right)$, $\underline{\MF}(xyz)$, and $\underline{\CM}(A)$,
respectively.
(See Corollary \ref{cor:PeriodicDecomposableFuk} + (\ref{eqn:PeriodicDegenerateCaseDecompositionFuk}),
Theorem \ref{thm:PeriodicDecomposable} + (\ref{eqn:PeriodicDegenerateCaseDecompositionMF}),
and (\ref{eqn:CMDecomposition}).)

The loop datum $\left(\tilde{w}',\primelambda_k,\primemu\right)$
and the band datum $\left(\tilde{w},\lambda_k,\mu\right)$
also correspond to each other
for each $k$
under the conversion formula.
Therefore,
by Theorem \ref{thm:LagMFCorrespondence}
and Theorem \ref{thm:MFCMCorrespondenceIntro},
the above three decompositions are compatible with each other in non-degenerate cases.
In degenerate cases,
the first and the second are still compatible with each other,
while the second and the third are not,
due to the shifting of rank/multiplicity $\primemu=\mu-1$
(see (\ref{eqn:PeriodicDegenerateCaseMappedToMFFuk}) and (\ref{eqn:PeriodicDegenerateCaseMappedToCM})).

\begin{remark}
\label{rmk:Completion}
In this paper,
we consider matrix factorizations over the power series ring
$S:=\field[[x,y,z]]$
and maximal Cohen-Macaulay modules over its quotient
$A:=\left.\field[[x,y,z]]\right/(xyz)$,
instead of the polynomial ring
$\tilde{S}:=\field[x,y,z]$
and its quotient
$\tilde{A}:=\left.\field[x,y,z]\right/(xyz)$.
To distinguish the latter,
we denote by
$\MF[xyz]$
the category of matrix of matrix factorizations of $xyz$ over $\tilde{S}$,
and by
$\CM\left(\tilde{A}\right)$
the category of maximal Cohen-Macaulay modules over $\tilde{A}$
(see \cite{HB93}).
Their stable categories and other variations are defined in the same way as in
\S \ref{sec:MFCategories} and \S \ref{sec:EisenbudEquivalence}.

There are obvious faithful functors
$
\underline{\MF}[xyz]
\rightarrow
\underline{\MF}(xyz)
$
and
$
\underline{\CM}\left(\tilde{A}\right)
\rightarrow
\underline{\CM}\left(A\right)
$
(see \cite[Theorem 2.1.3, Corollary 2.1.8]{HB93}),
which are essentially surjective
(indeed, every indecomposable object in $\underline{\CM}(A)$
is obtained as an image of the functor).
Still
there are some objects in
$\underline{\MF}[xyz]$
that are not isomorphic to each other
but become isomorphic in
$\underline{\MF}(xyz)$.
For example,
the below diagram shows
a family of matrix factorizations
($\lambda\in\field^{\times}$)
that are not zero objects in $\underline{\MF}[xyz]$
but becomes zero objects in $\underline{\MF}(xyz)$.
(Note that the vertical isomorphisms exist only in the latter category.)
$$
\begin{tikzcd}[arrow style=tikz,>=stealth,row sep=3em,column sep=6em] 
S^2
  \arrow[r,"\spmat{z & \lambda-y \\ 0 & xy}"]
  \arrow[d,swap,"\smat{\begin{psmallmatrix}1 & 0 \\ z(\lambda-y)^{-1} & 1\end{psmallmatrix}\\[3mm]}"]
&
S^2
  \arrow[r,"\spmat{xy & -\lambda+y \\ 0 & z}"]
  \arrow[d,swap,"\smat{\begin{psmallmatrix}xy & -\lambda+y \\ (\lambda-y)^{-1} & 0\end{psmallmatrix}\\[3mm]}"]
&
S^2
  \arrow[d,swap,"\smat{\begin{psmallmatrix}1 & 0 \\ z(\lambda-y)^{-1} & 1\end{psmallmatrix}\\[3mm]}"]
\\
S^2
  \arrow[r,"\spmat{xyz & 0 \\ 0 & 1}"]
&
S^2
  \arrow[r,"\spmat{1 & 0 \\ 0 & xyz}"]
&
S^2
\end{tikzcd}
$$

They come from a loop with holonomy $\lambda$ that is homotopic to one of the boundary circles in $\POP$,
which was excluded in our correspondence in Theorem \ref{thm:ModuleGeodesic}.
Thus,
$\underline{\MF}[xyz]$ has more objects than $\underline{\MF}(xyz)$.
It would be interesting to know
how many objects in the gap,
and whether they are also realized in a geometric way.

\end{remark}

\noindent
\textbf{Acknowledgement.}
This work started as an attempt to understand the mirror A-side of Igor Burban and Yuriy Drozd's representation theory of maximal Cohen-Macaulay modules over non-isolated surface singularities in \cite{BD17},
and we are grateful to Igor Burban for explanations, discussions
and many great advice on this work.
We thank
Yong-Geun Oh,
Henning Krause,
Juan Omar Gomez
and Sibylle Schroll
for their interest on this project as well as helpful comments,
Wassilij Gnedin for
carefully reading and revising the draft
from a representation-theoretic perspective,
Severin Barmeier, Dongwook Choa, Wonbo Jeong, Sangjin Lee and Sangwook Lee for many useful discussions in symplectic geometry,
and Hanwool Bae for explaining to us his thesis.
The second author is also grateful to
Jongil Park, Jae-Hoon Kwon, Hansol Hong, Philsang Yoo and Igor Burban
for many great comments in his thesis defense \cite{Rho23},
where most parts of this work were discussed. 
We also thank Kyoungmo Kim for sharing his idea and
helpful discussions on this topic
and other joint works.
The second author was supported by
Basic Science Research Program through the National Research Foundation of Korea(NRF) funded by the Ministry of Education(RS-2023-00248895),
and
the German
Research Foundation SFB-TRR 358/1 2023 — 491392403.
\end{spacing}

\newpage
\begin{spacing}{0.94}
\section{Localized Mirror Functor and Its Computation}\label{sec:LMFComputation}

This section is devoted to
an elaboration of the computational aspect
of the localized mirror functor
\cite{CHL}
applied to the \emph{pair-of-pants surface}.
Especially,
we develop a formula for finding each component of the matrix factorization
corresponding to a higher-rank local system $(E,\nabla)$ over a loop $L$.
Every convention and notation for such an object
$\mathcal{L}=\left(L,E,\nabla\right)$
is based on our geometric setting of the compact Fukaya category
(with immersed loops with a local system)
explained in Section \ref{sec:CptFukSurface}.

\subsection{Localized mirror functor for pair-of-pants surface}
We call a smooth surface with boundary $\POP$ diffeomorphic to the complement in $S^2$ of three
distinct points
a \textbf{pair-of-pants surface}.
Consider a marked loop $\mathbb{L}=\left(\mathbb{L},e_{\mathbb{L}},o_{\mathbb{L}}\right)$ in $\POP$ described in Figure \ref{fig:SeidelLagrangian},
which is called the \textbf{Seidel Lagrangian}.
(Note that its self-intersections are transversal.
See also \cite{Se}.)
Assume that the areas of two triangles bounded by $\bL$ in $\POP$ are the same
\footnote{
This condition is necessary when we consider the Novikov field $\Lambda$ instead of our field $\field$.
}.
We put on its domain a trivial line bundle $E_{\mathbb{L}} = S^1 \times \field$ equipped with a flat connection
$
\nabla_{\mathbb{L}}
$
whose holonomy
is $-1$ at the point {\small$\color{gray}\bigstar$} marked in Figure \ref{fig:SeidelLagrangian}.
We assume that the triple
$
\left(\mathbb{L}, E_{\mathbb{L}}, \nabla_{\mathbb{L}}\right)
$
is an object of $\Fuk\left(\POP\right)$
and still denote it as $\mathbb{L}$ for simplicity.

\begin{figure}[H]
   \begin{minipage}{0.5\textwidth}
     \centering
          \adjustbox{height=35mm, center}{

          }
\centering
\captionsetup{width=1\linewidth}
\caption{Generating objects of $D^\pi W\Fuk\left(\POP\right)$}
\label{fig:GeneratingArcs}
   \end{minipage}
\end{figure}

\vspace{-3mm}

Since $\bL$ has $3$ self-intersections,
we have
$$
\chi^0\left(\mathbb{L},\mathbb{L}\right)
=
\left\{e_{\mathbb{L}}, \OL{X}, \OL{Y}, \OL{Z}\right\},
\quad
\chi^1\left(\mathbb{L},\mathbb{L}\right)
=
\left\{o_{\mathbb{L}}, X, Y, Z\right\}.
$$
We trivialize $E_{\mathbb{L}}$ over $S^1\setminus{\small\color{gray}\bigstar}$
as in Proposition \ref{rmk:ChooseTrivialization},
which yields identifications
$
\left.E_{\mathbb{L}}\right|_{\bullet} \cong \field
$
for each
$\bullet
\in
\chi\left(\mathbb{L},\mathbb{L}\right).
$
Then we have
$
\Hom_{\field}\left(\left.E_{\mathbb{L}}\right|_{\bullet}, \left.E_{\mathbb{L}}\right|_{\bullet}\right)
\footnote{
Note that although two $\left.E_{\mathbb{L}}\right|_{\bullet}$'s have the same notation, they are actually
fibers of $E$ over different preimages (in $S^1$)
of the point $\bullet\in\POP$ under $L$,
as noted in the definition in (\ref{eqn:DefinitionOfHom}).
}
\cong
\Span\left\{\bullet\right\}
$
as noted in Remark \ref{rmk:TrivializingHom},
and hence
$$
\hom^0(\bL,\bL)
=
\Span\left\{e_{\mathbb{L}} \OL{X}, \OL{Y}, \OL{Z}\right\}
\quad
\hom^1(\bL,\bL)
=
\Span\left\{o_{\mathbb{L}}, X, Y, Z\right\}.
$$

Seidel Lagrangian $\mathbb{L}$ is very special and useful
because it
has a \emph{weak bounding cochain}
(or it is \emph{weakly unobstructed})
in the sense of \cite{FOOO}. 
It enables $\mathbb{L}$ to serve as a reference of the localized mirror functor.

\begin{prop}\cite{CHL}\label{prop:WeakBoundingCochain}
A linear combination
$
b = xX + yY + zZ
\in
\hom^1\left(\mathbb{L},\mathbb{L}\right)
$
is
a \textnormal{\textbf{weak bounding cochain}}
for any $x$, $y$, $z\in\field$.
That is,
\begin{equation} \label{eqn:wu}
\operatorm_0^b\left(\mathbb{L}\right)
:=
\sum_{i=1}^{\infty} \operatorm_{i}({\color{black}\underbrace{\color{black}b, \ldots, b}_{i}})
=
xyz \cdot e_{\mathbb{L}}
\end{equation}
and hence $\left(\mathbb{L},b\right)$ has the \emph{disk potential}
$
W^{\mathbb{L}} = xyz.
$
\end{prop}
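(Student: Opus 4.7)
The plan is to reduce the infinite sum $\mathfrak{m}_0^b(\mathbb{L}) = \sum_{i \geq 1} \mathfrak{m}_i(b,\ldots,b)$ to a finite count of rigid pseudo-holomorphic polygons bounded by $\mathbb{L}$ in $\POP$. By definition, $\mathfrak{m}_i(a_1,\ldots,a_i)$ counts immersed polygons with $i+1$ convex corners at self-intersections of $\mathbb{L}$, whose $i$ inputs (read along the boundary in the counter-clockwise direction) are labelled by $a_1,\ldots,a_i$ and whose output labels the remaining corner, weighted by the holonomy of $\nabla_{\mathbb{L}}$ around the boundary (and by area, which drops out in our $\field$-linear setting). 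Since $b$ is a linear combination of the odd generators $X,Y,Z$, the only contributing polygons are those whose $i$ input corners lie in $\{X,Y,Z\}$.

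The first step is the geometric classification of such rigid polygons. Inspecting Figure \ref{fig:SeidelLagrangian}, the three self-intersections $X,Y,Z$ of $\mathbb{L}$ cut $\POP$ into regions, among which exactly two are immersed triangles with convex corners at all three of $X,Y,Z$: the small inner triangle and the larger outer triangle (the remaining regions are either annular, surround punctures, or have non-convex corners). These two triangles are by hypothesis of equal symplectic area, and they are the only rigid holomorphic polygons bounded by $\mathbb{L}$ whose corners all lie in $\{X,Y,Z\}\cup\{\OL X,\OL Y,\OL Z,e_{\mathbb{L}},o_{\mathbb{L}}\}$. Consequently $\mathfrak{m}_i(b,\ldots,b) = 0$ for $i \neq 3$, and only the two triangles contribute to $\mathfrak{m}_3(b,b,b)$.

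Next, expand
\begin{equation*}
\mathfrak{m}_3(b,b,b) \;=\; \sum_{(A_1,A_2,A_3)\in\{X,Y,Z\}^3}\!\! c_{A_1}c_{A_2}c_{A_3}\,\mathfrak{m}_3(A_1,A_2,A_3),
\end{equation*}
where $(c_X,c_Y,c_Z)=(x,y,z)$. For each of the two triangles, the cyclic order of its corners around $\partial \mathbb{L}$ singles out exactly one ordered triple $(A_1,A_2,A_3)$ as a permutation of $(X,Y,Z)$; the remaining orderings vanish. The output corner is in both cases $e_{\mathbb{L}}$ (rather than $\OL X,\OL Y,\OL Z$) because the corners at the self-intersections are maximally "convex", forcing the fourth distinguished marked point of the triangle to be an even generator; a local check at each corner shows this generator is the unit $e_{\mathbb{L}}$. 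The two triangle counts therefore both produce a monomial of the form $\pm xyz\cdot e_{\mathbb{L}}$. Finally one computes the signs: the intrinsic orientation/spin-structure sign contributes opposite signs for the two triangles (since they induce opposite boundary orientations on $\mathbb{L}$), and the holonomy of $\nabla_{\mathbb{L}}$ around one of the two boundaries picks up the factor $-1$ at the marked point $\bigstar$ while the other does not. These two signs combine so that both triangles contribute $+xyz\cdot e_{\mathbb{L}}$ rather than cancelling, giving $\mathfrak{m}_3(b,b,b) = xyz\cdot e_{\mathbb{L}}$.

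The main obstacle is bookkeeping of signs. One must fix conventions for the spin structure on $\mathbb{L}$, the orientation of the moduli of triangles, the Koszul signs arising from the odd inputs $X,Y,Z$, and the placement of the holonomy marking $\bigstar$, and verify that these conventions conspire to produce $+xyz$ rather than $0$ or $-xyz$; this requires a careful local computation at each corner and at the marked point but no additional geometric input beyond the assumed equality of the two triangle areas.
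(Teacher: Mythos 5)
Your overall strategy (classify the rigid polygons bounded by $\mathbb{L}$ with corners among $X,Y,Z$, find exactly the front and back triangles, and evaluate their contributions) is the right one, but the bookkeeping of which operations these two triangles feed into is wrong, and this is exactly where the content of the proposition lies.

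First, a triangle with convex corners at all three self-intersections is read by the $\AI$-operations in more than one way: taking two corners as odd inputs and the third as the even output angle $\OL{X},\OL{Y}$ or $\OL{Z}$, it contributes to $\operatorm_2(b,b)$. So the existence of only two triangles does \emph{not} give you $\operatorm_2(b,b)=0$ for free. In fact both triangles contribute nontrivially, e.g.\ $\operatorm_2(X,Y)=\OL{Z}=-\operatorm_2(Y,X)$ from the front and back triangles respectively, and these terms cancel in the sum only because the holonomy $-1$ of $\nabla_{\mathbb{L}}$ at ${\small\color{gray}\bigstar}$ is picked up by exactly one of the two triangles. Your proposal silently discards these terms; the holonomy $-1$, which you only invoke in the $\operatorm_3$ sign discussion, is actually needed precisely here.

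Second, your treatment of $\operatorm_3(b,b,b)$ is inconsistent. By the convention for outputs at the unit (Remark \ref{rmk:AIoperationwithInfinitesimalGenerators}.(3)), a triangle contributes to $\operatorm_3(X,Y,Z)$ with output $e_{\mathbb{L}}$ only if its boundary passes through the marked point $e_{\mathbb{L}}$. Only the front triangle does; the back triangle passes through $o_{\mathbb{L}}$, which has odd degree and cannot be the (necessarily even) output of $\operatorm_3$ on three odd inputs. Hence exactly one triangle contributes $xyz\cdot e_{\mathbb{L}}$. Your claim that \emph{both} triangles contribute $+xyz\cdot e_{\mathbb{L}}$ would yield $2xyz\cdot e_{\mathbb{L}}$, contradicting the identity you are trying to prove. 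The correct picture is: the two triangles each contribute to $\operatorm_2$ and cancel there via the holonomy, and the single front triangle contributes the surviving term $\operatorm_3(X,Y,Z)=xyz\cdot e_{\mathbb{L}}$.
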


\begin{proof}
All $\operatorm_1$-terms in the left side vanish because there are no bigons.
There are several non-zero $\operatorm_2$-terms
such as
$
\operatorm_2\left(X,Y\right)
=\OL{Z}=-
\operatorm_2\left(Y,X\right)
$
coming from the front and back triangles,
but they cancle each other because the holonomy $-1$ contributes only to the back triangle.
The only non-zero higher $\operatorm_{\ge3}$-term is $\operatorm_3(X,Y,Z)=xyz\cdot e_{\mathbb{L}}$,
which comes from the front triangle bounded by $\bL$
passing through $e_{\mathbb{L}}$
(Remark \ref{rmk:AIoperationwithInfinitesimalGenerators}.(3)).
This is the only surviving term in $\operatorm_0^b\left(\mathbb{L}\right)$,
which gives the disk potential $W^{\mathbb{L}}=xyz$.
\end{proof}
\end{spacing}

\begin{spacing}{0.96}
Such a pair $\left(\mathbb{L},b\right)$ defines an $\AI$-functor from Fukaya category
to the $\AI$-category $\MF_{\AI}\left(xyz\right)$ of matrix factorizations of $xyz$,
called the \textbf{localized mirror functor} \cite{CHL}.
(Here we follow the convention in \cite{CHLnc}.)
It is based on the deformation theory of $\AI$-operations,
as we will see below:

For any object $\mathcal{L}=\left(L,E,\nabla\right)$ in
$
W\Fuk\left(\POP\right)
\footnote{
For
definition of wrapped Fukaya category $W\Fuk\left(\POP\right)$,
see \cite{AS} and \cite{Ab12} (also \cite{auroux2014beginner} for surfaces).
Its objects involve not only loops but also arcs between boundaries,
and it contains the compact Fukaya category $\Fuk\left(\POP\right)$
as a full subcategory.},
$
note that
two sets
$
\chi^\bullet\left(L,\mathbb{L}\right)
$
$\left(\bullet\in\mathbb{Z}_2\right)$
are finite sets of the same cardinality
$
\tau:=\frac{1}{2}\left|L\cap \mathbb{L}\right|
$.
Therefore, two $\field$-vector spaces
\begin{equation}\label{eqn:DecompositionOfHom}
\hom^\bullet\left(\mathcal{L},\mathbb{L}\right)
=
\bigoplus_{p\in \chi^\bullet\left(L,\mathbb{L}\right)}
\Hom_{\field}\left(\left.E\right|_p,\field\right)
=
\bigoplus_{p\in \chi^\bullet\left(L,\mathbb{L}\right)}
\left(\left.E\right|_p\right)^*
\quad
\left(\bullet\in\mathbb{Z}_2\right)
\end{equation}
have the same finite dimension
$
\tau\primemu,
$
where
$
\primemu
$
is the rank of $E$.

We define the \textbf{deformed differential}
\begin{align*}\label{eqn:DeformedDifferential}
\begin{split}
\operatorm_1^{0,b}
:
\hom\left(\mathcal{L},\mathbb{L}\right)
\rightarrow
\hom\left(\mathcal{L},\mathbb{L}\right),
\quad
f
\mapsto
\operatorm_1^{0,b}\left(f\right)
:=
\sum_{i=0}^{\infty}
\operatorm_{1+i}\big(f,\underbrace{b, \ldots, b}_{i}\big),
\end{split}
\end{align*}
which is a $\field$-linear map of degree $1$ and satisfies
$
\left(\operatorm_1^{0,b}\right)^2
=
xyz\cdot\id_{\hom\left(\mathcal{L},\mathbb{L}\right)}
$
by the following lemma,
which we recall for the reader's convenience:

\begin{lemma}\cite{CHL}\label{lem:MFIdentity}
For a weak bounding cochain $\left(\mathbb{L},b\right)$ with disk potential $W^{\mathbb{L}}$,
we have
$$\left(\operatorm_1^{0,b}\right)^2 = W^{\mathbb{L}} \cdot \id_{\hom\left(\mathcal{L},\mathbb{L}\right)}.$$
\end{lemma}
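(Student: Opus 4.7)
The plan is to derive the identity from the $A_\infty$-relations applied to the input string $(f, b, b, \ldots, b)$, following the standard curved-deformation argument of Fukaya--Oh--Ohta--Ono. Recall that $\operatorm_1^{0,b}$ is the twist of $\operatorm_1$ obtained by inserting arbitrarily many copies of $b$ to the right of $f$. Iterating this twist and reorganising via the $A_\infty$-identities will reduce $(\operatorm_1^{0,b})^2$ to terms that involve the weakly unobstructed sum $\sum_{k\geq 1}\operatorm_k(b, \ldots, b)$, which by Proposition~\ref{prop:WeakBoundingCochain} equals $W^{\mathbb{L}}\cdot e_{\bL} = xyz\cdot e_{\bL}$.

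Concretely, I would first expand
\[
(\operatorm_1^{0,b})^2(f) \;=\; \sum_{i,j\geq 0}\, \operatorm_{1+j}\bigl(\operatorm_{1+i}(f,\, b,\, \ldots,\, b),\; b,\, \ldots,\, b\bigr),
\]
and then, for each $n\geq 1$, invoke the $A_\infty$-relation on $(f, b, \ldots, b)$ with $n-1$ copies of $b$. The resulting vanishing sum splits into two types according to whether the inner operation contains $f$ or consists only of $b$'s. Type (A), with inner operation $\operatorm_{1+i}(f, b, \ldots, b)$, precisely reproduces the summands of $(\operatorm_1^{0,b})^2(f)$ after summing over all $n$. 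Type (B) collects terms of the shape $\operatorm_{r+s+2}\bigl(f,\, b^r,\, \operatorm_l(b^l),\, b^s\bigr)$, where the inner operation sits somewhere among the $b$'s to the right of $f$.

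Summing type (B) first over $l\geq 1$ and invoking Proposition~\ref{prop:WeakBoundingCochain} replaces the inner slot by $xyz\cdot e_{\bL}$. The strict-unit axiom of $e_{\bL}$ then annihilates $\operatorm_t(\ldots, e_{\bL}, \ldots)$ for $t\neq 2$, so the only surviving contribution comes from $r=s=0$ and equals $\pm\, xyz\cdot \operatorm_2(f, e_{\bL}) = \pm\, xyz\cdot f$. Combining with type (A) and rearranging gives the desired identity $(\operatorm_1^{0,b})^2(f) = W^{\mathbb{L}}\cdot f$.

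The structural input is thus exhausted by (i) the $A_\infty$-relation, (ii) the defining equation of a weak bounding cochain, and (iii) strict unitality of $e_{\bL}$. The main obstacle is the bookkeeping of Koszul signs: one must check that the signs coming from $|b|=1$, from the chosen $A_\infty$ sign convention, and from the unit equation $\operatorm_2(f, e_{\bL}) = (-1)^{|f|} f$ combine to $+W^{\mathbb{L}}$ rather than $-W^{\mathbb{L}}$. A secondary technical point is the convergence of the infinite sums defining $\operatorm_1^{0,b}$, which holds because the deformed operations are $\field[[x,y,z]]$-linear on the finite-dimensional $\field$-vector space $\hom^\bullet(\mathcal{L}, \bL)$ and only finitely many holomorphic polygons bounded by $L$ and $\bL$ contribute to each fixed monomial in $x, y, z$.
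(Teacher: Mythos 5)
Your argument is essentially the paper's proof: expand $(\operatorname{\mathfrak{m}}_1^{0,b})^2(f)$ as the double sum over insertions of $b$, apply the $A_\infty$-relation to trade these terms for ones in which the inner operation is $\operatorname{\mathfrak{m}}_k(b,\dots,b)$, replace that by $W^{\mathbb{L}}\cdot e_{\mathbb{L}}$ via the weak Maurer--Cartan equation, and let strict unitality kill everything except $\operatorname{\mathfrak{m}}_2(f,e_{\mathbb{L}})$. The only thing you defer --- the sign check --- is exactly what the paper supplies: the factor $(-1)^{|f|-1}$ (with an overall minus) from the $A_\infty$-relation cancels against $\operatorname{\mathfrak{m}}_2(f,e_{\mathbb{L}})=(-1)^{|f|}f$ to yield $+W^{\mathbb{L}}\cdot f$.
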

\begin{proof}
For any
$f\in\hom^{\bullet}\left(\mathcal{L},\mathbb{L}\right)$
($\bullet\in\mathbb{Z}_2$),
we can write
\begin{align*}
\operatorm_1^{0,b}\left(\operatorm_1^{0,b}\left(f\right)\right)
&=
\sum_{j=0}^{\infty}
\sum_{i=0}^{\infty}
\operatorm_{1+j}
\big(
\operatorm_{1+i}
\big(f,\underbrace{b,\dots,b}_{i}\big),\underbrace{b,\dots,b}_{j}\big)
\\[-2mm]
&=
-
\sum_{l_0,l_1\ge0}
\sum_{k=1}^{\infty}
(-1)^{\left|f\right|-1}
\operatorm_{2+l_0+l_1}
\big(f,\underbrace{b,\dots,b}_{l_0},\operatorm_k(\underbrace{b,\dots,b}_{k}),\underbrace{b,\dots,b}_{l_1}\big),
\end{align*}
where the second identity follows from the $\AI$-relations (\ref{eqn:AI-relation}).
Using the identity
$
\sum_{k=1}^{\infty} \operatorm_{k}({\color{black}\color{black}b, \ldots, b})
=
W^{\mathbb{L}} \cdot e_{\mathbb{L}}
$
and the fact that $e_\mathbb{L}$ is a unit (\ref{eqn:Unit}),
we can rewrite it as
$$
\sum_{l_0,l_1\ge0}
(-1)^{\left|f\right|}
\operatorm_{2+l_0+l_1}
\big(f,\underbrace{b,\dots,b}_{l_0},W^{\mathbb{L}}\cdot e_{\mathbb{L}},\underbrace{b,\dots,b}_{l_1}\big)
=
(-1)^{\left|f\right|}
\operatorm_2\left(f,W^{\mathbb{L}} \cdot e_{\mathbb{L}}\right)
=
W^{\mathbb{L}}\cdot f,
$$
which proves the claim.
\end{proof}

Restricting
the domain of
$\operatorm_1^{0,b}$
to each degree summand yields two maps
\begin{equation}\label{eqn:LMFImage}
 \begin{tikzcd}[arrow style=tikz,>=stealth, sep=65pt, every arrow/.append style={shift left}]
   \hom^0\left(\mathcal{L},\mathbb{L}\right)
   =
   \displaystyle\bigoplus_{p\in\chi^0\left(L,\mathbb{L}\right)}\left(\left.E\right|_p\right)^*
     \arrow{r}{\LocalPhi\left(\mathcal{L}\right):=\operatorm_1^{0,b}} 
   &
   \displaystyle\bigoplus_{s\in\chi^1\left(L,\mathbb{L}\right)}\left(\left.E\right|_s\right)^*
   =
   \hom^1\left(\mathcal{L},\mathbb{L}\right)
     \arrow{l}{\LocalPsi\left(\mathcal{L}\right):=\operatorm_1^{0,b}} 
 \end{tikzcd}
\end{equation}
satisfying
\begin{equation}\label{eqn:MatrixFactorizationOfLMF}
\LocalPsi\left(\mathcal{L}\right)
\LocalPhi\left(\mathcal{L}\right)
=
xyz\cdot\id_{\hom^0\left(\mathcal{L},\mathbb{L}\right)}
\quad\text{and}\quad
\LocalPhi\left(\mathcal{L}\right)
\LocalPsi\left(\mathcal{L}\right)
=
xyz\cdot\id_{\hom^1\left(\mathcal{L},\mathbb{L}\right)}.
\end{equation}

As
$
\hom^\bullet\left(\mathcal{L},\mathbb{L}\right)
$
($\bullet\in\mathbb{Z}_2$)
are $\field$-vector spaces of dimension $\tau\primemu$,
extension of scalar to the
ring
$S:=\field[[x,y,z]]$
yields
two free $S$-modules 
$
S
\otimes_{\field}
\hom^\bullet\left(\mathcal{L},\mathbb{L}\right)
$
of rank $\tau\primemu$.
Now we view each of $\LocalPhi\left(\mathcal{L}\right)$ and $\LocalPsi\left(\mathcal{L}\right)$
as a map between those $S$-modules,
regarding $x$, $y$ and $z$ as variables in the ring $S$.
Then the relation (\ref{eqn:MatrixFactorizationOfLMF}) still holds
just by replacing
$
\id_{\hom^\bullet\left(\mathcal{L},\mathbb{L}\right)}
$
with
$
\id_{S\otimes
\hom^\bullet\left(\mathcal{L},\mathbb{L}\right)},
$
so the pair
$$
\LocalF\left(\mathcal{L}\right)
:=
\left(\LocalPhi\left(\mathcal{L}\right),\LocalPsi\left(\mathcal{L}\right)\right)
$$
produces a $\tau\primemu\times\tau\primemu$ matrix factorization of $xyz$ in $S$.

\end{spacing}


\begin{thm}\cite{CHL}\label{thm:lmf}
The localized mirror functor $\LocalF : W\Fuk\left(\POP\right) \to \MF_{\AI}(xyz)$ is defined as follows:
\begin{itemize}
\item
For an object
$\mathcal{L}=\left(L,E,\nabla\right)$
in
$
W\Fuk\left(\POP\right),
$
its mirror object in
$
\MF_{\AI}(xyz)
$
is given by 
$$
\LocalF(\mathcal{L})
=
\left(\LocalPhi\left(\mathcal{L}\right),\LocalPsi\left(\mathcal{L}\right)\right).
$$

\item
Higher components
$
\left\{\LocalF_k\right\}_{k\ge1}
$
are given by
$$
\begin{matrix}
\LocalF_k
:
\hom\left(\mathcal{L}_0,\mathcal{L}_1\right) \otimes \cdots \otimes \hom\left(\mathcal{L}_{k-1},\mathcal{L}_k\right)
\to
\hom_{\MF_{\AI}\left(xyz\right)}
\left(\LocalF\left(\mathcal{L}_0\right),\LocalF\left(\mathcal{L}_k\right)\right)
\\[2mm]
\hspace{55mm}
\displaystyle
\left(f_{1}, \ldots,f_{k}\right)
\mapsto
\operatorm_{k+1}^{0,\dots,0,b}\left(f_1,\dots,f_k,-\right)
:= \sum_{i=0}^{\infty} \operatorm_{k+1+i}(f_{1},\ldots,f_{k},-,\underbrace{b, \ldots, b}_{i}),
\end{matrix}
$$
whose images are $\field$-linear maps
$
\hom\left(\mathcal{L}_k,\mathbb{L}\right)
\rightarrow
\hom\left(\mathcal{L}_0,\mathbb{L}\right),
$
also viewed as module homomorphisms
$
S\otimes
\hom\left(\mathcal{L}_k,\mathbb{L}\right)
\rightarrow
S\otimes
\hom\left(\mathcal{L}_0,\mathbb{L}\right)
$
over $S=\field[[x,y,z]]$.
\end{itemize}
Then, $\LocalF$ is an $A_{\infty}$-functor.
Moreover, it induces an equivalence from the derived wrapped Fukaya category 
$D^{\pi}W\Fuk\left(\POP\right)$
\footnote{
It is obtained from $W\Fuk\left(\POP\right)$ by taking twisted completion (Definition \ref{defn:TwistedCompletion}),
idempotent completion and then
the cohomological category (Definition \ref{defn:CohomologicalCategory}).
For more details,
we refer to \cite{S08}.
}
to the homotopy category
$\underline{\MF}\left[xyz\right]$
\footnote{
We can take the target of the functor as
$\MF_{\AI}[xyz]$
(see Remark \ref{rmk:Completion})
since every object in $W\Fuk\left(\POP\right)$
is quasi-isomorphic to an object whose image lies in 
$\MF_{\AI}[xyz]$.
Then $\underline{\MF}[xyz]$ is the same as its cohomological category
$H^0\left(\MF_{\AI}[xyz]\right)$.
}
of matrix factorizations of $xyz$. 
\end{thm}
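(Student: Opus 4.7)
The plan is to verify the three claims in sequence. For each object $\mathcal{L}$, Lemma \ref{lem:MFIdentity} restricted to the two parity-summands of $\hom(\mathcal{L},\mathbb{L})$ already yields $\LocalPsi\LocalPhi = xyz\cdot\id$ and $\LocalPhi\LocalPsi = xyz\cdot\id$, and since the $\AI$-operations are polynomial in the coefficients of $b = xX + yY + zZ$, these maps extend $S$-linearly to the free rank-$\tau\primemu$ modules $S\otimes_\field \hom^\bullet(\mathcal{L},\mathbb{L})$, giving a bona fide matrix factorization. For the $\AI$-functor relations I would expand both sides using the definition $\LocalF_k(f_1,\ldots,f_k)(-)=\sum_{i\ge 0}\operatorm_{k+1+i}(f_1,\ldots,f_k,-,b,\ldots,b)$ and apply the $\AI$-relations of $W\Fuk(\POP)$ to each summand. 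This produces a triple sum over $b$-insertion patterns, splitting points and internal brackets that cancels in pairs, except for contributions where an inner bracket $\operatorm_s$ is applied to a pure $b$-block; by Proposition \ref{prop:WeakBoundingCochain} such blocks evaluate to $W^{\mathbb{L}}\cdot e_{\mathbb{L}} = xyz\cdot e_{\mathbb{L}}$, and the unit property of $e_{\mathbb{L}}$ converts them into exactly the curvature contribution for $\MF_{\AI}(xyz)$. This is the Stasheff-type mechanism of Lemma \ref{lem:MFIdentity} upgraded to $k\ge 1$.

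For the induced equivalence at the derived level, I would proceed by generator matching. The category $D^{\pi}W\Fuk(\POP)$ is split-generated by any two of the three arcs $L_{\XY}, L_{\YZ}, L_{\ZX}$ of Figure \ref{fig:GeneratingArcs}, as established in \cite{AAEKO}. I would compute $\LocalF$ on each such arc directly from (\ref{eqn:LMFImage}) by counting rigid holomorphic triangles bounded by $\mathbb{L}$ and the arc, identifying the image with one of the three classical rank-one matrix factorizations of $xyz$, namely $(x,yz)$, $(y,xz)$, $(z,xy)$ up to basis change and shift. These rank-one factorizations are well known to split-generate $\underline{\MF}[xyz]$. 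To upgrade split-generation to an equivalence I would verify that $\LocalF$ is cohomologically fully faithful on the generators: this reduces to computing the Floer cohomology between pairs of arcs and matching it with the explicit $\Hom$-complexes between the corresponding matrix factorizations on the B-side. Taking twisted completion and idempotent completion on both sides then propagates the equivalence to the full derived categories.

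The hard part is the last step. The $\AI$-functor axioms are a formal consequence of the weak bounding cochain property via the cancellation principle already used in Lemma \ref{lem:MFIdentity}, and although the combinatorics is intricate, no new geometric input is needed. The equivalence of derived categories, by contrast, relies essentially on the split-generation result of \cite{AAEKO} together with an explicit matching of $\AI$-structures on the generating arcs; the bookkeeping of disk counts, orientations, signs and holonomy contributions on the A-side against matrix operations on the B-side is delicate but finite. The alternative route, essentially the one taken in \cite{CHL}, is to package the generator matching into an abstract quasi-equivalence criterion and avoid a direct term-by-term comparison.
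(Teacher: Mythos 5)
Your proposal is correct and follows essentially the same route as the paper, which cites the theorem from \cite{CHL} and justifies only the final equivalence: the matrix-factorization property of $\LocalF(\mathcal{L})$ via Lemma \ref{lem:MFIdentity}, the $\AI$-functor relations via the weak Maurer--Cartan property of $b$ and the unit $e_{\mathbb{L}}$, and the derived equivalence by matching the generating arcs $L_{\XY}, L_{\YZ}, L_{\ZX}$ with the generating matrix factorizations $z\cdot xy$, $x\cdot yz$, $y\cdot zx$, checking cohomological full faithfulness on these generators, and invoking the abstract quasi-equivalence criterion (Theorem 4.2 in \cite{CHL}) — exactly the "alternative route" you identify at the end.
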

The last statement recovers the result of \cite{AAEKO} for $\POP$.
It follows from the fact that $\LocalF$ sends the three generating arcs
$L_{\XY}$, $L_{\YZ}$, and $L_{\ZX}$
of $D^{\pi}W\Fuk\left(\POP\right)$
in Figure \ref{fig:GeneratingArcs}
to the three generating matrix factorizations $z\cdot xy$, $x\cdot yz$, and $y\cdot zx$
of $\underline{\MF}\left[xyz\right]$,
respectively.
The functor induces an isomorphism on cohomology of hom spaces between those objects
and hence we can apply Theorem 4.2 in \cite{CHL}.





\subsection{Computation of localized mirror functor}\label{sec:LMFComputationSubsection}

Note in (\ref{eqn:DecompositionOfHom}) that
$
\hom\left(\mathcal{L},\mathbb{L}\right)
$
allows a decomposition 
into
$\primemu$-dimensional vector spaces
$
\left(\left.E\right|_p\right)^*
$
for
$
p\in\chi\left(L,\mathbb{L}\right).
$
Correspondingly,
the operation
$\operatorm_1^{0,b}$
(and hence
$\LocalPhi\left(\mathcal{L}\right)$
and
$\LocalPsi\left(\mathcal{L}\right)$)
is decomposed into several
$
\left(
\left(\left.E\right|_s\right)^*,\left(\left.E\right|_p\right)^*
\right)
$-components
\footnote{
Following the standard natation in matrices,
$\left(\left.E\right|_s\right)^*$
and
$\left(\left.E\right|_p\right)^*$
are subspaces of the codomain and domain,
respectively.
}
for
$
p,s\in\chi\left(L,\mathbb{L}\right).
$

For
$
p\in\chi\left(L,\mathbb{L}\right)
$
and
$
f=\left.f\right|_p\in
\left(\left.E\right|_p\right)^*
\subseteq
\hom\left(\mathcal{L},\mathbb{L}\right),
$
we have
\begin{align*}
\operatorm_1^{0,b}\left(f\right)
&=
\sum_{i=0}^{\infty}
\operatorm_{1+i}\big(f,\underbrace{b, \ldots, b}_{i}\big)
\qquad
\left(
b=xX+yY+zX
\in
\hom^1\left(\mathbb{L},\mathbb{L}\right)
\right)
\\
&=
\sum_{
\begin{matrix}
\scriptscriptstyle
\left(x_1,X_1\right),\dots,\left(x_i,X_i\right)
\\
\scriptscriptstyle
\in\left\{(x,X),(y,Y),(z,Z)\right\}
\end{matrix}
}x_1\dots x_i\
\operatorm_{1+i}\left(f,X_1,\dots,X_i\right),
\end{align*}
and
$$
\operatorm_{1+i}\left(f,X_1,\dots,X_i\right)
=
\sum_{s\in\chi\left(L,\mathbb{L}\right)}
\sum_{u\in\mathcal{M}\left(p,X_1,\dots,X_i,\overline{s}\right)}
\sign(u)
\operatorname{hol}_s\left(\partial u\right)
\left(f,X_1,\dots,X_i\right).
$$
Therefore,
the
$
\left(\left.E\right|_s\right)^*
$-component
of
$
\operatorm_1^{0,b}\left(f\right)
$
is
$$
\sum_{
\begin{matrix}
\scriptscriptstyle
\left(x_1,X_1\right),\dots,\left(x_i,X_i\right)
\\
\scriptscriptstyle
\in\left\{(x,X),(y,Y),(z,Z)\right\}
\end{matrix}
}
x_1\cdots x_i\
\sum_{u\in\mathcal{M}\left(p,X_1,\dots,X_i,\overline{s}\right)}
\sign(u)
\operatorname{hol}_s\left(\partial u\right)
\left(f,X_1,\dots,X_i\right).
$$

So it
comes from
immersed polygons
(also called \emph{deformed strips})
$u$ bounded by $L$ and $\mathbb{L}$,
whose angles consist of $p$,
$X_1,\dots,X_i$
and $\overline{s}$
in a counterclockwise order,
for some $X_1,\dots,X_i\in\left\{X,Y,Z\right\}$.
(See Figure \ref{fig:strip}.)
Such a deformed strip contributes a monomial
$x_1\cdots x_i$,
where $x_j$ is $x$, $y$ or $z$ depending on whether $X_j$ is $X$, $Y$ or $Z$.

%

\vspace{-8mm}

\begin{figure}[H]
\adjustbox{height=35mm}{

          }
\vspace{-7mm}
\centering
\captionsetup{width=1\linewidth}
\caption{A deformed strip which contributes a monomial $x_1\cdots x_i$ to the $\left(\left.E\right|_s\right)^*
$-component
of
$
\operatorm_1^{0,b}\left(\left.f\right|_p\right)
$
}
\label{fig:strip}
\end{figure}

\vspace{-5mm}

The sign of the deformed strip $u$ is given by
$$
\sign\left(u\right)
=
\begin{cases}
1 & \text{if orientation of $\mathbb{L}$ $=$ orientation of $\partial u$,}
\\
(-1)^{i+1} & \text{otherwise}
\end{cases}
=
(-1)^
{(i+1)\mathbb{1}_{\operatorname{o}\left(\mathbb{L}\right)\ne\operatorname{o}\left(\partial u\right)}}.
$$
It follows from formula (\ref{eqn:sign(u)}) and the fact that the orientation of $\mathbb{L}$ along any angle $X_j$
is preserved
(because all $X_j$'s have odd-degrees)
and the degrees of $p$ and $s$ are always different.

The holonomy operation of $\partial u$ at $s$ is by definition given as
\begin{multline*}
\hol_s\left(\partial u\right)
:
\Hom_{\field}\left(\left.E\right|_{p},\left.E_{\mathbb{L}}\right|_{p}\right)
\otimes
\Hom_{\field}\left(\left.E_{\mathbb{L}}\right|_{X_1},\left.E_{\mathbb{L}}\right|_{X_1}\right)
\otimes \cdots \otimes
\Hom_{\field}\left(\left.E_{\mathbb{L}}\right|_{X_i},\left.E_{\mathbb{L}}\right|_{X_i}\right)
\rightarrow
\Hom_{\field}\left(\left.E\right|_{s},\left.E_{\mathbb{L}}\right|_{s}\right),
\\
\left(f,X_1,\dots,X_i\right)
\mapsto
P\left(\left(\partial u\right)
_{i+1}
\right)
\ \circ \
X_i
\ \circ \
P\left(\left(\partial u\right)
_{i}
\right)
\ \circ \
X_{i-1}
\ \circ \
\cdots
\ \circ \
X_1
\ \circ \
P\left(\left(\partial u\right)
_1
\right)
\ \circ \
f
\ \circ \
P\left(\left(\partial u\right)
_0
\right).
\end{multline*}
Under the identification
$
\Hom_{\field}\left(\left.E_{\mathbb{L}}\right|_{X_j},\left.E_{\mathbb{L}}\right|_{X_j}\right)
=
\Span\left\{X_j\right\}
\cong
\field
\quad
(j\in\left\{1,\dots,i\right\}),
$
the generator $X_j$ corresponds to $1$.
Note that
$\left(\partial u\right)_j$ lies on $\mathbb{L}$
for $j\in\left\{1,\dots,i+1\right\}$,
and
under the identification
$
\Hom_{\field}\left(\left.E_{\mathbb{L}}\right|_{X_{j-1}},\left.E_{\mathbb{L}}\right|_{X_{j}}\right)
\cong
\field
\quad
(j\in\left\{1,\dots,i+1\right\},\
X_0:=p,\
X_{i+1}:=\overline{s}),
$
in view of Proposition \ref{rmk:HolonomyComputing},
$
P\left(\left(\partial u\right)_j\right)
$
corresponds
$
(-1)^{\#\left(\left(\partial u\right)_j \cap {\small\color{gray}\bigstar}_{\mathbb{L}}\right)},
$
where
$
\#\left(\left(\partial u\right)_j \cap {\small\color{gray}\bigstar}_{\mathbb{L}}\right)
$
is the number of times $\left(\partial u\right)_j$
passes through the point ${\small\color{gray}\bigstar}$ on $\mathbb{L}$.
Therefore,
we can replace
$
\operatorname{hol}_s\left(\partial u\right)
\left(f,X_1,\dots,X_i\right)
$
with
$$
(-1)^{\#\left(\partial u \cap {\small\color{gray}\bigstar}_{\mathbb{L}}\right)}
f
\ \circ \
P\left(\left(\partial u\right)_0
\right),
$$
where
$
\#\left(\partial u \cap {\small\color{gray}\bigstar}_{\mathbb{L}}\right)
$
is the total number of times $\partial u$
passes through the point ${\small\color{gray}\bigstar}$ on $\mathbb{L}$,
and
$
P\left(\left(\partial u\right)_0\right)\in
\Hom_{\field}\left(\left.E\right|_{s},\left.E\right|_{p}\right)
$
is the parallel transport
from $\left.E\right|_{s}$ to $\left.E\right|_{p}$
along the side of $u$
lying in $L$.

We summarize the above discussion into the following formula:

\begin{prop}\label{prop:ComponentOfDeformedDifferential}
The
$
\left(
\left(\left.E\right|_s\right)^*,\left(\left.E\right|_p\right)^*
\right)
$-component of
$
\operatorm_1^{0,b}
:
\hom\left(\mathcal{L},\mathbb{L}\right)
\rightarrow
\hom\left(\mathcal{L},\mathbb{L}\right)
$
(and hence of
$
\LocalPhi\left(\mathcal{L}\right)
$
or
$
\LocalPsi\left(\mathcal{L}\right)
$)
is the $\field$-linear map
$
\operatorm_1^{0,b}
:
\left(\left.E\right|_p\right)^*
\rightarrow
\left(\left.E\right|_s\right)^*
$
that maps $f\in\left(\left.E\right|_p\right)^*$ to
\begin{equation}\label{eqn:ComponentOfDeformedDifferential}
\sum_{
\begin{matrix}
\scriptscriptstyle
\left(x_1,X_1\right),\dots,\left(x_i,X_i\right)
\\
\scriptscriptstyle
\in\left\{(x,X),(y,Y),(z,Z)\right\}
\end{matrix}
}x_1\cdots x_i\
\sum_{u\in\mathcal{M}\left(p,X_1,\dots,X_i,\overline{s}\right)}
(-1)^
{
(i+1)\mathbb{1}_{\operatorname{o}\left(\mathbb{L}\right)\ne\operatorname{o}\left(\partial u\right)}
+
\#\left(\partial u \cap {\small\color{gray}\bigstar}_{\mathbb{L}}\right)
}
f
\ \circ \
P\left(\left(\partial u\right)_0
\right).
\end{equation}
It is also
an $S$-module map
$
S\otimes
\left(\left.E\right|_p\right)^*
\rightarrow
S\otimes
\left(\left.E\right|_s\right)^*,
$
by considering $x$, $y$ and $z$ as variables in $S=\field[[x,y,z]]$.

If we
choose a point ${\small\color{red}\bigstar}$ on $L$ so that 
$
\hol_{\small\color{red}\bigstar}(E)
$
is represented by a matrix
$
\primeLambda
\in
\operatorname{GL}_{\primemu}\left(\field\right),
$
trivialize
$
\left.E\right|_{S^1\setminus{\small\color{red}\bigstar}}
\cong
\left(S^1\setminus{\small\color{red}\bigstar}\right)
\times
\field^{\primemu}
$
as in Proposition \ref{rmk:ChooseTrivialization}
and thus
$
\left(\left.E\right|_p\right)^*
\cong
\left(\left.E\right|_s\right)^*
\cong
\field^{\primemu}
$,
then
it
is represented by
the
$\primemu\times\primemu$ matrix
\begin{equation}\label{eqn:MatrixOfComponentOfDeformedDifferential}
\sum_{
\begin{matrix}
\scriptscriptstyle
\left(x_1,X_1\right),\dots,\left(x_i,X_i\right)
\\
\scriptscriptstyle
\in\left\{(x,X),(y,Y),(z,Z)\right\}
\end{matrix}
}x_1\cdots x_i\
\sum_{u\in\mathcal{M}\left(p,X_1,\dots,X_i,\overline{s}\right)}
(-1)^
{
(i+1)\mathbb{1}_{\operatorname{o}\left(\mathbb{L}\right)\ne\operatorname{o}\left(\partial u\right)}
+
\#\left(\partial u \cap {\small\color{gray}\bigstar}_{\mathbb{L}}\right)
}
\left(\primeLambda^T\right)^
{
\#
\left(\partial u \cap {\small\color{red}\bigstar}_L
\right)
\left(
\mathbb{1}_{
\operatorname{o}\left(L\right)=\operatorname{o}\left(\partial u\right)
}
-
\mathbb{1}_{
\operatorname{o}\left(L\right)\ne\operatorname{o}\left(\partial u\right)
}
\right)
},
\end{equation}
where
$
\#
\left(\partial u \cap {\small\color{red}\bigstar}_L
\right)
$
is the number of times $\partial u$ passes through the point ${\small\color{red}\bigstar}$ on $L$.
Note that the entries can be also viewed as
elements in $\field[[x,y,z]]$.

\end{prop}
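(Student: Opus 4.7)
The plan is to unpack the definition of $\operatorm_1^{0,b}$ step by step, expanding out the weak bounding cochain $b = xX + yY + zZ$ by multilinearity, and then using the geometric description of the Fukaya $\AI$-operations on a surface (where $\operatorm_k$ counts immersed polygons weighted by signs and holonomy factors). First I would substitute $b$ into the formula $\operatorm_1^{0,b}(f) = \sum_{i\ge 0}\operatorm_{1+i}(f, b, \ldots, b)$, distributing the sum to obtain the outer sum over tuples $(x_1, X_1), \ldots, (x_i, X_i) \in \{(x,X),(y,Y),(z,Z)\}$ with coefficients $x_1\cdots x_i$. Applying the geometric definition of $\operatorm_{1+i}(f, X_1, \ldots, X_i)$ then converts each term into a sum over deformed strips $u \in \mathcal{M}(p, X_1, \ldots, X_i, \bar{s})$, weighted by $\sign(u)\,\hol_s(\partial u)$. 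Finally I would project onto the $(E|_s)^*$-summand, which amounts to restricting the sum to strips whose final corner is the chosen output intersection $s$.

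For the sign, I would invoke the formula \eqref{eqn:sign(u)}. Since every infinitesimal generator $X_j$ has odd degree, the orientation of $\mathbb{L}$ is preserved across each such corner, and because $p$ and $s$ have opposite $\mathbb{Z}_2$-degrees along $\mathbb{L}$, the only sign contribution comes from the orientation comparison between $\mathbb{L}$ and $\partial u$ along the $\mathbb{L}$-portion of the boundary, yielding the claimed $(-1)^{(i+1)\mathbb{1}_{o(\mathbb{L})\ne o(\partial u)}}$ after accounting for the $i{+}1$ arcs of $\partial u$ lying on $\mathbb{L}$. For the holonomy, I would decompose $\partial u$ into its successive arcs $(\partial u)_0,\ldots,(\partial u)_{i+1}$, where $(\partial u)_0$ lies on $L$ and the rest on $\mathbb{L}$, and use Proposition \ref{rmk:HolonomyComputing} together with the trivialization of $E_\mathbb{L}$ on $S^1 \setminus {\small\color{gray}\bigstar}$: each parallel transport $P((\partial u)_j)$ on $\mathbb{L}$ contributes the scalar $(-1)^{\#((\partial u)_j \cap {\small\color{gray}\bigstar}_\mathbb{L})}$, and these scalars multiply over $j=1,\ldots,i+1$ to give the stated global factor $(-1)^{\#(\partial u \cap {\small\color{gray}\bigstar}_\mathbb{L})}$, leaving $f \circ P((\partial u)_0)$ as the only nonscalar piece.

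To obtain the matrix form \eqref{eqn:MatrixOfComponentOfDeformedDifferential}, I would fix the trivialization of $E$ over $S^1 \setminus {\small\color{red}\bigstar}$ as in Proposition \ref{rmk:ChooseTrivialization}, so that $P((\partial u)_0)$ becomes multiplication by some power of $\primeLambda$ on $\field^{\primemu}$. Each time $(\partial u)_0$ traverses ${\small\color{red}\bigstar}$ in the positive direction of $L$ it picks up a factor $\primeLambda$, and in the negative direction a factor $\primeLambda^{-1}$; combining into one expression gives $\primeLambda$ raised to $\#(\partial u \cap {\small\color{red}\bigstar}_L)$ times $\pm 1$ according to whether $\partial u$ agrees or disagrees with the orientation of $L$. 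Since $f \circ P((\partial u)_0)$ acts on a row vector $f\in\field^\primemu$ by right-multiplication by the matrix of $P((\partial u)_0)$, translating to the standard (column-vector) matrix convention introduces the transpose, giving the exponent $(\primeLambda^T)^{\#(\partial u \cap {\small\color{red}\bigstar}_L)\,(\mathbb{1}_{o(L)=o(\partial u)}-\mathbb{1}_{o(L)\ne o(\partial u)})}$.

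The main obstacle I anticipate is getting all the sign conventions consistent, particularly the interaction between the orientation-mismatch sign $(-1)^{(i+1)\mathbb{1}}$ coming from the $\mathbb{L}$-side and the direction-dependent sign built into the exponent of $\primeLambda^T$ on the $L$-side; one must ensure the chosen trivialization conventions in Proposition \ref{rmk:ChooseTrivialization} and the sign formula \eqref{eqn:sign(u)} are applied compatibly so that no double-counting or hidden sign appears when the strip happens to reverse either boundary's orientation. Otherwise, the result is a direct bookkeeping of the geometric definitions given in Section \ref{sec:CptFukSurface} together with Proposition \ref{prop:WeakBoundingCochain}, specialized to the Seidel Lagrangian.
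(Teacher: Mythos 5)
Your proposal is correct and follows essentially the same route as the paper: the paper's own "proof" is precisely the bookkeeping in \S\ref{sec:LMFComputationSubsection} — expanding $b=xX+yY+zZ$ multilinearly, applying the polygon-counting definition of $\operatorm_{1+i}$, extracting the sign from (\ref{eqn:sign(u)}) via the odd degree of the $X_j$'s and the opposite degrees of $p$ and $s$, decomposing the holonomy into arcwise parallel transports (each $\mathbb{L}$-arc contributing $(-1)^{\#(\cdot\cap{\small\color{gray}\bigstar}_{\mathbb{L}})}$), and finally representing $f\mapsto f\circ P((\partial u)_0)$ in the trivialization of Proposition \ref{rmk:ChooseTrivialization}, where the transpose arises exactly as you describe from precomposition acting on the dual space. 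No gaps.
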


\subsection{Illustration with an example}\label{sec:LMFIllustration}


In this section,
we illustrate the computation of the localized mirror functor
with a concrete example.
Namely, we will find the mirror matrix factorization of the loop with a local system
$
\mathcal{L}
:=
\mathcal{L}
\left(\left(3,-2,2\right),\primelambda,1\right)
$
\footnote{
This is the canonical form of loops with a local system corresponding to a loop datum $\left((3,-2,2),\primelambda,1\right)$.
See Definition \ref{defn:LoopData}.
}
for any $\primelambda\in\field^{\times}$.
It is an object $\left(L,E,\nabla\right)$ of $\Fuk\left(\POP\right)$
that consists of the underlying loop
$
L:=L\left(3,-2,2\right)
$
described in Figure \ref{fig:LocalizedMirrorFunctorExample},
a trivial line bundle $E$ over the domain $S^1$ of $L$,
and a flat connection $\nabla$ on $E$ whose holonomy is $\primelambda$
at the point {\small$\color{red}\bigstar$} on $L$ marked in Figure \ref{fig:LocalizedMirrorFunctorExample}.


Note that $L$ and $\mathbb{L}$ have $6$ intersections,
say $p$, $q$, $r$, and $s$, $t$, $u$.
According to their degrees,
we have
$$
\chi^0\left(L,\mathbb{L}\right)
=
\left\{p,q,r\right\},
\quad
\chi^1\left(L,\mathbb{L}\right)
=
\left\{s,t,u\right\}.
$$
We trivialize $E$ over $S^1\setminus{\small\color{red}\bigstar}$
as in Proposition \ref{rmk:ChooseTrivialization},
which yields identifications
$
\left.E\right|_{\bullet} \cong \field
$
for each
$\bullet
\in
\chi\left(L,\mathbb{L}\right).
$
Then we have
$
\Hom_{\field}\left(\left.E\right|_{\bullet}, \left.E_{\mathbb{L}}\right|_{\bullet}\right)
\cong
\Span\left\{\bullet\right\}
$
as noted in Remark \ref{rmk:TrivializingHom},
and hence
$$
\hom^0(\mathcal{L},\bL)
=
\Span\left\{p,q,r\right\},
\quad
\hom^1(\mathcal{L},\bL)
=
\Span\left\{s,t,u\right\}.
$$

Then two restricted operations
$
\operatorm_1^{0,b}
:
\Hom^0\left(\mathcal{L},\mathbb{L}\right)
\rightarrow
\Hom^1\left(\mathcal{L},\mathbb{L}\right)
$
and
$
\operatorm_1^{0,b}
:
\Hom^1\left(\mathcal{L},\mathbb{L}\right)
\rightarrow
\Hom^0\left(\mathcal{L},\mathbb{L}\right)
$
with respect to ordered bases
$
\left\{p,q,r\right\}
$
and
$
\left\{s,t,u\right\}
$
yield two $3\times3$ matrices
$\LocalPhi\left(\mathcal{L}\right)$
and
$\LocalPsi\left(\mathcal{L}\right)$,
respectively.

\begin{figure}[h]
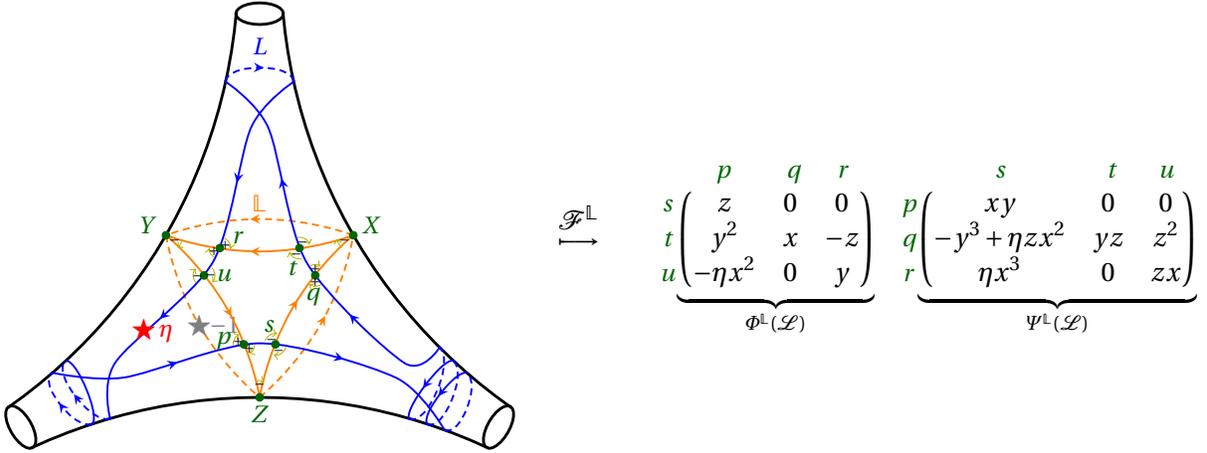

\setlength\arraycolsep{0pt}
\centering
$

}
\right)
}_{\LocalPsi(\mathcal{L})}
}
\\[-2mm]
\end{matrix}
\\[14mm]
\end{matrix}
\end{matrix}
$
\captionsetup{width=1\linewidth}
\caption{Matrix factorization of $xyz$ corresponding to $\mathcal{L}=\mathcal{L}\left(\left(3,-2,2\right),\primelambda,1\right)$}
\label{fig:LocalizedMirrorFunctorExample}
\end{figure}

Entries come from the following relations:
$$
\setlength\arraycolsep{0pt}
\setcounter{MaxMatrixCols}{100}
\left\{
\begin{matrix}
\operatorm_1^{0,b}(p) & = & z & s & + & y^2 & t & - & \primelambda & x^2 & u \\[1mm]
\operatorm_1^{0,b}(q) & = &   &   &   &   x & t &   &          &     &   \\[1mm]
\operatorm_1^{0,b}(r) & = &   &   & - &   z & t & + &          &   y & u
\end{matrix}
\right.
\quad\text{and}\quad
\left\{
\begin{matrix}
\operatorm_1^{0,b}(s) & = & xy & p & + & (-y^3 & + & \primelambda & zx^2) & q & + & \primelambda & x^3 & r \\[1mm]
\operatorm_1^{0,b}(t) & = &    &   &   &       &   &          &    yz & q &   &          &     &   \\[1mm]
\operatorm_1^{0,b}(u) & = &    &   &   &       &   &          &   z^2 & q & + &          &  zx & r
\end{matrix}
\right.
$$
Note that
the $\left({\scriptstyle\blacksquare},\bullet\right)$-entry of
$\LocalPhi\left(\mathcal{L}\right)$ or $\LocalPsi\left(\mathcal{L}\right)$
is the coefficient of ${\scriptstyle\blacksquare}$ in $\operatorm_1^{0,b}\left(\bullet\right)$
for
$\bullet,
{\scriptstyle\blacksquare}\in\left\{p,q,r,s,t,u\right\}$.
So it can be computed from the formula
(\ref{eqn:MatrixOfComponentOfDeformedDifferential}),
which in this case is just a single entry
(in $S=\field[[x,y,z]]$)
\begin{equation}\label{eqn:EntryOfDeformedDifferential}
\displaystyle
\sum_{
\begin{matrix}
\scriptscriptstyle
\left(x_1,X_1\right),\dots,\left(x_i,X_i\right)
\\
\scriptscriptstyle
\in\left\{(x,X),(y,Y),(z,Z)\right\}
\end{matrix}
}x_1\cdots x_i\
\sum_{u\in\mathcal{M}\left(\bullet,X_1,\dots,X_i,\overline{\scriptscriptstyle\blacksquare}\right)}
(-1)^
{
(i+1)\mathbb{1}_{\operatorname{o}\left(\mathbb{L}\right)\ne\operatorname{o}\left(\partial u\right)}
+
\#\left(\partial u \cap {\small\color{gray}\bigstar}_{\mathbb{L}}\right)
}
\primelambda^
{
\#
\left(\partial u \cap {\small\color{red}\bigstar}_L
\right)
\left(
\mathbb{1}_{
\operatorname{o}\left(L\right)=\operatorname{o}\left(\partial u\right)
}
-
\mathbb{1}_{
\operatorname{o}\left(L\right)\ne\operatorname{o}\left(\partial u\right)
}
\right)
}.
\end{equation}

That is,
each deformed strip
$u$ bounded by $L$ and $\mathbb{L}$,
whose angles consist of $\bullet$,
$X_1,\dots,X_i$
and $\overline{\scriptstyle\blacksquare}$
in a counterclockwise order,
contributes a monomial
$x_1\cdots x_i$.
Its coefficient is determined by coincidence of orientations of $L$, $\mathbb{L}$
with boundary orientation of $\partial u$
and the number of times $\partial u$ passes through the
points
${\small\color{red}\bigstar}_L$
or
${\small\color{gray}\bigstar}_{\mathbb{L}}$.
Figure \ref{fig:LocalizedMirrorFunctorEntryExamples} shows some deformed strips
that contribute
some of the entries above:

\newpage
\begin{figure}[H]
\vspace{-25mm}
\setlength\arraycolsep{-5pt}
\centering
$$
\hspace*{-22mm}

\\
&&&\hspace{-120mm}\text{(g) $(q,s)$-entry of $\LocalPsi\left(L\right)$: $-y^3+\primelambda zx^2$}
\end{matrix}
$$
\captionsetup{width=1\linewidth}
\caption{Some entries of $\LocalPhi(L)$ and $\LocalPsi(L)$}
\label{fig:LocalizedMirrorFunctorEntryExamples}
\end{figure}
\thispagestyle{empty}

\subsection{Higher rank computation}
We can compute the matrix factorizations corresponding to
a higher rank local system on a loop
easily once we know the result for rank $1$ local systems on the same loop.

Consider two local systems on the same loop $L$ in $\POP$
that have rank $1$ and $\primemu$, respectively:
\begin{itemize}
\item
$\mathcal{L}_1\left(\primelambda\right)
:=\left(L,E_1,\nabla_1\right)$
: loop with a local system of rank $1$
of holonomy $\primelambda\in\field^\times$.
\item
$\mathcal{L}_{\primemu}\left(\primeLambda\right)
:=\left(L,E_{\primemu},\nabla_{\primemu}\right)$
: loop with a local system of rank $\primemu$
of holonomy $\primeLambda\in\operatorname{GL}_{\primemu}\left(\field\right)$,
\end{itemize}
Namely,
there is
a point ${\small\color{red}\bigstar}$ on $L$
and
identification
$\left.E_1\right|_{\small\color{red}\bigstar}
\cong
\field$
and
$\left.E_{\primemu}\right|_{\small\color{red}\bigstar}
\cong
\field^{\primemu}$,
so that
$
\hol_{\small\color{red}\bigstar}\left(E_1\right)
$
is represented by a scalar
$
\primelambda
\in
\field^\times,
$
while 
$
\hol_{\small\color{red}\bigstar}\left(E_{\primemu}\right)
$
is represented by a matrix
$
\primeLambda
\in
\operatorname{GL}_{\primemu}\left(\field\right)
$.

For any $p,s\in\chi\left(L,\mathbb{L}\right)$,
the formula
(\ref{eqn:EntryOfDeformedDifferential})
and
(\ref{eqn:MatrixOfComponentOfDeformedDifferential})
imply that
the $(s,p)$-entry of
$
\left(\LocalPhi\left(\mathcal{L}_1\left(\primelambda\right)\right),\LocalPsi\left(\mathcal{L}_1\left(\primelambda\right)\right)\right)
$
and
the
$
\left(
\left(\left.E_{\primemu}\right|_s\right)^*,\left(\left.E_{\primemu}\right|_p\right)^*
\right)
$-component of
$
\left(\LocalPhi\left(\mathcal{L}_{\primemu}\left(\primeLambda\right)\right),\LocalPsi\left(\mathcal{L}_{\primemu}\left(\primeLambda\right)\right)\right)
$
are given in the form
\begin{equation}\label{eqn:EntryComponentExpansion}
\sum_{k=-\infty}^{\infty}a_k \primelambda^k
\in \field[[x,y,z]]
\quad\text{and}\quad
\sum_{k=-\infty}^{\infty}a_k \left(\primeLambda^T\right)^k
\in \field[[x,y,z]]^{\primemu\times \primemu},
\end{equation}
respectively,
for some $a_k\in \field[[x,y,z]]$.
(See Theorem \ref{thm:CylinderFreeMFFinite} for the convergence issue).

\begin{prop}[$\left(\primelambda,\primeLambda^T\right)$-substitution]\label{prop:HigherRankLMF}
\label{prop:SubstitutionGeometry}
The matrix factorizations
corresponding to
$\mathcal{L}_1\left(\primelambda\right)$
and
$\mathcal{L}_{\primemu}\left(\primeLambda\right)$
are
given in the form
$$
\LocalF\left(\mathcal{L}_1\left(\primelambda\right)\right)
=
\left(
\sum_{k=-\infty}^{\infty}\varphi_k\primelambda^k,
\sum_{k=-\infty}^{\infty}\psi_k\primelambda^k
\right)
\quad\text{and}\quad
\LocalF\left(\mathcal{L}_{\primemu}\left(\primeLambda\right)\right)
=
\left(
\sum_{k=-\infty}^{\infty}\varphi_k\otimes\left(\primeLambda^T\right)^k,
\sum_{k=-\infty}^{\infty}\psi_k\otimes\left(\primeLambda^T\right)^k
\right)
\footnote{
Here $\otimes$ for matrices refers to the \emph{Kronecker product}.
},
$$
respectively,
for some
$
\varphi_k, \psi_k\in \field[[x,y,z]]^{\tau\times\tau}
$
$\left(\tau=\frac{1}{2}\left|L\cap\mathbb{L}\right|\right)$.

\vspace{-1mm}

\end{prop}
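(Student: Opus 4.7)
The plan is to derive the proposition as a direct application of the formula (\ref{eqn:MatrixOfComponentOfDeformedDifferential}) of Proposition \ref{prop:ComponentOfDeformedDifferential}, noting that the two local systems $\mathcal{L}_1(\primelambda)$ and $\mathcal{L}_\primemu(\primeLambda)$ share the \emph{same} underlying loop $L$ and hence the same moduli of deformed strips. First I would fix bases: pick a point ${\small\color{red}\bigstar}$ on $L$, trivialize $E_1$ and $E_\primemu$ over $S^1 \setminus {\small\color{red}\bigstar}$ as in Proposition \ref{rmk:ChooseTrivialization}, and order the basis of $\hom^\bullet(\mathcal{L}_\primemu(\primeLambda),\mathbb{L})$ as the lexicographic product of $\chi^\bullet(L,\mathbb{L})$ (giving the block index of size $\tau$) with a basis of the fiber $\field^\primemu$ (giving the inner index of size $\primemu$).

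The key observation is that for any $p,s\in\chi(L,\mathbb{L})$, all ingredients in the formula (\ref{eqn:MatrixOfComponentOfDeformedDifferential}) except the last factor $(\primeLambda^T)^{\pm\#(\partial u \cap {\small\color{red}\bigstar}_L)}$ depend only on the pair $(L,\mathbb{L})$ and not on the local system: the set $\mathcal{M}(p,X_1,\dots,X_i,\overline{s})$, the monomial $x_1\cdots x_i$, the sign $(-1)^{(i+1)\mathbb{1}_{\operatorname{o}(\mathbb{L})\ne\operatorname{o}(\partial u)}}$, and the holonomy parity $(-1)^{\#(\partial u\cap{\small\color{gray}\bigstar}_\mathbb{L})}$ are all intrinsic to the geometry. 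I would therefore group deformed strips by the signed winding number $k := \#(\partial u\cap{\small\color{red}\bigstar}_L)\cdot(\mathbb{1}_{\operatorname{o}(L)=\operatorname{o}(\partial u)} - \mathbb{1}_{\operatorname{o}(L)\ne\operatorname{o}(\partial u)})$, and let $a_k^{sp}\in\field[[x,y,z]]$ denote the sum over strips contributing that winding number of the remaining scalar factors.

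With this packaging, formula (\ref{eqn:MatrixOfComponentOfDeformedDifferential}) says the $(s,p)$-entry of $\LocalPhi(\mathcal{L}_1(\primelambda))$ (resp.\ $\LocalPsi$) is $\sum_k a_k^{sp}\,\primelambda^k$, which means the scalar matrices $\varphi_k,\psi_k$ in the statement satisfy $(\varphi_k)_{sp} = a_k^{sp}$ (and similarly for $\psi_k$). Applied to the rank $\primemu$ case, the same formula yields the $(s,p)$-\emph{block} of $\LocalPhi(\mathcal{L}_\primemu(\primeLambda))$ as the $\primemu\times\primemu$ matrix $\sum_k a_k^{sp}\,(\primeLambda^T)^k$. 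Under the chosen ordering of the basis of $\hom^\bullet(\mathcal{L}_\primemu(\primeLambda),\mathbb{L})$, a matrix whose $(s,p)$-block equals $\alpha_{sp}\cdot B$ for a scalar matrix $(\alpha_{sp})$ and fixed $B$ is by definition the Kronecker product $(\alpha_{sp})\otimes B$. Summing over $k$ distributes through the Kronecker product, giving exactly $\sum_k \varphi_k\otimes (\primeLambda^T)^k$ and analogously for $\psi$.

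The only non-routine points I anticipate are bookkeeping rather than substantive: one must keep careful track of the transpose convention on the holonomy factor (which is already fixed in Proposition \ref{prop:ComponentOfDeformedDifferential}), and one must justify that the doubly-infinite sums $\sum_k$ make sense as elements of $\field[[x,y,z]]^{\tau\primemu\times\tau\primemu}$. The latter is precisely the content of the forward reference Theorem \ref{thm:CylinderFreeMFFinite}: for fixed $(s,p)$ and fixed monomial in $x,y,z$, only finitely many deformed strips contribute, so each coefficient $a_k^{sp}$ lies in $\field[[x,y,z]]$ and for each fixed total degree only finitely many $k$ produce nonzero contributions. Given this, the whole argument reduces to the purely formal Kronecker-product identification above and requires no further input.
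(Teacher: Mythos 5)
Your proposal is correct and follows essentially the same route as the paper: the paper likewise packages the contributions of formula (\ref{eqn:MatrixOfComponentOfDeformedDifferential}) into the coefficients $a_k$ of (\ref{eqn:EntryComponentExpansion}), defines $\varphi_k$, $\psi_k$ entrywise from them, and concludes via the Kronecker-product identification of blockwise scalar multiples, deferring convergence to Theorem \ref{thm:CylinderFreeMFFinite}. Your extra care with the lexicographic ordering of the basis and the explicit grouping of strips by signed winding number is just a more spelled-out version of what the paper leaves implicit.
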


\begin{proof}
Let $\varphi_k$ be a $\tau\times\tau$ matrix over
$S=\field[[x,y,z]]$
whose $(s,p)$-entry is
$a_k$
given in (\ref{eqn:EntryComponentExpansion})
(viewed as
a map of free $S$-modules
from
$\operatorname{Span}_S\left(\chi^0\left(L,\mathbb{L}\right)\right)$
to
$\operatorname{Span}_S\left(\chi^1\left(L,\mathbb{L}\right)\right)$
with respect to an appropriate order in each set $\chi^{\bullet}\left(L,\mathbb{L}\right)$).
Then
$
\sum_{k=-\infty}^{\infty}\varphi_k\primelambda^k
$
coincides with
$\LocalPhi\left(\mathcal{L}_1\left(\primelambda\right)\right)$.

Now $\varphi_k\otimes\left(\primeLambda^T\right)^k$ is a $\tau\primemu\times\tau\primemu$ matrix over $S$,
which can be also viewed as
a map of free $S$-modules
$$
\bigoplus_{p\in\chi^0\left(L,\mathbb{L}\right)}
\operatorname{Span}_S\left\{p\right\}
\otimes_{\field}
\left(\left.E_{\primemu}\right|_p\right)^*
\rightarrow
\bigoplus_{s\in\chi^1\left(L,\mathbb{L}\right)}
\operatorname{Span}_S\left\{s\right\}
\otimes_{\field}
\left(\left.E_{\primemu}\right|_s\right)^*
$$
under
an order in $\chi^{\bullet}\left(L,\mathbb{L}\right)$
and
identification
$
\left.E_{\primemu}\right|_p
\cong
\left.E_{\primemu}\right|_s
\cong
\field^{\primemu}.
$
Its
$
\left(
\left(\left.E_{\primemu}\right|_s\right)^*,\left(\left.E_{\primemu}\right|_p\right)^*
\right)
$-component
is
$
a_k \left(\primeLambda^T\right)^k,
$
where $a_k$ denotes the $(s,p)$-entry of $\varphi_k$.
Therefore,
the corresponding
component of
$
\sum_{k=-\infty}^{\infty}
\varphi_k\otimes\left(\primeLambda^T\right)^k
$
is
$
\sum_{k=-\infty}^{\infty}
a_k\left(\primeLambda^T\right)^k,
$
which is
that
of
$
\LocalPhi\left(\mathcal{L}_{\primemu}\left(\primeLambda\right)\right)
$
by (\ref{eqn:EntryComponentExpansion}).
It works the same for $\psi_k$ instead of $\varphi_k$.
\end{proof}

The proposition says that
if we know $\varphi_k$ and $\psi_k$'s from the rank $1$ cases,
we immediately get the result for higher rank cases as well,
just by `\textbf{substituting the matrix $\primeLambda^T$ for the scalar $\primelambda$}'.
See the example:

\begin{ex}\label{ex:HigherRankExample}
Consider the loop with a local system
$
\mathcal{L}_{\primemu}
:=
\mathcal{L}
\left(\left(3,-2,2\right),\primelambda,\primemu\right)
$
of rank $\primemu\in\mathbb{Z}_{\ge1}$.
It consists of the same underlying loop $L:=L\left(3,-2,2\right)$ as in Subsection \ref{sec:LMFIllustration}
(Figure \ref{fig:LocalizedMirrorFunctorExample}),
trivial vector bundle $E_{\primemu}$\ of rank $\primemu$ over the domain $S^1$ of $L$,
and a flat connection $\nabla_{\primemu}$ on $E_{\primemu}$ whose holonomy is $J_{\primemu}\left(\primelambda\right)$
at the point {\small$\color{red}\bigstar$}.
Then,
by the result for $\mathcal{L}_1$ in Subsection \ref{sec:LMFIllustration} and Proposition \ref{prop:HigherRankLMF},
the corresponding matrix factorization is
%
%
%
%
%
$$
\makeatletter\setlength\BA@colsep{2pt}\makeatother
\begin{blockarray}{c @{\hspace{2pt}} c @{\hspace{7pt}} ccc}
  &
  & \color{darkgreen} \small\text{$\left(\left.E\right|_p\right)^*$}
  & \color{darkgreen} \small\text{$\left(\left.E\right|_q\right)^*$}
  & \color{darkgreen} \small\text{$\left(\left.E\right|_r\right)^*$}
  \\[1mm]
  \begin{block}{c @{\hspace{2pt}} c @{\hspace{7pt}} (ccc)}
    & \color{darkgreen} \small\text{$\left(\left.E\right|_s\right)^*$} & z I_{\primemu} & 0 & 0
    \\[1mm]
    \LocalPhi\left(\mathcal{L}_{\primemu}\right) =
    & \color{darkgreen} \small\text{$\left(\left.E\right|_t\right)^*$} & y^2 I_{\primemu} & x I_{\primemu} & -z I_{\primemu}
    \\[1mm]
    & \color{darkgreen} \small\text{$\left(\left.E\right|_u\right)^*$} & -x^2 J_{\primemu}\left(\primelambda\right)^T & 0 & y I_
    \\
  \end{block}
\end{blockarray}
\quad
\begin{blockarray}{c @{\hspace{2pt}} c @{\hspace{7pt}} ccc @{\hspace{6pt}} c}
  &
  & \color{darkgreen} \small\text{$\left(\left.E\right|_s\right)^*$}
  & \color{darkgreen} \small\text{$\left(\left.E\right|_t\right)^*$}
  & \color{darkgreen} \small\text{$\left(\left.E\right|_u\right)^*$}
  &
  \\[1mm]
  \begin{block}{c @{\hspace{2pt}} c @{\hspace{7pt}} (ccc) @{\hspace{6pt}} c}
    & \color{darkgreen} \small\text{$\left(\left.E\right|_p\right)^*$} & xy I_{\primemu} & 0 & 0
    &
    \\[1mm]
    \text{and}\quad\LocalPsi\left(\mathcal{L}_{\primemu}\right) =
    & \color{darkgreen} \small\text{$\left(\left.E\right|_q\right)^*$} & -y^3 I_{\primemu} + zx^2 J_{\primemu}\left(\primelambda\right)^T & yz I_{\primemu} & z^2 I_{\primemu}
    & .
    \\[1mm]
    & \color{darkgreen} \small\text{$\left(\left.E\right|_r\right)^*$} & x^3 J_{\primemu}\left(\primelambda\right)^T & 0 & zx I_{\primemu}
    &
    \\
  \end{block}
\end{blockarray}
$$

\end{ex}

\section{Matrix Factorizations from Loops with a Local System}\label{sec:MfFromLag}

In this section,
we first
define the {canonical form
of immersed loops in $\POP$
and local systems} on them,
parameterized by
\emph{normal loop words} (\S \ref{sec:LoopWords}) and \emph{loop data} (\S \ref{sec:LoopData}),
respectively.
Then we
compute their mirror image under the localized mirror functor,
which provides the {canonical form of matrix factorizations of $xyz$}
(\S \ref{sec:MFfromLoopsWithLocalSystem}).
The matrix factorizations corresponding to periodic loop words are decomposable,
which implies that \emph{non-primitive} loops (with a local system) are decomposable in the Fukaya category
(\S \ref{sec:PeriodicCase}).
Meanwhile, an exceptional case is handled separately (\S \ref{sec:ExceptionalCase}).

\subsection{Loop words and canonical form of immersed loops}\label{sec:LoopWords}
We first recall from \cite{CJKR} the concept of \emph{loop words},
which parameterize free homotopy classes of loops in $\POP$.
They were introduced
in order
to pick a specific representative in each (hyperbolic) free homotopy class,
motivated by the observation that two freely homotopic loops
correspond to homotopically equivalent matrix factorizations
under the localized mirror functor (Theorem \ref{thm:CylinderFreeMFFinite}).

Note that the fundamental group of $\POP$ can be presented as
$\pi_1\left(\POP\right) = \left<\alpha,\beta,\gamma\left|\alpha\beta\gamma=1\right.\right>$
with the based loops $\alpha$, $\beta$ and $\gamma$ in $\POP$ shown in Figure \ref{fig:gen}.
Also recall that there is a one-to-one correspondence between the free homotopy classes of loops in $\POP$ and the conjugacy classes in $\pi_1\left(\POP\right)$. 


          }
\centering
\captionsetup{width=1\linewidth}
\caption{Canonical form $L\left(w',\primelambda,\primemu\right)$ of loops a local system}
\label{fig:GeneralLoops}
\end{subfigure}
\centering
\caption{Fundamental group and loop data}
\end{figure}

\begin{defn}\label{def:LoopWord}
A \textnormal{\textbf{loop word}} of \emph{length} $3\tau$ is
$$
w' = (l_1',m_1',n_1',l_2',m_2',n_2',\dots,l_\tau',m_\tau',n_\tau')\in \mathbb{Z}^{3\tau}
$$
($\tau\in\mathbb{Z}_{\ge1}$).
The associated loop,
denoted as
$$
L\left(w'\right),
$$
is illustrated in Figure \ref{fig:GeneralLoops}.
It visits three holes A, B, and C in turn, winding them around the number of times specified in $w'$.
Namely,
starting from the point
{\small$\color{red}\bigstar$}
marked in the figure,
it winds
hole A $l_1'$-times,
hole B $m_1'$-times,
hole C $n_1'$-times,
hole A $l_2'$-times,
hole B $m_2'$-times,
and so on.
After finally it winds hole C $n_{\tau}'$-times,
it returns to the point {\small$\color{red}\bigstar$} to form a closed loop.
We perturb it if it is necessary to put them together into a transversal set.

Note that its free homotopy class in
$
\left[S^1,\POP\right]
=
\left.\pi_1\left(\POP\right)\right/\sim_{\textnormal{conjugation}}$
is
$$
\left[L\left(w'\right)\right]
=
\left[
\alpha^{l_1'}\beta^{m_1'}\gamma^{n_1'}\alpha^{l_2'}\beta^{m_2'}\gamma^{n_2'}\cdots\alpha^{l_\tau'}\beta^{m_\tau'}\gamma^{n_\tau'}
\right].
$$
Two loop words $w'$ and $\tilde{w}'$ are regarded as \emph{equivalent} if
$
\left[L\left(w'\right)\right]
=
\left[L\left(\tilde{w}'\right)\right].
$
\end{defn}

\begin{ex}
The loop described in Figure \ref{fig:FukObjects} and Figure \ref{fig:LocalizedMirrorFunctorExample} is (a perturbation of) $L(3,-2,2)$. 
\end{ex}

We
denote the $j$-th value of a loop word $w'$ as $w'_j$ so that
\begin{align*}
w'
&=
\left(w'_1, w'_2, w'_3, \dots, w'_{3\tau-1}, w'_{3\tau}\right)
\in\mathbb{Z}^{3\tau}.
\end{align*}
Then any tuple
$
(w'_{k},w'_{k+1},\dots,w'_l)
$
for some distinct $k$, $l\in\mathbb{Z}_{3\tau}$ is called a \emph{subword} in $w'$.
We regard the index $i$ of $l'_i$, $m'_i$ and $n'_i$ to be in $\mathbb{Z}_\tau$ (hence $3i\in\mathbb{Z}_{3\tau}$)
and the index $j$ of $w'_j$ to be in $\mathbb{Z}_{3\tau}$.
Therefore,
for example,
$(w'_{3\tau-1},w'_{3\tau},w'_1)$ is a subword.
We define the \emph{$1$-shift} of a loop word $w'$ to be
$$
w'^{(1)}=\left(l'_2,m'_2,n'_2,\dots,l'_\tau,m'_\tau,n'_\tau,l'_1,m'_1,n'_1\right)
\in\mathbb{Z}^{3\tau}
$$
and \emph{$k$-shift} to be $w'^{(k)}$ which is obtained from $w'$ by applying the $1$-shift $k$-times ($k \in \Z$).

For a
loop word $w'$,
we define its \emph{$N$-concatenation} $\left(w'\right)^{\hash N}$ as the $N$ repetitions of $w'$.
It is called \emph{periodic} if it is $N$-concatenation of another
loop word for some $N\in\mathbb{Z}_{\ge2}$.
For an immersed loop $L:S^1\rightarrow \POP$,
we define its \emph{$N$-concatenation} by the immersed loop
$$
L^{\hash N}:S^1\rightarrow \POP,\quad
e^{2\pi it}\mapsto L\left(e^{2 N \pi it}\right).
$$
A loop $L$ or its free homotopy class $[L]$ are called \emph{non-primitive}
if $L$ is freely homotopic to an $N$-concatenation of another loop for some $N\in\mathbb{Z}_{\ge2}$.
Otherwise, they are called \emph{primitive}.

Note that
if a loop word $w'$ is periodic,
then the associated
loop $L\left(w'\right)$ and its free homotopy class $\left[L\left(w'\right)\right]$
are non-primitive.
But the converse is not true in general as a non-periodic loop word $w'$ can be equivalent to a periodic one.
It will be fixed when we will regard only \emph{normal} loop words (Corollary \ref{cor:PeriodicNonprimitive}).


The following lemma is easy to check.
\begin{lemma}\label{lem:equimove}
The following operations on a loop word $w'$ do not change the equivalence class of $w'$:
\begin{itemize}
\item (inserting 0s) insert the subword $(0,0,0)$ somewhere in $w'$,
\item (removing 0s) remove a subword $(0,0,0)$ in $w'$ if it exists,
\item (adding $1$s around $0$) add $(1,1,1)$ to the subword $(w_{j-1}',0,w_{j+1}')$ in $w'$ where $w_j'=0$, and
\item (subtracting $1$s around $1$) subtract $(1,1,1)$ from the subword $(w_{j-1}',1,w_{j+1}')$ in $w'$ where $w_j'=1$,
\item (shifting) take $k$-shift of $w'$ for some $k\in\mathbb{Z}$.
\end{itemize}
\end{lemma}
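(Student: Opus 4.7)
The plan is to work entirely at the level of conjugacy classes in $\pi_1(\POP) = \langle \alpha,\beta,\gamma \mid \alpha\beta\gamma = 1\rangle$, using the stated correspondence between free homotopy classes of loops and conjugacy classes. By the definition of $L(w')$, each operation on the loop word $w'$ translates to a replacement on the word
$$
W(w') \;:=\; \alpha^{l_1'}\beta^{m_1'}\gamma^{n_1'}\cdots \alpha^{l_\tau'}\beta^{m_\tau'}\gamma^{n_\tau'}
\in \pi_1(\POP),
$$
and it suffices to check that each replacement preserves the conjugacy class $[W(w')]$.

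I would first dispose of the three easy operations. For (shifting), the $1$-shift cyclically moves the factors $\alpha^{l_1'}\beta^{m_1'}\gamma^{n_1'}$ from the front to the back of $W(w')$, which is conjugation by $\alpha^{l_1'}\beta^{m_1'}\gamma^{n_1'}$, hence preserves the conjugacy class; iterating gives arbitrary $k$-shifts. For (inserting $0$s), inserting $(0,0,0)$ at any position amounts to inserting the trivial element $\alpha^0\beta^0\gamma^0 = 1$ into $W(w')$ (possibly after a shift to bring the insertion point into standard position), which of course does not change $W(w')$. (Removing $0$s) is the inverse of this move.

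The substantive step is (adding $1$s around $0$) together with its inverse (subtracting $1$s around $1$). Here I would split into three cases depending on which of $\{l_i', m_i', n_i'\}$ the entry $w_j'$ corresponds to. If $w_j' = m_i' = 0$, the relevant local subword of $W(w')$ is $\alpha^{l_i'}\beta^{0}\gamma^{n_i'} = \alpha^{l_i'}\gamma^{n_i'}$, and after the operation it becomes $\alpha^{l_i'+1}\beta^{1}\gamma^{n_i'+1} = \alpha^{l_i'}(\alpha\beta\gamma)\gamma^{n_i'} = \alpha^{l_i'}\gamma^{n_i'}$ using $\alpha\beta\gamma = 1$, so $W(w')$ is unchanged. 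If $w_j' = n_i' = 0$, the local subword is $\beta^{m_i'}\gamma^{0}\alpha^{l_{i+1}'}$ and the relation needed is $\beta\gamma\alpha = 1$, which follows from $\alpha\beta\gamma = 1$ by conjugation. If $w_j' = l_i' = 0$, one uses $\gamma\alpha\beta = 1$ analogously. The case $i=1$, $j=1$ (and similarly $j = 3\tau$) involves wrap-around, but since we only need to preserve the conjugacy class, we may first cyclically shift $w'$ to bring the affected position into the interior and apply the interior argument, then shift back.

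The only place requiring care is the bookkeeping for the three sub-cases of $w_j' = 0$ and the wrap-around at the ends of $w'$; beyond that, every identity is an immediate consequence of $\alpha\beta\gamma = 1$ and its cyclic conjugates. I do not foresee any genuine obstacle: the lemma is essentially the observation that the loop words are a presentation of conjugacy classes in $\pi_1(\POP)$ subject to the single relator $\alpha\beta\gamma$, and the listed moves are precisely the elementary moves in that presentation.
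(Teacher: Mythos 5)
Your proof is correct and is exactly the argument the paper has in mind (the paper states the lemma without proof, remarking only that it is "easy to check"): each move is either conjugation, insertion of the identity, or an application of $\alpha\beta\gamma=1$ and its cyclic conjugates $\beta\gamma\alpha=1$, $\gamma\alpha\beta=1$ to the word $\alpha^{l_1'}\beta^{m_1'}\gamma^{n_1'}\cdots\alpha^{l_\tau'}\beta^{m_\tau'}\gamma^{n_\tau'}$. The case analysis and the wrap-around reduction via shifting are handled correctly.
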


The converse statement is also true,
but its proof involves a non-trivial word problem.

 \begin{prop}\cite{CJKR}\label{prop:equihomotopy}
Two loop words $w'$ and $\tilde{w}'$ are equivalent if and only if $\tilde{w}'$ can be obtained from $w'$ by performing the above five operations finitely many times.
\end{prop}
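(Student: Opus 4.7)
The \textbf{if} direction is already Lemma \ref{lem:equimove}, so the plan concentrates on the converse: assuming $[L(w')] = [L(\tilde w')]$, one must connect $w'$ to $\tilde w'$ by the five operations. The starting observation is that $\pi_1(\POP)=\langle \alpha,\beta,\gamma \mid \alpha\beta\gamma\rangle$ is in fact the free group on $\alpha,\beta$ with $\gamma=(\alpha\beta)^{-1}$. Each loop word produces an element of this group, and equivalence of loop words is exactly equality of conjugacy classes. Each of the five operations corresponds to a simple local identity: operations $1$ and $2$ insert or delete the trivial subword $\alpha^0\beta^0\gamma^0 = 1$, operations $3$ and $4$ insert or delete the relator $\gamma\alpha\beta = 1$ localized at a $0$ or $1$ entry, and operation $5$ is cyclic conjugation. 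So the question becomes: do these particular local moves generate all ways of representing the same conjugacy class as a loop word?

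My plan is to argue through a canonical form. First, I will use the \emph{normal loop words} introduced in Definition \ref{defn:normal2} as a distinguished representative of each equivalence class. The strategy then splits into two parts. Step one, \emph{reducibility}: show that every loop word can be transformed into some normal loop word by finitely many of the five operations. This is proved by induction on a well-chosen complexity measure (total length $3\tau$, tie-broken by the number of $0$ and $1$ entries appearing in non-normal configurations). Whenever the word is not normal, one locates a subword of one of a few explicit local shapes to which operation $1$, $2$, $3$, or $4$ applies and strictly decreases the complexity. Step two, \emph{uniqueness}: show that within a single equivalence class, the normal loop words coincide up to a shift. This reduces to showing that the cyclically reduced image of a normal loop word in $F(\alpha,\beta)$ determines the normal loop word uniquely up to shifting, which is a combinatorial matching following directly from the definition of normality.

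The main obstacle will be step one. Operations $3$ and $4$ simultaneously create and destroy $0$ and $1$ entries in neighboring slots, so one must choose the well-founded order carefully and perform a case analysis on the local configurations of $0$s and $1$s to verify that any non-normal word admits a complexity-decreasing move. A related subtlety is that the reduction can transiently increase the length via operation $1$, so one must argue such insertions are controlled and always eventually permit a compensating application of operation $2$ or $4$. Once termination is secured, step two is comparatively routine, as the cyclic reduction theorem for $F(\alpha,\beta)$ does most of the work and normal loop words are defined precisely so that this data is read back uniquely. Combining the two steps, any two equivalent $w'$ and $\tilde w'$ both reduce to the same normal form up to shifting, so they are connected to each other by a round trip through that common normal form using only the five operations.
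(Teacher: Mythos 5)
Your overall architecture (reduce both words to a canonical representative by the five moves, then show canonical representatives in the same class agree up to shifting) is the natural one, and for \emph{hyperbolic} loop words it is essentially the content of Proposition \ref{prop:normalnormal}. The difficulty is that your Step one is false as stated: not every loop word can be transformed into a normal loop word. By Lemma \ref{lem:equimove} the five operations preserve the equivalence class, and every normal loop word is hyperbolic, so a non-hyperbolic word such as $w'=(1,0,0)$ can never reach a normal word by any sequence of moves. Yet the proposition is asserted for \emph{all} loop words, and non-trivial instances of it occur in the elliptic and parabolic classes: for example $(1,0,0)$ and $(0,-1,-1)$ are equivalent (since $\beta^{-1}\gamma^{-1}=\beta^{-1}\alpha\beta$ is conjugate to $\alpha$), and your round-trip argument gives no way to connect them, because there is no common normal form for them to pass through.

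To repair this you would need to extend the canonical-form strategy to the non-hyperbolic classes: choose explicit representatives for the elliptic class and for each parabolic class (e.g. $(0,0,0)$ and $(l',0,0)$, $(0,m',0)$, $(0,0,n')$), prove a separate reducibility statement showing every non-hyperbolic loop word reaches its representative by the five moves, and check uniqueness there as well. This is not automatic — the cancellations that make $(0,-1,-1)$ collapse to $\alpha$ are exactly the degenerate configurations your complexity measure must also control. Beyond this, your termination argument for Step one is still only a sketch (you correctly identify that operations $3$ and $4$ shuffle $0$s and $1$s between slots and that operation $1$ can transiently lengthen the word, but you do not exhibit the well-founded order or the case analysis), so even in the hyperbolic case the proof is a plan rather than an argument. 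Note also that the present paper does not reprove this proposition; it imports it from \cite{CJKR}, so the word-problem case analysis you are deferring is precisely the content that has to be supplied.
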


Note that several equivalent loop words can represent the same free homotopy class.
To find a unique representative in each class, we introduce the following \textbf{normal form}  of loop words.
It will play an important role in the conversion formula between loop data and band data.

\begin{defn}\label{defn:normal2}
A loop word $w'$ is said to be \textnormal{\textbf{normal}} if it satisfies the following conditions:
\begin{itemize}
\item any subword of the form $\left(a,1,b\right)$ in $w'$ satisfies $a, b\le0$,
\item any subword of the form $\left(a,0,b\right)$ in $w'$ satisfies $a\le-1$, $b\ge1$ or $a\ge1$, $b\le-1$ or $a, b\ge1$,
\item $w'$ has no subword of the form $(0,-1,-1,\dots,-1,0)$, and
\item $w'$ does not consist only of $-1$, that is, $w'\ne\left(-1,-1,\dots,-1\right)$.
\end{itemize}
\end{defn}

\vspace{30mm}

We say that a loop $L$ or its free homotopy class $[L]$ are
\emph{elliptic} if $L$ is null-homotopic,
\emph{parabolic} if $L$ is freely homotopic to some concatenation of a boundary loop,
and \emph{hyperbolic} otherwise
\footnote{
The terminologies come from hyperbolic geometry.
In fact,
if we assign a hyperbolic metric to the surface,
the elliptic, parabolic, and hyperbolic loops correspond to
concepts already in use in hyperbolic geometry.
We refer readers to Section 9.6 in \cite{Ratcliffe}.
}.
An elliptic loop is obstructed,
and a parabolic loop $L$ can always be deformed so that it doesn't meet the reference $\mathbb{L}$ at all,
which means that the corresponding matrix factorization $\LocalF\left(L\right)$ is null-homotopic.
Therefore, elliptic and parabolic loops will be excluded from our consideration.

According to Definition \ref{def:LoopWord},
each elliptic or parabolic loop is produced by a loop word
equivalent to one of
$\left(l',0,0\right)$,
$\left(0,m',0\right)$
or
$\left(0,0,n'\right)$
for some
$l', m', n'\in\mathbb{Z}$.
Loop words
in those forms are called \emph{non-hyperbolic},
while the others are called \emph{hyperbolic}.
Therefore,
hyperbolic loop words produce hyperbolic loops.
Interestingly, the above normality condition automatically rules out non-hyperbolic loop words.
Moreover,
the normal form up to shifting gives exactly one representative among equivalent hyperbolic loop words.


\begin{prop}\cite{CJKR}\label{prop:normalnormal}
Any normal loop word is hyperbolic.
Conversely, any hyperbolic loop word is equivalent to a unique normal loop word up to shifting.

\end{prop}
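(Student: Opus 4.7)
The plan is to deduce both statements from a combinatorial normalization algorithm combined with a uniqueness argument based on the free-group structure of $\pi_1(\POP)$. Since $\alpha\beta\gamma=1$, we have $\pi_1(\POP) \cong F_2 = \langle\alpha,\beta\rangle$ via $\gamma = \beta^{-1}\alpha^{-1}$; by Proposition \ref{prop:equihomotopy}, two loop words are equivalent if and only if their images under the substitution map $\Phi : w'\mapsto \alpha^{l_1'}\beta^{m_1'}(\beta^{-1}\alpha^{-1})^{n_1'}\cdots$ are conjugate in $F_2$.

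For the direction that \emph{normal implies hyperbolic}, I would argue the contrapositive. A non-hyperbolic loop word is equivalent to one of $(l',0,0)$, $(0,m',0)$, $(0,0,n')$; by the shift-invariance of the normality conditions it suffices to rule out the first family. A short case analysis on $l'\in\Z$, tracking the effect of the moves of Lemma \ref{lem:equimove} on the signs and positions of zeros in the equivalence class $[(l',0,0)]$, shows that every representative must fail at least one normality condition of Definition \ref{defn:normal2}—through an inadmissible $(a,0,b)$ subword, a $(a,1,b)$ subword with $a\geq 1$ or $b\geq 1$, a $(0,-1,\ldots,-1,0)$ subword, or reduction to the all-$(-1)$ word.

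For \emph{existence of a normal representative}, given a hyperbolic $w'$ I would describe an iterative normalization procedure using the five moves of Lemma \ref{lem:equimove}. The procedure repeatedly applies: (i) subtract $(1,1,1)$ around a condition-1-violating $1$, possibly followed by removal of a newly created $(0,0,0)$ triple; (ii) insert $(0,0,0)$ combined with add $(1,1,1)$ around a condition-2-violating $0$ to flip its flanking signs into an admissible configuration; (iii) an analogous local move near a $(0,-1,\ldots,-1,0)$ subword to enforce condition 3; (iv) a detection-and-replacement step that handles the forbidden all-$(-1)$ configuration by substituting its equivalent $(2,2,2)^{\#\tau}$ form. Termination follows from a lexicographic monovariant such as $(|w'|,\,\#\{j:w_j'=\pm 1\})$, which can be shown to strictly decrease after each reduction step; hyperbolicity guarantees the process never produces the empty or a non-hyperbolic form.

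\emph{Uniqueness up to shifting} will be the main obstacle. I would show that the normalization procedure is confluent: two distinct reduction sequences applied to the same loop word yield normal outputs that agree up to shifting. A Newman-style argument reduces this to verifying local convergence for a finite catalog of critical pairs—overlaps between insert/remove $(0,0,0)$, add/subtract $(1,1,1)$, and shifting. Alternatively, one can attempt to construct a shift-invariant injection from normal loop words into conjugacy classes of $F_2$ via $\Phi$; the subtle point, illustrated by the equivalence $\Phi(2,2,2)\sim\Phi(-1,-1,-1)$ in $F_2$ (both representing the commutator class), is that some conjugacy classes admit more than one natural block decomposition, and condition 4 of Definition \ref{defn:normal2} is precisely what singles out the correct normal representative (in this example, $(2,2,2)$). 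The most delicate part is the case analysis showing that the normality conditions pick out a unique block decomposition at each ambiguous class and that distinct shifts of a normal loop word never collide under $\Phi$-conjugacy.
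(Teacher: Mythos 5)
The paper gives no proof of this proposition: it is quoted from \cite{CJKR}, so there is no in-paper argument to compare yours against. Judged on its own, your decomposition into three tasks (normal $\Rightarrow$ hyperbolic; existence of a normal representative; uniqueness up to shifting) is the right one, the translation of equivalence into conjugacy in $F_2=\langle\alpha,\beta\rangle$ via $\gamma=\beta^{-1}\alpha^{-1}$ is the natural tool, and you correctly identify $(2,2,2)\sim(-1,-1,-1)$ as the phenomenon that condition 4 of Definition \ref{defn:normal2} is designed to break. But each of the three steps has a genuine gap, and in two of them the specific mechanism you propose would fail.

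First, termination. Your monovariant $\bigl(|w'|,\ \#\{j: w_j'=\pm1\}\bigr)$ does not decrease under your own step (ii), which inserts $(0,0,0)$ and adds $(1,1,1)$ and therefore strictly increases the length; moreover the move set of Lemma \ref{lem:equimove} contains mutually inverse pairs (insert/remove $(0,0,0)$, add/subtract $(1,1,1)$) plus shifting, so as a rewriting system it is manifestly non-terminating, and Newman's lemma cannot be invoked until you isolate an oriented, terminating subsystem — which is precisely the content you have not supplied. This also undermines the confluence route to uniqueness. Second, for ``normal $\Rightarrow$ hyperbolic'' you propose to check that \emph{every} representative of the equivalence class of $(l',0,0)$ violates normality; that class is infinite, so a case analysis on $l'$ cannot do this. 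One needs an invariant separating normal words from non-hyperbolic ones — e.g.\ showing that the cyclic reduction of $\Phi(w')$ for normal $w'$ is never trivial and never a power of a single generator ($\alpha$, $\beta$, or $\beta^{-1}\alpha^{-1}$), whereas $\Phi(l',0,0)=\alpha^{l'}$ always is. Third, uniqueness is only gestured at: the injection route requires an explicit procedure recovering the normal loop word (up to shift) from the cyclically reduced word representing its conjugacy class, together with a verification that the four normality conditions select exactly one block decomposition of each such cyclic word into $\alpha$-, $\beta$-, $\gamma$-syllables; that reconstruction is the actual substance of the proposition and is missing. As it stands the proposal is a plausible outline, not a proof.
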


This also implies that
two normal loop words $w'$ and $\tilde{w}'$ are equivalent
if and only if
they coincide up to shifting,
that is,
$\tilde{w}'=w'^{(k)}$ for some $k\in\mathbb{Z}$.


\begin{cor}\label{cor:PeriodicNonprimitive}
A normal loop word $w'$ is periodic
if and only if
the associated loop $L\left(w'\right)$ and its free homotopy class
$\left[L\left(w'\right)\right]$ are non-primitive.
\end{cor}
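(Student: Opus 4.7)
The plan is to prove both implications separately, with the forward direction being essentially formal and the backward direction relying on the uniqueness part of Proposition \ref{prop:normalnormal} together with a compatibility check.

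For the forward direction, suppose $w' = (\tilde{w}')^{\# N}$ for some loop word $\tilde{w}'$ and $N \ge 2$. By construction in Definition \ref{def:LoopWord}, the loop $L(w')$ is freely homotopic to $L(\tilde{w}')^{\# N}$: both represent the conjugacy class of $\bigl(\alpha^{\tilde{l}_1}\beta^{\tilde{m}_1}\gamma^{\tilde{n}_1}\cdots \gamma^{\tilde{n}_{\tilde{\tau}}}\bigr)^N$ in $\pi_1(\POP)$, where $\tilde{w}' = (\tilde{l}_1,\tilde{m}_1,\tilde{n}_1,\dots)$. Hence $L(w')$ is non-primitive.

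For the backward direction, suppose $L(w')$ is freely homotopic to $M^{\# N}$ for some loop $M$ and $N \ge 2$. Since $w'$ is normal, $L(w')$ is hyperbolic by Proposition \ref{prop:normalnormal}. If $M$ were elliptic (null-homotopic) or parabolic (freely homotopic to some power of a boundary loop, i.e.\ some power of $\alpha$, $\beta$, or $\gamma$), then $M^{\# N}$ would be of the same type, contradicting hyperbolicity of $L(w')$. So $M$ is hyperbolic, and by Proposition \ref{prop:normalnormal} there exists a normal loop word $\tilde{w}'$ with $[M] = [L(\tilde{w}')]$. Consequently $[L(w')] = [L((\tilde{w}')^{\# N})]$, i.e.\ $w'$ and $(\tilde{w}')^{\# N}$ are equivalent loop words.

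The key technical step, which I expect to be the main obstacle, is to verify that $(\tilde{w}')^{\# N}$ is itself normal whenever $\tilde{w}'$ is. Once this is done, the uniqueness clause of Proposition \ref{prop:normalnormal} forces $w' = \bigl((\tilde{w}')^{\# N}\bigr)^{(k)}$ for some $k \in \mathbb{Z}$, and shifting preserves periodicity (a $k$-shift of an $N$-concatenation of $\tilde{w}'$ is the $N$-concatenation of a shift of $\tilde{w}'$, using that $\tilde{w}'$ has length $3\tilde{\tau}$ dividing the length of $(\tilde{w}')^{\# N}$), so $w'$ is periodic.

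To handle the normality inheritance, I would observe that Definition \ref{defn:normal2} is phrased in terms of cyclic subwords, and every length-$3$ cyclic subword of $(\tilde{w}')^{\# N}$ is already a cyclic subword of $\tilde{w}'$; this immediately transfers the conditions on $(a,1,b)$ and $(a,0,b)$ patterns. The third condition (no subword $(0,-1,\dots,-1,0)$) is the only one involving variable length and is therefore the delicate one: a violating subword in $(\tilde{w}')^{\# N}$ with its two bounding $0$'s lying in different copies of $\tilde{w}'$ would, going from the first $0$ to the next $0$ in cyclic order within $\tilde{w}'$, produce a cyclic subword of $\tilde{w}'$ of the same forbidden shape, contradicting normality of $\tilde{w}'$. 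The fourth condition is obvious. Combining these observations completes the proof.
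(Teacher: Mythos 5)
Your argument is correct and follows the route the paper intends: the corollary is stated there without proof as an immediate consequence of Proposition \ref{prop:normalnormal}, and your write-up supplies exactly the missing details (the forward direction from the definition of $L(w')$, hyperbolicity of the root loop $M$, normality of $(\tilde{w}')^{\# N}$, and the uniqueness-up-to-shifting clause). The one place where your condition-3 argument does not literally apply is when a forbidden subword of $(\tilde{w}')^{\# N}$ spans a full period or more, so that ``the next $0$ in cyclic order within $\tilde{w}'$'' is the same $0$ again (and a subword with coinciding endpoints is not allowed); but then $\tilde{w}'$ is cyclically $(0,-1,\dots,-1)$ or $(-1,\dots,-1)$, which already violates the second or fourth normality condition, so the normality-inheritance step still goes through.
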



Now we give $\POP$ a hyperbolic metric with three cusps.
It can be achieved by considering the Poincar\'e disk as the universal cover of $\POP$
as shown in Figure \ref{fig:UniversalCover}.
In fact,
such a metric is unique up to isometry
(Theorem 9.8.8 in \cite{Ratcliffe}).
It is well-known in hyperbolic geometry
that there is exactly one (immersed) closed geodesic in each primitive hyperbolic free homotopy class of loops in $\POP$
(Theorem 9.6.4 in \cite{Ratcliffe}).
This provides another 
description of normal loop words.

\begin{prop}\label{prop:GeodesicNormalLoopWordCorrespondence}
There is a one-to-one correspondence
\begin{align*}
\left\{\textnormal{closed geodesics in }\POP \right\}
\ \ &\overset{1:1}{\leftrightarrow}\ \
\left.\left\{\textnormal{non-periodic normal loop words}\right\}\right/\sim_{\textnormal{shifting}}.
\end{align*}
\end{prop}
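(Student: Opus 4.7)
The plan is to deduce this correspondence by combining the classical uniqueness of closed geodesics in free homotopy classes on a hyperbolic surface with the two main facts about normal loop words already established in the excerpt: Proposition \ref{prop:normalnormal} and Corollary \ref{cor:PeriodicNonprimitive}. Concretely, I would chain the bijections
\[
\{\text{closed geodesics in }\POP\}
\;\overset{1:1}{\longleftrightarrow}\;
\bigl\{\text{primitive hyperbolic free homotopy classes of loops in }\POP\bigr\}
\;\overset{1:1}{\longleftrightarrow}\;
\{\text{non-periodic normal loop words}\}\big/{\sim_{\textnormal{shifting}}}.
\]

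For the first bijection, I would appeal to the standard fact (Theorem 9.6.4 of Ratcliffe, already cited in the excerpt) that on a hyperbolic surface each primitive hyperbolic free homotopy class of loops contains a unique closed geodesic, which produces an injection from closed geodesics to hyperbolic free homotopy classes. Using the convention declared in the footnote attached to Theorem \ref{thm:ModuleGeodesic}---that every closed geodesic in this paper is oriented, non-periodic and considered up to orientation-preserving reparameterization---this promotes to a bijection with the \emph{primitive} hyperbolic classes, and orientation is preserved tautologically since loop words encode an orientation.

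For the second bijection, Proposition \ref{prop:normalnormal} gives that hyperbolic free homotopy classes of loops in $\POP$ are in one-to-one correspondence with normal loop words up to shifting, while Corollary \ref{cor:PeriodicNonprimitive} says that a normal loop word is periodic if and only if the associated free homotopy class is non-primitive. Restricting the bijection of Proposition \ref{prop:normalnormal} to its non-periodic part therefore yields exactly the primitive hyperbolic classes on the one side and the non-periodic normal loop words modulo shifting on the other.

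Composing the two bijections yields the proposition. I do not expect any serious obstacle: the heavy lifting has already been done in Proposition \ref{prop:normalnormal} (uniqueness of the normal representative) and in the classical hyperbolic-geometry theorem. The only subtle point to verify is the orientation/reparameterization convention, namely to confirm that two normal loop words differing only by a cyclic shift correspond to the same oriented closed geodesic up to reparameterization---this is immediate from the construction of $L(w')$ in Figure \ref{fig:GeneralLoops}, since a shift of $w'$ just moves the base point ${\small\color{red}\bigstar}$ along the loop without changing its image or orientation.
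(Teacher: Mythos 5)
Your proposal is correct and is essentially the argument the paper intends: the proposition is stated immediately after the citation of the uniqueness of closed geodesics in primitive hyperbolic free homotopy classes (Theorem 9.6.4 in Ratcliffe), and the correspondence is obtained exactly by composing that classical fact with Proposition \ref{prop:normalnormal} and Corollary \ref{cor:PeriodicNonprimitive}, as you do. Your remark on the orientation/shifting convention is the right subtlety to check and is handled correctly.
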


\begin{figure}[H]
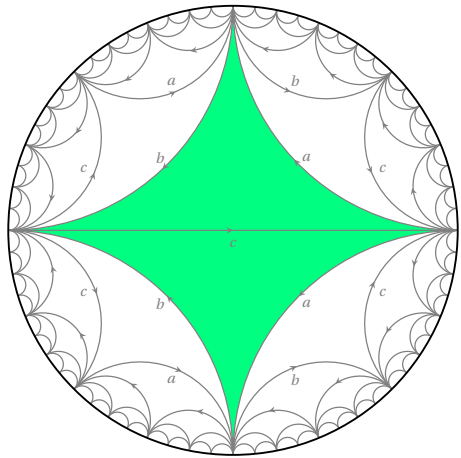

     \centering
          \adjustbox{height=60mm}{

          }
\centering
\captionsetup{width=1\linewidth}
\caption{Fundamental domain of $\POP$ in its universal cover (Poincar\'e disk)}
\label{fig:UniversalCover}
\end{figure}


\begin{spacing}{0.97}

\subsection{Loop data and canonical form of loops with a local system
}\label{sec:LoopData}


\begin{defn}\label{defn:LoopData}
A \textnormal{\textbf{loop datum}} $(w',\primelambda,\primemu)$ consists of the following:
\begin{itemize}
\item (normal loop word) $w' = (l'_1,m'_1,n'_1,l'_2,m'_2,n'_2,\dots,l'_\tau,m'_\tau,n'_\tau)\in \mathbb{Z}^{3\tau}$ for some $\tau \in \mathbb{Z}_{\ge1}$,
\item (holonomy parameter)\ $\primelambda \in \field^\times$,
\item ((geometric) rank) $\primemu \in \mathbb{Z}_{\ge1}$.
\end{itemize}
It represents an object $\left(L,E,\nabla\right)$ of $\Fuk\left(\POP\right)$,
denoted as
$$
\mathcal{L}\left(w',\primelambda,\primemu\right),
$$
that consists of the loop $L:=L\left(w'\right)$
defined in Definition \ref{def:LoopWord},
a trivial $\field$-vector bundle $E$ of rank $\primemu$ over the domain $S^1$ of $L$,
and a flat connection $\nabla$ on $E$
whose holonomy is $J_{\primemu}\left(\primelambda\right)$
at the point {\small$\color{red}\bigstar$} on $L$
(marked in Figure \ref{fig:GeneralLoops}).
We refer to it as the
\textnormal{\textbf{canonical form of
loops with a local system}}
corresponding to the loop datum
$\left(w',\primelambda,\primemu\right)
$.


\end{defn}

%
%
%
%
%
%

\begin{prop}
Let
$\left(L,E,\nabla\right)$ be a loop with a local system
with $L=L\left(w'\right)$ for some normal loop word $w'$.
%
Then there are finitely many pairs
$
\left(\primelambda_1,\primemu_1\right),\dots,\left(\primelambda_d,\primemu_d\right)
\in\field^\times\times\mathbb{Z}_{\ge1}
$
($d\in\mathbb{Z}_{\ge1}$)
such that
$\left(E,\nabla\right)$ is gauge equivalent to
the direct sum
$$
\left(E_1,\nabla_1\right)\oplus\cdots\oplus \left(E_d,\nabla_d\right),
$$
where $\left(L,E_i,\nabla_i\right)$
($i\in\left\{1,\dots,d\right\}$)
is the canonical form $\mathcal{L}\left(w',\primelambda_i,\primemu_i\right)$
corresponding to the loop datum $\left(w',\primelambda_i,\primemu_i\right)$.
Moreover, the choice of the pairs is unique up to the order.

\end{prop}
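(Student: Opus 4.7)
The plan is to reduce the statement to the Jordan normal form theorem for invertible matrices over the algebraically closed field $\field = \mathbb{C}$. The proof goes through the classical fact that flat vector bundles on a circle are classified, up to gauge equivalence, by their holonomy considered up to conjugation.

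First, I would observe that every complex vector bundle over $S^1$ is trivializable. Fixing a trivialization $E \cong S^1 \times \field^\primemu$ near the marked point ${\small\color{red}\bigstar}$, the flat connection $\nabla$ is determined up to gauge equivalence by its holonomy at ${\small\color{red}\bigstar}$, which is an element $H \in \operatorname{GL}_\primemu(\field)$. Moreover, two triples $(L, E, \nabla)$ and $(L, E', \nabla')$ of the same rank $\primemu$ are gauge equivalent if and only if the corresponding holonomy matrices are conjugate in $\operatorname{GL}_\primemu(\field)$, because a change of trivialization at ${\small\color{red}\bigstar}$ conjugates the holonomy, and conversely any conjugating matrix extends (via parallel transport) to a global gauge transformation.

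Second, I would apply the Jordan normal form theorem over the algebraically closed field $\field$. Since $H$ is invertible, all its eigenvalues lie in $\field^\times$. The Jordan decomposition yields a unique (up to the order of blocks) collection of pairs $(\primelambda_i, \primemu_i) \in \field^\times \times \mathbb{Z}_{\ge 1}$, $i \in \{1,\dots,d\}$, such that $H$ is conjugate to $\bigoplus_{i=1}^d J_{\primemu_i}(\primelambda_i)$. Correspondingly, $(E, \nabla)$ decomposes as a direct sum of flat sub-bundles $(E_i, \nabla_i)$ whose holonomies at ${\small\color{red}\bigstar}$ are precisely $J_{\primemu_i}(\primelambda_i)$. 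By Definition \ref{defn:LoopData}, each $(L, E_i, \nabla_i)$ is isomorphic to $\mathcal{L}(w', \primelambda_i, \primemu_i)$, yielding the desired decomposition. Uniqueness of the multiset $\{(\primelambda_i, \primemu_i)\}$ follows from the uniqueness of the Jordan normal form.

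There is no real obstacle here: the only subtlety is verifying that gauge equivalence on $(L, E, \nabla)$ (as an object of $\Fuk(\POP)$) matches conjugation of holonomy, but this is essentially tautological once one unravels the definition of gauge equivalence recalled in Section~\ref{sec:CptFukSurface}. The statement is therefore just a geometric repackaging of Jordan normal form, and no further input is needed.
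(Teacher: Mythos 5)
Your proposal is correct and follows essentially the same route as the paper's own proof: trivialize $E$ over $S^1$, represent the holonomy at ${\small\color{red}\bigstar}$ by a matrix in $\operatorname{GL}_{\primemu}(\field)$, and apply the Jordan normal form, with uniqueness of the pairs coming from uniqueness of the Jordan decomposition. The only difference is that you spell out the (standard) identification of gauge equivalence with conjugation of holonomy, which the paper leaves implicit.
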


\begin{proof}
Note that every $\field$-vector bundle over $S^1$ is trivial.
At a fixed point {\small$\color{red}\bigstar$} on $L$, 
the holonomy
$
\hol_{\small\color{red}\bigstar}(E)
$
of $\left(E,\nabla\right)$
along $L$ at {\small$\color{red}\bigstar$}
is represented by some matrix
$\primeLambda\in\operatorname{GL}_{\primemu}\left(\field\right)$.
Then the Jordan canonical form of $\primeLambda$
has Jordan blocks
$J_{\primemu_1}\left(\primelambda_1\right),\dots,J_{\primemu_d}\left(\primelambda_d\right)$
($d\in\mathbb{Z}_{\ge1}$)
for some pairs
$\left(\primelambda_1,\primemu_1\right),\dots,\left(\primelambda_d,\primemu_d\right)\in\field^\times \times\mathbb{Z}_{\ge1}$,
which yields the desired decomposition.
\end{proof}

This shows that any indecomposable local system on a fixed loop
$L=L\left(w'\right)$
is gauge equivalent to a unique canonical form
$\mathcal{L}\left(w',\primelambda,\primemu\right)$.
Combining this with Proposition \ref{prop:GeodesicNormalLoopWordCorrespondence},
we get the following:  

\begin{cor}\label{cor:GeodesicLocalSystemLoopData}
There is a one-to-one correspondence
$$
\left.
\left\{
\textnormal{\textnormal{{closed geodesics in $\POP$ with an indecomposable local system}}}
\right\}
\right/\sim_\textnormal{gauge equivalence}
$$
$$
\hspace{-55mm}
\quad\overset{1:1}{\leftrightarrow}\quad
\left.\left\{\textnormal{non-periodic loop data}
\right\}
\right/
\sim_{\textnormal{shifting}}
\footnote{
We say that a loop datum is \emph{periodic} if its normal loop word is periodic,
and define a \emph{shift} of a loop datum as the shifting of its normal loop word.
}.
$$

\end{cor}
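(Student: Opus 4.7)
The plan is to combine the geodesic/normal-word bijection of Proposition \ref{prop:GeodesicNormalLoopWordCorrespondence} with a pointwise classification of indecomposable local systems on a fixed loop via the Jordan canonical form, as supplied by the proposition immediately preceding the corollary.

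First, I would set up the correspondence. Given an oriented closed geodesic $\gamma$ in $\POP$, Proposition \ref{prop:GeodesicNormalLoopWordCorrespondence} (together with Corollary \ref{cor:PeriodicNonprimitive}, which identifies non-primitive with periodic at the level of normal loop words) produces a non-periodic normal loop word $w'$, unique up to shifting, such that $\gamma$ is the geodesic representative of the free homotopy class $[L(w')]$. Conversely, every non-periodic normal loop word $w'$ gives a primitive hyperbolic free homotopy class and hence a unique closed geodesic. So it remains to upgrade this to a correspondence that also records an indecomposable local system on the left and the parameters $(\primelambda,\primemu)$ on the right.

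Second, fix such a geodesic $\gamma$ and a basepoint ${\small\color{red}\bigstar}$ on it, and consider an indecomposable local system $(E,\nabla)$ on $\gamma$. By the preceding proposition, $(E,\nabla)$ decomposes uniquely (up to order) as a direct sum of canonical forms $\mathcal{L}(w',\primelambda_i,\primemu_i)$; indecomposability forces $d=1$, yielding a single pair $(\primelambda,\primemu)\in\field^{\times}\times\mathbb{Z}_{\ge1}$. Conversely, any such pair produces the canonical form $\mathcal{L}(w',\primelambda,\primemu)$, whose underlying local system is indecomposable since its holonomy is a single Jordan block. The map descends to gauge equivalence classes on the left because gauge transformations act on the holonomy matrix by conjugation, under which a single Jordan block representative is determined by the unordered pair (eigenvalue, size). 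Uniqueness of the Jordan normal form then gives a bijection at a fixed geodesic and fixed basepoint.

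Third, I would check that the bijection is compatible with the equivalence relations on both sides. On the right, shifting a loop datum $(w',\primelambda,\primemu)\mapsto (w'^{(k)},\primelambda,\primemu)$ corresponds geometrically to moving the basepoint ${\small\color{red}\bigstar}$ along the loop past $k$ of the distinguished windings around the holes; this is precisely the freedom in choosing the basepoint on $\gamma$. On the left, the holonomy at different basepoints is related by conjugation by the parallel transport, so the induced Jordan datum $(\primelambda,\primemu)$ is unchanged. Gauge equivalence of local systems and orientation-preserving reparameterization of $\gamma$ are similarly absorbed, the former by the conjugation action used above and the latter because normal loop words already represent geodesics only up to shifting. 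Thus the two bijections assemble into the desired one-to-one correspondence.

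The main obstacle I expect is bookkeeping rather than a genuine geometric or algebraic difficulty: namely, verifying cleanly that the basepoint-shifting on $\gamma$ matches the combinatorial shifting of $w'$, and that this verification is uniform across the non-periodic case (where the stabilizer of a normal word under shifting is trivial). Once that is done, the rest reduces to quoting Proposition \ref{prop:GeodesicNormalLoopWordCorrespondence} and the Jordan normal form statement from the preceding proposition.
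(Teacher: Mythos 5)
Your proposal is correct and follows essentially the same route as the paper: it combines Proposition \ref{prop:GeodesicNormalLoopWordCorrespondence} (geodesics $\leftrightarrow$ non-periodic normal loop words up to shifting) with the Jordan-block classification of indecomposable local systems from the proposition immediately preceding the corollary. The extra care you take with basepoint versus word shifting is a reasonable elaboration of what the paper leaves implicit, but it does not constitute a different argument.
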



\subsection{Matrix factorizations from canonical form of loops with a local system}
\label{sec:MFfromLoopsWithLocalSystem}

To get a matrix factorization defined over $\field$
from the
loop with a local system
$
\mathcal{L}\left(w',\primelambda,\primemu\right)
$
constructed
in Definition \ref{defn:LoopData},
we first need to ensure that
the underlying loop $L\left(w'\right)$
doesn't bound an immersed cylinder with the reference loop $\mathbb{L}$
(Seidel Lagrangian).
We briefly summarize the discussion in \cite{CJKR} here:

\begin{defn}\label{defn:CylinderFree}
A loop $L$ in $\POP$ is said to be \textnormal{\textbf{cylinder-free}} with
$\mathbb{L}$
if there is no immersion
$j:S^1\times[0,1]\rightarrow\POP$
that satisfies
$j\left(e^{2\pi it},0\right) = L\left(\imath(t)\right)$
and
$j\left(e^{2\pi it},1\right) = \mathbb{L}\left(\jmath(t)\right)$
for some immersions $\imath$, $\jmath:S^1\rightarrow S^1$.
\end{defn}

\begin{thm}\cite{CJKR}\label{thm:CylinderFreeMFFinite}
For an object $\mathcal{L}:=(L,E,\nabla)$ in $\Fuk\left(\POP\right)$
whose underlying loop $L$ is cylinder-free with $\mathbb{L}$,
its mirror matrix factorization
$
\LocalF\left(\mathcal{L}\right)
=
\left(\LocalPhi\left(\mathcal{L}\right),\LocalPsi\left(\mathcal{L}\right)\right)
$
is well-defined over $\field$.
More precisely,
this means that
the moduli spaces
involved in formula \ref{eqn:ComponentOfDeformedDifferential}
are finite.

Moreover,
the homotopy class of $\LocalF\left(\mathcal{L}\right)$ is invariant
under free homotopy of the underlying curve $L$ and
gauge equivalence of the flat vector bundle $(E,\nabla)$
\footnote{
See also \cite[
Proposition 5.4]{CHL-toric}
for its invariance under Hamiltonian isotopy of $L$.
}.
\end{thm}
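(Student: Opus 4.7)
The plan is to establish the two assertions separately. For the first (finiteness of the moduli spaces), I would argue by contradiction using a Gromov-type compactness argument in the universal cover of $\POP$; for the second (invariance under free homotopy and gauge equivalence), I would invoke functoriality of $\LocalF$ from Theorem \ref{thm:lmf}.

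For the finiteness part, fix $p,s\in\chi(L,\mathbb{L})$ and consider the total moduli space
\[
\mathcal{M}_{p,s}\ :=\ \bigsqcup_{i\ge 0}\ \bigsqcup_{X_1,\dots,X_i\in\{X,Y,Z\}}\mathcal{M}(p,X_1,\dots,X_i,\overline{s})
\]
whose elements contribute to the $(s,p)$-component of $\operatorm_1^{0,b}$ in formula (\ref{eqn:MatrixOfComponentOfDeformedDifferential}). The claim reduces to $|\mathcal{M}_{p,s}|<\infty$. Suppose for contradiction there exists a sequence $(u_n)_{n\in\mathbb{N}}$ of pairwise distinct deformed strips in $\mathcal{M}_{p,s}$. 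Since $|L\cap \mathbb{L}|$ and $|\mathbb{L}\cap\mathbb{L}|$ are finite, and each $u_n$ is determined combinatorially by its ordered sequence of corners, the number $i(u_n)$ of corners among $\{X,Y,Z\}$ must tend to infinity along a subsequence (which we again call $(u_n)$).

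Now lift each $u_n$ to the universal cover $\widetilde{\POP}$, identified with the Poincar\'e disk carrying the hyperbolic metric. Each lift $\widetilde{u}_n\colon D^2\to\widetilde{\POP}$ is an immersed polygon whose boundary consists of exactly one $L$-arc running from a lift of $p$ to a lift of $s$ and several $\mathbb{L}$-arcs meeting at lifts of the corners $X_j$. As $i(u_n)\to\infty$, the $L$-arc must cover $L$ arbitrarily many times, so by composing with covering transformations to normalize its starting lift of $p$, the endpoint of this arc escapes to infinity in $\widetilde{\POP}$ along a quasi-geodesic lying in the preimage of $L$. After a further diagonal subsequence, the normalized strips converge on compact subsets to a limiting immersion of a half-infinite strip $\mathbb{R}\times[0,1]\to\widetilde{\POP}$ whose two boundary components are lifts of $L$ and of $\mathbb{L}$ respectively. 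Projecting back to $\POP$ and using that $L$ and $\mathbb{L}$ are compact, this produces an immersion $S^1\times[0,1]\to\POP$ with $S^1\times\{0\}$ factoring through a covering of $L$ and $S^1\times\{1\}$ through a covering of $\mathbb{L}$, contradicting the cylinder-free hypothesis. This finishes Part 1.

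For the invariance part, let $\mathcal{L}_0=(L_0,E_0,\nabla_0)$ and $\mathcal{L}_1=(L_1,E_1,\nabla_1)$ be two loops with local systems such that $L_0$ is freely homotopic to $L_1$ and $(E_0,\nabla_0)$ is gauge equivalent to $(E_1,\nabla_1)$ under this homotopy. A free homotopy of a loop in a surface is realized (after a small perturbation preserving transversality with $\mathbb{L}$) by a compactly supported Hamiltonian isotopy of $\POP$, and gauge equivalence of local systems is an isomorphism of the underlying flat bundles. Standard invariance results for the (wrapped) Fukaya category then yield a quasi-isomorphism $\mathcal{L}_0\simeq\mathcal{L}_1$ in $W\Fuk(\POP)$. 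Since $\LocalF\colon W\Fuk(\POP)\to\MF_{\AI}(xyz)$ is an $A_\infty$-functor by Theorem \ref{thm:lmf}, it sends quasi-isomorphisms to quasi-isomorphisms, so $\LocalF(\mathcal{L}_0)$ and $\LocalF(\mathcal{L}_1)$ are isomorphic in the homotopy category $\underline{\MF}(xyz)$.

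The main obstacle is Part 1: the control of sequences of deformed strips whose combinatorial complexity (number of corners) is unbounded. The delicate point is the extraction of a geometric limit, since the conformal type of the polygonal domains degenerates as the corner count grows. The key technical ingredient is that negative curvature of the hyperbolic pair-of-pants bounds the geometry of long strips in the universal cover and forces the limit, once taken, to be an immersed cylinder; this is precisely the situation ruled out by the cylinder-free hypothesis. The remaining checks --- invariance under Hamiltonian isotopy and under the choice of trivialization for the local system --- are routine consequences of the functoriality already established.
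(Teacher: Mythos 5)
The paper does not actually prove this theorem: it is imported verbatim from \cite{CJKR}, so there is no in-paper argument to compare against, and your proposal has to stand on its own. Your Part 1 has the right overall shape (the standard contradiction-plus-compactness argument producing a limit cylinder), but the decisive step is asserted rather than proved. Two points need real work: (i) a priori only the $\mathbb{L}$-side of $u_n$ lengthens as the corner count grows, so you must justify why the $L$-side is forced to wrap around $L$ unboundedly (otherwise the lift of $s$ stays in a compact set and no escape to infinity occurs); and (ii) the passage from a half-infinite strip in the universal cover to a \emph{compact} immersed cylinder $S^1\times[0,1]\to\POP$ requires periodicity, i.e.\ that the lift of $L$ and the lift of $\mathbb{L}$ bounding the limit strip stay within bounded distance over infinite length and are therefore stabilized by a common deck transformation. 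That is exactly where hyperbolicity enters and it is the substance of the finiteness proof, not a remark. Your reduction to ``corner count tends to infinity'' also tacitly uses that an immersed polygon is determined by its boundary combinatorics, which is true here but deserves a sentence.

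Part 2 contains a genuine error. A free homotopy of a loop in a surface is \emph{not} in general realized by a Hamiltonian isotopy: there is a flux obstruction, and two disjoint freely homotopic circles are the basic counterexample. Worse for your argument, such circles need not even be quasi-isomorphic in $\Fuk(\POP)$ over $\field$, since the morphism complex between disjoint curves is zero while $HF(L,L)\neq 0$ for trivial holonomy; so ``an $A_\infty$-functor preserves quasi-isomorphisms'' cannot yield free-homotopy invariance, because the source objects need not be quasi-isomorphic at all. This is precisely why the theorem's footnote separates Hamiltonian-isotopy invariance (the standard functorial statement, \cite{CHL-toric}) from the stronger free-homotopy invariance: the latter is a property of the \emph{image} matrix factorization, and in \cite{CJKR} it is established directly by checking that $\LocalF(\mathcal{L})$ changes by an explicit homotopy equivalence of matrix factorizations under elementary moves of $L$ relative to $\mathbb{L}$ (the moves underlying Lemma \ref{lem:equimove}), not by appealing to invariance in the Fukaya category. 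The gauge-equivalence half of your Part 2 is fine; the free-homotopy half is the gap and needs a different argument.
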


\begin{prop}\cite{CJKR}\label{prop:CylinderFree}
For a normal loop word $w'$ other than of the form $\left(2,2,2\right)^{\hash \tau}$, the corresponding loop $L\left(w'\right)$ is cylinder-free with $\mathbb{L}$.
\end{prop}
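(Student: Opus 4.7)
The plan is to argue by contrapositive: if $L(w')$ is not cylinder-free with $\mathbb{L}$, then some multi-cover of $L(w')$ is freely homotopic to a multi-cover of $\mathbb{L}$, and the uniqueness statement in Proposition \ref{prop:normalnormal} will force $w' = (2,2,2)^{\hash\tau}$.

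First, I would identify the free homotopy class of $\mathbb{L}$. By directly tracing the Seidel Lagrangian in Figure \ref{fig:SeidelLagrangian}---starting near one cusp and following its orientation across the three ``lobes''---one reads off a representative word in the generators $\alpha,\beta,\gamma$ of $\pi_1(\POP)$ and checks that $[\mathbb{L}] = [L((2,2,2))]$. Since $(2,2,2)$ satisfies the four conditions of Definition \ref{defn:normal2} vacuously, it is already a normal loop word, and therefore $(2,2,2)^{\hash N}$ is a normal representative of $[\mathbb{L}^{\hash N}]$ for every $N\in\mathbb{Z}_{\ge 1}$.

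Next, suppose there is an immersion $j:S^1\times[0,1]\to\POP$ as in Definition \ref{defn:CylinderFree}. The two boundary parametrizations $L\circ\imath$ and $\mathbb{L}\circ\jmath$ are freely homotopic through $j$, and since $\imath,\jmath:S^1\to S^1$ are immersions of $S^1$ they are covering maps of some degrees $\pm k,\pm N$ with $k,N\in\mathbb{Z}_{\ge 1}$. This yields an equality of free homotopy classes $[L(w')^{\hash k}] = [L((2,2,2))^{\pm\hash N}]$, where the sign encodes orientation. Since $L(w')^{\hash k}$ is freely homotopic to $L(w'^{\hash k})$ (and similarly for $\mathbb{L}$), the loop words $w'^{\hash k}$ and $(2,2,2)^{\hash N}$ (or its orientation-reverse) become equivalent in the sense of Definition \ref{def:LoopWord}.

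I would then observe that $w'^{\hash k}$ remains normal: the conditions of Definition \ref{defn:normal2} constrain only cyclic subwords of bounded length, and every cyclic subword of $w'^{\hash k}$ already appears as a cyclic subword of $w'$. By Proposition \ref{prop:normalnormal}, two equivalent normal loop words coincide up to shifting, so $w'^{\hash k}$ equals $(2,2,2)^{\hash N}$ up to shifting. As $(2,2,2)^{\hash N}$ has translational period $3$, its shifts are themselves of the form $(2,2,2)^{\hash N}$, and one concludes $w' = (2,2,2)^{\hash\tau}$ with $\tau k = N$. The orientation-reversed case is handled by the analogous symmetric argument and also yields a word of the same form.

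The main obstacle is the first step: pinning down $[\mathbb{L}] = [L((2,2,2))]$ requires careful bookkeeping of the starting point and orientation conventions built into Definition \ref{def:LoopWord}, since the picture of $\mathbb{L}$ in Figure \ref{fig:SeidelLagrangian} does not immediately match the canonical form depicted in Figure \ref{fig:GeneralLoops}. A secondary technical point is verifying preservation of normality under self-concatenation; the key here is to read loop words cyclically, so that the ``wrap-around'' subwords between successive copies of $w'$ are already controlled by the normality of $w'$ itself.
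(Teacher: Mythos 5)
There is a genuine gap. Your argument rests on the identification $[\mathbb{L}]=[L((2,2,2))]$, but this is not what happens: tracing the conventions of Definition \ref{def:LoopWord}, the class of $\mathbb{L}$ with its given orientation is $[\alpha\gamma\beta]$, whose normal representative is $(1,0,1,0,1,0)$, while $(2,2,2)$ represents $[\alpha^2\beta^2\gamma^2]=[\mathbb{L}^{\#-1}]$ (this is exactly what the paper records at the start of \S\ref{sec:ExceptionalCase}). Since a cylinder as in Definition \ref{defn:CylinderFree} only forces $[L(w')^{\#k}]=[\mathbb{L}^{\#N}]$ for some nonzero integers $k,N$ of either sign, your contrapositive can only conclude that $w'$ is, up to shift, one of $(2,2,2)^{\#\tau}$ \emph{or} $(1,0,1,0,1,0)^{\#\tau}$ — both are normal loop words. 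Your closing claim that ``the orientation-reversed case \dots also yields a word of the same form'' is therefore false: it yields $(1,0,1,0,1,0)^{\#\tau}$, which is not a shift of $(2,2,2)^{\#\tau}$.

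This matters because the proposition asserts that $L((1,0,1,0,1,0)^{\#\tau})$ \emph{is} cylinder-free even though it is freely homotopic to $\mathbb{L}^{\#\tau}$. Cylinder-freeness is a property of the specific immersed representative, not of its free homotopy class, so no amount of bookkeeping with normal forms can close this case; one must examine the actual configuration of $L((1,0,1,0,1,0)^{\#\tau})$ against $\mathbb{L}$ and rule out an immersed annulus directly (the paper flags this with ``In the former case, it is not the case for our canonical form''). That representative-level argument is the substantive content missing from your proof. A secondary, patchable issue: your claim that every cyclic subword of $w'^{\#k}$ already occurs cyclically in $w'$ fails for subwords longer than $3\tau$, which is relevant to the $(0,-1,\dots,-1,0)$ condition in Definition \ref{defn:normal2}; you need a separate (easy) check that a normal $w'$ cannot produce such a wrap-around subword after concatenation.
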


\end{spacing}

\newpage

Now we compute the matrix factorization corresponding to the canonical form
$
\left(L,E,\nabla\right)
:=
\mathcal{L}\left(w',\primelambda,\primemu\right)
$
for a loop datum $\left(w',\primelambda,\primemu\right)$
with $w'\ne(2,2,2)^{\hash \tau}$.
For
a loop word of length $3\tau$
$$
w' = \left(l_1',m_1',n_1',l_2',m_2',n_2',\dots,l_{\tau}',m_{\tau}',n_{\tau}'\right),
$$
the corresponding loop $L=L\left(w'\right)$ has $6\tau$ intersections with $\mathbb{L}$.
We name even-degree angles from $L$ to $\mathbb{L}$ by
$
p_1,q_1,r_1,\dots,p_{\tau},q_{\tau},r_{\tau}\in\hom^0\left(L,\mathbb{L}\right)
$
and odd-degree angles from $L$\ to $\mathbb{L}$ by
$
s_1,t_1,u_1,\dots,s_{\tau},t_{\tau},u_{\tau}\in\hom^1\left(L,\mathbb{L}\right)
$
in the order along the orientation of $L$.

\begin{figure}[H]
$$
          \adjustbox{height=60mm}{

          }
$$
\caption{
Canonical form $\left(L,E,\nabla\right)
=
\mathcal{L}\left(w',\primelambda,\primemu\right)
$
and Seidel Lagrangian
$\mathbb{L}$
}
\end{figure}

\begin{prop}\label{thm:MFFromLag}
For a
loop datum $\left(w',\primelambda,\primemu\right)$
with
$w'\ne(2,2,2)^{\hash \tau}$
\footnote{
For the case $w'=(2,2,2)^{\hash \tau}$,
we still get the same form
even if we use the loop constructed in Definition \ref{def:LoopWord},
which is not cylinder-free with $\mathbb{L}$.
For $\primelambda=-1$,
however,
it fails to be a matrix factor of $xyz$
as its determinant is zero.
For $\primelambda\ne-1$,
it is a valid matrix factor of $xyz$
but the opposite matrix factor
$\LocalPsi\left(\mathcal{L}\left(w',\primelambda,\primemu\right)\right)$
cannot be directly obtained by counting polygons
between $L$ and $\mathbb{L}$,
because it involves some moduli spaces
having infinitely many elements. 
To justify it,
we will need to develop additional explanation
but we won't to do in that way here.
We will rather treat them separately in Subsection \ref{sec:(2,2,2)loop}.
},
the corresponding loop with a local system
$
\left(L,E,\nabla\right)
:=
\mathcal{L}\left(w',\primelambda,\primemu\right)
$
is converted under the localized mirror functor
into
the matrix factor
$
\LocalPhi\left(\mathcal{L}\left(w',\primelambda,\primemu\right)\right)
$
given by
\begin{center}
\adjustbox{scale=0.86}{
$\makeatletter\setlength\BA@colsep{2pt}\makeatother
\begin{blockarray}{c @{\hspace{2pt}} c @{\hspace{7pt}} cccccccc}
  &
  & \color{darkgreen} \small\text{$\left(\left.E\right|_{p_1}\right)^*$}
  & \color{darkgreen} \small\text{$\left(\left.E\right|_{q_1}\right)^*$}
  & \color{darkgreen} \small\text{$\left(\left.E\right|_{r_1}\right)^*$}
  & \color{darkgreen} \small\text{$\left(\left.E\right|_{p_2}\right)^*$}
  & \color{darkgreen} \small\text{$\cdots$}
  & \color{darkgreen} \small\text{$\left(\left.E\right|_{q_\tau}\right)^*$}
  & \color{darkgreen} \small\text{$\left(\left.E\right|_{r_\tau}\right)^*$}
  \\[1mm]
  \begin{block}{c @{\hspace{2pt}} c @{\hspace{7pt}} (ccccccc)c}
    & \color{darkgreen} \small\text{$\left(\left.E\right|_{s_1}\right)^*$}
    & z I_{\primemu} & -y^{m_1'-1} I_{\primemu} & 0 & 0 & \cdots & 0 & -(-x)^{-l_1'}\left(J_{\primemu}\left(\primelambda\right)^T\right)^{-1}
    \\[2mm]
    & \color{darkgreen} \small\text{$\left(\left.E\right|_{t_1}\right)^*$}
    & y^{-m_1'} I_{\primemu} & x I_{\primemu} & -z^{n_1'-1} I_{\primemu} & 0 & \cdots & 0 & 0
    \\[2mm]
    & \color{darkgreen} \small\text{$\left(\left.E\right|_{u_1}\right)^*$}
    & 0 & z^{-n_1'} I_{\primemu} & y I_{\primemu} & -(-x)^{l_2'-1} I_{\primemu} & \cdots & 0 & 0
    \\[2mm]
    & \color{darkgreen} \small\text{$\left(\left.E\right|_{s_2}\right)^*$}
    & 0 & 0 & -(-x)^{-l_2'} I_{\primemu} & z I_{\primemu} & \ddots & \vdots & \vdots
    \\[2mm]
    & \color{darkgreen} \small\text{$\vdots$}
    & \vdots & \vdots & \vdots & \ddots & \ddots & -y^{m_\tau'-1} I_{\primemu} & 0
    \\[2mm]
    & \color{darkgreen} \small\text{$\left(\left.E\right|_{t_\tau}\right)^*$}
    & 0 & 0 & 0 & \cdots & y^{-m_\tau'} I_{\primemu} & x I_{\primemu} & -z^{n_\tau'-1} I_{\primemu}
    \\[2mm]
    & \color{darkgreen} \small\text{$\left(\left.E\right|_{u_\tau}\right)^*$}
    & -(-x)^{l_1'-1} J_{\primemu}\left(\primelambda\right)^T & 0 & 0 & \cdots & 0 & z^{-n_\tau'} I_{\primemu} & y I_{\primemu}
    & \small \text{$3\tau\primemu\times3\tau\primemu$}
    \\
  \end{block}
\end{blockarray},
$
}
\end{center}
where $x^a, y^a, z^a$ are regarded as $0$ if $a<0$.

\end{prop}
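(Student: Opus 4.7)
By Proposition \ref{prop:HigherRankLMF}, it suffices to establish the $\primemu=1$ case with scalar holonomy $\primelambda\in\field^\times$; the general formula then follows by applying the $(\primelambda,\primeLambda^T)$-substitution entry-by-entry with $\primeLambda=J_\primemu(\primelambda)$. For the rank-one computation I would invoke Proposition \ref{prop:ComponentOfDeformedDifferential}, which expresses each $(\blacksquare,\bullet)$-entry of $\LocalPhi(L)$ via formula (\ref{eqn:MatrixOfComponentOfDeformedDifferential}) as a signed, $\primelambda$-weighted sum over immersed polygons bounded by $L=L(w')$ and $\mathbb{L}$, with corners $\bullet,X_1,\ldots,X_i,\overline{\blacksquare}$. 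Since $w'\ne(2,2,2)^{\hash\tau}$, Proposition \ref{prop:CylinderFree} and Theorem \ref{thm:CylinderFreeMFFinite} guarantee that each of these moduli spaces is finite and that the resulting pair of matrices is defined over $\field$.

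The core of the proof is a complete enumeration of contributing polygons. For each output corner $\overline{\blacksquare}\in\{s_i,t_i,u_i\}$, I would show that the polygons with corner $\overline{\blacksquare}$ split into two geometric families: (i) the three elementary corner triangles of $\mathbb{L}$ adjacent to the punctures, producing the ``diagonal'' contributions $zI_\primemu$ (the $Z$-triangle, entries $(s_i,p_i)$), $xI_\primemu$ (the $X$-triangle, entries $(t_i,q_i)$), and $yI_\primemu$ (the $Y$-triangle, entries $(u_i,r_i)$); and (ii) the ``winding polygons'' that hug the portion of $L$ winding $l_i'$, $m_i'$, or $n_i'$ times around a puncture. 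A positive-exponent winding polygon traverses $|m_i'-1|$ copies of the $Y$-angle and contributes $y^{m_i'-1}$, while a negative-exponent one lies on the opposite side of the puncture and contributes $y^{-m_i'}$; the convention $y^a=0$ for $a<0$ ensures exactly one of the two appears. The analogous dichotomies for the punctures $A$ and $C$ produce the entries $(u_{i-1},p_i)$ and $(t_i,r_i)$ with $x$- and $z$-powers.

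The sign and holonomy follow uniformly from formula (\ref{eqn:MatrixOfComponentOfDeformedDifferential}). I would check that for each winding polygon the parity $(i+1)\mathbb{1}_{\operatorname{o}(\mathbb{L})\neq\operatorname{o}(\partial u)}$ together with crossings through ${\small\color{gray}\bigstar}_{\mathbb{L}}$ yields the stated signs; the alternating $(-x)$ in the entries associated to puncture $A$ reflects that the triangle adjacent to $A$ passes through the marked point on $\mathbb{L}$ while those adjacent to $B$ and $C$ do not, which was illustrated by the sample computations in \S\ref{sec:LMFIllustration}. The holonomy factor $\primelambda^{\pm1}$ enters precisely for the two wrap-around polygons whose boundary on $L$ crosses the marked point ${\small\color{red}\bigstar}$, yielding the entries in positions $(s_1,r_\tau)$ and $(u_\tau,p_1)$. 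All remaining potential polygons either escape to the boundary or develop into an immersed cylinder, and are excluded by cylinder-freeness.

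The main obstacle is the combinatorial completeness of the enumeration: one must verify that every pair $(\bullet,\blacksquare)$ not listed receives no contribution, and that no polygon is doubly counted. I would address this by tracking each intersection angle by its cyclic position along both $L$ and $\mathbb{L}$ (using the labelling of the figure preceding the proposition), so that the cyclic order on corners of any immersed polygon forces it to lie in a tubular neighborhood of a single puncture-winding segment of $L$. Combined with the normal-form constraints on $w'$ from Definition \ref{defn:normal2}, this pins down the enumeration to the two families described above and leaves no room for additional contributions.
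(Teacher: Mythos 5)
Your proposal follows essentially the same route as the paper: reduce the rank-$\primemu$ case to rank $1$ by the $\left(\primelambda,\primeLambda^T\right)$-substitution of Proposition \ref{prop:HigherRankLMF}, and establish the rank-$1$ case by the signed, holonomy-weighted polygon count of Proposition \ref{prop:ComponentOfDeformedDifferential}. The only difference is that you sketch the rank-$1$ enumeration (corner triangles versus winding polygons, with the completeness argument via cyclic ordering of corners) which the paper delegates to the illustration in \S\ref{sec:LMFIllustration} and to the rigorous treatment in \cite{CJKR}; your sketch is consistent with that computation.
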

\begin{proof}
For $\primemu=1$ case,
the computation is essentially the same as what we did for
$
\LocalPhi\left(
\mathcal{L}
\left(\left(3,-2,2\right),\primelambda,1\right)
\right)
$
in
Subsection \ref{sec:LMFIllustration}.
A rigorous proof can be found in \cite{CJKR}.
For $\primemu\ge2$,
we use
Proposition \ref{prop:HigherRankLMF}
to `substitute the matrix $J_{\primemu}\left(\primelambda\right)^T$ for the scalar $\primelambda$',
as we did in Example \ref{ex:HigherRankExample}.
\end{proof}

Inspired by this observation,
we propose the following definition:

\begin{defn}\label{defn:CanonicalFormMF}
For a loop datum $\left(w',\lambda,\primemu\right)$ with $\left(w',\lambda\right)\ne\left((2,2,2)^{\hash \tau},1\right)$,
consider the matrix
\begin{equation*}
\varphi\left(w',\lambda,\primemu\right):=\adjustbox{scale=1}{
$
\makeatletter\setlength\BA@colsep{2pt}\makeatother
\begin{blockarray}{c@ {\hspace{8pt}} cccccccc}
  \begin{block}{c @{\hspace{8pt}} (ccccccc)c}
    & z I_{\primemu} & -y^{m_1'-1} I_{\primemu} & 0 & 0 & \cdots & 0 & -x^{-l_1'}J_{\primemu}\left(\lambda\right)^{-1}
    \\[2mm]
    & -y^{-m_1'} I_{\primemu} & x I_{\primemu} & -z^{n_1'-1} I_{\primemu} & 0 & \cdots & 0 & 0
    \\[2mm]
    & 0 & -z^{-n_1'} I_{\primemu} & y I_{\primemu} & -x^{l_2'-1} I_{\primemu} & \cdots & 0 & 0
    \\[2mm]
    & 0 & 0 & -x^{-l_2'} I_{\primemu} & z I_{\primemu} & \ddots & \vdots & \vdots
    \\[2mm]
    & \vdots & \vdots & \vdots & \ddots & \ddots & -y^{m_\tau'-1} I_{\primemu} & 0
    \\[2mm]
    & 0 & 0 & 0 & \cdots & -y^{-m_\tau'} I_{\primemu} & x I_{\primemu} & -z^{n_\tau'-1} I_{\primemu}
    \\[2mm]
    & -x^{l_1'-1} J_{\primemu}\left(\lambda\right) & 0 & 0 & \cdots & 0 & -z^{-n_\tau'} I_{\primemu} & y I_{\primemu}
    & \small \text{$3\tau\primemu\times3\tau\primemu$}
    \\
  \end{block}
\end{blockarray}
$
}
\end{equation*}
where $x^a, y^a, z^a$ are regarded as $0$ if $a<0$.
Then it is a matrix factor of $xyz$ with the opposite factor denoted by
$\psi\left(w',\lambda,\primemu\right)$
\footnote{
Note that it is a consequence of Theorem \ref{thm:CylinderFreeMFFinite}, Proposition \ref{prop:CylinderFree}
and Theorem \ref{thm:LagMFCorrespondenceNondegenerate}.
An algebraic proof is given in
the proof of Theorem \ref{thm:MFCMCorrespondence}.
Indeed,
the opposite matrix $\psi\left(w',\lambda,\primemu\right)$
can be explicitly written,
see Corollary 9.6 in \cite{CJKR}.
}.
We refer to the pair
$
\left(\varphi\left(w',\lambda,\primemu\right),\psi\left(w',\lambda,\primemu\right)\right)
$
the
\textnormal{\textbf{canonical form of
matrix factorizations}
of $xyz$}
corresponding to the loop datum $\left(w',\lambda,\primemu\right)$.

%

\end{defn}

We can transform the matrix
$
\LocalPhi\left(\mathcal{L}\left(w',\primelambda,\primemu\right)\right)
$
obtained in Proposition \ref{thm:MFFromLag}
into $\varphi\left(w',\lambda,\primemu\right)$
by some bases change and prove the following theorem:

\begin{thm}\label{thm:LagMFCorrespondenceNondegenerate}
For a
loop datum $\left(w',\primelambda,\primemu\right)$
with
$w'\ne(2,2,2)^{\hash \tau}$,
there is an isomorphism
\begin{align*}
\LocalF\left(\mathcal{L}\left(w',\primelambda,\primemu\right)\right)
\ \ &\cong\ \
\left(\varphi\left(w',\lambda,\primemu\right),\psi\left(w',\lambda,\primemu\right)\right)
\end{align*}
in ${\MF}\left(xyz\right)$,
where
$$
\lambda
:=
(-1)^{
l_1+\cdots+l_{\tau}+\tau
}
\primelambda
$$
for
$
l_i
:=
l_i'
+1
-\mathbb{1}_{n_{i-1}'\ge1}
-\mathbb{1}_{l_i'\ge1}
-\mathbb{1}_{m_i'\ge1}
$
($i\in\mathbb{Z}_{\tau}$)
\footnote{
This is a part of the conversion formula
from loop data to band data
(Definition \ref{def:ConversionFromLooptoBand}).
}
.


\end{thm}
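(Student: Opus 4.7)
\medskip

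\noindent\textbf{Proof proposal.} The plan is to exhibit an explicit isomorphism in $\MF(xyz)$ between $\LocalPhi(\mathcal{L}(w',\primelambda,\primemu))$ (as computed in Proposition~\ref{thm:MFFromLag}) and $\varphi(w',\lambda,\primemu)$ (as defined in Definition~\ref{defn:CanonicalFormMF}) by a judicious change of bases on domain and codomain. Comparing the two matrices, the entries agree in shape but differ in three ways: (i)~certain off-diagonal entries appear with opposite sign (for instance $+y^{-m_i'}$ versus $-y^{-m_i'}$, and similarly for $z^{-n_i'}$); (ii)~the $x$-entries appear as $(-x)^a$ in the first matrix and as $x^a$ in the second, which contributes extra signs $(-1)^a$ that must be accounted for; (iii)~the holonomy data appears as $J_{\primemu}(\primelambda)^T$ in one and as $J_{\primemu}(\lambda)$ in the other. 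I will show that all these discrepancies can be reconciled by block-diagonal basis changes that concentrate their cumulative sign effect into the single wrap-around entry, producing precisely the factor $(-1)^{l_1+\cdots+l_\tau+\tau}$ promised by the conversion formula.

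\medskip

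First I would perform diagonal sign changes $P = \operatorname{diag}(\varepsilon_1 I_\primemu,\ldots,\varepsilon_{3\tau} I_\primemu)$ and $Q = \operatorname{diag}(\eta_1 I_\primemu,\ldots,\eta_{3\tau} I_\primemu)$ with $\varepsilon_j,\eta_j\in\{\pm 1\}$ on the domain and codomain, chosen recursively block-by-block so that each off-diagonal entry $\pm (-)^{\bullet}I_\primemu$ becomes $-(-)^{\bullet}I_\primemu$ of the form prescribed by $\varphi(w',\lambda,\primemu)$. Since each such basis change is an isomorphism in $\MF(xyz)$ and acts trivially on $xyz$, it preserves the matrix factorization structure. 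As one traverses the $3\tau$ columns cyclically and adjusts row/column signs, all but one entry can be corrected; the leftover sign piles up in the $(u_\tau,p_1)$ entry, multiplying $J_{\primemu}(\primelambda)^T$ by an accumulated scalar $\epsilon\in\{\pm 1\}$, and the corresponding compensation occurs in the $(s_1,r_\tau)$ entry by $\epsilon^{-1}=\epsilon$.

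\medskip

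Second, to eliminate the transpose I would apply the anti-diagonal permutation $K\in\operatorname{GL}_\primemu(\field)$ (with $K^2 = I$) satisfying $K J_\primemu(\primelambda) K = J_\primemu(\primelambda)^T$, applied only in the $p_1$-block of the domain and the $u_\tau$-block of the codomain. This conjugation leaves the rest of the matrix unchanged (because the adjacent off-diagonal entries in those rows/columns are scalar multiples of $I_\primemu$, which commute with $K$), and converts $J_\primemu(\primelambda)^T$ to $J_\primemu(\primelambda)$ (and $(J_\primemu(\primelambda)^T)^{-1}$ to $J_\primemu(\primelambda)^{-1}$). Then $\epsilon J_\primemu(\primelambda)=J_\primemu(\epsilon\primelambda)$ (up to a conjugation that preserves the Jordan structure when $\epsilon=-1$), so setting $\lambda := \epsilon\primelambda$ produces exactly the form $\varphi(w',\lambda,\primemu)$ of Definition~\ref{defn:CanonicalFormMF}. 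The opposite factor $\psi(w',\lambda,\primemu)$ is then forced by the identity $\psi\varphi = xyz\cdot I$, so the isomorphism automatically intertwines the second factor as well.

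\medskip

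The main obstacle is the sign bookkeeping required to show that the accumulated sign $\epsilon$ equals $(-1)^{l_1+\cdots+l_\tau+\tau}$ with the corrected exponents $l_i = l_i' + 1 - \mathbb{1}_{n_{i-1}'\ge 1} - \mathbb{1}_{l_i'\ge 1} - \mathbb{1}_{m_i'\ge 1}$. The subtlety is that the sign contributed by each block of the $i$-th segment depends on whether the exponents $l_i', m_i', n_{i-1}'$ are positive or non-positive, because the definition of $\varphi$ puts the $x$-terms in the sub-block corresponding to the $i$-th triangle only when $l_i'\ge 1$ (otherwise they appear in a different sub-block with $-l_i'\ge 0$). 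I would carefully trace which rows/columns are negated in each case, observe that the total parity of negations introduced in the $i$-th segment is precisely $l_i' + 1 - \mathbb{1}_{n_{i-1}'\ge 1} - \mathbb{1}_{l_i'\ge 1} - \mathbb{1}_{m_i'\ge 1} \pmod 2$, and sum over $i$ to obtain the claimed formula. Once this parity count is verified, the isomorphism is complete and the theorem follows.
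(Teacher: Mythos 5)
Your proposal is correct and follows essentially the same route as the paper's proof: block-diagonal sign changes that funnel the accumulated sign $(-1)^{l_1+\cdots+l_\tau+\tau}$ into the wrap-around entries, followed by a similarity transformation turning $(-1)^\dagger J_{\primemu}(\primelambda)^T$ into $J_{\primemu}(\lambda)$ (the paper packages your permutation $K$ and the alternating-sign diagonal into one explicit anti-diagonal matrix). The only slip is that the conjugation removing the transpose must be applied consistently to all blocks (in particular also to the $r_\tau$- and $s_1$-blocks, to handle the $(J_{\primemu}(\primelambda)^T)^{-1}$ entry), which is harmless since it commutes with the scalar blocks.
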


\begin{proof}

Note that exactly one of block components
$-y^{m_1'-1}I_{\primemu}$
and
$y^{-m_1'}I_{\primemu}$
survives in
$
\LocalPhi\left(\mathcal{L}\left(w',\primelambda,\primemu\right)\right),
$
for example,
as $y^a$ is regarded as $0$ if $a<0$.
By changing the sign of some basis elements,
we can replace its block components
$$
y^{-m_1'}I_{\primemu},\ \ \
z^{-n_1'}I_{\primemu},\ \ \
-(-x)^{l_2'-1}I_{\primemu},\ \ \
-(-x)^{-l_2'}I_{\primemu},\ \ \
\dots\ \ \,\ \ \
y^{-m_\tau'}I_{\primemu},\ \ \
z^{-n_\tau'}I_{\primemu},\ \ \
-(-x)^{l_1'-1} J_{\primemu}\left(\primelambda\right)^T,\ \ \
-(-x)^{-l_1'}\left(J_{\primemu}\left(\primelambda\right)^T\right)^{-1}
$$
with
$$
-y^{-m_1'}I_{\primemu},\ \ \
-z^{-n_1'}I_{\primemu},\ \ \
-x^{l_2'-1}I_{\primemu},\ \ \
-x^{-l_2'}I_{\primemu},\ \ \
\dots\ \ \,\ \ \
-y^{-m_\tau'}I_{\primemu},\ \ \
-z^{-n_\tau'}I_{\primemu},\ \ \
-x^{l_1'-1} (-1)^\dagger J_{\primemu}\left(\primelambda\right)^T,\ \ \
-x^{-l_1'} (-1)^\dagger \left(J_{\primemu}\left(\primelambda\right)^T\right)^{-1}
$$
in order,
where $(-1)^\dagger$ is the total sign change given by
$$
\begin{matrix}
(-1)^\dagger
:=
(-1)^{
\left(l_1'-1\right) \mathbb{1}_{l_1'\ge1}
-l_1' \mathbb{1}_{l_1'\le0}
+\mathbb{1}_{m_1'\le0}
+\mathbb{1}_{n_1'\le0}
+\left(l_2'-1\right) \mathbb{1}_{l_2'\ge1}
-l_2' \mathbb{1}_{l_2'\le0}
+\mathbb{1}_{m_2'\le0}
+\cdots
+\mathbb{1}_{m_\tau'\le0}+\mathbb{1}_{n_\tau'\le0}
}
=
(-1)^{
l_1+\cdots+l_\tau+\tau
}.
\end{matrix}
$$

The last two can be again replaced by
$
-x^{l_1'-1}J_{\primemu}\left(\lambda\right)
$
and
$
-x^{-l_1'}J_{\primemu}\left(\lambda\right)^{-1}
$
for $\lambda:=(-1)^\dagger\primelambda$,
respectively,
by some bases change using the fact that
the matrix
$
(-1)^\dagger J_{\primemu}\left(\primelambda\right)^T
$
is similar to
$
J_{\primemu}\left(\lambda\right)
$
from the relation
$$
\begin{psmallmatrix}
0 & \cdots & 0 & 0 & 1
\\
0 & \cdots & 0 & (-1)^\dagger & 0
\\
0 & \cdots & 1 & 0 & 0
\\
\rotatebox{90}{$\scriptstyle\cdots$} & \reflectbox{\rotatebox{135}{$\scriptstyle\cdots$}} & \rotatebox{90}{$\scriptstyle\cdots$} & \rotatebox{90}{$\scriptstyle\cdots$} & \rotatebox{90}{$\scriptstyle\cdots$}
\\
\pm1& \cdots & 0 & 0 & 0
\end{psmallmatrix}
(-1)^\dagger
\begin{psmallmatrix}
\primelambda & 0 & 0 & \cdots & 0
\\
1 & \primelambda & 0 & \cdots & 0
\\
0 & 1 & \primelambda & \cdots & 0
\\
\rotatebox{90}{$\scriptstyle\cdots$} & \rotatebox{90}{$\scriptstyle\cdots$} & \reflectbox{\rotatebox{45}{$\scriptstyle\cdots$}} & \reflectbox{\rotatebox{45}{$\scriptstyle\cdots$}} & \rotatebox{90}{$\scriptstyle\cdots$}
\\
0 & 0 & \cdots & 1 & \primelambda
\end{psmallmatrix}
\begin{psmallmatrix}
0 & 0 & 0 & \cdots &
\pm1
\\
\rotatebox{90}{$\scriptstyle\cdots$} & \rotatebox{90}{$\scriptstyle\cdots$} & \rotatebox{90}{$\scriptstyle\cdots$} & \reflectbox{\rotatebox{135}{$\scriptstyle\cdots$}} & \rotatebox{90}{$\scriptstyle\cdots$}
\\
0 & 0 & 1 & \cdots & 0
\\
0 & (-1)^\dagger & 0 & \cdots & 0
\\
1 & 0 & 0 & \cdots & 0
\end{psmallmatrix}
=
\begin{psmallmatrix}
\lambda & 1 & 0 & \cdots & 0
\\
0 & \lambda & 1 & \cdots & 0
\\
0 & 0 & \lambda & \reflectbox{\rotatebox{45}{$\scriptstyle\cdots$}} & \rotatebox{90}{$\scriptstyle\cdots$}
\\
\rotatebox{90}{$\scriptstyle\cdots$} & \rotatebox{90}{$\scriptstyle\cdots$} & \rotatebox{90}{$\scriptstyle\cdots$} & \reflectbox{\rotatebox{45}{$\scriptstyle\cdots$}} & 1
\\
0 & 0 & 0 & \cdots & \lambda
\end{psmallmatrix}.
\vspace{-5mm}
$$
\end{proof}

To
manipulate and analyze
the canonical form of
matrix factorizations,
it is convenient to introduce notations on some special matrices.

\begin{notation}
Denote the $N\times N$ Jordan block of eigenvalue $0$ and its transpose as
$$
J_N
:=
\begin{psmallmatrix}
0 & 1 & 0 & \cdots & 0
\\
0 & 0 & 1 & \cdots & 0
\\
0 & 0 & 0 & \reflectbox{\rotatebox{45}{$\scriptstyle\cdots$}} & \rotatebox{90}{$\scriptstyle\cdots$}
\\
\rotatebox{90}{$\scriptstyle\cdots$} & \rotatebox{90}{$\scriptstyle\cdots$} & \rotatebox{90}{$\scriptstyle\cdots$} & \reflectbox{\rotatebox{45}{$\scriptstyle\cdots$}} & 1
\\
0 & 0 & 0 & \cdots & 0
\end{psmallmatrix}_{N\times N}
\quad\text{and}\quad
K_N
:=
J_N^T
=
\begin{psmallmatrix}
0 & 0 & 0 & \cdots & 0
\\
1 & 0 & 0 & \cdots & 0
\\
0 & 1 & 0 & \cdots & 0
\\
\rotatebox{90}{$\scriptstyle\cdots$} & \rotatebox{90}{$\scriptstyle\cdots$} & \reflectbox{\rotatebox{45}{$\scriptstyle\cdots$}} & \reflectbox{\rotatebox{45}{$\scriptstyle\cdots$}} & \rotatebox{90}{$\scriptstyle\cdots$}
\\
0 & 0 & \cdots & 1 & 0
\end{psmallmatrix}_{N\times N}.
$$
Note that the $n\times n$ Jordan block of eigenvalue $\lambda\in\field$ is
$J_n\left(\lambda\right)=\lambda I_n + J_n$.
We will also frequently use
$$
R_{N}\left(\lambda\right)
:=
J_N + \lambda K_N^{N-1}
=
\begin{psmallmatrix}
0 & 1 & 0 & \cdots & 0
\\
0 & 0 & 1 & \cdots & 0
\\
0 & 0 & 0 & \reflectbox{\rotatebox{45}{$\scriptstyle\cdots$}} & \rotatebox{90}{$\scriptstyle\cdots$}
\\
\rotatebox{90}{$\scriptstyle\cdots$} & \rotatebox{90}{$\scriptstyle\cdots$} & \rotatebox{90}{$\scriptstyle\cdots$} & \reflectbox{\rotatebox{45}{$\scriptstyle\cdots$}} & 1
\\
\lambda & 0 & 0 & \cdots & 0
\end{psmallmatrix}_{N \times N}
\quad\text{and}\quad
R_N^T\left(\lambda\right)
=
K_N + \lambda J_N^{N-1}
=
\begin{psmallmatrix}
0 & 0 & 0 & \cdots & \lambda
\\
1 & 0 & 0 & \cdots & 0
\\
0 & 1 & 0 & \cdots & 0
\\
\rotatebox{90}{$\scriptstyle\cdots$} & \rotatebox{90}{$\scriptstyle\cdots$} & \reflectbox{\rotatebox{45}{$\scriptstyle\cdots$}} & \reflectbox{\rotatebox{45}{$\scriptstyle\cdots$}} & \rotatebox{90}{$\scriptstyle\cdots$}
\\
0 & 0 & \cdots & 1 & 0
\end{psmallmatrix}_{N \times N }
\footnote{
Note that both are just $\left(\lambda\right)_{1\times1}$
if $N=1$.
},
$$
and their enlargement by replacing  $\lambda$ with $J_{\primemu}\left(\lambda\right)^{\pm1}$
$$
R_N\left(J_{\primemu}\left(\lambda\right)\right)
=
J_N\otimes I_{\primemu} + K_N^{N-1}\otimes J_{\primemu}\left(\lambda\right)
=
\begin{psmallmatrix}
0 & I_{\primemu} & 0 & \cdots & 0
\\
0 & 0 & I_{\primemu} & \cdots & 0
\\
0 & 0 & 0 & \reflectbox{\rotatebox{45}{$\scriptstyle\cdots$}} & \rotatebox{90}{$\scriptstyle\cdots$}
\\
\rotatebox{90}{$\scriptstyle\cdots$} & \rotatebox{90}{$\scriptstyle\cdots$} & \rotatebox{90}{$\scriptstyle\cdots$} & \reflectbox{\rotatebox{45}{$\scriptstyle\cdots$}} & I_{\primemu}
\\
J_{\primemu}\left(\lambda\right) & 0 & 0 & \cdots & 0
\end{psmallmatrix}_{N\primemu\times N\primemu},
$$
$$
R_N^T\left(J_{\primemu}\left(\lambda\right)^{-1}\right)
=
K_N\otimes I_{\primemu} + J_N^{N-1}\otimes J_{\primemu}\left(\lambda\right)^{-1}
=
\begin{psmallmatrix}
0 & 0 & 0 & \cdots & J_{\primemu}\left(\lambda\right)^{-1}
\\
I_{\primemu} & 0 & 0 & \cdots & 0
\\
0 & I_{\primemu} & 0 & \cdots & 0
\\
\rotatebox{90}{$\scriptstyle\cdots$} & \rotatebox{90}{$\scriptstyle\cdots$} & \reflectbox{\rotatebox{45}{$\scriptstyle\cdots$}} & \reflectbox{\rotatebox{45}{$\scriptstyle\cdots$}} & \rotatebox{90}{$\scriptstyle\cdots$}
\\
0 & 0 & \cdots & I_{\primemu} & 0
\end{psmallmatrix}_{N\primemu\times N\primemu}.
$$
For square matrices
$A_1,\dots,A_\tau$,
we denote
the block diagonal matrix made from them as
\begin{equation}\label{eqn:MatrixDecomposition}
\displaystyle
\bigoplus_{i=1}^\tau
A_i
:=
\begin{psmallmatrix}
A_1 & 0 & 0 & \cdots & 0
\\
0 & A_2 & 0 & \cdots & 0
\\
0 & 0 & A_3 & \cdots & 0
\\
\rotatebox{90}{$\scriptstyle\cdots$} & \rotatebox{90}{$\scriptstyle\cdots$} & \rotatebox{90}{$\scriptstyle\cdots$} & \reflectbox{\rotatebox{45}{$\scriptstyle\cdots$}} & \rotatebox{90}{$\scriptstyle\cdots$}
\\
0 & 0 & 0 & \cdots & A_\tau
\end{psmallmatrix}.
\end{equation}

\end{notation}

Using these new notations,
we can write the canonical form 
given in Definition \ref{defn:CanonicalFormMF} for $\primemu=1$ as
$$
\varphi\left(w',\lambda,1\right)
=
\varphi\left(w',0,1\right)
-\lambda x^{l_1'-1} K_{3\tau}^{3\tau-1}
-\lambda^{-1} x^{-l_1'} J_{3\tau}^{3\tau-1},
$$
where
$
\varphi\left(w',0,1\right)
$
is obtained by putting $\lambda=\lambda^{-1}=0$
in the expression of $\varphi\left(w',\lambda,1\right)$.
Then the general expression for arbitrary $\primemu$\ is obtained from it by
`substituting the matrix $J_{\primemu}\left(\lambda\right)$ for the scalar $\lambda$',
that is,
\begin{equation}\label{eqn:CanonicalFormKroneckerProduct}
\varphi\left(w',\lambda,\primemu\right)
=
\varphi\left(w',0,1\right)\otimes I_{\primemu}
-x^{l_1'-1} K_{3\tau}^{3\tau-1}\otimes J_{\primemu}\left(\lambda\right)
-x^{-l_1'} J_{3\tau}^{3\tau-1}\otimes J_{\primemu}\left(\lambda\right)^{-1}.
\end{equation}
It
can be
also
viewed as
a consequence of Proposition
\ref{prop:HigherRankLMF}.

By reordering rows and columns of the canonical form $\varphi\left(w',\lambda,\primemu\right)$,
we obtain an \textbf{alternative canonical form}
\begin{equation}\label{eqn:AlternativeCanonicalForm}
\varphi_{\operatorname{alt}}\left(w',\lambda,\primemu\right):=
\begin{pmatrix}
z I_{\tau\primemu}
& -\displaystyle\bigoplus_{i=1}^\tau y^{m_i'-1} I_{\primemu}
& -\left(\displaystyle\bigoplus_{i=1}^\tau x^{-l_i'} I_{\primemu} \right)
  R_N^T\left(J_{\primemu}\left(\lambda\right)^{-1}\right)
\\
- \displaystyle\bigoplus_{i=1}^\tau y^{-m_i'} I_{\primemu}
& x I_{\tau\primemu}
& -\displaystyle\bigoplus_{i=1}^\tau z^{n_i'-1} I_{\primemu}
\\
-R_N\left(J_{\primemu}\left(\lambda\right)\right)

 \left(\displaystyle\bigoplus_{i=1}^\tau x^{l_i'-1} I_{\primemu}\right)
& - \displaystyle\bigoplus_{i=1}^\tau z^{-n_i'} I_{\primemu}
& y I_{\tau\primemu}
\end{pmatrix}_{3\tau\primemu\times3\tau\primemu}.
\end{equation}
Sometimes it is more convenient to work with this alternative version than with the original one.

\subsection{Periodic case}\label{sec:PeriodicCase}
In this subsection,
we show that objects corresponding to the \textbf{periodic} normal loop words are \textbf{decomposable}.
The core of the decomposition lies in the following linear algebra problem:

\begin{lemma}\label{lem:JordanFormOfMatrices}
The Jordan canonical form of
$R_N\left(J_{\primemu}\left(\lambda\right)\right)$
and $R_N^T\left(J_{\primemu}\left(\lambda\right)^{-1}\right)$
are
$
\displaystyle\bigoplus_{k=0}^{N-1}
J_{\primemu}\left(\lambda_{k}\right)
$
and
$
\displaystyle\bigoplus_{k=0}^{N-1}
J_{\primemu}\left(\lambda_{k}^{-1}\right)
\footnote{
If $\primemu=1$,
they are diagonalizable 
via a transition matrix given by the Vandermonde matrix
$
\left(\lambda_{j}^i\right)_{0\le i,j\le N-1}
$.
},
$
respectively,
where
$\lambda_{0},\dots,\lambda_{N-1}\in\field^\times$ are the $N$-th roots of $\lambda\in\field^\times$
(i.e., distinct solutions of $x^N = \lambda$).

\end{lemma}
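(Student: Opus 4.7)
The plan is to analyze $R := R_N(J_{\primemu}(\lambda))$ as a twisted cyclic shift on $\field^{N\primemu} \cong \field^N \otimes \field^{\primemu}$. Under the standard basis $\{e_i \otimes v\}$, the action reads $R(e_i \otimes v) = e_{i-1} \otimes v$ for $i \ge 2$ and $R(e_1 \otimes v) = e_N \otimes J_{\primemu}(\lambda) v$. Iterating $N$ times yields the fundamental identity $R^N = I_N \otimes J_{\primemu}(\lambda)$, whence $(R^N - \lambda I)^{\primemu} = I_N \otimes J_{\primemu}^{\primemu} = 0$, so the polynomial $(t^N - \lambda)^{\primemu} = \prod_{k=0}^{N-1}(t - \lambda_k)^{\primemu}$ annihilates $R$ and confines its spectrum to $\{\lambda_0, \ldots, \lambda_{N-1}\}$.

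Next I would solve the eigenvector equation $Rw = \lambda_k w$ with the ansatz $w = \sum_i c_i\, e_i \otimes v$. The shift relations force $c_i = \lambda_k^{i-1} c_1$, and the closing-up condition reduces to $J_{\primemu}(\lambda) v = \lambda v$; since $\ker J_{\primemu}$ is one-dimensional, each $\lambda_k$ has geometric multiplicity exactly one, so $R$ has a single Jordan block of some size $s_k$ at $\lambda_k$. The annihilating polynomial $(t^N - \lambda)^{\primemu}$ gives $s_k \le \primemu$, and the dimension count $\sum_k s_k = N\primemu$ then forces $s_k = \primemu$ for every $k$, yielding the claimed Jordan form $\bigoplus_k J_{\primemu}(\lambda_k)$.

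The companion statement for $R_N^T(J_{\primemu}(\lambda)^{-1})$ follows by the same argument with the reverse cyclic shift twisted by $J_{\primemu}(\lambda)^{-1}$: the reduced eigenvalue condition becomes $\nu^N = \lambda^{-1}$, so the eigenvalues are the $\lambda_k^{-1}$, each carrying a single Jordan block of size $\primemu$. Alternatively, one can invoke the relation $R_N^T(C) = R_N(C^T)^T$ together with the similarity $(J_{\primemu}(\lambda)^T)^{-1} \sim J_{\primemu}(\lambda^{-1})$ to reduce directly to the first case applied with eigenvalue $\lambda^{-1}$.

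The step requiring the most care will be the geometric-multiplicity bound, where one must check that the one-dimensionality of $\ker J_{\primemu}$ survives the passage from $J_{\primemu}(\lambda)$ to the twisted shift $R$, yielding exactly one eigenvector per root $\lambda_k$. The hypothesis $\lambda \in \field^{\times}$ is used precisely to guarantee that the $N$ roots $\lambda_k$ are distinct and non-zero, so the single Jordan blocks sit in distinct generalized eigenspaces; the footnote on the $\primemu = 1$ diagonalization is then immediate from the Vandermonde invertibility of the explicit eigenvectors $(1, \lambda_k, \ldots, \lambda_k^{N-1})^T$ assembled as a transition matrix.
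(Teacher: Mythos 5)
Your proof is correct, and it takes a genuinely different route from the paper's, even though both arguments share the same skeleton (confine the spectrum to the $N$-th roots of $\lambda$, show each eigenspace is one-dimensional, then count). The paper first applies the perfect-shuffle similarity of Lemma \ref{lem:KroneckerProductSwitching} to rewrite $R_N\left(J_{\primemu}\left(\lambda\right)\right)$ as a block upper-triangular matrix with $R_N\left(\lambda\right)$ on the diagonal, reads off the characteristic polynomial $\left(t^N-\lambda\right)^{\primemu}$ from that form, and then merely asserts that each eigenspace has dimension one. You avoid the shuffle entirely: you confine the spectrum via the annihilating identity $R^N = I_N\otimes J_{\primemu}\left(\lambda\right)$, and you actually carry out the eigenvector computation the paper calls ``straightforward to check,'' showing that the closing-up condition forces the seed vector into the one-dimensional kernel of the nilpotent part of $J_{\primemu}\left(\lambda\right)$. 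Your concluding step also differs: where the paper combines algebraic multiplicity $\primemu$ from the characteristic polynomial with geometric multiplicity one, you combine the minimal-polynomial bound $s_k\le\primemu$ with the dimension count $\sum_k s_k = N\primemu$; both are valid, and your version has the advantage of never computing a determinant. The transpose case, handled either directly or via $R_N^T\left(C\right) = R_N\left(C^T\right)^T$ together with $\left(J_{\primemu}\left(\lambda\right)^T\right)^{-1}\sim J_{\primemu}\left(\lambda^{-1}\right)$, is likewise fine. One cosmetic point: the ansatz $w=\sum_i c_i\, e_i\otimes v$ with a common $v$ should be presented as a consequence of the shift relations $v_{i+1}=\lambda_k v_i$ applied to a general $w=\sum_i e_i\otimes v_i$, rather than as an assumption; since those relations force exactly this form, nothing is lost.
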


\begin{proof}
By the following Lemma \ref{lem:KroneckerProductSwitching}, 
the matrix
$
R_N\left(J_{\primemu}\left(\lambda\right)\right)
=
J_N\otimes I_{\primemu} + K_N^{N-1}\otimes J_{\primemu}\left(\lambda\right)
$
is similar to
$$
I_{\primemu}\otimes J_N + J_{\primemu}\left(\lambda\right) \otimes K_N^{N-1}
=
\begin{psmallmatrix}
J_N + \lambda K_N^{N-1} & K_N^{N-1} & 0 & \cdots & 0
\\
0 & J_N + \lambda K_N^{N-1} & K_N^{N-1} & \cdots & 0
\\
0 & 0 & J_N + \lambda K_N^{N-1} & \reflectbox{\rotatebox{45}{$\scriptstyle\cdots$}} & \rotatebox{90}{$\scriptstyle\cdots$}
\\
\rotatebox{90}{$\scriptstyle\cdots$} & \rotatebox{90}{$\scriptstyle\cdots$} & \rotatebox{90}{$\scriptstyle\cdots$} & \reflectbox{\rotatebox{45}{$\scriptstyle\cdots$}} & K_N^{N-1}
\\
0 & 0 & 0 & \cdots & J_N + \lambda K_N^{N-1}
\end{psmallmatrix}_{\primemu N\times \primemu N}
=
\begin{psmallmatrix}
R_{N}\left(\lambda\right) & K_N^{N-1} & 0 & \cdots & 0
\\
0 & R_N\left(\lambda\right) & K_N^{N-1} & \cdots & 0
\\
0 & 0 & R_N\left(\lambda\right) & \reflectbox{\rotatebox{45}{$\scriptstyle\cdots$}} & \rotatebox{90}{$\scriptstyle\cdots$}
\\
\rotatebox{90}{$\scriptstyle\cdots$} & \rotatebox{90}{$\scriptstyle\cdots$} & \rotatebox{90}{$\scriptstyle\cdots$} & \reflectbox{\rotatebox{45}{$\scriptstyle\cdots$}} & K_N^{N-1}
\\
0 & 0 & 0 & \cdots & R_N\left(\lambda\right)
\end{psmallmatrix}_{\primemu N\times \primemu N},
$$
whose characteristic polynomial
(in $t$) is
$$
\left(\det\left(t I_N - R_N\left(\lambda\right)\right)\right)^{\primemu}
=
\left|
\begin{smallmatrix}
t & -1 & 0 & \cdots & 0
\\
0 & t & -1 & \cdots & 0
\\
0 & 0 & t & \reflectbox{\rotatebox{45}{$\scriptstyle\cdots$}} & \rotatebox{90}{$\scriptstyle\cdots$}
\\
\rotatebox{90}{$\scriptstyle\cdots$} & \rotatebox{90}{$\scriptstyle\cdots$} & \rotatebox{90}{$\scriptstyle\cdots$} & \reflectbox{\rotatebox{45}{$\scriptstyle\cdots$}} & -1
\\
-\lambda & 0 & 0 & \cdots & t
\end{smallmatrix}
\right|_{N \times N}^{\primemu}
=
\left(t^N-\lambda\right)^{\primemu}
=
\left(t-\lambda_{0}\right)^{\primemu}\cdots\left(t-\lambda_{N-1}\right)^{\primemu}.
$$
It is straightforward to check that the eigenspace
for each eigenvalue $t=\lambda_{k}$
($k\in\left\{0,\dots,N-1\right\}$)
has dimension only $1$,
which completes the proof for $R_N\left(J_{\primemu}\left(\lambda\right)\right)$.
The proof for $R_N^T\left(J_{\primemu}\left(\lambda\right)^{-1}\right)$ is similar.
\end{proof}

\begin{lemma}\label{lem:KroneckerProductSwitching}
For two matrices
$
A\in\field^{m_1\times n_1}
$
and
$
B\in\field^{m_2\times n_2},
$
their Kronecker products
$A\otimes B \in \field^{m_1 m_2\times n_1 n_2}$ and $B\otimes A \in \field^{m_2 m_1\times n_2 n_1}$
are \emph{permutation equivalent}.
More specifically,
there is a \emph{perfect shuffle matrix}
$S_{p,q}\in O(pq)$
for each $\left(p,q\right)\in\mathbb{Z}_{\ge1}\times\mathbb{Z}_{\ge1}$
whose entries are $0$ or $1$,
such that
$S_{p,q}^T = S_{p,q}^{-1} = S_{q,p}$
and
$$
S_{m_1,m_2}\left(A\otimes B\right)S_{n_2,n_1}
=
B\otimes A
$$
for any
$
A\in\field^{m_1\times n_1}
$
and
$
B\in\field^{m_2\times n_2}.
$

In particular,
if $A$ and $B$ are square matrices,
$A\otimes B$ and $B\otimes A$ are similar.
\end{lemma}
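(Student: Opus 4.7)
The plan is to construct the perfect shuffle matrix $S_{p,q}$ explicitly as a permutation of tensor-product basis vectors, verify the orthogonality/inverse relations from the definition, and then check the intertwining identity by evaluating both sides on an arbitrary elementary tensor of basis vectors. This is a classical fact (the ``commutation matrix'' identity) but it fits cleanly into a short direct argument.

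First I would fix standard bases $\{e_i^{(n)}\}_{i=1}^n$ of $\field^n$, note that $\{e_i^{(p)} \otimes e_j^{(q)}\}$ is a basis of $\field^{pq}$, and define $S_{p,q} \in O(pq)$ as the unique linear map satisfying
\[
S_{p,q}\bigl(e_i^{(p)} \otimes e_j^{(q)}\bigr) \;=\; e_j^{(q)} \otimes e_i^{(p)}
\qquad (1 \le i \le p,\ 1 \le j \le q).
\]
Since this sends a basis to a basis bijectively, $S_{p,q}$ is a permutation matrix, so all its entries lie in $\{0,1\}$ and $S_{p,q}^T = S_{p,q}^{-1}$. The relation $S_{p,q}^{-1} = S_{q,p}$ is immediate from the definition, since $S_{q,p}$ reverses the swap; combining these gives $S_{p,q}^T = S_{q,p}$ as asserted.

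Next I would verify the main identity $S_{m_1, m_2}\,(A \otimes B)\,S_{n_2, n_1} = B \otimes A$ by evaluating the left-hand side on the basis vector $e_i^{(n_2)} \otimes e_j^{(n_1)}$. Applying $S_{n_2,n_1}$ first yields $e_j^{(n_1)} \otimes e_i^{(n_2)}$. Then, expanding $A e_j^{(n_1)} = \sum_k A_{kj}\, e_k^{(m_1)}$ and $B e_i^{(n_2)} = \sum_\ell B_{\ell i}\, e_\ell^{(m_2)}$, the action of $A \otimes B$ gives
\[
(A \otimes B)\bigl(e_j^{(n_1)} \otimes e_i^{(n_2)}\bigr)
\;=\; \sum_{k,\ell} A_{kj} B_{\ell i}\; e_k^{(m_1)} \otimes e_\ell^{(m_2)}.
\]
Applying $S_{m_1,m_2}$ swaps the two factors in each summand, producing $\sum_{k,\ell} A_{kj} B_{\ell i}\, e_\ell^{(m_2)} \otimes e_k^{(m_1)} = (B e_i^{(n_2)}) \otimes (A e_j^{(n_1)}) = (B \otimes A)(e_i^{(n_2)} \otimes e_j^{(n_1)})$, as required. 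The final assertion, similarity of $A \otimes B$ and $B \otimes A$ when both matrices are square, is then the special case $m_1 = n_1$, $m_2 = n_2$ of the identity, using $S_{n_2,n_1} = S_{n_1,n_2}^{-1}$.

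There is no serious obstacle here; the only care needed is bookkeeping of the four index sizes $m_1, n_1, m_2, n_2$ so that each $S_{\bullet,\bullet}$ sits on a space of the correct dimension. I would keep the proof to roughly the length indicated above, since the result is standard and the virtue of the presentation is explicit identification of the transition matrix used later in the proof of Lemma \ref{lem:JordanFormOfMatrices}.
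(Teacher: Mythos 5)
Your proof is correct and follows the same idea as the paper's, which simply states that $B\otimes A$ is obtained from $A\otimes B$ by reordering rows and columns and cites a reference for the details. You have carried out exactly that verification explicitly (defining $S_{p,q}$ as the basis-swapping permutation and checking the identity on elementary tensors), and the index bookkeeping is consistent, so nothing further is needed.
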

\begin{proof}
One can get $B\otimes A$ from $A\otimes B$
(and vice versa)
just by reordering rows and columns. 
It is straightforward to check the detail
(see \cite{enwiki:1215962176}).
\end{proof}

\begin{thm}\label{thm:PeriodicDecomposable}
Let $\left(w',\lambda,\primemu\right)$ be a loop datum
with
$\left(w',\lambda\right)\ne\left((2,2,2)^{\hash \tau},1\right)$.
If the normal loop word $w'$ is periodic,
i.e.,
$w' = \left(\tilde{w}'\right)^{\hash N}\in \mathbb{Z}^{3\tau}$
for another normal loop word $\tilde{w}'\in\mathbb{Z}^{3\tilde{\tau}}$
($\tau=N\tilde{\tau}$),
there is
an invertible matrix $V\in\operatorname{GL}_{3\tau\primemu}\left(\field\right)$
such that
$$
 \varphi\left(w',\lambda,\primemu\right)
 =
 V^{-1}
 \begin{psmallmatrix}
 \varphi\left(\tilde{w}',\lambda_{0},\primemu\right) & 0 & \cdots & 0
 \\
 0 & \varphi\left(\tilde{w}',\lambda_{1},\primemu\right) & \cdots & 0
 \\
 \scalebox{.75}{\vdots} & \scalebox{.75}{\vdots} & \scalebox{.75}{$\ddots$} &
 \scalebox{.75}{\vdots}
 \\
 0 & 0 & \cdots & \varphi\left(\tilde{w}',\lambda_{N-1},\primemu\right)
 \end{psmallmatrix}
 V,
 $$
where 
$\lambda_{0},\dots,\lambda_{N-1}\in\field^\times$ are the $N$-th roots of $\lambda$.
This yields a decomposition in $\MF(xyz)$
$$
 \varphi\left(w',\lambda,\primemu\right)
 \cong
 \bigoplus_{k=0}^{N-1} \varphi\left(\tilde{w}',\lambda_{k},\primemu\right).
$$

\end{thm}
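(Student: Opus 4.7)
The plan is to construct the similarity matrix $V$ by combining a refinement of Lemma \ref{lem:JordanFormOfMatrices} with the block-periodic structure inherent in $w'=(\tilde w')^{\hash N}$, working throughout with the alternative canonical form (\ref{eqn:AlternativeCanonicalForm}).

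First, I would pass to $\varphi_{\operatorname{alt}}(w',\lambda,\primemu)$. Under the periodicity assumption, every off-diagonal block-diagonal entry $\bigoplus_{i=1}^\tau f_i I_\primemu$ factors as $I_N \otimes \bigoplus_{j=1}^{\tilde\tau}\tilde f_j I_\primemu$, so the entire $\lambda$-dependence is isolated in the two corner matrices $R_\tau(J_\primemu(\lambda))=R_{N\tilde\tau}(J_\primemu(\lambda))$ and $R_\tau^T(J_\primemu(\lambda)^{-1})=R_\tau(J_\primemu(\lambda))^{-1}$. The latter identity follows from the direct computation $R_N(\lambda)R_N^T(\lambda^{-1})=J_NK_N+K_N^{N-1}J_N^{N-1}=(I_N-E_{N,N})+E_{N,N}=I_N$, extended to the matrix-coefficient version by the Kronecker expansion.

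Second, the key input is the following refinement of Lemma \ref{lem:JordanFormOfMatrices}: there exists $V_0\in GL_{\tau\primemu}(\field)$ with
\begin{equation*}
V_0^{-1}\, R_{N\tilde\tau}(J_\primemu(\lambda))\, V_0 \;=\; \bigoplus_{k=0}^{N-1} R_{\tilde\tau}(J_\primemu(\lambda_k)).
\end{equation*}
Both sides act on $\field^{\tau\primemu}$ as a cyclic $\field[\xi]$-module whose minimal polynomial equals the characteristic polynomial $(\xi^{N\tilde\tau}-\lambda)^\primemu$; using the factorization $(\xi^{N\tilde\tau}-\lambda)^\primemu=\prod_{k=0}^{N-1}(\xi^{\tilde\tau}-\lambda_k)^\primemu$ (pairwise coprime), the Chinese Remainder isomorphism
\begin{equation*}
\field[\xi]/(\xi^{N\tilde\tau}-\lambda)^\primemu \;\cong\; \prod_{k=0}^{N-1} \field[\xi]/(\xi^{\tilde\tau}-\lambda_k)^\primemu
\end{equation*}
identifies the two cyclic modules and provides an explicit change-of-basis matrix $V_0$. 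Inversion then forces $V_0^{-1}R_\tau^T(J_\primemu(\lambda)^{-1})V_0=\bigoplus_k R_{\tilde\tau}^T(J_\primemu(\lambda_k)^{-1})$ automatically.

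Third, taking $V=I_3\otimes V_0$ on the $3\times 3$ block structure of (\ref{eqn:AlternativeCanonicalForm}) and following with the natural shuffle permutation interleaving the $N$ outer blocks with the three block rows, conjugation by $V$ transports $\varphi_{\operatorname{alt}}(w',\lambda,\primemu)$ to $\bigoplus_{k=0}^{N-1}\varphi_{\operatorname{alt}}(\tilde w',\lambda_k,\primemu)$. Converting back through the row-column permutation relating $\varphi$ and $\varphi_{\operatorname{alt}}$ then yields the claimed matrix identity, and the decomposition in $\MF(xyz)$ follows at once.

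The delicate point is the compatibility in step three between $V_0$ and the periodic block-diagonal matrices $I_N\otimes\bigoplus_j\tilde f_j I_\primemu$. In the scalar case $\primemu=1$ the Vandermonde-type matrix with columns $(1,\lambda_k,\ldots,\lambda_k^{N-1})^T$, tensored with $I_{\tilde\tau}$, both diagonalizes $R_{N\tilde\tau}(\lambda)$ blockwise into $\bigoplus_k R_{\tilde\tau}(\lambda_k)$ and tautologically commutes with every $I_N\otimes B$. For $\primemu\ge2$ and $\tilde\tau\ge2$, however, the nilpotent part $J_\primemu$ in $J_\primemu(\lambda)$ couples the $N$- and $\primemu$-directions inside $R_\tau(J_\primemu(\lambda))$, and a naive Kronecker ansatz $V_0=P\otimes I_{\tilde\tau}\otimes Q$ no longer works; the $V_0$ extracted from the CRT splitting above must then be corrected by an additional intertwiner absorbing this coupling while preserving the periodic-diagonal constraint, and I expect this to be the main computational obstacle in carrying out the proof.
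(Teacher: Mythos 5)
Your proposal has a genuine gap at its central step, and you have in fact flagged it yourself. The CRT argument correctly shows that $R_{N\tilde\tau}(J_{\primemu}(\lambda))$ and $\bigoplus_k R_{\tilde\tau}(J_{\primemu}(\lambda_k))$ are similar (both are nonderogatory with minimal polynomial $(\xi^{N\tilde\tau}-\lambda)^{\primemu}$), and the observation that $R_\tau^T(J_{\primemu}(\lambda)^{-1})=R_\tau(J_{\primemu}(\lambda))^{-1}$ is correct and nicely reduces the two corner blocks to a single conjugation. But a $V_0$ produced by the Chinese Remainder isomorphism is only determined up to the centralizer of the cyclic matrix, and that construction completely forgets the $\tilde\tau$-grading; there is no reason for such a $V_0$ to commute with the periodic block-diagonal matrices $I_N\otimes D\otimes I_{\primemu}$ (with $D=\operatorname{diag}(\tilde f_1,\dots,\tilde f_{\tilde\tau})$) coming from the word entries $x^{l_i'-1}, y^{m_i'-1}, z^{n_i'-1}$, etc. Without that compatibility, conjugation by $I_3\otimes V_0$ scrambles every off-corner block of $\varphi_{\operatorname{alt}}$, so the claimed splitting does not follow. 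Asserting that the needed ``additional intertwiner'' exists is precisely the content of the theorem; as written, the proof is circular at this point.

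For comparison, the paper avoids the problem by a different bookkeeping. Working with the original $\varphi$ rather than $\varphi_{\operatorname{alt}}$, it writes $\varphi(w',\lambda,\primemu)$ as a triple Kronecker product in which the $\lambda$- and periodicity-dependence sits in the rank-one corner $K_{3\tau}^{3\tau-1}=K_N^{N-1}\otimes K_{3\tilde\tau}^{3\tilde\tau-1}$; after the reordering of Lemma \ref{lem:KroneckerProductSwitching} the matrix to be Jordan-ized is $R_N(J_{\primemu}(\lambda))$, living purely in the $(N,\primemu)$ factors and tensored against $K_{3\tilde\tau}^{3\tilde\tau-1}$, while \emph{all} word-dependent data sits in the complementary $3\tilde\tau$ factor as $I_{N\primemu}\otimes\tilde\varphi$. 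The conjugator $P\otimes I_{3\tilde\tau}$ from Lemma \ref{lem:JordanFormOfMatrices} then automatically commutes with $I_{N\primemu}\otimes\tilde\varphi$, and the reduction to the case $l_1'\ge1$ kills the second corner term outright. Your route can be repaired, but only by reintroducing this structure: demand that $V_0$ be block-diagonal along the $\tilde\tau$-grading, note that $(\bigoplus_kR_{\tilde\tau}(J_{\primemu}(\lambda_k)))^{\tilde\tau}$ restricts to $\bigoplus_kJ_{\primemu}(\lambda_k)$ on each graded piece while $R_{N\tilde\tau}(J_{\primemu}(\lambda))^{\tilde\tau}$ restricts to $R_N(J_{\primemu}(\lambda))$, and solve the resulting intertwining system starting from exactly the matrix $P$ of Lemma \ref{lem:JordanFormOfMatrices} — at which point you have reconstructed the paper's argument in different coordinates.
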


\begin{proof}
Recall the
formula (\ref{eqn:CanonicalFormKroneckerProduct})
$$
\varphi\left(w',\lambda,\primemu\right)
=
\varphi\left(w',0,1\right)\otimes I_{\primemu}
-x^{l_1'-1} K_{3\tau}^{3\tau-1}\otimes J_{\primemu}\left(\lambda\right)
-x^{-l_1'} J_{3\tau}^{3\tau-1}\otimes J_{\primemu}\left(\lambda\right)^{-1}.
$$
Here we have
$$
\varphi\left(w',0,1\right)
=
I_N\otimes
\varphi\left(\tilde{w}',0,1\right)
-x_1^{l_1'-1} J_N \otimes K_{3\tilde{\tau}}^{3\tilde{\tau}-1}
-x_1^{-l_1'} K_N \otimes J_{3\tilde{\tau}}^{3\tilde{\tau}-1}
$$
for a periodic word
$
w' = \left(\tilde{w}'\right)^{\hash  N}.
$
Denoting by
$
\tilde{\varphi}
:=
\varphi\left(\tilde{w}',0,1\right),
$
we can rewrite
$
\varphi\left(w',\lambda,\primemu\right)
$
as
$$
\left(
I_N\otimes
\tilde{\varphi}
-x_1^{l_1'-1} J_N \otimes K_{3\tilde{\tau}}^{3\tilde{\tau}-1}
-x_1^{-l_1'} K_N \otimes J_{3\tilde{\tau}}^{3\tilde{\tau}-1}
\right)
\otimes I_{\primemu}
-x^{l_1'-1} K_N^{N-1} \otimes K_{3\tilde{\tau}}^{3\tilde{\tau}-1}\otimes J_{\primemu}\left(\lambda\right)
-x^{-l_1'} J_N^{N-1} \otimes J_{3\tilde{\tau}}^{3\tilde{\tau}-1}\otimes J_{\primemu}\left(\lambda\right)^{-1},
$$
where we also used trivial identities
$
K_{3\tau}^{3\tau-1}
=
K_N^{N-1}\otimes K_{3\tilde{\tau}}^{3\tilde{\tau}-1}
$
and
$
J_{3\tau}^{3\tau-1}
=
J_N^{N-1}\otimes
J_{3\tilde{\tau}}^{3\tilde{\tau}-1}.
$


By Lemma \ref{lem:KroneckerProductSwitching},
we can switch the order in the Kronecker product
$
A\otimes B \otimes C
$
above into
$
A\otimes C \otimes B
$
to get a similar matrix.
That is,
$
\varphi\left(w',\lambda,\primemu\right)$
is similar to
\begin{equation}\label{eqn:SimilarToCanonicalForm}
\begin{matrix}
I_N\otimes I_{\primemu}\otimes
\tilde{\varphi}
-x_1^{l_1'-1}\left(J_N\otimes I_{\primemu} + K_N^{N-1}\otimes J_{\primemu}\left(\lambda\right)\right)\otimes K_{3\tilde{\tau}}^{3\tilde{\tau}-1}
-x_1^{-l_1'}\left(K_N\otimes I_{\primemu} + J_N^{N-1}\otimes J_{\primemu}\left(\lambda\right)^{-1}\right)\otimes J_{3\tilde{\tau}}^{3\tilde{\tau}-1}
\\[2mm]
\hspace{-45mm}
=
I_{N\primemu} \otimes
\tilde{\varphi}
- x_1^{l_1'-1} R_N\left(J_{\primemu}\left(\lambda\right)\right)
 \otimes K_{3\tilde{\tau}}^{3\tilde{\tau}-1}
- x_1^{-l_1'} R_N^T\left(J_{\primemu}\left(\lambda\right)^{-1}\right) \otimes J_{3\tilde{\tau}}^{3\tilde{\tau}-1}
\end{matrix}
\end{equation}
via transition matrices given by
$
I_N \otimes S_{3\tilde{\tau},\primemu}
$
and
$
I_N \otimes S_{\primemu,3\tilde{\tau}}.
$
In other words,
(\ref{eqn:SimilarToCanonicalForm}) is another expression for
$$
\left(
I_N \otimes S_{3\tilde{\tau},\primemu}
\right)
\varphi\left(w',\lambda,\primemu\right)
\left(
I_N \otimes S_{\primemu,3\tilde{\tau}}
\right).
$$

Now we may assume $l_1'\ge1$.
(The other case is handled in the same way.)
Then we can drop the third term in (\ref{eqn:SimilarToCanonicalForm}).
Lemma \ref{lem:JordanFormOfMatrices}
implies that there is an invertible matrix
$P\in\operatorname{GL}_{N\primemu}\left(\field\right)$
satisfying
$$
R_N\left(J_{\primemu}\left(\lambda\right)\right)P
=
P
\left(
\bigoplus_{k=0}^{N-1}
J_{\primemu}\left(\lambda_{k}\right)
\right).
$$
Multiplying
$
P\otimes I_{3\tilde{\tau}}
$
to (\ref{eqn:SimilarToCanonicalForm})
on the right yields
\begin{align*}
\left(
I_N \otimes S_{3\tilde{\tau},\primemu}
\right)
\varphi\left(w',\lambda,\primemu\right)
\left(
I_N \otimes S_{\primemu,3\tilde{\tau}}
\right)
\left(P\otimes I_{3\tilde{\tau}}\right)
&=
P\otimes\tilde{\varphi}
-
x_1^{l_1'-1}
\left(R_N\left(J_{\primemu}\left(\lambda\right)\right)P\right)
\otimes
K_{3\tilde{\tau}}^{3\tilde{\tau}-1}
\\[1mm]
&=
P\otimes\tilde{\varphi}
-
x_1^{l_1'-1}
P
\left(
\bigoplus_{k=0}^{N-1}
J_{\primemu}\left(\lambda_{k}\right)
\right)
\otimes
K_{3\tilde{\tau}}^{3\tilde{\tau}-1}
\\[1mm]
&=
\left(P\otimes I_{3\tilde{\tau}}\right)
\left(
I_{N\primemu}\otimes\tilde{\varphi}
-
x_1^{l_1'-1}
\left(
\bigoplus_{k=0}^{N-1}
J_{\primemu}\left(\lambda_{k}\right)
\right)
\otimes
K_{3\tilde{\tau}}^{3\tilde{\tau}-1}
\right)
\\[1mm]
&=
\left(P\otimes I_{3\tilde{\tau}}\right)
\bigoplus_{k=0}^{N-1}
\left(
I_{\primemu}\otimes\tilde{\varphi}
-
x_1^{l_1'-1}
J_{\primemu}\left(\lambda_{k}\right)
\otimes
K_{3\tilde{\tau}}^{3\tilde{\tau}-1}
\right).
\end{align*}
Lemma \ref{lem:KroneckerProductSwitching} says that each direct summand
$
I_{\primemu}\otimes\tilde{\varphi}
-
x_1^{l_1'-1}
J_{\primemu}\left(\lambda_{k}\right)
\otimes
K_{3\tilde{\tau}}^{3\tilde{\tau}-1}
$
is similar to
$$
\varphi\left(\tilde{w}',\lambda_{k},\primemu\right)
=
\tilde{\varphi}\otimes I_{\primemu}
-
x_1^{l_1'-1} K_{3\tilde{\tau}}^{3\tilde{\tau}-1}\otimes
J_{\primemu}\left(\lambda_{k}\right)
$$
via transition matrices
$
S_{\primemu,3\tilde{\tau}}
$
and
$
S_{3\tilde{\tau},\primemu},
$
and hence their direct sum is similar to
$
\bigoplus_{k=0}^{N-1}
\varphi\left(\tilde{w}',\lambda_{k},\primemu\right)
$
via transition matrices
$
\bigoplus_{k=0}^{N-1} S_{\primemu,3\tilde{\tau}}
=
I_N\otimes S_{\primemu,3\tilde{\tau}}
$
and
$
\bigoplus_{k=0}^{N-1} S_{3\tilde{\tau},\primemu}
=
I_N\otimes S_{3\tilde{\tau},\primemu}.
$
Therefore,
we can write
$$
\left(
I_N\otimes S_{\primemu,3\tilde{\tau}}
\right)
\left(P^{-1}\otimes I_{3\tilde{\tau}}\right)
\left(
I_N \otimes S_{3\tilde{\tau},\primemu}
\right)
\varphi\left(w',\lambda,\primemu\right)
\left(
I_N \otimes S_{\primemu,3\tilde{\tau}}
\right)
\left(P\otimes I_{3\tilde{\tau}}\right)
\left(
I_N\otimes S_{3\tilde{\tau},\primemu}
\right)
=
\bigoplus_{k=0}^{N-1}
\varphi\left(\tilde{w}',\lambda_{k},\primemu\right),
$$
which shows the claim with
$
V:=
\left(
I_N \otimes S_{\primemu,3\tilde{\tau}}
\right)
\left(P\otimes I_{3\tilde{\tau}}\right)
\left(
I_N\otimes S_{3\tilde{\tau},\primemu}
\right)
\in\operatorname{GL}_{3\tau\primemu}\left(\field\right).
$
\end{proof}

\begin{cor}\label{cor:PeriodicDecomposableFuk}
Let $\left(w',\primelambda,\primemu\right)$ be a loop datum
with
$w'\ne(2,2,2)^{\hash \tau}$.
If the normal loop word $w'$ is periodic,
i.e.,
$w' = \left(\tilde{w}'\right)^{\hash N}\in \mathbb{Z}^{3\tau}$
for another normal loop word $\tilde{w}'\in\mathbb{Z}^{3\tilde{\tau}}$
($\tau=N\tilde{\tau}$),
there is a decomposition in $\Fuk\left(\POP\right)$
$$
\mathcal{L}\left(w',\primelambda,\primemu\right)
 \cong
 \bigoplus_{k=0}^{N-1} \mathcal{L}\left(\tilde{w}',\primelambda_{k},\primemu\right),
$$
 where 
$\primelambda_{0},\dots,\primelambda_{N-1}\in\field^\times$ are the $N$-th roots of $\primelambda$.
\end{cor}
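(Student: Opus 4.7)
The plan is to deduce this geometric decomposition from its matrix-factorization counterpart (Theorem \ref{thm:PeriodicDecomposable}) via the localized mirror functor $\LocalF$, which by Theorem \ref{thm:lmf} induces a categorical equivalence and is in particular faithful and detects direct sum decompositions.

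First I would translate both sides of the desired isomorphism to the mirror side. By Theorem \ref{thm:LagMFCorrespondenceNondegenerate},
\[
\LocalF\bigl(\mathcal{L}(w',\primelambda,\primemu)\bigr) \;\cong\; \bigl(\varphi(w',\lambda,\primemu),\psi(w',\lambda,\primemu)\bigr)
\]
for $\lambda = \epsilon\,\primelambda$ with $\epsilon := (-1)^{l_1+\cdots+l_\tau+\tau}$, and likewise each summand $\mathcal{L}(\tilde{w}',\primelambda_k,\primemu)$ is sent to $\varphi(\tilde{w}',\tilde{\lambda}_k,\primemu)$ with $\tilde{\lambda}_k = \tilde{\epsilon}\,\primelambda_k$ and $\tilde{\epsilon} := (-1)^{\tilde{l}_1+\cdots+\tilde{l}_{\tilde{\tau}}+\tilde{\tau}}$. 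Since each $l_i$ depends only on three consecutive entries of $w'$ and $w'$ is $3\tilde{\tau}$-periodic, the sequence $(l_i)$ is itself $\tilde{\tau}$-periodic, so $l_1+\cdots+l_\tau = N(\tilde{l}_1+\cdots+\tilde{l}_{\tilde{\tau}})$ and $\tau = N\tilde{\tau}$, giving the crucial identity $\epsilon = \tilde{\epsilon}^N$.

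Next I would apply Theorem \ref{thm:PeriodicDecomposable} to obtain
\[
\varphi(w',\lambda,\primemu) \;\cong\; \bigoplus_{k=0}^{N-1}\varphi(\tilde{w}',\lambda_k,\primemu)
\]
where $\lambda_k$ ranges over the $N$-th roots of $\lambda$. Defining $\primelambda_k := \tilde{\epsilon}\,\lambda_k$ gives $\primelambda_k^N = \tilde{\epsilon}^N\lambda_k^N = \epsilon\,\lambda = \primelambda$, so these are honest $N$-th roots of $\primelambda$ as required by the statement; moreover, $\lambda_k = \tilde{\epsilon}\,\primelambda_k$ is precisely the conversion required to invoke Theorem \ref{thm:LagMFCorrespondenceNondegenerate} in the reverse direction, yielding $\varphi(\tilde{w}',\lambda_k,\primemu) \cong \LocalF(\mathcal{L}(\tilde{w}',\primelambda_k,\primemu))$. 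Assembling the three isomorphisms,
\[
\LocalF\bigl(\mathcal{L}(w',\primelambda,\primemu)\bigr) \;\cong\; \bigoplus_{k=0}^{N-1}\LocalF\bigl(\mathcal{L}(\tilde{w}',\primelambda_k,\primemu)\bigr),
\]
and the conclusion in $\Fuk(\POP)$ (more precisely in $D^\pi \Fuk(\POP)$, where finite direct sums exist) follows because $\LocalF$ reflects isomorphisms.

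The main point requiring care is the sign bookkeeping when matching roots: Theorem \ref{thm:PeriodicDecomposable} naturally produces the $N$-th roots of $\lambda$, whereas the corollary demands $N$-th roots of $\primelambda$, and bridging them is exactly the identity $\epsilon = \tilde{\epsilon}^N$. The remaining admissibility check is immediate: if $\tilde{w}' = (2,2,2)^{\hash\tilde{\tau}}$ then $w' = (2,2,2)^{\hash\tau}$, contradicting the hypothesis, so both $w'$ and $\tilde{w}'$ avoid the degenerate form and Theorem \ref{thm:LagMFCorrespondenceNondegenerate} applies throughout.
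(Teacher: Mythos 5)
Your proposal is correct and follows essentially the route the paper intends: the corollary is deduced from Theorem \ref{thm:PeriodicDecomposable} by transporting the decomposition through the localized mirror functor via Theorem \ref{thm:LagMFCorrespondenceNondegenerate}, using that $\LocalF$ is (part of) an equivalence and hence reflects isomorphisms and direct sums; your careful verification of the sign identity $\epsilon=\tilde{\epsilon}^N$ matching the $N$-th roots of $\lambda$ with those of $\primelambda$ is exactly the bookkeeping the paper leaves implicit. The only point you gloss over is that the equivalence of Theorem \ref{thm:lmf} lands in $\underline{\MF}[xyz]$ rather than $\underline{\MF}(xyz)$ (cf.\ Remark \ref{rmk:Completion}), but this is harmless here since the conjugating matrix $V$ in Theorem \ref{thm:PeriodicDecomposable} and the basis changes in Theorem \ref{thm:LagMFCorrespondenceNondegenerate} are constant, so all isomorphisms already hold over the polynomial ring.
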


\begin{remark}
This shows that
\textnormal{\textbf{non-primitive loops
(with a local system)
are decomposable}} in the Fukaya category,
which is not intuitively obvious.
In following up works,
we will see that this is indeed a general and intrinsic feature of the Fukaya category of hyperbolic Riemann surfaces.
\end{remark}

\subsection{Non-cylinder-free case $w'=(2,2,2)^{\hash \tau}$}\label{sec:(2,2,2)loop}
\label{sec:ExceptionalCase}

Note that a loop $L$ in $\POP$ is not cylinder-free with $\mathbb{L}$
only if
it is freely homotopic to $\mathbb{L}^{\hash \tau}$
for some $\tau\in\mathbb{Z}$.
Only in cases of $w'=\left(1,0,1,0,1,0\right)^{\hash \tau}$ or $\left(2,2,2\right)^{\hash \tau}$ for some $\tau\in\mathbb{Z}_{\ge1}$,
the corresponding loop $L\left(w'\right)$ has the same free homotopy type with $\mathbb{L}^{\hash \tau}$ and $\mathbb{L}^{\hash -\tau}$,
respectively.
Then it has a chance to bound an immersed cylinder with $\mathbb{L}$.
In the former case,
it is not the case for our canonical form given in Definition \ref{def:LoopWord}.
In the latter case,
however,
our canonical form bounds an immersed cylinder with $\mathbb{L}$,
which indeed produces
some infinite moduli spaces
involved in the formula \ref{eqn:ComponentOfDeformedDifferential}
for the matrix factor
$\LocalPsi\left(\mathcal{L}\left(w',\primelambda,\primemu\right)\right)$
\footnote{
Still the matrix factorization can be defined over the Novikov field.
Moreover,
unless $\primelambda=-1$,
we can use the formula
$
1-\primelambda+\primelambda^2-\primelambda^3+\cdots = \frac{1}{1+\primelambda}
$
to evaluate $T=1$ in some infinite series to get a matrix factorization over $\field$,
which is isomorphic to the corresponding (original) canonical form.
But instead of justifying that formula,
we will take a detour using the perturbed loop and showing Proposition \ref{prop:DegenerateOriginalCanonicalForm}.
}.

To prevent this,
we take a very specific \textbf{perturbed version of the loop} for the case
$w'=\left(2,2,2\right)^{\hash \tau}$.
We first define the loop $L:=L\left(2,2,2\right)$ as shown in Figure \ref{fig:Perturbed222}.
Then let $L\left(\left(2,2,2\right)^{\hash \tau}\right)$ be the $\tau$-concatenation
(with {\small$\color{red}\bigstar$} as a starting point)
of it.
For a loop datum
$\left((2,2,2)^{\hash \tau},\primelambda,\primemu\right)$,
we define the loop with a local system
$\mathcal{L}\left((2,2,2)^{\hash \tau},\primelambda,\primemu\right)$
as in Definition \ref{defn:LoopData}
so that it has
holonomy $J_{\primemu}\left(\primelambda\right)$
at {\small$\color{red}\bigstar$},
using the modified underlying loop
$L\left(\left(2,2,2\right)^{\hash \tau}\right)$.

\begin{figure}[h]
\setlength\arraycolsep{0pt}
\centering
$

}
\right)
}_{\LocalPsi(\mathcal{L})}
}
\\[-2mm]
\end{matrix}
\\[14mm]
\end{matrix}
\end{matrix}
$
\captionsetup{width=1\linewidth}
\caption{
Loop with a local system
$
\mathcal{L}:=\mathcal{L}\left((2,2,2),\primelambda,\primemu\right)
$
and the corresponding matrix factorization}
\label{fig:Perturbed222}
\end{figure}


\begin{defn}\label{def:DegenerateCanonicalFormMF}
The \textnormal{\textbf{degenerate canonical form of
matrix factorizations}}
of $xyz$ corresponding to a loop datum
$
\left((2,2,2)^{\hash \tau},\lambda,\primemu\right)
$
is defined as
$$
\begin{matrix}
\hspace{-80mm}
\left(
{\varphi_{\deg}}\left(\left(2,2,2\right)^{\hash \tau}, \lambda, \primemu\right),
{\psi_{\deg}} \left(\left(2,2,2\right)^{\hash \tau}, \lambda, \primemu\right)
\right)
\\[3mm]
:=
\left(
\arraycolsep=0pt\def\arraystretch{1}
\begin{psmallmatrix}
-zx I_{\tau\primemu} & 0 & 0 & 0
\\[2mm]
z I_{{\tau\primemu}} & -y I_{{\tau\primemu}} & 0 & 0
\\[2mm]
0 & x I_{{\tau\primemu}} & -z I_{{\tau\primemu}} & 0
\\[2mm]
- x R_{\tau,\primemu}\left(\lambda\right) & 0 & y I_{{\tau\primemu}} & -xy I_{\tau\primemu}
\end{psmallmatrix}
_{4\tau\primemu \times 4\tau\primemu}
,
\begin{psmallmatrix}
-y I_{\tau\primemu} & 0 & 0 & 0
\\[2mm]
-z I_{\tau\primemu} & -zx I_{{\tau\primemu}} & 0 & 0
\\[2mm]
-x I_{\tau\primemu} & -x^2 I_{{\tau\primemu}} & -xy I_{{\tau\primemu}} & 0
\\[2mm]
- I_{\tau\primemu} + R_{\tau,\primemu}\left(\lambda\right) & -x I_{\tau\primemu} & -y I_{\tau\primemu} & -z I_{\tau\primemu}
\end{psmallmatrix}
_{4\tau\primemu \times 4\tau\primemu}
\right).
\end{matrix}
$$
\end{defn}


\begin{thm}\label{thm:LagMFCorrespondenceDegenerate}
For a
loop datum $\left((2,2,2)^{\hash \tau},\primelambda,\primemu\right)$,
there is an isomorphism
\begin{align*}
\LocalF\left(\mathcal{L}\left((2,2,2)^{\hash \tau},\primelambda,\primemu\right)\right)
\ \ &\cong\ \
\left(
{\varphi_{\deg}}\left(\left(2,2,2\right)^{\hash \tau}, \lambda, \primemu\right),
{\psi_{\deg}} \left(\left(2,2,2\right)^{\hash \tau}, \lambda, \primemu\right)
\right)
\end{align*}
in ${\MF}\left(xyz\right)$,
where
$
\lambda
:=
(-1)^{\tau}
\primelambda.
$


\end{thm}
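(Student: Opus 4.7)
The plan is to parallel the proof of Theorem \ref{thm:LagMFCorrespondenceNondegenerate}, substituting the perturbed loop of Figure \ref{fig:Perturbed222} for the unperturbed one. First I would invoke Theorem \ref{thm:CylinderFreeMFFinite} after checking that the perturbed loop $L\left((2,2,2)^{\hash\tau}\right)$, built as the $\tau$-concatenation (based at ${\small\color{red}\bigstar}$) of the loop displayed in Figure \ref{fig:Perturbed222}, is cylinder-free with $\mathbb{L}$. Note that the original loop defined naively by Definition \ref{def:LoopWord} is \emph{not} cylinder-free (it is freely homotopic to $\mathbb{L}^{\hash -\tau}$), so this perturbation is essential and is precisely why a separate degenerate canonical form is needed.

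Next I would settle the base case $(\primemu,\tau)=(1,1)$. From Figure \ref{fig:Perturbed222}, the intersections split as $\chi^0(L,\mathbb{L})=\{p,q,r,v\}$ and $\chi^1(L,\mathbb{L})=\{w,s,t,u\}$, yielding a $4\times 4$ matrix factor. I would apply formula (\ref{eqn:MatrixOfComponentOfDeformedDifferential}) entry by entry, enumerating the (finitely many) immersed polygons with the indicated counterclockwise angle sequences, and tracking signs from coincidence of orientations and from crossings of ${\small\color{red}\bigstar}_L$ and ${\small\color{gray}\bigstar}_\mathbb{L}$. This yields exactly the $\LocalPhi(\mathcal{L})$ and $\LocalPsi(\mathcal{L})$ displayed on the right of Figure \ref{fig:Perturbed222}. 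For general $\tau\ge 2$, the $\tau$-concatenation produces $4\tau$ intersections organized into $\tau$ cyclic blocks; polygons lying inside a single fundamental segment contribute the block-diagonal part, while the only polygons coupling distinct blocks are those that sweep past ${\small\color{red}\bigstar}_L$, so the interblock coupling appears solely in the $(4,1)$-block position and, after an appropriate cyclic ordering of basis elements, assembles into the matrix $R_{\tau}(\primelambda)$.

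Then I would pass to arbitrary $\primemu\ge 1$ via the substitution principle of Proposition \ref{prop:HigherRankLMF}: each occurrence of $\primelambda$ in the rank-one computation is replaced by $J_{\primemu}(\primelambda)^T$, turning the coupling block into $R_\tau(J_{\primemu}(\primelambda)^T)$, which by Lemma \ref{lem:KroneckerProductSwitching} is similar to $R_{\tau,\primemu}(\primelambda) := R_\tau \otimes I_\primemu + K_\tau^{\tau-1}\otimes J_\primemu(\primelambda)$. Finally, a change of basis analogous to the one used in the proof of Theorem \ref{thm:LagMFCorrespondenceNondegenerate}---sign flips on suitably chosen basis vectors followed by the conjugation relation
\[
(-1)^{\dagger}\,J_{\primemu}(\primelambda)^T\ \sim\ J_{\primemu}\bigl((-1)^{\dagger}\primelambda\bigr)
\]
with total sign $(-1)^{\dagger}=(-1)^{\tau}$---converts $J_{\primemu}(\primelambda)^T$ into $J_{\primemu}(\lambda)$ with $\lambda=(-1)^{\tau}\primelambda$. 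Collecting everything produces the matrices in Definition \ref{def:DegenerateCanonicalFormMF}.

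The main obstacle will be the polygon enumeration in the base case $(\primemu,\tau)=(1,1)$: the perturbation creates genuinely new disk shapes interacting with the self-intersections of $\mathbb{L}$, and each of the sixteen matrix entries demands careful sign bookkeeping from the two marked points. Once that calculation is verified against Figure \ref{fig:Perturbed222}, the extension to general $\tau$ (by the periodic block structure of the concatenation) and to general $\primemu$ (by the substitution principle) is largely mechanical, with the only subtlety being the $(-1)^\tau$ that accumulates through the cyclic sign flips and produces the conversion $\lambda=(-1)^{\tau}\primelambda$.
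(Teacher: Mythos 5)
Your proposal follows essentially the same route as the paper's proof: the paper likewise computes $\LocalF(\mathcal{L}((2,2,2),\primelambda,\primemu))$ for the perturbed loop by the polygon-counting method of Proposition \ref{thm:MFFromLag} (with the higher-rank substitution built in), records the result in Figure \ref{fig:Perturbed222}, and then passes to the degenerate canonical form by sign changes of basis elements, treating $\tau\ge2$ analogously via the block structure of the concatenation. Your write-up is just a more explicit version of that argument (including the cylinder-freeness check and the $(-1)^{\tau}$ bookkeeping, which is consistent with the formula $l_i=0$ for $w'=(2,2,2)^{\hash\tau}$), so it is correct and not a different approach.
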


\begin{proof}
For $\tau=1$ case,
the matrix factorization
$
\LocalF\left(\mathcal{L}\left((2,2,2),\primelambda,\primemu\right)\right)
$
is computed in Figure \ref{fig:Perturbed222},
using the same method as in Proposition \ref{thm:MFFromLag}.
It can be transformed to
$
\left(
{\varphi_{\deg}}\left(\left(2,2,2\right)^{\hash \tau}, \lambda, \primemu\right),
{\psi_{\deg}} \left(\left(2,2,2\right)^{\hash \tau}, \lambda, \primemu\right)
\right)
$
by changing the sign of some basis elements.
The case of $\tau\ge2$ is handled similarly.
\end{proof}

For $\lambda\ne1$,
we can reduce the degenerate canonical form into the original version using the following `\textbf{matrix reduction}':

\begin{lemma}\cite{CJKR}\label{lem:MatrixReducingProcess}
Let $S$ be
the power series ring
$
\field[[x_1,\dots,x_n]]
$
of $n$ variables
and $f\in S$ its nonzero element.
Assume that the pair
$
\left(
\begin{psmallmatrix} C & D \\[1mm] E^T & u \end{psmallmatrix},
\begin{psmallmatrix} F & G \\[1mm] H^T & v \end{psmallmatrix}
\right)
$
is a matrix factorization of $f$ in $S$,
for some matrices $C$, $F\in S^{k\times k}$,
$D$, $E$, $G$, $H\in S^{k\times1}$
and $u$, $v\in S$.
If $u$ or $v$ is a unit in $S$,
there is an isomorphism
$$
\left(
\begin{pmatrix} C & D \\[1mm] E^T & u \end{pmatrix},
\begin{pmatrix} F & G \\[1mm] H^T & v \end{pmatrix}
\right)
\ \cong \
\left(C-Du^{-1} E^T,F\right)
\oplus
\left(1,f\right)
\quad
\text{or}
\quad
\left(C,F-Gv^{-1}H^T\right)
\oplus
\left(f,1\right),
$$
respectively,
in $\MF(f)$.
Therefore,
the pair is isomorphic to the reduced pair
$
\left(C-Du^{-1} E^T,F\right)
$
or
$
\left(C,F-Gv^{-1}H^T\right),
$
respectively,
in the homotopy category $\underline{\MF}(f)$.
%
%
%
%
%

\end{lemma}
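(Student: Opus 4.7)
The plan is to construct explicit invertible matrices that simultaneously block-diagonalize both $\varphi$ and $\psi$ via a transformation that preserves the matrix factorization property. Recall that for any invertible $P$, $Q$ of compatible sizes, the substitution $(\varphi,\psi) \mapsto (P\varphi Q, Q^{-1}\psi P^{-1})$ produces an isomorphic matrix factorization, since $(P\varphi Q)(Q^{-1}\psi P^{-1}) = P(\varphi\psi) P^{-1} = f\cdot \id$. Assume $u$ is a unit in $S$ (the case of $v$ is entirely analogous after transposing roles).

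First I would set
$$
P := \begin{pmatrix} I_k & -Du^{-1} \\ 0 & 1 \end{pmatrix},
\qquad
Q := \begin{pmatrix} I_k & 0 \\ -u^{-1} E^T & 1 \end{pmatrix},
$$
which are invertible over $S$ since $u^{-1} \in S$. A direct block computation gives
$$
P\varphi Q = \begin{pmatrix} C - Du^{-1} E^T & 0 \\ 0 & u \end{pmatrix}.
$$
The core of the argument is to verify that $Q^{-1}\psi P^{-1}$ is \emph{also} block-diagonal, namely of the form $\begin{pmatrix} F & 0 \\ 0 & u^{-1} f \end{pmatrix}$. Expanding the triple product, the off-diagonal blocks turn out to be $FDu^{-1} + G$ and $u^{-1}E^T F + H^T$, and these vanish by reading off the $(1,2)$-entry of $\psi\varphi = f \cdot \id$ and the $(2,1)$-entry of $\varphi\psi = f\cdot \id$, which give $FD + Gu = 0$ and $E^TF + uH^T = 0$ respectively. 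The bottom-right entry simplifies to $u^{-1}(E^T G + uv) = u^{-1} f$ from the $(2,2)$-entry of $\varphi\psi = f\cdot\id$.

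Once both matrices are block-diagonal, I would conclude an isomorphism in $\MF(f)$
$$
(\varphi,\psi) \ \cong\ \bigl(C - Du^{-1} E^T,\ F\bigr) \ \oplus\ \bigl(u,\ u^{-1} f\bigr),
$$
and then absorb the unit $u$ by rescaling the second summand to get $(u, u^{-1} f) \cong (1, f)$. Passing to the homotopy category $\underline{\MF}(f)$, the trivial factor $(1,f)$ becomes a zero object (its identity morphism is null-homotopic via the contraction $1 \colon S \to S$), which yields the reduced form as claimed.

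The main obstacle here is not conceptual but bookkeeping: one must carefully track the four blocks of $Q^{-1}\psi P^{-1}$ and remember to invoke \emph{both} matrix factorization identities $\varphi\psi = f\cdot \id$ and $\psi\varphi = f\cdot \id$ (the lemma does not assume $S$ is commutative-enough for one to imply the other for arbitrary submatrices), since each identity is responsible for killing exactly one of the two off-diagonal blocks of the transformed $\psi$.
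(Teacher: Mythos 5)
Your proposal is correct and follows essentially the same route as the paper: the paper's proof exhibits the same elementary block transition matrices (with the $u^{-1}$ rescaling folded into the middle vertical map $\begin{psmallmatrix} I_k & -Du^{-1} \\ 0 & u^{-1} \end{psmallmatrix}$ rather than performed as a separate final step), verifies commutativity using both factorization identities, and then discards the summand $(1,f)$ as a zero object of $\underline{\MF}(f)$. The bookkeeping in your computation of $Q^{-1}\psi P^{-1}$ checks out, so no changes are needed.
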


\begin{proof}
Assume that $u$ is a unit in $S$,
and
consider the following diagram:
$$
\newcommand{\scriptverteq}{\mathrel{\rotatebox{90}{$\scriptstyle=$}}}
\tikzset{
    labl/.style={anchor=south, rotate=90, inner sep=.4mm}
}
\begin{tikzcd}[arrow style=tikz,>=stealth,row sep=4em,column sep=6em] 
S^k\oplus S
  \arrow[r, "\spmat{ C & D \\[1mm] E^T & u }"]
  \arrow[d, "\cong" labl, swap, "\spmat{ I_k & 0 \\[1mm] u^{-1}E^T & 1 }\hspace{3mm}"]
&
S^k\oplus S
  \arrow[r, "\spmat{F & G \\[1mm] H^T & v}"]
  \arrow[d, "\cong" labl, "\spmat{ I_k & -Du^{-1} \\[1mm] 0 & u^{-1} }"]
&
S^k\oplus S
  \arrow[d, "\cong" labl, "\spmat{ I_k & 0 \\[1mm] u^{-1}E^T & 1 }"]
\\
S^k\oplus S
  \arrow[r, "\spmat{ C-Du^{-1}E^T & 0 \\[1mm] 0 & 1 }"]
&
S^k\oplus S
  \arrow[r, "\hspace{7mm}\spmat{ F & 0 \\[1mm] 0 & f }"]
&
S^k\oplus S
\end{tikzcd}
$$
The commutativity of the diagram is immediate from some matrix calculations using the fact that the original pair
is a matrix factorization of $f$. Also, note that the vertical maps
are all isomorphisms.
This yields an isomorphism
$
\left(
\begin{psmallmatrix} C & D \\[1mm] E^T & u \end{psmallmatrix},
\begin{psmallmatrix} F & G \\[1mm] H^T & v \end{psmallmatrix}
\right)
\cong
\left(C-Du^{-1} E^T,F\right)
\oplus
\left(1,f\right)
$
in $\MF(f)$.

One can construct an explicit homotopy between $\left(1,f\right)$ or $\left(f,1\right)$
and the zero object
$
\begin{tikzcd}[arrow style=tikz,>=stealth, sep=20pt, every arrow/.append style={shift left=0.5}]
   0
     \arrow{r}{}
   &
   0
     \arrow{l}{}
\end{tikzcd}
$
in $\MF(f)$,
which shows that it is a zero object in the homotopy category $\underline{\MF}(f)$.
(It is also a consequence of Proposition \ref{prop:StableHomotopySame}.)
This means that we can drop the direct summand $\left(1,f\right)$ or $\left(f,1\right)$
in $\underline{\MF}(f)$.
\end{proof}

\begin{prop}\label{prop:DegenerateOriginalCanonicalForm}
If $\lambda\ne1$,
the degenerate canonical form
$
\left(
{\varphi_{\deg}}\left(\left(2,2,2\right)^{\hash \tau}, \lambda, \primemu\right),
{\psi_{\deg}} \left(\left(2,2,2\right)^{\hash \tau}, \lambda, \primemu\right)
\right)
$
($\tau,\primemu\in\mathbb{Z}_{\ge1}$)
is isomorphic
to the original version
$
\left(
{\varphi}\left(\left(2,2,2\right)^{\hash \tau}, \lambda, \primemu\right),
{\psi} \left(\left(2,2,2\right)^{\hash \tau}, \lambda, \primemu\right)
\right)
$
in $\underline{\MF}(xyz)$.

\end{prop}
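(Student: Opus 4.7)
The key observation is that when $\lambda \neq 1$, the block $M := -I_{\tau\primemu} + R_{\tau,\primemu}(\lambda)$ appearing at position $(4,1)$ of $\psi_{\deg}\left((2,2,2)^{\hash\tau}, \lambda, \primemu\right)$ is invertible over $\field$, hence a unit in $S^{\tau\primemu \times \tau\primemu}$ for $S = \field[[x,y,z]]$. Indeed, by Lemma \ref{lem:JordanFormOfMatrices} the eigenvalues of $R_{\tau,\primemu}(\lambda) = R_\tau(J_\primemu(\lambda))$ are the $\tau$-th roots of $\lambda$ (each with algebraic multiplicity $\primemu$), so the eigenvalues $\zeta - 1$ of $M$ are all nonzero precisely when $\lambda \neq 1$.

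The strategy is to apply a block-matrix version of the reduction Lemma \ref{lem:MatrixReducingProcess} to peel off a homotopically trivial direct summand of rank $\tau\primemu$. Since the proof of that lemma is purely formal matrix algebra, it extends verbatim to the case where $u$ or $v$ is an invertible matrix block. Concretely, I would first left-multiply $\varphi_{\deg}$ and right-multiply $\psi_{\deg}$ by the block permutation $P$ swapping blocks $1$ and $4$, producing an isomorphic matrix factorization whose $\psi$-side has $M$ at its bottom-right corner. Applying the block form of the lemma then yields an isomorphism in $\MF(xyz)$ between the degenerate form and $\left(C,\, F - GM^{-1}H^T\right) \oplus \left(I_{\tau\primemu},\, xyz \cdot I_{\tau\primemu}\right)$, where the $3\tau\primemu \times 3\tau\primemu$ blocks $C, F$ and the rectangular blocks $G, H$ are read off directly from the permuted pair; the trivial summand $\left(I_{\tau\primemu},\, xyz \cdot I_{\tau\primemu}\right)$ vanishes in $\underline{\MF}(xyz)$.

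The main obstacle is the final identification of the reduced pair $(C, F - GM^{-1}H^T)$ with the original canonical pair $\left(\varphi\left((2,2,2)^{\hash\tau}, \lambda, \primemu\right),\, \psi\left((2,2,2)^{\hash\tau}, \lambda, \primemu\right)\right)$, equivalently with its alternative form $(\varphi_{\operatorname{alt}}, \psi_{\operatorname{alt}})$ from (\ref{eqn:AlternativeCanonicalForm}). Direct inspection shows that $C$ already matches $\varphi_{\operatorname{alt}}\left((2,2,2)^{\hash\tau}, \lambda, \primemu\right)$ up to a cyclic permutation of block rows, so the real difficulty is removing the $M^{-1}$ contributions from $F - GM^{-1}H^T$: here $G$ and $H$ are block-columns made from $\pm xI, \pm yI, \pm zI$, so $GM^{-1}H^T$ is a $3 \times 3$ block matrix whose entries are products of two of $x, y, z$ multiplied by $M^{-1}$, whereas $\psi_{\operatorname{alt}}$ has polynomial entries depending on $\lambda$ only through $R_{\tau,\primemu}(\lambda)$ itself. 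I would resolve this by constructing block-triangular invertible matrices $U, V \in \operatorname{GL}_{3\tau\primemu}(S)$ that carry the reduced pair to $(\varphi_{\operatorname{alt}}, \psi_{\operatorname{alt}})$; the matrix-factorization identity $\varphi_{\deg}\psi_{\deg} = xyz \cdot I$ and the commutation $M \cdot R_{\tau,\primemu}(\lambda) = R_{\tau,\primemu}(\lambda) \cdot M$ together supply enough algebraic constraints to confine the $M^{-1}$ entries to off-diagonal blocks of $U, V$ where they can be absorbed.
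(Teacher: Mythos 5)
Your proposal is correct and follows essentially the same route as the paper: the paper's proof is precisely a block-matrix reduction along the invertible block $I_{\tau\primemu}-R_{\tau,\primemu}(\lambda)$ sitting in $\psi_{\deg}$, written out as an explicit commutative diagram whose vertical transition matrices are exactly the ones a block version of Lemma \ref{lem:MatrixReducingProcess} produces, landing on the direct sum of $\left(\varphi_{\operatorname{alt}}\left((2,2,2)^{\hash\tau},\lambda,\primemu\right),\psi_{\operatorname{alt}}\left((2,2,2)^{\hash\tau},\lambda,\primemu\right)\right)$ with the trivial summand $\left(xyz\,I_{\tau\primemu},I_{\tau\primemu}\right)$, which is then discarded in $\underline{\MF}(xyz)$. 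The one substantive remark is that the ``main obstacle'' in your final paragraph is not an obstacle: once you have verified that the reduced first factor $C$ equals $\varphi_{\operatorname{alt}}$ (which it does on the nose after your block permutation, with all negative powers of the variables set to zero), the second factor $F-GM^{-1}H^{T}$ is automatically equal to $\psi_{\operatorname{alt}}$, since a matrix factor of $xyz$ has nonzero determinant over the domain $S$ and therefore a unique cofactor --- from $C\left(F-GM^{-1}H^{T}-\psi_{\operatorname{alt}}\right)=0$ the difference must vanish. So the proposed construction of $U,V$ and the appeal to the commutation of $M$ with $R_{\tau,\primemu}(\lambda)$ can be dropped entirely.
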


\begin{proof}
In the simplest case $\tau=\primemu=1$,
the reducing process in Lemma \ref{lem:MatrixReducingProcess} along a unit $-1+\lambda$ in $\psi_{\deg}\left((2,2,2),\lambda,1\right)$
yields a commutative diagram,
which proves the claim:
$$
\newcommand{\scriptverteq}{\mathrel{\rotatebox{90}{$\scriptstyle=$}}}
\tikzset{
    labl/.style={anchor=south, rotate=90, inner sep=.5mm}
}
\begin{tikzcd}[arrow style=tikz,>=stealth,row sep=5em,column sep=7em] 
S^3\oplus S
  \arrow[r, "\smat{\varphi_{\deg}\left((2,2,2),\lambda,1\right)}"]
  \arrow[d, "\cong" labl, swap, "\smat{\begin{psmallmatrix} I_3 & -\left(1-\lambda\right)^{-1} \begin{psmallmatrix}y \\ z \\ x \end{psmallmatrix} \\[1mm] 0 & -\left(1-\lambda\right)^{-1} \end{psmallmatrix} \\[3mm] }\hspace{3mm}"]
&
S\oplus S^3
  \arrow[r, "\smat{\psi_{\deg}\left((2,2,2),\lambda,1\right)}"]
  \arrow[d, "\cong" labl, "\smat{\begin{psmallmatrix} 0 & I_3 \\[1mm] 1 & \left(1-\lambda\right)^{-1} \begin{psmallmatrix} x & y & z \end{psmallmatrix} \end{psmallmatrix}\\[6mm]}"]
&
S^3\oplus S
  \arrow[d, "\cong" labl, "\smat{\begin{psmallmatrix} I_3 & -\left(1-\lambda\right)^{-1} \begin{psmallmatrix}y \\ z \\ x \end{psmallmatrix} \\[1mm] 0 & -\left(1-\lambda\right)^{-1} \end{psmallmatrix} \\[3mm] }\hspace{3mm}"]
\\
S^3\oplus S
  \arrow[r, "\spmat{ \varphi\left((2,2,2),\lambda,1\right) & 0 \\[1mm] 0 & xyz }"]
&
S^3\oplus S
  \arrow[r, "\spmat{ \psi\left((2,2,2),\lambda,1\right) & 0 \\[1mm] 0 & 1 }"]
&
S^3\oplus S
\end{tikzcd}
$$

%
%
%
%
%
%
%
%
%
%

In general case,
we `substitute the matrix $R_{\tau,\primemu}\left(\lambda\right)$ for the scalar $\lambda$'
to get a commutative diagram:
$$
\newcommand{\scriptverteq}{\mathrel{\rotatebox{90}{$\scriptstyle=$}}}
\tikzset{
    labl/.style={anchor=south, rotate=90, inner sep=.5mm}
}
\begin{tikzcd}[arrow style=tikz,>=stealth,row sep=7em,column sep=11em] 
\scriptstyle S^{3\tau\primemu}\oplus S^{\tau\primemu}
  \arrow[r, "\smat{\varphi_{\deg}\left((2,2,2)^{\hash \tau},\lambda,\primemu\right)}"]
  \arrow[d, "\cong" labl, swap, "\smat{\begin{psmallmatrix} I_{3\tau\primemu} & -\begin{psmallmatrix}y I_{\tau\primemu} \\ z I_{\tau\primemu} \\ x I_{\tau\primemu} \end{psmallmatrix} \left(I_{\tau\primemu}-R_{\tau,\primemu}\left(\lambda\right)\right)^{-1} \\[1mm] 0 & -\left(I_{\tau\primemu}-R_{\tau,\primemu}\left(\lambda\right)\right)^{-1}  \end{psmallmatrix} \\[3mm] }\hspace{3mm}"]
&
\scriptstyle S^{\tau\primemu}\oplus S^{3\tau\primemu}
  \arrow[r, "\smat{\psi_{\deg}\left((2,2,2)^{\hash \tau},\lambda,\primemu\right)}"]
  \arrow[d, "\cong" labl, "\smat{\begin{psmallmatrix} 0 & I_{3\tau\primemu} \\[1mm] I_{\tau\primemu} & \left(I_{\tau\primemu}-R_{\tau,\primemu}\left(\lambda\right)\right)^{-1} \begin{psmallmatrix} x I_{\tau\primemu} & y I_{\tau\primemu} & z I_{\tau\primemu} \end{psmallmatrix} \end{psmallmatrix}\\[6mm]}"]
&
\scriptstyle S^{3\tau\primemu}\oplus S^{\tau\primemu}
  \arrow[d, "\cong" labl, "\smat{\begin{psmallmatrix} I_{3\tau\primemu} & -\begin{psmallmatrix}y I_{\tau\primemu} \\ z I_{\tau\primemu} \\ x I_{\tau\primemu} \end{psmallmatrix} \left(I_{\tau\primemu}-R_{\tau,\primemu}\left(\lambda\right)\right)^{-1} \\[1mm] 0 & -\left(I_{\tau\primemu}-R_{\tau,\primemu}\left(\lambda\right)\right)^{-1}  \end{psmallmatrix} \\[3mm] }\hspace{3mm}"]
\\
\scriptstyle S^{3\tau\primemu}\oplus S^{\tau\primemu}
  \arrow[r, "\spmat{ \varphi_{\operatorname{alt}}\left((2,2,2)^{\hash \tau},\lambda,\primemu\right) & 0 \\[1mm] 0 & xyz I_{\tau\primemu} }"]
&
\scriptstyle S^{3\tau\primemu}\oplus S^{\tau\primemu}
  \arrow[r, "\spmat{ \psi_{\operatorname{alt}}\left((2,2,2)^{\hash \tau},\lambda,\primemu\right) & 0 \\[1mm] 0 & I_{\tau\primemu} }"]
&
\scriptstyle S^{3\tau\primemu}\oplus S^{\tau\primemu}
\end{tikzcd}
$$
Therefore,
the degenerate canonical form
is isomorphic to the alternative canonical form
(\ref{eqn:AlternativeCanonicalForm})
in $\underline{\MF}(xyz)$,
which is again isomorphic to the original canonical form in
$\MF(xyz)$.
\end{proof}

This shows that even if the normal loop word is
$w'=(2,2,2)^{\hash \tau}$
(and hence we use the perturbed loop for $\mathcal{L}\left((2,2,2)^{\hash \tau},\primelambda,\primemu\right)$),
we can still define and use the original canonical form
of matrix factorizations
$
\left(
{\varphi}\left(\left(2,2,2\right)^{\hash \tau}, \lambda, \primemu\right),
{\psi} \left(\left(2,2,2\right)^{\hash \tau}, \lambda, \primemu\right)
\right)
$
unless $\lambda=1$
(or equivalently $\primelambda=(-1)^{\tau}$).
Therefore, we would not call them all degenerate cases.

\begin{defn}\label{defn:DegenerateLoopData}
A normal loop word is called
\textnormal{\textbf{non-cylinder-free}}
if it is $w'=(2,2,2)^{\hash \tau}$
for some $\tau\in\mathbb{Z}_{\ge1}$,
and a loop datum is called
\textnormal{\textbf{degenerate}}
if it is

(1)
$
\left(w'=(2,2,2)^{\hash \tau},\primelambda=(-1)^{\tau},\primemu\right)
$
and parameterizes
loops with a local system
$
\mathcal{L}\left((2,2,2)^{\hash \tau},(-1)^{\tau},\primemu\right)
$,

(2)
$
\left(w'=(2,2,2)^{\hash \tau},\lambda=1,\primemu\right)
$
and parameterizes
matrix factorizations
$
\left(
{\varphi_{\deg}}\left(\left(2,2,2\right)^{\hash \tau}, 1, \primemu\right),
{\psi_{\deg}} \left(\left(2,2,2\right)^{\hash \tau}, 1, \primemu\right)
\right)
$.
\end{defn}

Now
Theorem \ref{thm:LagMFCorrespondenceNondegenerate},
Theorem \ref{thm:LagMFCorrespondenceDegenerate},
and Proposition \ref{prop:DegenerateOriginalCanonicalForm}
complete the proof of
Theorem \ref{thm:LagMFCorrespondence} in the introduction.

For $\tau\ge2$,
the non-cylinder-free loop word $(2,2,2)^{\hash \tau}$ is \textbf{periodic},
and
the corresponding objects are \textbf{decomposable}
as in the general case
(Theorem \ref{thm:PeriodicDecomposable}):

\begin{prop}\label{prop:PeriodicDecomposableDegenerate}
For a loop datum
$\left((2,2,2)^{\hash \tau},\lambda,\primemu\right)$,
there is a decomposition
$$
{\varphi_{\deg}}\left((2,2,2)^{\hash \tau}, \lambda, \primemu\right)
\cong
\bigoplus_{k=0}^{\tau-1} \varphi_{\deg}\left((2,2,2),\lambda_{\tau,k},\primemu\right)
$$
in $\MF(xyz)$,
where
$\lambda_{\tau,0},\dots,\lambda_{\tau,\tau-1}\in\field$ are the $\tau$-th roots of $\lambda$.

\end{prop}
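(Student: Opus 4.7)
The strategy mirrors that of Theorem \ref{thm:PeriodicDecomposable}: isolate a single $\tau$-dependent block inside the matrix, diagonalize it via the Jordan form supplied by Lemma \ref{lem:JordanFormOfMatrices}, and then shuffle tensor factors to expose a direct sum structure. The essential observation is that in Definition \ref{def:DegenerateCanonicalFormMF} the parameter $\lambda$ and the concatenation length $\tau$ enter only through the single block $-x R_{\tau,\primemu}(\lambda) = -x R_\tau(J_\primemu(\lambda))$ in position $(4,1)$; every other block is a scalar multiple of $I_{\tau\primemu}$.

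First, I would write
\begin{equation*}
\varphi_{\deg}\left((2,2,2)^{\hash\tau},\lambda,\primemu\right)
\ =\
A\otimes I_{\tau\primemu}
\ +\
B\otimes R_\tau\!\left(J_\primemu(\lambda)\right),
\end{equation*}
where
\[
A=\begin{psmallmatrix}
-zx & 0 & 0 & 0 \\
z   & -y & 0 & 0 \\
0   & x  & -z & 0 \\
0   & 0  & y & -xy
\end{psmallmatrix},
\qquad
B=\begin{psmallmatrix}
0 & 0 & 0 & 0 \\
0 & 0 & 0 & 0 \\
0 & 0 & 0 & 0 \\
-x & 0 & 0 & 0
\end{psmallmatrix}
\]
are $4\times 4$ matrices with entries in $S=\field[[x,y,z]]$. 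By Lemma \ref{lem:JordanFormOfMatrices}, there exists $P\in\operatorname{GL}_{\tau\primemu}(\field)$ such that $P^{-1}R_\tau(J_\primemu(\lambda))P = \bigoplus_{k=0}^{\tau-1}J_\primemu(\lambda_{\tau,k})$. Conjugating by $I_4\otimes P$ replaces $R_\tau(J_\primemu(\lambda))$ with the block-diagonal matrix while leaving $A\otimes I_{\tau\primemu}$ unchanged.

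Second, I would regard $\field^{4\tau\primemu}\cong\field^{4}\otimes\field^{\tau}\otimes\field^{\primemu}$ and apply the perfect shuffle $S_{4,\tau}\otimes I_\primemu$ of Lemma \ref{lem:KroneckerProductSwitching} to swap the first two tensor factors. This transforms $A\otimes I_\tau\otimes I_\primemu$ into $I_\tau\otimes A\otimes I_\primemu$, and $B\otimes\bigl(\bigoplus_k J_\primemu(\lambda_{\tau,k})\bigr)=\sum_b B\otimes E_{bb}\otimes J_\primemu(\lambda_{\tau,b})$ (where $E_{bb}$ is the diagonal $\tau\times\tau$ matrix with a single $1$) into $\sum_b E_{bb}\otimes B\otimes J_\primemu(\lambda_{\tau,b})$. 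Summing the two shuffled pieces gives
\begin{equation*}
\sum_{b=0}^{\tau-1} E_{bb}\otimes\bigl(A\otimes I_\primemu + B\otimes J_\primemu(\lambda_{\tau,b})\bigr)
\ =\
\bigoplus_{k=0}^{\tau-1}\varphi_{\deg}\!\left((2,2,2),\lambda_{\tau,k},\primemu\right),
\end{equation*}
since each summand is exactly the $\tau=1$ instance of Definition \ref{def:DegenerateCanonicalFormMF} with eigenvalue $\lambda_{\tau,k}$ (note $R_{1,\primemu}(\lambda_{\tau,k})=J_\primemu(\lambda_{\tau,k})$).

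Combining the conjugation by $I_4\otimes P$ with the shuffle yields an invertible $V\in\operatorname{GL}_{4\tau\primemu}(\field)$ realizing
\[
\varphi_{\deg}\!\left((2,2,2)^{\hash\tau},\lambda,\primemu\right)
\ =\ V^{-1}\!\left(\bigoplus_{k=0}^{\tau-1}\varphi_{\deg}\!\left((2,2,2),\lambda_{\tau,k},\primemu\right)\right)\!V,
\]
which (together with the automatically induced change of basis on the opposite factor $\psi_{\deg}$) furnishes the claimed isomorphism in $\MF(xyz)$. The only step requiring care is bookkeeping the order of the tensor factors so that the shuffle matrix genuinely commutes with the identity blocks and intertwines $R_\tau(J_\primemu(\lambda))$ with its Jordan form in the right positions; this is essentially the same maneuver already carried out in the proof of Theorem \ref{thm:PeriodicDecomposable}, simplified by the fact that here only one block of $\varphi_{\deg}$ is $\lambda$-dependent.
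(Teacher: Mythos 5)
Your proof is correct and is essentially the argument the paper intends: its own proof of this proposition is just the one-line remark that it goes "in the same way as Theorem~\ref{thm:PeriodicDecomposable}", i.e.\ combine Lemma~\ref{lem:JordanFormOfMatrices} (Jordan form of $R_\tau(J_{\primemu}(\lambda))$) with the perfect-shuffle Lemma~\ref{lem:KroneckerProductSwitching}, exactly as you do. Your write-up correctly exploits the simplification that only the $(4,1)$ block of $\varphi_{\deg}$ depends on $\lambda$, so the expression $A\otimes I_{\tau\primemu}+B\otimes R_\tau(J_{\primemu}(\lambda))$ is already in the form reached midway through the proof of Theorem~\ref{thm:PeriodicDecomposable}, and the remaining conjugation and shuffle steps are carried out accurately.
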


\begin{proof}
It is proven in the same way as Theorem \ref{thm:PeriodicDecomposable}.
\end{proof}

Combining Proposition \ref{prop:PeriodicDecomposableDegenerate} and Proposition \ref{prop:DegenerateOriginalCanonicalForm},
we get a decomposition of the matrix factorization for periodic degenerate cases
($w'=(2,2,2)^{\hash \tau},\lambda=1$)
as
\begin{align}\label{eqn:PeriodicDegenerateCaseDecompositionMF}
\begin{split}
{\varphi_{\deg}}\left((2,2,2)^{\hash \tau}, 1, \primemu\right)
&\cong
\bigoplus_{k=0}^{\tau-1} \varphi_{\deg}\left((2,2,2), e^{2\pi i \cdot \frac{k}{\tau}},\primemu\right)
\quad\text{in }\MF(xyz)
\\
&\cong
{\varphi_{\deg}}\left((2,2,2),1,\primemu\right)
\oplus
\bigoplus_{k=1}^{\tau-1} \varphi \left((2,2,2), e^{2\pi i \cdot \frac{k}{\tau}}, \primemu \right)
\quad\text{in }\underline{\MF}(xyz).
\end{split}
\end{align}

This also implies the decomposition of the loop with a local system for periodic degenerate cases
($w'=(2,2,2)^{\hash \tau},\primelambda=(-1)^\tau$)
as
\begin{equation}\label{eqn:PeriodicDegenerateCaseDecompositionFuk}
\mathcal{L}\left((2,2,2)^{\hash \tau},(-1)^{\tau},\primemu\right)
\cong
\bigoplus_{k=0}^{\tau-1}
\mathcal{L}\left((2,2,2),-e^{2\pi i \cdot \frac{k}{\tau}},\primemu\right)
\quad\text{in }\Fuk\left(\POP\right),
\end{equation}
extending the result of Corollary \ref{cor:PeriodicDecomposableFuk}
to any periodic normal loop words.

\section{Matrix Factorizations from Maximal Cohen-Macaulay Modules}

In this section,
we first recall
some general concepts on \emph{maximal Cohen-Macaulay modules}
which we will need in the rest of the section,
including
the
\emph{Macaulayfication} and its computation
(\S \ref{sec:MCMMacaulayficationMF}).
Then
we recall Burban-Drozd's classification \cite{BD17} of \emph{band-type} indecomposable maximal Cohen-Macaulay modules
over
$A:=\left.\field[[x,y,z]]\right/(xyz)$.
Their canonical form is provided as a submodule of a free module $A^{\tau\mu}$,
whose generators are given in terms of \emph{band data} (\S \ref{sec:BandData}).
Those band data are in one-to-one correspondence with loop data by the \emph{conversion formula} given in \cite{CJKR}
(\S \ref{sec:ConversionFormula}).
Using this correspondence,
we prove the main theorem in this section
that the canonical form of
matrix factorizations corresponding to loop data is related to
the canonical form of band-type maximal Cohen-Macaulay modules corresponding to band data
under Eisenbud's equivalence
(\S \ref{sec:MFfromCM}).
The \emph{degenerate case} will be treated separately (\S \ref{sec:DegenerateCase}).

\subsection{Maximal Cohen-Macaulay modules and Macaulayfication}
\label{sec:MCMMacaulayficationMF}

Let $\left(A,\mathfrak{m}\right)$ be a Noetherian local ring,
$\mathbb{k}:=\left.A\right/\mathfrak{m}$
its residue field,
and
$
d:=\operatorname{kr.dim}(A)
$
its Krull dimension.

\begin{defn}\label{defn:MCMModules}
A
Noetherian
$A$-module $M$\ is called \textnormal{\textbf{maximal Cohen-Macaulay}}
if
$$
\operatorname{Ext}_A^i\left(\mathbb{k},M\right)=0
\quad\text{for }i\in\left\{0,\dots,d-1\right\}.
$$
We denote by $\CM(A)$ the \textnormal{\textbf{category of maximal Cohen-Macaulay modules over $A$}},
which is a full subcategory of the category $A-\operatorname{mod}$ of
Noetherian (i.e. finitely generated)
$A$-modules.
\end{defn}

We refer readers, for example, to \cite{Yo} for general properties
and representations of maximal Cohen-Macaulay modules,
and \cite{HB93} for (not necessarily maximal) Cohen-Macaulay modules.
However,
note that many authors,
including \cite{Yo},
refer to maximal Cohen-Macaulay modules simply as \emph{Cohen-Macaulay modules}.

\subsubsection{Macaulayfication}

Now let us focus on our specific case
where
$A$ is given by
$
\left.\field[[x,y,z]]\right/(xyz).
$
It is an example of \emph{surface singularities}
(i.e. $d=2$),
and
many technics and representation-theoretic aspects
for them were developed and studied in \cite{BD08,BD17}.
We will
especially
use the following \emph{Macaulayfying} process,
which
naturally
associates any
Noetherian
$A$-module a maximal Cohen-Macaulay $A$-module:

\begin{defn}\cite{BD08}\label{defn:Macaulayfication}
The \textnormal{\textbf{Macaulayfication}} of a
Noetherian
$A$-module $\tilde{M}$
is defined by
$$
\tilde{M}^{\dagger}
:=
\Hom_A\left(\Hom_A\left(\tilde{M},A\right),A\right)
\footnote{
We are using the fact that $A$ is \emph{Gorenstein in codimension one}
and therefore its \emph{canonical module} is isomorphic to $A$.
See \cite{BD08} for general definition when $A$ does not have a such property.
}.
$$
%
It is indeed a maximal Cohen-Macaulay $A$-module
\footnote{
Here the fact that the Krull dimension of $A$ is $2$ is essential.
See Lemma 3.1 in \cite{BD08}.
}.
This defines a functor
$
\dagger:A-\operatorname{mod}\rightarrow\CM(A),
$
which is called the
\emph{Macaulayfication functor}.
\end{defn}

We will use
a combinatorial tool to compute the Macaulayfication of a given $A$-module in practice:

\begin{defn}
Let $\tilde{M}$ be an $A$-submodule of a free module $A^r$.
If there is an element $F\in A^r\setminus\tilde{M}$
such that $xF$, $yF$, $zF\in\tilde{M}$,
we call it a \textnormal{\textbf{Macaulayfying element}} of $\tilde{M}$ in $A^r$.
\end{defn}

\begin{prop}\cite{BD17}
For an $A$-submodule $\tilde{M}$ of a free module $A^r$,
the following hold:

\noindent
(1) $\tilde{M}$ is maximal Cohen-Macaulay if and only if there is no Macaulayfying element of $\tilde{M}$ in $A^r$.
We have $\tilde{M}^\dagger = \tilde{M}$ in this case.

\noindent
(2) $\tilde{M}^\dagger \cong \left<\tilde{M},F\right>_A^\dagger$
holds for any Macaulayfying element $F$ of $\tilde{M}$ in $A^r$.
\end{prop}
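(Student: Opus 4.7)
The plan is to reduce both statements to the depth-theoretic vanishings $\Hom_A(k,A) = 0 = \operatorname{Ext}^1_A(k,A)$, where $k = A/\mathfrak{m}$. These hold because $A = \field[[x,y,z]]/(xyz)$ is a reduced complete intersection, hence a Gorenstein local ring of Krull dimension and depth two; in particular $\operatorname{Ann}_A(\mathfrak{m}) = 0$. Throughout, Macaulayfying elements will be interpreted cohomologically: an element $F \in A^r \setminus \tilde{M}$ with $\mathfrak{m} F \subseteq \tilde{M}$ is precisely a nonzero element of $A^r/\tilde{M}$ annihilated by $\mathfrak{m}$, i.e.\ a nonzero homomorphism $k \to A^r/\tilde{M}$. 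So the non-existence of a Macaulayfying element is equivalent to $\Hom_A(k, A^r/\tilde{M}) = 0$.

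To prove (1), I will apply $\Hom_A(k,-)$ to the short exact sequence $0 \to \tilde{M} \to A^r \to A^r/\tilde{M} \to 0$. Because $A^r$ is free and the Ext vanishings above give $\Hom_A(k, A^r) = 0 = \operatorname{Ext}^1_A(k, A^r)$, the long exact sequence collapses to an isomorphism $\Hom_A(k, A^r/\tilde{M}) \cong \operatorname{Ext}^1_A(k, \tilde{M})$, while $\Hom_A(k, \tilde{M}) \subseteq \Hom_A(k, A^r) = 0$ is automatic. Thus $\tilde{M}$ is MCM (by Definition \ref{defn:MCMModules}, with $d = 2$) if and only if $\Hom_A(k, A^r/\tilde{M}) = 0$, which by the first paragraph is exactly the non-existence of a Macaulayfying element. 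The additional claim that $\tilde{M}^\dagger = \tilde{M}$ for MCM $\tilde{M}$ follows from the standard identification of MCM modules with reflexive modules over a Gorenstein local ring of dimension two (see e.g.\ \cite{Yo}).

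For (2), set $\tilde{N} := \langle \tilde{M}, F \rangle_A$. Since $F + \tilde{M}$ is nonzero and annihilated by $\mathfrak{m}$, the quotient $\tilde{N}/\tilde{M}$ is isomorphic to $k$, giving a short exact sequence $0 \to \tilde{M} \to \tilde{N} \to k \to 0$. Applying $\Hom_A(-,A)$ and using $\Hom_A(k,A) = 0 = \operatorname{Ext}^1_A(k,A)$, the inclusion $\tilde{M} \hookrightarrow \tilde{N}$ induces an isomorphism $\Hom_A(\tilde{N}, A) \xrightarrow{\cong} \Hom_A(\tilde{M}, A)$. Dualizing once more yields $\tilde{M}^\dagger = \tilde{M}^{**} \cong \tilde{N}^{**} = \tilde{N}^\dagger$, as desired. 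The only potential obstacle is confirming the depth and Gorenstein properties of $A$, but these are standard for hypersurface singularities; once they are in hand, both statements are short diagram chases with long exact Ext sequences, so no computational difficulty remains.
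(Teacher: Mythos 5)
Your argument is correct. Note that the paper itself gives no proof of this proposition: it simply defers to Proposition 4.2 of \cite{CJKR} for (1) and Lemma 1.5 of \cite{BD17} for (2). Your reduction is exactly the standard one underlying those references: a Macaulayfying element is a nonzero map $k\to A^r/\tilde M$, the vanishings $\Hom_A(k,A)=\operatorname{Ext}^1_A(k,A)=0$ follow from $\operatorname{depth}A=2$ (CM-ness of the hypersurface suffices here; Gorensteinness is only needed to identify $\dagger$ with the double $A$-dual rather than the $\omega$-dual), and the two long exact sequences give (1) and (2) respectively. The one step you quote as ``standard'' --- that MCM submodules of $A^r$ are reflexive, so $\tilde M^{\dagger}=\tilde M$ --- is precisely the content of Lemma 3.1 of \cite{BD08}, which the paper already invokes in Definition \ref{defn:Macaulayfication}, so nothing is missing.
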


\begin{proof}
See Proposition 4.2 in \cite{CJKR} for the proof of (1),
and Lemma 1.5 in \cite{BD17} for (2).
\end{proof}

\subsection{Band data and canonical form of maximal Cohen-Macaulay modules}\label{sec:BandData}

We
recall the concept of the \emph{band data} from \cite{BD17},
which parameterize \emph{band-type} indecomposable maximal Cohen Macaulay modules over
$A=\left.\field[[x,y,z]]\right/(xyz)$.
Here we use a slightly modified version of band words
given in \cite{CJKR}
for our specific singularity $A$,
in order
to match them with the loop data
in the next subsection.


\begin{defn}\label{defn:mbd}
A \textnormal{\textbf{band datum}} $(w,\lambda,\mu)$ consists of the following:
\begin{itemize}
\item (band word) $w = \left(l_1,m_1,n_1,l_2,m_2,n_2,\dots,l_\tau,m_\tau,n_\tau\right)\in \mathbb{Z}^{3\tau}$ for some $\tau \in \mathbb{Z}_{\ge1}$,
\item (eigenvalue) $\lambda\in \field^\times$,
\item ((algebraic) multiplicity) $\mu \in \mathbb{Z}_{\ge1}$.
\end{itemize}
It defines an $A$-module, denoted by
$$
\tilde{M}\left(w,\lambda,\mu\right),
$$
as
an $A$-submodule of $A^{\tau\mu}$ generated by all columns of the $6$ matrices
\begin{center}
$x^2y^2I_{\tau\mu}$,\quad $y^2z^2I_{\tau\mu}$,\quad $z^2x^2I_{\tau\mu}$,\quad
$
\pi_x\left(w,\lambda,\mu\right):=
$
\adjustbox{scale=0.898}{
$
\begin{pmatrix}
x^{l_1^- +2}yI_\mu & zx^{l_2^+ +2}I_\mu & \cdots & 0 \\
0 & x^{l_2^- +2}yI_\mu & \ddots & \vdots \\
\vdots & \vdots & \ddots & zx^{l_\tau^+ +2}I_\mu \\
zx^{l_1^+ +2}J_\mu(\lambda) & 0 & \cdots & x^{l_\tau^- +2}yI_\mu
\end{pmatrix}_{\tau\mu \times \tau\mu},
$
}

$
\pi_y\left(w,\lambda,\mu\right):=
$
\adjustbox{scale=0.898}{
$
\begin{pmatrix}
\left(xy^{m_1^+ +2} + y^{m_1^- +2}z\right)I_\mu & 0 & \cdots & 0 \\
0 & \left(xy^{m_2^+ +2} + y^{m_2^- +2}z\right)I_\mu & \cdots & 0 \\
\vdots & \vdots & \ddots & \vdots \\
0 & 0 & \cdots & \left(xy^{m_\tau^+ +2} + y^{m_\tau^- +2}z\right)I_\mu
\end{pmatrix}_{\tau\mu \times \tau\mu},
$
}

$
\pi_z\left(w,\lambda,\mu\right):=
$
\adjustbox{scale=0.899}{
$
\begin{pmatrix}
\left(yz^{n_1^+ +2}+z^{n_1^- +2}x\right)I_\mu & 0 & \cdots & 0 \\
0 & \left(yz^{n_2^+ +2} + z^{n_2^- +2}x\right)I_\mu & \cdots & 0 \\
\vdots & \vdots & \ddots & \vdots \\
0 & 0 & \cdots & \left(yz^{n_\tau^+ +2} + z^{n_\tau^- +2}x\right)I_\mu
\end{pmatrix}_{\tau\mu \times \tau\mu},
$
}
\end{center}
where
$a^+:=\max\left\{0,a\right\}$
and
$a^-:=\max\left\{0,-a\right\}$
for $a\in\mathbb{Z}$.
\end{defn}

The notion of \emph{shift}, \emph{subword}, \emph{concatenation} and \emph{periodicity} of a band word
can be defined similarly following those of a loop word.
Two band words $w$ and $\tilde{w}$ are considered \emph{equivalent}
if they coincide up to shifting,
that is,
$\tilde{w}=w^{(k)}$ for some $k\in\mathbb{Z}$.

In general,
however,
the $A$-module
$\tilde{M}\left(w,\lambda,\mu\right)$
fails to be maximal Cohen-Macaulay.
So we need to \emph{Macaulayfy} it to get an object in $\CM(A)$ from a band datum.

\begin{defn}\label{defn:CanonicalFormMCM}
Given a band datum $\left(w,\lambda,\mu\right)$,
we define the corresponding maximal Cohen-Macaulay $A$-module
$$
M\left(w,\lambda,\mu\right)
:=
\tilde{M}(w,\lambda,\mu)^\dagger
$$
as the Macaulayfication
of
$\tilde{M}(w,\lambda,\mu)$.
We refer to it as the
\textnormal{\textbf{canonical form of band-type
maximal Cohen-Macaulay modules}}
over $A$
corresponding to the band datum $\left(w,\lambda,\mu\right)$.
\end{defn}

The maximal Cohen-Macaulay module $M\left(w,\lambda,\mu\right)$
constructed from any band datum is
\textbf{locally free on the punctured spectrum of $A$},
that is,
for any $\mathfrak{p}\in\Spec(A)\setminus \left\{\mathfrak{m}\right\}$
the localization $M_{\mathfrak{p}}$ is a free $A_{\mathfrak{p}}$-module,
where $\mathfrak{m}$ is the maximal ideal of $A$.
The converse also holds,
which is the following classification theorem:

\begin{thm}\cite{BD17}\label{thm:BDClassification}
Any indecomposable maximal Cohen-Macaulay module over $A$
that is locally free on the punctured spectrum 
is isomorphic to the canonical form $M\left(w,\lambda,\mu\right)$ for some unique non-periodic band datum $\left(w,\lambda,\mu\right)$ up to shifting of the band word $w$.

\end{thm}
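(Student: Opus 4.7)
The plan is to reduce this classification problem from $\underline{\CM}(A)$ to a more tractable matrix problem via the category of triples $\Tri(A)$ mentioned in the excerpt. Recall that Burban--Drozd construct an equivalence $\mathbb{F}_{\operatorname{BD}}:\CM(A)\xrightarrow{\simeq}\Tri(A)$, where an object of $\Tri(A)$ consists of three $\field((t))$-vector spaces (one for each coordinate axis of $\Spec A$, arising from the normalization of $A$) together with "gluing" maps that encode how a module is glued across the singular locus $\Spec A_{\operatorname{sing}} = \{xy=yz=zx=0\}$. The first step is to establish that the local-freeness hypothesis on the punctured spectrum translates precisely into the gluing maps being injective with free cokernels of finite rank, which is the technical condition that makes the following classification finite-dimensional.

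Next, I would translate the classification of such triples into a bimodule / matrix problem of the form studied in representation theory of tame algebras. Concretely, each indecomposable triple corresponds to an indecomposable representation of a certain \emph{bunch of chains} (in the sense of Bondarenko) or equivalently of a gentle-like algebra attached to the three-punctured sphere dual graph. Because the three coordinate axes meet pairwise at the origin, the gluing data naturally decomposes according to a sequence of alternating ``moves'' between the $x$-, $y$-, and $z$-axes; encoding these moves as exponents $(l_i,m_i,n_i)$ gives rise to the band word $w$. Once this combinatorial encoding is set up, the classical Krull--Schmidt / ``bands and strings'' dichotomy (as in Bondarenko, Crawley-Boevey) produces exactly two families of indecomposables: \emph{string-type} objects, which correspond to non-cyclic sequences of moves, and \emph{band-type} objects, which correspond to cyclic sequences equipped with a continuous spectral parameter $\lambda\in\field^\times$ and discrete multiplicity parameter $\mu\in\mathbb{Z}_{\ge 1}$ realized by a Jordan block $J_\mu(\lambda)$.

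Having classified indecomposables in $\Tri(A)$, I would then transport back via $\mathbb{F}_{\operatorname{BD}}^{-1}$ to verify that each band-type triple indeed corresponds to the maximal Cohen-Macaulay module $M(w,\lambda,\mu)=\tilde{M}(w,\lambda,\mu)^{\dagger}$ constructed in Definition \ref{defn:CanonicalFormMCM}. This matching is done by explicitly computing the image under $\mathbb{F}_{\operatorname{BD}}$ of the generators of $\tilde M(w,\lambda,\mu)$ listed in Definition \ref{defn:mbd} and checking that after Macaulayfication the resulting triple is isomorphic to $\Theta(w,\lambda,\mu)$ displayed in Figure \ref{fig:CanonicalForm2}. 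Finally, uniqueness up to shifting the band word would be deduced from the uniqueness-of-Jordan-canonical-form type statement in the matrix problem: two band data give isomorphic indecomposables iff their cyclic words and spectral data coincide, and the non-periodicity assumption is precisely what is needed to rule out the fictitious extra automorphisms that would otherwise identify $(w,\lambda,\mu)$ with $((\tilde w)^{\hash N}, \lambda',\mu)$.

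The main obstacle is the matrix problem step: proving \emph{tameness} (i.e., the band/string dichotomy with no ``wild'' families) is the deep part of Burban--Drozd's work and requires delicate reduction algorithms for bunches of chains, together with a careful analysis of how the three normalization strata interact at the triple point. The rest of the argument is essentially bookkeeping --- identifying our explicit generators with the abstract triples and matching continuous/discrete invariants --- but the tameness statement itself is the genuinely non-trivial input, and I would import it from \cite{BD17} rather than attempt to re-derive it here.
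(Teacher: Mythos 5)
Your proposal follows essentially the same route that the paper attributes to Burban--Drozd: pass to the category of triples $\Tri(A)$ via $\mathbb{F}_{\operatorname{BD}}$, reduce the classification of triples to a tame matrix problem (bunch of chains) yielding the band/string dichotomy with Jordan-block parameters, and transport the band-type canonical forms back to $\CM(A)$, importing the hard tameness statement from \cite{BD17}. The paper itself does not reprove this theorem but cites it and sketches exactly this strategy, so your outline is consistent with the intended proof.
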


The proof of the theorem actually follows from a highly non-trivial representation-theoretic study of the category $\CM(A)$,
which is applicable to much broader class (i.e. \emph{degenerate cusp}) of \emph{non-isolated} surface singularities.
To be more specific,
they created its equivalent category $\Tri(A)$,
called the \emph{category of triples}
(or \emph{Burban-Drozd's triple category} in some literature),
and a natural equivalence functor
$
\mathbb{F}_{\BD}:\CM(A)\xrightarrow{\simeq} \Tri(A)
\footnote{
See also \cite{BZ20} for another elaboration on this equivalence and its applications.
}.
$
Its object consists of two modules and a `linearized morphism' between them,
and the indecomposable objects can be classified via a \emph{matrix problem} on the linear map.
As a result,
they fall into the following two types: \emph{band-type} and \emph{string-type}.
The canonical form $\Theta\left(w,\lambda,\mu\right)$ of band-type indecomposable objects in $\Tri(A)$ is given in Figure \ref{fig:CanonicalForm2},
which are parameterized by the band data $\left(w,\lambda,\mu\right)$.
Then they determine the corresponding objects $M\left(w,\lambda,\mu\right)$ in $\CM(A)$
under the equivalence,
which we take as presented in Definition \ref{defn:CanonicalFormMCM}.
\vspace{-2mm}
\begin{figure}[H]
\centering
$

$
\vspace{-2mm}
\captionsetup{width=1\linewidth}
\caption{Canonical form of band-type indecomposable objects in $\Tri\left(A\right)$}
\label{fig:CanonicalForm2}
\end{figure}

\vspace{-3mm}

\begin{remark}
One can also consider a direct functor
$
\underline{\MF}(xyz)\rightarrow \underline{\Tri}(A)
$
by composing the Eisenbud's cokernel functor and Burban-Drozd's functor $\mathbb{F}_{\BD}$.
Then we have an alternative way to compute objects in $\underline{\Tri}(A)$
corresponding to the canonical form of objects in $\underline{\MF}(xyz)$.
This gives another proof for Theorem
\ref{thm:MFCMCorrespondence}
without going through the Macaulayfication process.
However, since it involves additional algebro-geometric consideration for the category $\Tri(A)$
(\cite[Chapter 5]{Rho23}),
we defer it to a separate future work.
\end{remark}

\begin{remark}\label{rmk:StringCorrespondence}
Indecomposable maximal Cohen-Macaulay modules over $A$
that are \emph{not} locally free on the punctured spectrum 
were also classified in \cite{BD17}
by using \emph{string data} instead of band data.
On the mirror side,
they correspond to arcs starting and ending at boundaries of $\POP$.
Most statements and proofs in the present paper
apply to these cases without significant modifications,
while one should take care on some technical details when
proving homotopy invariance
of matrix factorizations
to establish a one-to-one correspondence of them with open geodesics.
There we have to consider \emph{wrapped morphisms} between arcs (non-compact Lagrangian submanifolds),
and a discussion on the wrapped Fukaya category of immersed non-exact Lagrangians
should precede this.
Here we present only some correspondence between basic objects as in Figure \ref{fig:HMSGeneratingObject},
which also correspond to $z\cdot xy$, $x\cdot yz$ and $y\cdot zx$ in $\underline{\MF}(xyz)$,
respectively.
\end{remark}
\vspace{-3mm}
\begin{figure}[H]
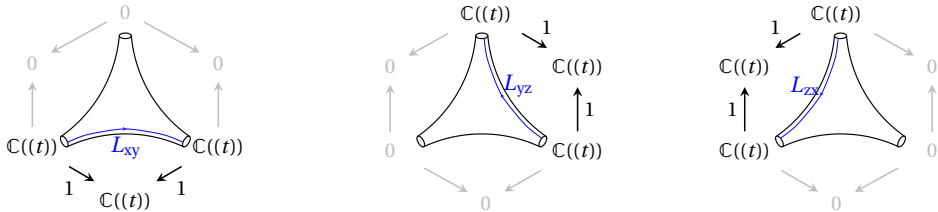

\setlength\arraycolsep{15pt}
$

$
\centering
\captionsetup{width=1\linewidth}
\caption{Mirror symmetry of generating objects}
\label{fig:HMSGeneratingObject}
\end{figure}

\newpage

\subsection{Conversion formula between loop/band data}\label{sec:ConversionFormula}

We define the \emph{conversion formula},
which underlies the correspondence between the canonical form of matrix factorizations of $xyz$
(in loop data)
and
the canonical form of band-type indecomposable maximal Cohen-Macaulay modules over $A$
(in band data).
It was first introduced in \cite{CJKR},
and now we reformulate and extend it for higher rank/multiplicity.

%


\begin{defn}[Conversion from loop data to band data]\label{def:ConversionFromLooptoBand}
A loop datum $(w',\primelambda,\primemu)$ with a normal loop word
$
w' = (w_1',w_2',w_3',w_4',w_5',w_6',\dots,w_{3\tau-2}',w_{3\tau-1}',w_{3\tau}')\in\mathbb{Z}^{3\tau}
$
is converted to a band datum $\left(w,\lambda,\mu\right)$ as follows:
\begin{itemize}
\item
The normal loop word $w'$ is converted to the band word $w\in\mathbb{Z}^{3\tau}$,
defined as
$$
w_j
:=
w_j'+1
-\mathbb{1}_{w_{j-1}'\ge1}
-\mathbb{1}_{w_{j}'\ge1}
-\mathbb{1}_{w_{j+1}'\ge1}
\quad\left(j\in\mathbb{Z}_{3\tau}\right).
$$
\item
The holonomy parameter $\primelambda\in\field^\times$ is converted to
the eigenvalue
$$
\lambda:= (-1)^{l_1 + \cdots + l_\tau + \tau}\primelambda
\quad\text{where $l_i := w_{3i-2}$ for $i\in\mathbb{Z}_\tau$}.
$$
\item
The geometric rank $\primemu$ is converted to the algebraic multiplicity
\footnote{
We do not define it for periodic degenerate cases ($w'=(2,2,2)^{\hash \tau}, \primelambda=(-1)^\tau$
with $\tau\ge2$),
because the corresponding loop with a local system and matrix factorization are
decomposed into $\tau$ pieces
((\ref{eqn:PeriodicDegenerateCaseDecompositionFuk})
and
(\ref{eqn:PeriodicDegenerateCaseDecompositionMF}))
and they are mapped to maximal Cohen-Macaulay modules
having different algebraic multiplicities
(\ref{eqn:PeriodicDegenerateCaseMappedToCM}).
}
$$
\mu
:=
\begin{cases}
\primemu
&
\text{in non-degenerate cases},
\\
\primemu+1
&
\text{in the non-periodic degenerate case } \left(w'=(2,2,2), \primelambda=-1\right).
\end{cases}
$$
\end{itemize}

\end{defn}

We call a band datum \textbf{degenerate} if it is
$\left(w=(0,0,0)^{\hash \tau},\lambda=1,\mu\right)$
for some
$\tau,\mu\in\mathbb{Z}_{\ge1}$.
Note that
it corresponds to a degenerate loop datum
$\left(w'=(2,2,2)^{\hash \tau},\primelambda=(-1)^{\tau},\primemu\right)$
under above the conversion formula above
(while ignoring the relation between $\mu$ and $\primemu$
which is not defined if $\tau\ge2$).


\begin{defn}[Conversion from band data to loop data]\label{def:ConversionFromBandtoLoop}
A band datum $(w,\lambda,\mu)$ with a band word
$
w = \left(w_1,w_2,w_3,w_4,w_5,w_6,\dots,w_{3\tau-2},w_{3\tau-1},w_{3\tau}\right)\in\mathbb{Z}^{3\tau}
$
is converted to a loop datum $\left(w',\primelambda,\primemu\right)$ as follows:
\begin{itemize}
\item
The band word $w$ is converted to the normal loop word $w'\in\mathbb{Z}^{3\tau}$,
defined as
$$
w_j'
:=
w_j-1
+\delta_{j-1}
+\delta_{j}
+\delta_{j+1}
\quad\left(j\in\mathbb{Z}_{3\tau}\right)
$$
where 
$\delta = \delta(w)\in\left\{0,1\right\}^{3\tau}$
is given by
$$
\hspace{20mm}
\delta_j :=
\left\{
\begin{array}{cl}

\setlength\arraycolsep{0pt}
\begin{array}{l}
0 \\ \\ \\
\end{array} 

&

\setlength\arraycolsep{0pt}
\begin{array}{l}
\text{if} \\ \\ \\
\end{array} 

\quad \left\{
                         \begin{array}{l}
                         w_j<0, \text{ or} \\
                         w_j=0 \text{ and at least one of the first non-zero entries adjacent to the} \\
                         \hspace{11mm}\text{string of }$0$\text{s containing $w_j$ (exists and) is negative,}
                         \end{array}
                         \right. \\[6mm]
1 & \text{otherwise}
\end{array}
\right.
\quad\left(j\in\mathbb{Z}_{3\tau}\right).
$$
\item
The eigenvalue $\lambda\in\field^\times$ is converted to
the holonomy parameter
$$
\primelambda:= (-1)^{l_1 + \cdots + l_\tau + \tau}\lambda
\quad\text{where $l_i := w_{3i-2}$ for $i\in\mathbb{Z}_\tau$}.
$$
\item
The algebraic multiplicity $\mu$ is converted to the geometric rank
\footnote{
We do not define it for periodic degenerate cases ($w=(0,0,0)^{\hash \tau}, \lambda=1$ with $\tau\ge2$)
in the same reason as above.
}
$$
\primemu
:=
\begin{cases}
\mu
&
\text{in non-degenerate cases},
\\
\mu-1
&
\text{in the non-periodic degenerate case } \left(w=(0,0,0), \lambda=1\right).
\end{cases}
$$
\end{itemize}
\end{defn}

\begin{prop}\cite{CJKR}\label{prop:ConversionFormulaInverseToEachOther}
(1) The loop word $w'$ converted from a band word $w$ is indeed normal.

\noindent
(2) 
Two conversion formula above
are inverse to each other.
\end{prop}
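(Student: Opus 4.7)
My plan is to reduce both parts of the proposition to a single structural lemma, and then verify that lemma by case analysis. The holonomy-to-eigenvalue and rank-to-multiplicity pieces of the conversion are manifestly involutive once the word-level bijection is in place: both directions use the same sign $(-1)^{l_1+\cdots+l_\tau+\tau}$ computed from $l_i = w_{3i-2}$, which is unaffected by the word conversion if that conversion is a bijection, and the multiplicity assignment is the identity outside the single non-periodic degenerate case, which is matched by convention. So the real content lies in the word-level statements.

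The key lemma I would establish is that, starting from a band word $w$ and forming $w'$ by the band-to-loop rule, the non-local indicator $\delta_j(w)$ coincides pointwise with $\mathbb{1}_{w'_j \ge 1}$. Once this is in hand, the two compositions at the word level collapse to the elementary identity
\[
w_j \;=\; \bigl(w_j - 1 + \delta_{j-1} + \delta_j + \delta_{j+1}\bigr) + 1 - \mathbb{1}_{w'_{j-1}\ge 1} - \mathbb{1}_{w'_j \ge 1} - \mathbb{1}_{w'_{j+1}\ge 1}
\]
and its counterpart, yielding part (2) at once. To prove the lemma I would split on the sign of $w_j$. When $w_j \ge 1$ we have $\delta_j = 1$ from the definition, and combining $\delta_{j\pm 1} \in \{0,1\}$ with the update formula gives $w'_j \ge 1$ directly. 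When $w_j \le -1$ we have $\delta_j = 0$, and the same formula forces $w'_j \le 0$. The delicate case is $w_j = 0$, which requires an induction on the length of the maximal run of zeros through position $j$, pushing information inward from the first non-zero entries on either end; this is the step that genuinely uses the ``at least one adjacent non-zero entry is negative'' clause in the definition of $\delta$.

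For part (1), having the lemma in hand, each of the four normality conditions translates into a local inequality on $w$ and $\delta$ that I would verify case by case. The subword-$(a,1,b)$ condition with $w'_j = 1$ forces $\delta_{j-1}+\delta_j+\delta_{j+1} = 2 - w_j$, and combined with $w'_{j\pm 1} \le 0$ reduces to a manageable finite set of configurations. The subword-$(a,0,b)$ condition is handled analogously by tabulating the allowed triples of indicator values. The exclusion of $(0,-1,\dots,-1,0)$ is essentially the previous zero-run analysis run in reverse: such a pattern in $w'$ would force a prescribed vanishing of $\delta$ on a stretch of $w$ that the very definition of $\delta$ on zero-runs forbids. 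The all-$(-1)$ case follows from noting that $w'_j = -1$ uniformly would require $w_j + \delta_{j-1} + \delta_j + \delta_{j+1} = 0$ for all $j$ simultaneously, which one checks is incompatible with the definition of $\delta$ for any integer band word.

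The main obstacle is the zero-run analysis inside the key lemma: the definition of $\delta$ is genuinely non-local across strings of zeros in $w$, and the inductive propagation -- showing that the sign of the closest non-zero entry on each side exactly dictates $\delta$ inside the run, and that this matches the linear update defining $w'$ -- will require some bookkeeping on the boundary indices of a zero-run. The exclusion of the forbidden subword $(0,-1,\dots,-1,0)$ in part (1) is essentially the same obstacle viewed from the opposite direction, so once one side is under control the other should follow by a parallel argument.
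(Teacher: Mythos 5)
The paper does not actually prove this proposition here---it is quoted from \cite{CJKR}---so there is no in-text argument to compare yours against; I can only assess your proposal on its own terms. Your key lemma is correctly chosen and true: for an arbitrary band word $w$ with converted loop word $w'$, a case split on the sign of $w_j$ (with the zero-run induction you describe) does give $\delta_j(w)=\mathbb{1}_{w'_j\ge 1}$, and from this the composition band $\to$ loop $\to$ band collapses to your displayed identity. Your reduction of part (1) to local inequalities in $w$ and $\delta$ also goes through, including the two global conditions.

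The gap is in part (2). Your single lemma only yields one of the two compositions. For loop $\to$ band $\to$ loop you need the \emph{reverse} statement: for a \emph{normal} loop word $w'$ with converted band word $w$, one has $\delta_j(w)=\mathbb{1}_{w'_j\ge 1}$. This is not the key lemma (which relates $\delta_j(w)$ to the indicator of $F(w)$, not to the indicator of a preassigned $w'$ with $G(w')=w$), and it does not follow formally from it: the lemma plus part (1) only show that band $\to$ loop is a split injection into normal loop words, not a bijection. The reverse statement is exactly where normality of $w'$ is consumed, and it fails without it: take $w'=(2,1,2)$ (not normal, since the subword $(a,1,b)$ has $a=2>0$). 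Then $w=G(w')=(0,-1,0)$, whose cyclic zero-run $\{3,1\}$ has both adjacent nonzero entries equal to $-1$, so $\delta=(0,0,0)$ and $F(w)=(-1,-2,-1)\ne w'$. So ``its counterpart, yielding part (2) at once'' overstates what you have; you need a second, parallel case analysis (on the sign of $w'_j$, with its own zero-run and $(0,-1,\dots,-1,0)$ and all-$(-1)$ cases) to establish $\delta_j(G(w'))=\mathbb{1}_{w'_j\ge 1}$ for normal $w'$. The method is the same as for your key lemma, so the repair is routine, but it is a genuinely separate step and the place where all four normality conditions do their work.
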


\vspace{30mm}

\begin{ex}\label{ex:ConversionExample}

The following shows the conversion between a normal loop word $w'$ and a band word $w$:

\newcolumntype{x}[1]{>{\centering\hspace{0pt}}p{#1}} 

\newcommand{\corcmidrule}[1][0.6pt]{\\[\dimexpr-\normalbaselineskip-\belowrulesep-\aboverulesep-#1\relax]} 

\begin{center}
\setlength{\tabcolsep}{0.9pt}
\begin{tabular}{cccx{4.5mm}cx{4.5mm}cx{4.5mm}cx{4.5mm}cx{4.5mm}cx{4.5mm}cx{4.5mm}cx{4.5mm}cx{4.5mm}cx{4.5mm}cx{4.5mm}cx{4.5mm}cx{4.5mm}cx{4.5mm}cx{4.5mm}c}
$w'$ & $=$ & $($ & $8$ & $,$ & $2$ & $,$ & $3$ & $,$ & $-1$ & $,$ & $-1$ & $,$ & $-4$ & $,$ & $-1$ & $,$ & $0$ & $,$ & $5$ & $,$ & $0$ & $,$ & $-2$ & $,$ & $1$ & $,$ & $0$ & $,$ & $4$ & $,$ & $6$ & $)$ \\
\arrayrulecolor{blue}\cmidrule[0.6pt]{4-8}
\corcmidrule\arrayrulecolor{red}\cmidrule[0.6pt]{10-18}
\corcmidrule\arrayrulecolor{blue}\cmidrule[0.6pt]{20-20}
\corcmidrule\arrayrulecolor{red}\cmidrule[0.6pt]{22-24}
\corcmidrule\arrayrulecolor{blue}\cmidrule[0.6pt]{26-26}
\corcmidrule\arrayrulecolor{red}\cmidrule[0.6pt]{28-28}
\corcmidrule\arrayrulecolor{blue}\cmidrule[0.6pt]{30-32} \\[-3mm]

$\mathbb{1}_{w'\ge1}=\delta(w)$ & $=$ & $($ & $1$ & $,$ & $1$ & $,$ & $1$ & $,$ & $0$ & $,$ & $0$ & $,$ & $0$ & $,$ & $0$ & $,$ & $0$ & $,$ & $1$ & $,$ & $0$ & $,$ & $0$ & $,$ & $1$ & $,$ & $0$ & $,$ & $1$ & $,$ & $1$ & $)$ \\[3.8mm]


$w$ & $=$ & $($ & $6$ & $,$ & $0$ & $,$ & $2$ & $,$ & $-1$ & $,$ & $0$ & $,$ & $-3$ & $,$ & $0$ & $,$ & $0$ & $,$ & $5$ & $,$ & $0$ & $,$ & $-2$ & $,$ & $1$ & $,$ & $-1$ & $,$ & $3$ & $,$ & $4$ & $)$ \\
\arrayrulecolor{blue}\cmidrule[0.6pt]{4-8}
\corcmidrule\arrayrulecolor{red}\cmidrule[0.6pt]{10-18}
\corcmidrule\arrayrulecolor{blue}\cmidrule[0.6pt]{20-20}
\corcmidrule\arrayrulecolor{red}\cmidrule[0.6pt]{22-24}
\corcmidrule\arrayrulecolor{blue}\cmidrule[0.6pt]{26-26}
\corcmidrule\arrayrulecolor{red}\cmidrule[0.6pt]{28-28}
\corcmidrule\arrayrulecolor{blue}\cmidrule[0.6pt]{30-32}

\end{tabular}
\end{center}
%
The
holonomy parameter $\primelambda$
and
eigenvalue $\lambda$
in this case are related by
$
\lambda
=
(-1)^{6-1+0+0-1+5}\primelambda=
-\primelambda
$.

\end{ex}

\subsection{Matrix Factorizations from canonical form of maximal Cohen-Macaulay modules}\label{sec:MFfromCM}

Let $S:=\field[[x,y,z]]$
be a power series ring
and $A:=\left.\field[[x,y,z]]\right/(xyz)$
a hypersurface singularity.
For a non-degenerate band datum $\left(w,\lambda,\mu\right)$,
let $w'$ be the converted normal loop word from the band word $w$
and $\primemu:=\mu$.
They define objects in the canonical form as follows:
\begin{itemize}
\item
the matrix factorization
$\left(\varphi\left(w',\lambda,\primemu\right),\psi\left(w',\lambda,\primemu\right)\right)$
corresponding to the loop datum
$\left(w',\lambda,\primemu\right)$,
\item
the maximal Cohen-Macaulay module $M\left(w,\lambda,\mu\right)$
corresponding to the band datum $\left(w,\lambda,\mu\right)$.
\end{itemize}
Now we state our main theorem in this section that they are related
under Eisenbud's equivalence:

\begin{thm}\label{thm:MFCMCorrespondence}
Let $\left(w,\lambda,\mu\right)$ be a non-degenerate band datum,
$w'$ the converted normal loop word from the band word $w$
and $\primemu:=\mu$.
Then
there is an isomorphism in $\underline{\CM}(A)$
$$
\cok\underline{\varphi}\left(w',\lambda,\primemu\right)
\cong
M\left(w,\lambda,\mu\right),
$$
or equivalently,
regarding $M\left(w,\lambda,\mu\right)$
as an $S$-module
$M_S\left(w,\lambda,\mu\right)$,
it has a free resolution of the form
$$
\begin{tikzcd}[arrow style=tikz,>=stealth,row sep=5em,column sep=2em] 
0
  \arrow[r]
&
S^n
  \arrow[rr,"{\varphi\left(w',\lambda,\primemu\right)}"]
&&
S^n
  \arrow[r,"\pi"]
&
M_S\left(w,\lambda,\mu\right)
  \arrow[r,""]
&
0.
\end{tikzcd}
$$

\end{thm}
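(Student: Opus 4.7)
The plan is to reduce the general statement to the rank-one case established in \cite{CJKR} via a $(\lambda,\Lambda)$-substitution principle that promotes the scalar parameter $\lambda$ to the Jordan block $J_{\mu}(\lambda)$. By Eisenbud's theorem, $\cok\underline{\varphi}(w',\lambda,\primemu)$ is automatically maximal Cohen-Macaulay over $A$, so the task is purely to identify this cokernel with the explicit module $M(w,\lambda,\mu)=\tilde{M}(w,\lambda,\mu)^{\dagger}$ from Definition \ref{defn:CanonicalFormMCM}.

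For $\primemu=\mu=1$, the isomorphism is essentially the corresponding theorem of \cite{CJKR}: one reads off $\tau$ generators of $\cok\underline{\varphi}(w',\lambda,1)$ as elements of $A^{\tau}$ (from the rows of $\psi(w',\lambda,1)$), and then applies a finite, explicit chain of Macaulayfying moves to match them with the columns of the six matrices generating $\tilde{M}(w,\lambda,1)$ in Definition \ref{defn:mbd}. The conversion formula of Definition \ref{def:ConversionFromLooptoBand} is precisely the arithmetic dictionary that makes the exponents $l_{i}^{\pm},m_{i}^{\pm},n_{i}^{\pm}$ line up with those appearing in $\varphi(w',\lambda,1)$.

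For $\primemu=\mu\ge 2$, two structural observations drive the lift. First, by (\ref{eqn:CanonicalFormKroneckerProduct}), $\varphi(w',\lambda,\primemu)$ is obtained from $\varphi(w',\lambda,1)$ by the Kronecker substitution $\lambda\mapsto J_{\mu}(\lambda)$, $1\mapsto I_{\mu}$. Second, the generating matrices of $\tilde{M}(w,\lambda,\mu)$ in Definition \ref{defn:mbd} are obtained from those of $\tilde{M}(w,\lambda,1)$ by exactly the same substitution (only the block $\pi_{x}$ is affected). From these, I would formalise the $(\lambda,\Lambda)$-substitution pair framework alluded to in the introduction and verify two compatibility statements: (i) forming the cokernel over $A$ commutes with the substitution (this is formal, since cokernel commutes with the relevant tensor and block constructions), and (ii) Macaulayfication commutes with the substitution, i.e.\ every Macaulayfying element of the rank-one submodule lifts, after substitution, to a block of $\mu$ Macaulayfying elements of the rank-$\mu$ submodule, and conversely every Macaulayfying element in the rank-$\mu$ case arises this way. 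Combining (i) and (ii) with Stage 1 yields the isomorphism in $\CM(A)$, hence in $\underline{\CM}(A)$.

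The hard part will be (ii). Producing $\mu$ Macaulayfying elements from each rank-one one is straightforward via substitution; the delicate point is that no \emph{extraneous} Macaulayfying element appears in the rank-$\mu$ module. I would control this by a homological analysis of the quotient $\tilde{M}(w,\lambda,\mu)^{\dagger}/\tilde{M}(w,\lambda,\mu)$ as a module over $A/(x,y,z)$, showing that it equals the $(\lambda,J_{\mu}(\lambda))$-substitution of its rank-one analogue. The non-degeneracy hypothesis enters precisely here: in the excluded case $(w,\lambda)=((0,0,0),1)$ an exceptional Macaulayfying element appears that lies outside the $\lambda$-parametrised family and is responsible for the rank shift $\primemu=\mu-1$; that scenario is treated separately in \S\ref{sec:DegenerateCase}.
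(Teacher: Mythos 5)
Your overall route — reduce to the $\mu=1$ case of \cite{CJKR} and lift by substituting $J_{\mu}(\lambda)$ for $\lambda$ — is the paper's route, and your step producing a block of $\mu$ Macaulayfying elements $F(J_{\mu}(\lambda))$ from each rank-one family $F(\lambda)=F_{-}\lambda^{-1}+F_{0}+F_{+}\lambda$ via the relation $\chi F(\lambda)=\tilde{\pi}(w,\lambda,1)a_{\chi}(\lambda)$ is exactly what the paper does. Where you diverge is at the step you correctly identify as the crux, and there your plan has a gap. You propose to prove the converse half of (ii) — that \emph{every} Macaulayfying element of $\tilde{M}(w,\lambda,\mu)$ arises by substitution — via a ``homological analysis of the quotient $\tilde{M}^{\dagger}/\tilde{M}$,'' but this is circular as stated: computing that quotient presupposes knowing $\tilde{M}(w,\lambda,\mu)^{\dagger}$, which is the thing to be determined, and no independent mechanism is offered.

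The paper sidesteps this entirely. Having adjoined the substituted elements, it sets $N:=\langle\tilde{M}(w,\lambda,\mu),F_{1}(J_{\mu}(\lambda)),\dots,F_{\xi}(J_{\mu}(\lambda))\rangle=\im\pi(w,J_{\mu}(\lambda))$ and applies the exactness-preservation property of $(\lambda,\Lambda)$-substitution pairs (Proposition \ref{Lem:Substitution}) to the rank-one resolution (\ref{eqn:FreeResolutionMultOne}), obtaining
$0\to S^{3\tau\mu}\xrightarrow{\varphi(w',J_{\mu}(\lambda))}S^{3\tau\mu}\to N_{S}\to 0$.
Since $\varphi(w',J_{\mu}(\lambda))\,\psi(w',J_{\mu}(\lambda))=xyz\,I$, this presentation is $2$-periodic over $A$, so $N$ is already maximal Cohen-Macaulay; as Macaulayfication is unchanged by adjoining Macaulayfying elements, $N=N^{\dagger}=\tilde{M}(w,\lambda,\mu)^{\dagger}=M(w,\lambda,\mu)$. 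No classification of Macaulayfying elements in the rank-$\mu$ module is ever needed — the matrix-factorization identity does that work for you. I would replace your step (ii) by this argument; as written, the proposal does not close.
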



To prove the theorem, we need to compute an explicit
Macaulayfication of the module in Definition \ref{defn:mbd} and find its resolution.
The case of $\mu=1$ was carried out in \cite{CJKR}. We have developed our setup so that we can extend the construction of $\mu=1$ to the general case
by
`\textbf{substituting the matrix $J_{\mu}\left(\lambda\right)$ for the scalar $\lambda$}'
in all matrices.
(Recall that we did the same operation in the geometric side (Proposition \ref{prop:SubstitutionGeometry}).)
We explain the procedure in detail below:

\subsubsection{Preservation of Exactness}\label{sec:PreservationOfExactness}

Let $R$ be a $\field$-algebra and consider a one-parameter family of matrices in the form
$$
\varphi\left(\lambda\right)
=
\sum_{k=-N}^{N}\varphi_k\lambda^k
\in
R^{m\times n}
$$
for some matrices $\varphi_k\in R^{m\times n}$.
For any $\field$-valued square matrix
$\Lambda\in\field^{\mu\times\mu}$,
we associate a new matrix by
$$
\varphi\left(\Lambda\right)
:=
\sum_{k=-N}^{N}
\varphi_k\otimes \Lambda^k
\in
R^{m\mu\times n\mu}.
$$
We call the pair
$\left(\varphi\left(\lambda\right),\varphi\left(\Lambda\right)\right)$
a \textbf{$\left(\lambda,\Lambda\right)$-substitution pair}.
We have seen many examples:

\begin{itemize}
\item
$
\left(
\varphi\left(w',\lambda,1\right),
\varphi\left(w',\lambda,\primemu\right)
\right)
$
(Definition \ref{defn:CanonicalFormMF})
forms a $\left(\lambda,J_{\primemu}\left(\lambda\right)\right)$-substitution pair.
We denote them as
$$
\varphi\left(w',\lambda\right)
:=
\varphi\left(w',\lambda,1\right)
\quad\text{and}\quad
\varphi\left(w',J_{\primemu}\left(\lambda\right)\right)
:=
\varphi\left(w',\lambda,\primemu\right).
$$
\item
$
\left(
\pi_{\chi}\left(w,\lambda,1\right),
\pi_{\chi}\left(w,\lambda,\mu\right)
\right)
$
(Definition \ref{defn:mbd})
forms a $\left(\lambda,J_{\mu}\left(\lambda\right)\right)$-substitution pair
for each $\chi\in\left\{x,y,z\right\}$,
denoted as
$$
\pi_{\chi}\left(w,\lambda\right)
:=
\pi_{\chi}\left(w,\lambda,1\right)
\quad\text{and}\quad
\pi\left(w,J_{\mu}\left(\lambda\right)\right)
:=
\pi\left(w,\lambda,\mu\right).
$$
\end{itemize}

Such a pair enjoys a nice homological property,
namely,
$\left(\lambda,\Lambda\right)$-substitution
preserves the exactness of a sequence
as in the following proposition.
We thank Kyoungmo Kim
for providing us the idea of the proof.

%

\newcommand{\ringS}{R}
\begin{prop}
\label{Lem:Substitution}

Let
$$
\varphi\left(\lambda\right) = \sum_{k=-N}^{N} \varphi_k \lambda^k \in \ringS^{m \times n}
\quad\text{and}\quad
\psi\left(\lambda\right) = \sum_{k=-N}^{N} \psi_k \lambda^k \in \ringS^{l \times m}
$$
be one-parameter families of matrices for some 
$\varphi_k\in R^{m\times n}$,
$\psi_k\in R^{l\times n}$,
which form a sequence
\begin{equation}\label{eqn:ExactSequence1}
\newcommand{\scriptverteq}{\mathrel{\rotatebox{90}{$\scriptstyle=$}}}
\tikzset{
    labl/.style={anchor=south, rotate=90, inner sep=.4mm}
}
\begin{tikzcd}[arrow style=tikz,>=stealth,row sep=3em,column sep=3em] 
\ringS^n
  \arrow[r,"\varphi\left(\lambda\right)"]
&
\ringS^m
  \arrow[r,"\psi\left(\lambda\right)"]
&
\ringS^l
\end{tikzcd}
\end{equation}
for each $\lambda\in\field$.
Given a square matrix $\Lambda\in\field^{\mu\times\mu}$,
we have
new matrices
$$
\varphi\left(\Lambda\right) = \sum_{k=-N}^{N} \varphi_k \otimes \Lambda^k \in \ringS^{m\mu \times n\mu}
\quad\text{and}\quad
\psi\left(\Lambda\right) = \sum_{k=-N}^{N} \psi_k \otimes \Lambda^k \in \ringS^{l\mu \times m\mu}
$$
so that
$\left(\varphi\left(\lambda\right),\varphi\left(\Lambda\right)\right)$
and
$\left(\psi\left(\lambda\right),\psi\left(\Lambda\right)\right)$
form $\left(\lambda,\Lambda\right)$-substitution pairs,
which also make a sequence
\begin{equation}\label{eqn:ExactSequence2}
\newcommand{\scriptverteq}{\mathrel{\rotatebox{90}{$\scriptstyle=$}}}
\tikzset{
    labl/.style={anchor=south, rotate=90, inner sep=.4mm}
}
\begin{tikzcd}[arrow style=tikz,>=stealth,row sep=3em,column sep=3em] 
\ringS^n\otimes \ringS^{\mu}
  \arrow[r,"\varphi\left(\Lambda\right)"]
&
\ringS^m\otimes \ringS^{\mu}
  \arrow[r,"\psi\left(\Lambda\right)"]
&
\ringS^l\otimes \ringS^{\mu}.
\end{tikzcd}
\end{equation}

Now if the sequence (\ref{eqn:ExactSequence1}) is exact,
more precisely,
if we assume
\begin{enumerate}[label=(\roman*)]
\item
$
\psi\left(\lambda\right)
\varphi\left(\lambda\right)
=
0
$
for any $\lambda\in\field$,
\item
$
\im\varphi\left(\lambda\right)
=
\ker\psi\left(\lambda\right)
$
for any eigenvalue $\lambda$ of $\Lambda$,
\end{enumerate}
then the sequence (\ref{eqn:ExactSequence2}) is also exact,
i.e.,
$\im\varphi \left(\Lambda\right) = \ker \psi \left(\Lambda\right)$.
\end{prop}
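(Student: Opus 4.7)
The plan is to reduce the general case to a single Jordan block of $\Lambda$ and then build a preimage of any $\vec v \in \ker \psi(\Lambda)$ by an induction that exploits the Taylor expansion of the relation $\psi(\lambda)\varphi(\lambda) = 0$ around the eigenvalues of $\Lambda$.

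First, I would dispose of the composition condition $\psi(\Lambda)\varphi(\Lambda) = 0$: assumption (i) is a polynomial identity in $\lambda$, so it forces the coefficient-wise relations $\sum_{i+j=k}\psi_i\varphi_j = 0$ for every $k$. These same identities, combined with $\Lambda^i \Lambda^j = \Lambda^{i+j}$ and distributivity of the Kronecker product, immediately yield $\psi(\Lambda)\varphi(\Lambda) = 0$, so only exactness at the middle node remains at stake. Next, I would reduce to the case where $\Lambda$ is a single Jordan block $J_\mu(\lambda_0)$ with $\lambda_0 \in \field^\times$ (invertibility of $\Lambda$ is needed to make sense of $\Lambda^{-k}$). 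Writing the Jordan decomposition $\Lambda = P J P^{-1}$ with $J = \bigoplus_i J_{\mu_i}(\lambda_{0,i})$, one has $\varphi(\Lambda) = (I_m \otimes P)\,\varphi(J)\,(I_n \otimes P^{-1})$, and $\varphi(J)$ splits into the block pieces $\varphi(J_{\mu_i}(\lambda_{0,i}))$ after the perfect-shuffle reordering from Lemma \ref{lem:KroneckerProductSwitching}. Exactness of (\ref{eqn:ExactSequence2}) is thus equivalent to exactness in each single-Jordan-block substitution.

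For a single block $\Lambda = J_\mu(\lambda_0) = \lambda_0 I + N$ with $N^\mu = 0$, expanding $\Lambda^k$ via the binomial theorem gives
$$\varphi(\Lambda) = \sum_{j=0}^{\mu-1} \varphi^{(j)}(\lambda_0) \otimes N^j, \qquad \varphi^{(j)}(\lambda_0) := \frac{1}{j!}\frac{d^j \varphi}{d\lambda^j}\bigg|_{\lambda_0},$$
and similarly for $\psi(\Lambda)$. This presents $\varphi(\Lambda)$ in block-triangular form with $\varphi(\lambda_0)$ along the diagonal and the Taylor coefficients $\varphi^{(j)}(\lambda_0)$ on the $j$-th off-diagonal. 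Given $\vec v = (v_1,\dots,v_\mu)$ in $\ker \psi(\Lambda)$, the first block equation reads $\psi(\lambda_0)v_1 = 0$, so by assumption (ii) there exists $w_1$ with $v_1 = \varphi(\lambda_0)w_1$. Inductively, having constructed $w_1,\dots,w_{k-1}$, the $k$-th block equation combined with the Cauchy-product identities
$$\sum_{i+j=k}\psi^{(i)}(\lambda_0)\varphi^{(j)}(\lambda_0) = 0$$
(obtained by matching the $\lambda$-Taylor coefficients of $\psi(\lambda)\varphi(\lambda) = 0$ at $\lambda_0$) should force the residual $v_k - \sum_{j=1}^{k-1}\varphi^{(k-j)}(\lambda_0)w_j$ to lie in $\ker \psi(\lambda_0) = \im \varphi(\lambda_0)$, producing $w_k$ such that $v_k = \sum_{j=1}^{k}\varphi^{(k-j)}(\lambda_0)w_j$. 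Concatenating the $w_j$'s yields $\vec w$ with $\varphi(\Lambda)\vec w = \vec v$.

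The main obstacle is the combinatorial bookkeeping in the inductive step: verifying at each stage that the residual truly lies in $\ker \psi(\lambda_0)$ requires carefully matching the $k$-th block row of $\psi(\Lambda)\vec v = 0$ against the Cauchy-product expansion of the vanishing Taylor coefficients of $\psi\varphi$, and tracking the contributions of the previously constructed $w_1,\dots,w_{k-1}$. Once this matching is made explicit, the induction closes cleanly and assembles the desired preimage, completing the proof.
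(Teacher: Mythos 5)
Your proposal is correct and follows essentially the same route as the paper: both arguments first deduce $\psi(\Lambda)\varphi(\Lambda)=0$ from the coefficient identities of the vanishing Laurent polynomial, reduce via conjugation and the perfect-shuffle lemma to a single Jordan block, and then exploit the resulting block-triangular structure with $\varphi(\lambda_0)$ on the diagonal. The only difference is organizational --- the paper runs an induction on the block size $\mu$ by peeling off one scalar row and doing a $2\times2$ block diagram chase, whereas you write the full triangular form at once via Taylor coefficients and build the preimage component by component --- and your Leibniz-rule identities $\sum_{i+j=k}\psi^{(i)}(\lambda_0)\varphi^{(j)}(\lambda_0)=0$ do close the induction exactly as you anticipate.
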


\begin{proof}
The assumption (i) is equivalent to each coefficient of $\lambda^k$
in the expansion of $\psi\left(\lambda\right)\varphi\left(\lambda\right)$ being zero
\footnote{
This is true for any infinite (e.g. algebraically closed) field, including our field $\field$.
},
that is,
$\psi\left(\lambda\right)\varphi\left(\lambda\right)$
is zero as a Laurent polynomial in $\lambda$.
This implies $\psi\left(\Lambda\right)\varphi\left(\Lambda\right)=0$.

The converse is not immediate.
First we show that we can replace $\Lambda$ with any similar matrix $J$.
Namely,
let $J\in\field^{\mu\times\mu}$
be a matrix satisfying
$\Lambda=P^{-1}JP$
for some invertible matrix
$P\in\operatorname{GL}_{\mu}\left(\field\right)$.
Then the computation
\begin{align*}
\varphi\left(\Lambda\right)
&=
\sum_{k=-N}^{N}
\varphi_k\otimes\Lambda^k
=
\sum_{k=-N}^{N}
\varphi_k\otimes
\left(
P^{-1}J^k P
\right)
=
\sum_{k=-N}^{N}
\left(I_{m}\otimes P^{-1}\right)
\left(\varphi_k\otimes J^k\right)
\left(I_{n}\otimes P\right)
\\
&=
\left(I_{m}\otimes P^{-1}\right)
\left(
\sum_{k=-N}^{N}
\varphi_k\otimes J^k\right)
\left(I_{n}\otimes P\right)
=
\left(I_{m}\otimes P^{-1}\right)
\varphi\left(J\right)
\left(I_{n}\otimes P\right)
\end{align*}
and
the same computation for $\psi\left(\Lambda\right)$ show
the commutativity of the following diagram:
$$
\newcommand{\scriptverteq}{\mathrel{\rotatebox{90}{$\scriptstyle=$}}}
\tikzset{
    labl/.style={anchor=south, rotate=90, inner sep=.4mm}
}
\begin{tikzcd}[arrow style=tikz,>=stealth,row sep=3em,column sep=3em] 
\ringS^n\otimes \ringS^{\mu}
  \arrow[r,"\varphi\left(\Lambda\right)"]
  \arrow[d,"\cong" labl,swap,"I_n\otimes P\hspace{2mm}"]
&
\ringS^m\otimes \ringS^{\mu}
  \arrow[r,"\psi\left(\Lambda\right)"]
  \arrow[d,"\cong" labl,swap,"I_m\otimes P\hspace{2mm}"]
&
\ringS^l\otimes \ringS^{\mu}
  \arrow[d,"\cong" labl,swap,"I_l\otimes P\hspace{2mm}"]
\\
\ringS^n\otimes \ringS^{\mu}
  \arrow[r,"\varphi\left(J\right)"]
&
\ringS^m\otimes \ringS^{\mu}
  \arrow[r,"\psi\left(J\right)"]
&
\ringS^l\otimes \ringS^{\mu}
\end{tikzcd}
$$
Since the vertical maps are all isomorphisms,
the statement is equivalent to
$\im\varphi \left(J\right) = \ker \psi \left(J\right)$.
This enables us to replace $\Lambda$ with its Jordan canonical form.

Furthermore,
if $J$ is decomposed into block diagonals
$J_1,\dots,J_k$
(in the sense of (\ref{eqn:MatrixDecomposition})),
an analogous
argument 
shows that
the statement holds for $J$ if and only if
it holds for each block $J_i$.
Therefore,
it is enough to prove the statement
for a Jordan block $J_\mu\left(\lambda\right)$,
where $\lambda$ is an eigenvalue of $\Lambda$.

We proceed by an induction on $\mu$.
For $\mu=1$,
the statement is the same as the assumption (ii).
Now let $\mu\ge2$ and assume that the statement is true for
$\Lambda=J_{\mu-1}\left(\lambda\right)$.
We deform $\varphi\left(J_{\mu}\left(\lambda\right)\right)$
into its similar matrix
$
\begin{psmallmatrix}
\varphi\left(J_{\mu-1}\left(\lambda\right)\right) & * \\ 0 & \varphi\left(\lambda\right)
\end{psmallmatrix}
$
by the following computation:
\vspace{0mm}
{\allowdisplaybreaks
\begin{align*}
\varphi\left(J_{\mu}\left(\lambda\right)\right)
&=
\sum_{k=-N}^{N}
\varphi_k
\otimes
J_{\mu}\left(\lambda\right)^k
=
\sum_{k=-N}^{N}
\varphi_k
\otimes
\begin{psmallmatrix}
J_{\mu-1}\left(\lambda\right) & * \\ 0 & \lambda
\end{psmallmatrix}^k
=
\sum_{k=-N}^{N}
\varphi_k
\otimes
\begin{psmallmatrix}
J_{\mu-1}\left(\lambda\right)^k & * \\ 0 & \lambda^k
\end{psmallmatrix}
\\
&=
\sum_{k=-N}^{N}
S_{\mu,m}
\left(
\begin{psmallmatrix}
J_{\mu-1}\left(\lambda\right)^k & * \\ 0 & \lambda^k
\end{psmallmatrix}
\otimes
\varphi_k
\right)
S_{n,\mu}
=
S_{\mu,m}
\left(
\sum_{k=-N}^{N}
\begin{psmallmatrix}
J_{\mu-1}\left(\lambda\right)^k\otimes\varphi_k & * \\ 0 & \lambda^k\varphi_k
\end{psmallmatrix}
\right)
S_{n,\mu}
\\
&=
S_{\mu,m}
\left(
\sum_{k=-N}^{N}
\begin{psmallmatrix}
S_{m,\mu-1}\left(\varphi_k\otimes J_{\mu-1}\left(\lambda\right)^k\right)S_{\mu-1,n} & * \\ 0 & \lambda^k\varphi_k
\end{psmallmatrix}
\right)
S_{n,\mu}
\\
&=
S_{\mu,m}
\left(S_{m,\mu-1}\oplus1\right)
\begin{psmallmatrix}
\varphi\left(J_{\mu-1}\left(\lambda\right)\right) & * \\ 0 & \varphi\left(\lambda\right)
\end{psmallmatrix}
\left(S_{\mu-1,n}\oplus1\right)
S_{n,\mu}.
\end{align*}
}
The same computation also works for
$\psi\left(J_{\mu}\left(\lambda\right)\right)$,
which yields the following commutative diagram:
$$
\newcommand{\scriptverteq}{\mathrel{\rotatebox{90}{$\scriptstyle=$}}}
\tikzset{
    labl/.style={anchor=south, rotate=90, inner sep=.4mm}
}
\begin{tikzcd}[arrow style=tikz,>=stealth,row sep=4em,column sep=6em] 
\scriptstyle
\ringS^n\otimes \left(\ringS^{\mu-1}\oplus \ringS\right)
  \arrow[r,"\varphi\left(J_{\mu}\left(\lambda\right)\right)"]
  \arrow[d,"\cong" labl,swap,"\smat{\left(S_{\mu-1,n}\oplus1\right)S_{n,\mu}\\[5mm]}\hspace{2mm}"]
&
\scriptstyle
\ringS^m\otimes \left(\ringS^{\mu-1}\oplus \ringS\right)
  \arrow[r,"\psi\left(J_{\mu}\left(\lambda\right)\right)"]
  \arrow[d,"\cong" labl,swap,"\smat{\left(S_{\mu-1,m}\oplus1\right)S_{m,\mu}\\[5mm]}\hspace{2mm}"]
&
\scriptstyle
\ringS^l\otimes \left(\ringS^{\mu-1}\oplus \ringS\right)
  \arrow[d,"\cong" labl,swap,"\smat{\left(S_{\mu-1,l}\oplus1\right)S_{l,\mu}\\[5mm]}\hspace{2mm}"]
\\
\scriptstyle
\left(\ringS^n\otimes \ringS^{\mu-1}\right)
\oplus
\left(\ringS^n\otimes \ringS\right)
  \arrow[r,"\spmat{\varphi\left(J_{\mu-1}\left(\lambda\right)\right) & * \\ 0 & \varphi\left(\lambda\right)}"]
&
\scriptstyle
\left(\ringS^m\otimes \ringS^{\mu-1}\right)
\oplus
\left(\ringS^m\otimes \ringS\right)
  \arrow[r,"\spmat{\psi\left(J_{\mu-1}\left(\lambda\right)\right) & * \\ 0 & \psi\left(\lambda\right)}"]
&
\scriptstyle
\left(\ringS^l\otimes \ringS^{\mu-1}\right)
\oplus
\left(\ringS^l\otimes \ringS\right)
\end{tikzcd}
$$
Note that
the vertical maps are natural isomorphisms.
As we know
$\psi\left(J_{\mu}\left(\lambda\right)\right)\varphi\left(J_{\mu}\left(\lambda\right)\right)=0$
from our first discussion,
the composition in the bottom row
also vanishes.
The exactness of the bottom row easily follows from
the induction hypothesis and the assumption (ii),
also implying that the top row is exact.
\end{proof}


\subsubsection{Proof of the theorem}

Now recall that we have
$
\tilde{M}\left(w,\lambda,\mu\right) = \im\tilde{\pi}\left(w,\lambda,\mu\right) \subset A^{\tau\mu},
$
where
$$
\tilde{\pi}\left(w,\lambda,\mu\right) :=
\left(
\arraycolsep=2pt\def\arraystretch{1}
\begin{array}{c|c|c|c|c|c}
x^2 y^2 I_{\tau\mu} & y^2 z^2 I_{\tau \mu} & z^2 x^2 I_{\tau \mu} & \pi_x\left(w,J_\mu\left(\lambda\right)\right) & \pi_y\left(w,J_\mu\left(\lambda\right)\right) & \pi_z\left(w,J_\mu\left(\lambda\right)\right)
\end{array}
\right)_{\tau\mu\times 6\tau\mu}
$$
is an $A$-valued matrix, or an $A$-module map $A^{6\tau\mu}\rightarrow A^{\tau\mu}$.
Because it does not directly become maximal Cohen-Macaulay,
we will find its Macaulayfying elements in $A^{\tau\mu}$ to Macaulayfy it.
Here we briefly recall our previous discussion in \cite{CJKR} for $\mu=1$ case:

For a fixed band word $w$,
consider
a one-parameter family of elements
in $A^{\tau}$
of the form
\begin{equation}\label{eqn:OneParameterMacaulayfyingElements}
F\left(\lambda\right)
=
F_{-}\lambda^{-1} + F_0 + F_+ \lambda
\quad\text{where }
F_{-}, F_0, F_+\in A^{\tau}
\text{ and }
\lambda\in\field^\times
\end{equation}
(see (9.8) in \cite{CJKR})
that satisfies
\begin{equation}\label{eqn:MacaulayfyingEquation}
\chi F\left(\lambda\right)
=
\tilde{\pi}\left(w,\lambda,1\right)
a_{\chi}\left(\lambda\right)
\quad\text{for each }
\chi\in\left\{x,y,z\right\}
\text{ and }
\lambda\in\field^\times
\end{equation}
for some
$
a_{\chi}\left(\lambda\right)
=
a_{\chi,-}\lambda^{-1}
+
a_{\chi,0}
+
a_{\chi,1}\lambda
$
where
$
a_{\chi,-},
a_{\chi,0},
a_{\chi,+}
\in
A^{6\tau\mu}$
(see (9.10) in \cite{CJKR}).
Then we have
$
\chi F\left(\lambda\right)
\in
\tilde{M}\left(w,\lambda,1\right)
$
for each $\chi\in\left\{x,y,z\right\}$
and $\lambda\in\field^\times$,
which means that $F\left(\lambda\right)$ is
a Macaulayfying element of
$\tilde{M}\left(w,\lambda,1\right)$
in
$A^{\tau}$
for each
$\lambda\in\field^\times$.


\begin{thm}[Theorem 9.1 in \cite{CJKR}]\label{thm:MFCMCorrespondenceMultOne}
Let $\left(w,\lambda,1\right)$ be a non-degenerate band datum
and $w'$ the converted normal loop word from the band word $w$.
Then
there are
Macaulayfying elements
$
F_1\left(\lambda\right),\dots,F_{\xi}\left(\lambda\right)
$
of $\tilde{M}\left(w,\lambda,1\right)$
in $A^\tau$ of the form (\ref{eqn:OneParameterMacaulayfyingElements}),
realizing
the Macaulayfication of $\tilde{M}\left(w,\lambda,1\right)$ as
$$
M\left(w,\lambda,1\right)
=
\tilde{M}\left(w,\lambda,1\right)^\dagger
=
\left<\tilde{M}\left(w,\lambda,1\right),F_1\left(\lambda\right),\dots,F_\xi\left(\lambda\right)\right>
\footnote{
In degenerate case $(w=(0,0,0)^{\hash \tau},\lambda=1)$,
we find only one Macaulayfying element
(which doesn't belong to a one-parameter family in $\lambda$) to acheive the Macaulayfication,
because
there are no Macaulayfying elements
for $(w=(0,0,0)^{\hash \tau},\lambda\ne1)$.
}.
$$

Moreover,
denoting the right side as $\im\pi\left(w,\lambda,1\right)$
for some matrix $\pi\left(w,\lambda,1\right)\in A^{\tau\times 3\tau}$,
it fits into the free resolution
of $M_S\left(w,\lambda,1\right)$,
which is $M\left(w,\lambda,1\right)$ viewed as an $S$-module:
\begin{equation}\label{eqn:FreeResolutionMultOne}
\begin{tikzcd}[arrow style=tikz,>=stealth,row sep=5em,column sep=2em] 
0
  \arrow[r]
&
S^n
  \arrow[rr,"{\varphi\left(w',\lambda,1\right)}"]
&&
S^n
  \arrow[rr,"{\pi\left(w,\lambda,1\right)}"]
&&
M_S\left(w,\lambda,1\right)
  \arrow[r,""]
&
0.
\end{tikzcd}
\end{equation}

\end{thm}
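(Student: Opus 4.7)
The plan is to carry out three coordinated steps. First, I would produce an explicit one-parameter family of $3\tau$ Macaulayfying elements $F_1(\lambda),\dots,F_{3\tau}(\lambda)\in A^{\tau}$ of the form (\ref{eqn:OneParameterMacaulayfyingElements}), indexed by positions $j\in\mathbb{Z}_{3\tau}$ of the loop word $w'$, guided by the conversion formula in Definition \ref{def:ConversionFromBandtoLoop}. Second, I would argue that the submodule $\langle \tilde{M}(w,\lambda,1), F_1(\lambda),\dots, F_{3\tau}(\lambda)\rangle\subseteq A^{\tau}$ admits no further Macaulayfying elements, hence coincides with $M(w,\lambda,1)$. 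Third, I would compute the syzygies of the resulting $3\tau$ generators of the Macaulayfication and identify them with the columns of $\varphi(w',\lambda,1)$, producing the claimed free resolution (\ref{eqn:FreeResolutionMultOne}).

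For Step~1, each $F_j(\lambda)$ should be defined by a column vector in $A^{\tau}$ whose shape depends on the sign profile $\delta_j\in\{0,1\}$ of the band word (as in Definition \ref{def:ConversionFromBandtoLoop}): the coefficients $F_{j,-}$ and $F_{j,+}$ of $\lambda^{\mp 1}$ are nonzero only in the positions that would otherwise be ``missed'' by the monomial blocks $\pi_x, \pi_y, \pi_z$ and $x^2y^2, y^2z^2, z^2x^2$ of $\tilde{\pi}(w,\lambda,1)$ for the relevant sign of $w_j$. The Macaulayfying condition $\chi F_j(\lambda)=\tilde\pi(w,\lambda,1)\,a_{\chi,j}(\lambda)$ would then be verified for $\chi\in\{x,y,z\}$ by exhibiting the explicit coefficient vectors $a_{\chi,j}(\lambda)$ — effectively expressing $xF_j,yF_j,zF_j$ as concrete integer combinations of columns of $\tilde\pi(w,\lambda,1)$. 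The sign twist $\lambda=(-1)^{l_1+\cdots+l_\tau+\tau}\primelambda$ in Definition \ref{def:ConversionFromLooptoBand} should appear naturally when tracking the cyclic $(1,\tau)$-block $\lambda J_\mu(\lambda)$ of $\pi_x(w,\lambda,1)$ through the Macaulayfying relations, accumulating one sign per factor and one per cycle.

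For Step~2, one can apply the local criterion: an $A$-submodule $N\subseteq A^{r}$ is maximal Cohen-Macaulay if and only if there is no $F\in A^{r}\setminus N$ with $xF,yF,zF\in N$. After adjoining the $F_j(\lambda)$, the quotient $A^{\tau}/\langle \tilde{M},F_1,\dots,F_{3\tau}\rangle$ must have no further such elements; this can be verified by localizing at each of the three generic points of the components $V(x),V(y),V(z)\subseteq\Spec A$ and checking that the localization is free of the expected rank on each branch. For Step~3, the relations among the $3\tau$ new generators come from two sources: (a) the identities expressing each original generator of $\tilde{M}(w,\lambda,1)$ as a combination of the $F_j(\lambda)$'s multiplied by $x,y,z$ (from the defining Macaulayfying relations), and (b) the resulting ``closed'' syzygies cyclically combining three consecutive $F_j,F_{j+1},F_{j+2}$ via $x^{l_i'-1},y^{m_i'-1},z^{n_i'-1}$ or via $x^{-l_i'},y^{-m_i'},z^{-n_i'}$ (with the convention that negative exponents give zero, matching Definition \ref{defn:CanonicalFormMF}). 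Assembling these column-by-column should reproduce $\varphi(w',\lambda,1)$ exactly.

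The main obstacle will be Step~1: the explicit form of $F_j(\lambda)$ depends delicately on whether $w_j$ is positive, zero, or negative, and on how the adjacent entries $w_{j-1},w_{j+1}$ combine through $\delta$ — especially within long ``strings of zeros'' where the Macaulayfying elements cascade across several indices and the auxiliary $\lambda^{\pm 1}$-components interact nontrivially. A related technical point is that (\ref{eqn:MacaulayfyingEquation}) must hold as an identity of Laurent polynomials in $\lambda$, not merely for generic $\lambda$, so the coefficient equations for $\lambda^{-1},\lambda^{0},\lambda^{1}$ must be satisfied separately. Once this combinatorial construction is in place and checked uniformly across all sign regimes, the remaining verifications for the free resolution are essentially mechanical.
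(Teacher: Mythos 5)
Your three-step architecture is essentially the one the paper relies on (it defers the proof to Theorem 9.1 of \cite{CJKR}, but the surrounding text in \S\ref{sec:PreservationOfExactness} spells out exactly this strategy): produce one-parameter families $F(\lambda)=F_-\lambda^{-1}+F_0+F_+\lambda$ satisfying $\chi F(\lambda)=\tilde{\pi}(w,\lambda,1)\,a_{\chi}(\lambda)$ for $\chi\in\{x,y,z\}$, adjoin them, and exhibit the free resolution with $\varphi(w',\lambda,1)$ as presentation matrix. Your observation that the Macaulayfying identities must hold coefficientwise in $\lambda^{-1},\lambda^0,\lambda^1$ is also correct and is precisely what makes the later $(\lambda,\Lambda)$-substitution work.

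There is, however, a genuine flaw in your Step~2. Verifying that $\left<\tilde{M},F_1,\dots,F_\xi\right>$ admits no further Macaulayfying elements cannot be done by localizing at the generic points of $V(x)$, $V(y)$, $V(z)$: maximal Cohen--Macaulayness over the two-dimensional ring $A$ is a depth condition at the closed point $\mathfrak{m}$, and local freeness (of the expected rank) at the codimension-one generic points is automatic for these modules and detects nothing about $\operatorname{Ext}^1_A(\mathbb{k},-)$. The correct route — and the one the paper uses in the proof of Theorem \ref{thm:MFCMCorrespondence} — is to let your Step~3 absorb Step~2: once the syzygy computation shows that the $3\tau$ generators of $\im\pi(w,\lambda,1)$ have relation module generated by the columns of $\varphi(w',\lambda,1)$, and one checks that $\varphi(w',\lambda,1)$ is a matrix factor of $xyz$, Eisenbud's theorem (Theorem \ref{thm:EisenbudTheorem}) immediately gives that $\im\pi(w,\lambda,1)$ is maximal Cohen--Macaulay, hence equals the Macaulayfication $\tilde{M}(w,\lambda,1)^{\dagger}$. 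No separate "no more Macaulayfying elements" argument is needed or available in the form you propose. Beyond this, your Step~1 — the explicit construction of the $F_j(\lambda)$ across all sign regimes of $w$, including the cascades through strings of zeros — is where the entire content of the theorem lives, and you have only described its intended shape; as it stands the proposal is a plan rather than a proof.
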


Now we
`\textbf{substitute $J_{\mu}\left(\lambda\right)$ for $\lambda$}'
in all matrices to accomplish the same result for $\mu\ge2$:

%
%
%

\begin{proof}[Proof of Theorem \ref{thm:MFCMCorrespondence}]
For each Macaulayfying element
$
F\left(\lambda\right)
=
F_- \lambda^{-1} + F_0 + F_+ \lambda
\in A^\tau
$
of $\tilde{M}\left(w,\lambda,1\right)$ in $A^\tau$
given in Theorem \ref{thm:MFCMCorrespondenceMultOne},
we associate a matrix
$
F\left(J_\mu\left(\lambda\right)\right)
:=
F_-\otimes J_{\mu}\left(\lambda\right)^{-1}
+
F_0\otimes I_{\mu}
+
F_+\otimes J_{\mu}\left(\lambda\right)
\in
A^{\tau \mu \times \mu}
$
to get a
$\left(\lambda,J_{\mu}\left(\lambda\right)\right)$-substitution pair
$\left(F\left(\lambda\right),F\left(J_\mu\left(\lambda\right)\right)\right)$.
Then relation
(\ref{eqn:MacaulayfyingEquation})
implies
$$
\chi F\left(J_\mu\left(\lambda\right)\right) = \tilde{\pi}\left(w,\lambda,\mu\right) a_\chi\left(J_\mu\left(\lambda\right)\right)
\quad\text{for each}\ \ 
\chi\in\left\{x,y,z\right\},
$$
which shows that each column of $F\left(J_\mu\left(\lambda\right)\right)$ is a Macaulayfying element of $\tilde{M}\left(w,\lambda,\mu\right)$ in $A^{\tau\mu}$.
Therefore,
$$
M\left(w,\lambda,\mu\right)
=\tilde{M}\left(w,\lambda,\mu\right)^\dagger
=\left<\tilde{M}\left(w,\lambda,\mu\right), F_{1}\left(J_\mu\left(\lambda\right)\right),\dots,F_{\xi}\left(J_\mu\left(\lambda\right)\right)\right>^\dagger.
$$
Theorem \ref{thm:MFCMCorrespondenceMultOne} also implies that
$$
\left<\tilde{M}\left(w,\lambda,\mu\right), F_{1}\left(J_\mu\left(\lambda\right)\right),\dots,F_{\xi}\left(J_\mu\left(\lambda\right)\right)\right>
= \im\pi\left(w,J_\mu\left(\lambda\right)\right)
\subset A^{\tau\mu}.
$$

On the other hand, applying Proposition \ref{Lem:Substitution} to the above resolution (\ref{eqn:FreeResolutionMultOne}),
we have a free resolution
\begin{equation*}
\begin{tikzcd}[arrow style=tikz,>=stealth,row sep=5em,column sep=2.5em] 
  0 \arrow[r] & S^{3\tau\mu} \arrow[rr, "\varphi{(w',J_\mu\left(\lambda\right))}"] & & S^{3\tau\mu} \arrow[rr, "\pi{(w,J_\mu\left(\lambda\right))}"] & & \im\pi\left(w,J_\mu\left(\lambda\right)\right)_S \arrow[r] & 0
\end{tikzcd}
\end{equation*}
of $\im\pi\left(w,J_\mu\left(\lambda\right)\right)$ as an $S$-module. Finally, we know that $\varphi\left(w',J_\mu\left(\lambda\right)\right)$\ is a matrix factor of $xyz$, namely,
$$
\varphi\left(w',J_\mu\left(\lambda\right)\right) \psi\left(w',J_\mu\left(\lambda\right)\right)
= xyz I_{3\tau\mu},
$$
which implies that
$
\im\pi\left(w,J_\mu\left(\lambda\right)\right)
$
is already maximal Cohen-Macaulay and hence equals $M\left(w,\lambda,\mu\right)$.  
\end{proof}

\subsection{Degenerate case}\label{sec:DegenerateCase}

Under the conversion formula,
the degenerate band data
(without multiplicity)
$
\big(w=(0,0,0)^{\hash \tau},
\lambda=1\big)
$
and the degenerate loop data
(without rank)
$
\big(w'=(2,2,2)^{\hash \tau},
\lambda=1\big)
$
are converted to each other.
Recall that we have the degenerate canonical form of matrix factorizations 
(Definition \ref{def:DegenerateCanonicalFormMF})
$$
\left(
\varphi_{\deg}\left((2,2,2)^{\hash \tau},1,\primemu\right),
\psi_{\deg}\left((2,2,2)^{\hash \tau},1,\primemu\right)\right).
$$

For $\tau\ge2$,
the normal loop word $(2,2,2)^{\hash \tau}$ is periodic and the matrix factorization is decomposed into $\tau$ pieces,
each of which corresponds to the non-periodic normal loop word $(2,2,2)$
(Proposition \ref{prop:PeriodicDecomposableDegenerate}).
Among them,
only one piece
still has eigenvalue $\lambda=1$.
We compare it with the maximal Cohen-Macaulay module
corresponding to the non-periodic degenerate band data
$
\left(w=(0,0,0),\lambda=1,\mu\right):
$

\begin{itemize}
\item
the matrix factorization
$\left(\varphi_{\deg}\left((2,2,2),1,\primemu\right),\psi_{\deg}\left((2,2,2),1,\primemu\right)\right)$,
\item
the maximal Cohen-Macaulay module $M\left((0,0,0),1,\mu\right)$.
\end{itemize}
It turns out that they correspond to each other under the relation
$
\primemu=\mu-1:
$

\begin{thm}\label{thm:MFCMCorrespondenceDegenerate}
For a geometric rank $\primemu\in\mathbb{Z}_{\ge1}$ and algebraic multiplicity $\mu\in\mathbb{Z}_{\ge2}$ with $\primemu=\mu-1$,
we have
$$
\cok\underline{\varphi_{\deg}}\left((2,2,2),1,\primemu\right)
\cong
M\left((0,0,0),1,\mu\right)
$$
in $\underline{\CM}(A)$.
For $\mu=1$,
the right side
$
M\left((0,0,0),1,1\right)
\cong
A
\footnote{
See Remark 9.5 in \cite{BD17}.
It also follows from our discussion below.
}
$
is a zero object in $\underline{\CM}(A)$.
\end{thm}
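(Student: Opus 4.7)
The plan is to mimic the strategy of Theorem \ref{thm:MFCMCorrespondence}: produce an explicit set of Macaulayfying elements for $\tilde M((0,0,0),1,\mu) \subset A^\mu$, identify the resulting free $S$-presentation, and match it with the degenerate canonical form $\varphi_{\deg}((2,2,2),1,\primemu)$, absorbing the rank discrepancy into a free $A$-summand that vanishes in $\underline{\CM}(A)$.

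I will first dispose of the case $\mu = 1$ by hand. The submodule $\tilde M((0,0,0),1,1)$ is the ideal of $A$ generated by $x^2y^2$, $y^2z^2$, $z^2x^2$, $x^2(y+z)$, $y^2(x+z)$, $z^2(x+y)$. I plan to construct a descending chain of Macaulayfying elements of progressively lower degree, eventually reaching $1 \in A$, and conclude that $M((0,0,0),1,1) = A$. Since $A$ is free of rank one over itself, it is a zero object in $\underline{\CM}(A)$, which gives the second assertion of the theorem and also explains the rank shift $\primemu = \mu - 1$ on the matrix-factorization side: exactly one Macaulayfying element gets absorbed into the regular summand when $\mu$ is increased by one.

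For $\mu \geq 2$, I will start from the non-degenerate result: Theorem \ref{thm:MFCMCorrespondenceMultOne} applied to $(w=(0,0,0),\lambda,1)$ with $\lambda \neq 1$ furnishes Macaulayfying elements $F_i(\lambda) = F_{i,-}\lambda^{-1} + F_{i,0} + F_{i,+}\lambda$ of $\tilde M((0,0,0),\lambda,1)$. Because $J_\mu(1) = I_\mu + J_\mu$ is invertible (its inverse is the polynomial $\sum_{k \geq 0}(-J_\mu)^k$ in the nilpotent $J_\mu$), the $(\lambda,J_\mu(1))$-substitution $F_i(J_\mu(1))$ produces well-defined Macaulayfying elements of $\tilde M((0,0,0),1,\mu)$. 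Proposition \ref{Lem:Substitution} then yields a free $S$-presentation whose differential is obtained by substituting $J_\mu(1)$ for $\lambda$ in the non-degenerate presentation; after an explicit change of basis this substituted matrix should decompose as $\varphi_{\deg}((2,2,2),1,\mu-1)$ plus a presentation of the regular module $A$. The genuinely new ingredient is one \emph{exceptional} Macaulayfying element $F_{\operatorname{exc}} \in A^\mu$, not produced by $(\lambda,\Lambda)$-substitution, which is the analogue at $\mu \geq 2$ of the element $1 \in A$ appearing in the $\mu = 1$ case. Adjoining $F_{\operatorname{exc}}$ completes the Macaulayfication and contributes the extra free summand $A$ to the $S$-resolution; passing to $\underline{\CM}(A)$ kills this $A$, leaving the desired isomorphism.

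The main obstacle is this exceptional element $F_{\operatorname{exc}}$. Since it lies outside the $(\lambda,\Lambda)$-substitution framework, Proposition \ref{Lem:Substitution} does not apply to it, and I will need a direct hand-computation to (a) write it down explicitly, (b) verify that together with the substituted elements it exhausts the Macaulayfication, and (c) confirm that after row and column operations the presentation matrix decomposes precisely as $\varphi_{\deg}((2,2,2),1,\primemu) \oplus$ (a presentation of $A$). Step (c) is the subtle one, because it must produce the distinctive blocks $-zxI_{\tau\mu}$ and $-xyI_{\tau\mu}$ occurring in $\varphi_{\deg}$ but absent from $\varphi$; these are exactly the terms that record the interaction of $F_{\operatorname{exc}}$ with the rest of the generators.
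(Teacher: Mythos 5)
Your overall strategy (explicit Macaulayfication, then matching the resulting $S$-presentation against the degenerate canonical form) is the paper's strategy, but two steps of your plan fail as stated.

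First, the substitution step for $\mu\ge2$ is vacuous. For the band word $w=(0,0,0)$ with $\lambda\ne1$ there are \emph{no} Macaulayfying elements of the form $F(\lambda)=F_-\lambda^{-1}+F_0+F_+\lambda$ — this is exactly why the case is degenerate (see the footnote to Theorem \ref{thm:MFCMCorrespondenceMultOne}: the one-parameter families are empty for $(0,0,0)^{\hash\tau}$, $\lambda\ne1$, and $\tilde M((0,0,0),\lambda,1)$ needs no Macaulayfication there). So $F_i(J_\mu(1))$ contributes nothing, and the entire Macaulayfication is carried by the single exceptional element, which turns out to be $F=(xy+yz+zx)\,\mathbf{e}_1\in A^{\mu}$. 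Proposition \ref{Lem:Substitution} is still used, but only to transport the exact \emph{presentation} of $\tilde M((0,0,0),\lambda,1)$ (which is exact for every $\lambda\in\field^\times$, including $\lambda=1$, so hypothesis (ii) holds for the eigenvalue of $J_\mu(1)$) up to rank $\mu$; it is not used to produce Macaulayfying elements. Relatedly, your $\mu=1$ claim that the chain of Macaulayfying elements "reaches $1\in A$" is false: the Macaulayfication is the proper principal ideal $(xy+yz+zx)$, which is isomorphic to $A$ because $xy+yz+zx$ is a non-zerodivisor, not because $1$ ever enters the module.

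Second, the mechanism you propose for the rank drop is wrong. The module $M((0,0,0),1,\mu)$ is indecomposable (Theorem \ref{thm:BDClassification}), so its presentation cannot decompose as $\varphi_{\deg}((2,2,2),1,\mu-1)\oplus(\text{a presentation of }A)$; adjoining the exceptional element does not create a free summand of $M$. What actually happens is a size count: after adjoining $F$ and reducing along the three unit entries it introduces, one obtains a presentation of size $3\mu-2=3\primemu+1$, which is on the nose the \emph{reduced} degenerate form $\widetilde{\varphi_{\deg}}((2,2,2),1,\primemu)$ with $\primemu=\mu-1$. The link to the full $4\primemu\times4\primemu$ canonical form $\varphi_{\deg}((2,2,2),1,\primemu)$ is then Lemma \ref{lem:DegenerateReduction}, i.e.\ matrix reduction along the $\primemu-1$ units of the block $-I_{\primemu}+J_{\primemu}(1)$ in $\psi_{\deg}$; the summands discarded there are trivial matrix factorizations $(xyz,1)$, whose cokernels are the free summands — so the free $A$'s live on the matrix-factorization side of $\cok\underline{\varphi_{\deg}}$, not inside $M((0,0,0),1,\mu)$. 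You would need to reverse the direction of your claimed splitting and supply Lemma \ref{lem:DegenerateReduction} to make the argument close.
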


Theorem \ref{thm:MFCMCorrespondence} and Theorem \ref{thm:MFCMCorrespondenceDegenerate}
are combined to give Theorem \ref{thm:MFCMCorrespondenceIntro} in the introduction.
To prove Theorem \ref{thm:MFCMCorrespondenceDegenerate},
it is convenient to introduce a \textbf{reduced form}
of our matrix factorization:
We define

\hspace{10mm}
$
\left(
\widetilde{\varphi_{\deg}}\left(\left(2,2,2\right), 1, {\primemu}\right),
\widetilde{\psi_{\deg}} \left(\left(2,2,2\right), 1, {\primemu}\right)
\right)
$
$$
\hspace{10mm}
:=
\left(
\begin{psmallmatrix}
-zx\mathbf{e}_1^T & 0 & 0 & 0
\\[2mm]
z I_{\primemu} & -y I_{\primemu} & 0 & 0
\\[2mm]
0 & x I_{\primemu} & -z I_{\primemu} & 0
\\[2mm]
- x J_{\primemu} \left(1\right) & 0 & y I_{\primemu} & -xy \mathbf{e}_{\primemu}
\end{psmallmatrix}
_{\scriptscriptstyle (3\primemu+1)\times(3\primemu+1)}
,
\begin{psmallmatrix}
-y \mathbf{e}_1 & -xy J_{\primemu}(0)^T & -y^2 J_{\primemu}(0)^T & -yz J_{\primemu}(0)^T
\\[2mm]
-z \mathbf{e}_1 & -zx J_{\primemu}(1)^T & -yz J_{\primemu}(0)^T & -z^2 J_{\primemu}(0)^T
\\[2mm]
-x \mathbf{e}_1 & -x^2 J_{\primemu}(1)^T & -xy J_{\primemu}(1)^T & -zx J_{\primemu}(0)^T
\\[2mm]
0 & -x \mathbf{e}_{\primemu}^T & -y \mathbf{e}_{\primemu}^T & -z \mathbf{e}_{\primemu}^T
\end{psmallmatrix}
_{\scriptscriptstyle (3\primemu+1)\times(3\primemu+1)}
\right)
$$
for $\primemu\in\mathbb{Z}_{\ge1}$,
and
$$
\left(
\widetilde{\varphi_{\deg}}\left(\left(2,2,2\right), 1,0\right),
\widetilde{\psi_{\deg}} \left(\left(2,2,2\right), 1,0\right)
\right)
:= \left(-xyz,-1\right)
$$
for $\primemu=0$.

\begin{lemma}\label{lem:DegenerateReduction}
For $\primemu\in\mathbb{Z}_{\ge1}$,
the degenerate canonical form
$
\big(\varphi_{\deg}\left((2,2,2),1,\primemu\right),
\psi_{\deg}\left((2,2,2),1,\primemu\right)\big)
$
is isomorphic to its reduced form
$
\left(
\widetilde{\varphi_{\deg}}\left(\left(2,2,2\right), 1, {\primemu}\right),
\widetilde{\psi_{\deg}} \left(\left(2,2,2\right), 1, {\primemu}\right)
\right)
$
in $\underline{\MF}(xyz)$.

\end{lemma}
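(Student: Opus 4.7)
The strategy is to apply the matrix reducing process of Lemma \ref{lem:MatrixReducingProcess} iteratively, taking advantage of the fact that the entries $1$ on the superdiagonal of the nilpotent Jordan block are units in $S=\field[[x,y,z]]$. Explicitly, observe that the $(4,1)$-block of $\psi_{\deg}((2,2,2),1,\primemu)$ is
$$-I_{\primemu}+J_{\primemu}(1)=J_{\primemu},$$
which has $1$ in each superdiagonal entry $(i,i+1)$ for $i=1,\dots,\primemu-1$. Each such unit enables one application of Lemma \ref{lem:MatrixReducingProcess}, splitting off a trivial direct summand $(1,xyz)$ and reducing both matrices in the pair by one row and one column. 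The base cases $\primemu=0$ and $\primemu=1$ are immediate (the reduced form coincides with the degenerate canonical form when $\primemu=1$, since $\mathbf{e}_1^T=\mathbf{e}_{\primemu}=1$), so I may assume $\primemu\geq 2$.

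First, I would permute rows and columns of the pair to place one chosen superdiagonal unit of $\psi_{\deg}$ in the last slot $(k+1,k+1)$ required by Lemma \ref{lem:MatrixReducingProcess}, then apply the lemma to obtain a Schur-complement reduction. The key computation is to track the resulting rank-one correction $C-Du^{-1}E^T$ on the opposite factor $\varphi_{\deg}$, and to verify that it preserves the overall block structure while modifying only the boundary blocks that ought to shrink. I then iterate this $\primemu-1$ times, choosing at each stage the next superdiagonal unit in the remaining matrix, and each step strips off a copy of $(1,xyz)$, which is a zero object in $\underline{\MF}(xyz)$.

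Rather than grinding through the $\primemu-1$ induction steps explicitly, I would instead exhibit a single pair of invertible block matrices $P,Q\in\operatorname{GL}_{4\primemu}(S)$, built from the change-of-basis data accumulated across all reductions, realizing a direct sum decomposition
$$P\,\varphi_{\deg}((2,2,2),1,\primemu)\,Q = \widetilde{\varphi_{\deg}}((2,2,2),1,\primemu)\oplus I_{\primemu-1},$$
with the compatible decomposition $Q^{-1}\psi_{\deg}((2,2,2),1,\primemu)P^{-1}=\widetilde{\psi_{\deg}}((2,2,2),1,\primemu)\oplus (xyz\,I_{\primemu-1})$. Since $(I_{\primemu-1},xyz\,I_{\primemu-1})\cong \bigoplus^{\primemu-1}(1,xyz)$ is null-homotopic in $\underline{\MF}(xyz)$, this establishes the claimed isomorphism. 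The plausible shape of $P,Q$ is block-triangular, with blocks that implement the elimination of the last $\primemu-1$ coordinates of the first column-block of $\varphi_{\deg}$ (collapsing $-zxI_{\primemu}$ to $-zx\mathbf{e}_1^T$) and, symmetrically, of the last $\primemu-1$ coordinates of the fourth row-block of $\varphi_{\deg}$ (collapsing $-xyI_{\primemu}$ to $-xy\mathbf{e}_{\primemu}$), guided by the Jordan chain structure of $J_{\primemu}$.

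The main obstacle I anticipate is the bookkeeping: after each Schur-complement reduction, the rank-one correction modifies entries in several blocks, so one must check that these corrections are absorbed by the other Jordan-block units (used in subsequent steps) and do not leak into the blocks $zI_{\primemu}$, $-yI_{\primemu}$, $xI_{\primemu}$, $-zI_{\primemu}$, $yI_{\primemu}$, which must remain intact in the reduced form. I expect this to work out cleanly because $J_{\primemu}$ acts by shifting basis vectors, so the corrections propagate along the Jordan chain until they exit through either the $\mathbf{e}_1^T$ end or the $\mathbf{e}_{\primemu}$ end, producing exactly the asymmetric boundary terms in $\widetilde{\varphi_{\deg}}$ and $\widetilde{\psi_{\deg}}$.
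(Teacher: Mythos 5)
Your proposal is correct and follows essentially the same route as the paper: the paper's proof likewise observes that the $(4,1)$-block $-I_{\primemu}+J_{\primemu}(1)=J_{\primemu}$ of $\psi_{\deg}$ contains $\primemu-1$ units and applies Lemma \ref{lem:MatrixReducingProcess} that many times, leaving the computation as "straightforward." The only bookkeeping quibble is that since the units sit in the second factor $\psi_{\deg}$, the lemma puts the Schur-complement correction on $\psi_{\deg}$ itself (this is what produces the $J_{\primemu}(0)^T$ entries of $\widetilde{\psi_{\deg}}$) while $\varphi_{\deg}$ is merely truncated to the block yielding $\mathbf{e}_1^T$ and $\mathbf{e}_{\primemu}$, and the split-off summands are $(xyz,1)$ rather than $(1,xyz)$ — both immaterial to the conclusion.
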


\begin{proof}
The submatrix $-I_{\tau\primemu} + R_{\tau,\primemu}\left(\lambda\right) = -I_\primemu + J_\primemu\left(1\right)$ of $\psi_{\deg}\left((2,2,2),1,\primemu\right)$ has $\left(\primemu-1\right)$ units.
So we can use Lemma \ref{lem:MatrixReducingProcess} $\left(\primemu-1\right)$-times
to reduce it to $(3\primemu+1)\times(3\primemu+1)$ size,
the computation is straightforward.
\end{proof}

\begin{proof}[Proof of Theorem \ref{thm:MFCMCorrespondenceDegenerate}]
Let $\mu\in\mathbb{Z}_{\ge1}$.
We will find a free resolution
\begin{equation}\label{eqn:FreeResolutionDegenerate}
\begin{tikzcd}[arrow style=tikz,>=stealth,row sep=5em,column sep=2em] 
0
  \arrow[r]
&
S^{3\mu-2}
  \arrow[rrrr,"{\widetilde{\varphi_{\deg}}\left(\left(2,2,2\right), 1, {\mu-1}\right)}"]
&&&&
S^{3\mu-2}
  \arrow[r,"\pi"]
&
M_S\left((0,0,0),1,\mu\right)
  \arrow[r,""]
&
0.
\end{tikzcd}
\end{equation}
Then Eisenbud's equivalence and Lemma \ref{lem:DegenerateReduction} complete the proof.

Note that
$$
\tilde{M}\left((0,0,0),\lambda,\mu\right)
=
\im\tilde{\pi}\left((0,0,0),\lambda,\mu\right)
=
\im
\left(
\arraycolsep=2pt\def\arraystretch{1}
\begin{array}{c|c|c}
zx^{2} J_\mu(\lambda) + x^{2}y I_\mu & \left(xy^{2} + y^{2}z\right) I_\mu & \left(yz^{2}+z^{2}x\right) I_\mu
\end{array}
\right)_A
\subset
A^\mu
$$
for any $\lambda\in\mathbb{C}^\times$ and $\mu\in\mathbb{Z}_{\ge1}$.
It is easy to check that the sequence
$$
\begin{tikzcd}[arrow style=tikz,>=stealth,column sep = 20pt] 
S^{4} \arrow[rrrr,
"\spmat{
z & -y & 0 & 0
\\[2mm]
0 & x & -z & 0
\\[2mm]
- \lambda x & 0 & y & -xy
}"
] & & & & S^{3} \arrow[rrrrrr,
"\scriptstyle\left(
\lambda zx^{2} + x^{2}y \left| xy^{2} + y^{2}z \right| yz^{2}+z^{2}x
\right)
"
] & & & & & &
\tilde{M}\left((0,0,0),\lambda,1\right)_S \arrow[r] & 0
\end{tikzcd}
$$
is exact for any $\lambda\in\mathbb{C}^\times$.
Then by Proposition \ref{Lem:Substitution},
$$
\begin{tikzcd}[arrow style=tikz,>=stealth,column sep = 20pt] 
S^{4\mu} \arrow[rrrrrr,
"\spmat{
z I_\mu & -y I_\mu & 0 & 0
\\[2mm]
0 & x I_\mu & -z I_\mu & 0
\\[2mm]
- x J_\mu(1) & 0 & y I_\mu & -xy I_\mu
}"
] & & & &  & & S^{3\mu} \arrow[rrrrrrrrr,
"\scriptstyle\left(
zx^{2} J_\mu(1) + x^{2}y I_\mu \left| \left(xy^{2} + y^{2}z\right) I_\mu \right| \left(yz^{2}+z^{2}x\right) I_\mu
\right)
"
] & & & & & & & & &
\tilde{M}\left((0,0,0),1,\mu\right)_S \arrow[r] & 0
\end{tikzcd}
$$
is also exact. Using
$$
\spmat{
z I_\mu & -y I_\mu & 0
\\[2mm]
0 & x I_\mu & -z I_\mu
\\[2mm]
- x J_\mu(1) & 0 & y I_\mu
}
\spmat{y I_\mu
\\[2mm]
z I_\mu
\\[2mm]
x I_\mu
}
=
\spmat{0
\\[2mm]
0
\\[2mm]
-xy J_\mu(0)
},
$$
we can reduce it to the following,
which is still exact:
$$
\begin{tikzcd}[arrow style=tikz,>=stealth,column sep = 20pt] 
S^{3\mu+1} \arrow[rrrrrr,
"\spmat{
z I_\mu & -y I_\mu & 0 & 0
\\[2mm]
0 & x I_\mu & -z I_\mu & 0
\\[2mm]
- x J_\mu(1) & 0 & y I_\mu & -xy \mathbf{e}_\mu
}"
] & & & &  & & S^{3\mu} \arrow[rrrrrrrrr,
"\scriptstyle\left(
zx^{2} J_\mu(1) + x^{2}y I_\mu \left| \left(xy^{2} + y^{2}z\right) I_\mu \right| \left(yz^{2}+z^{2}x\right) I_\mu
\right)
"
] & & & & & & & & &
\tilde{M}\left((0,0,0),1,\mu\right)_S \arrow[r] & 0.
\end{tikzcd}
$$

There is a Macaulayfying element
$
F
:=
(xy+yz+zx) \mathbf{e}_1
:=
\left(xy+yz+zx,0,\dots,0\right)\in A^{\tau}
$
of
$
\tilde{M}\left((0,0,0),1,\mu\right)
$
in $A^\tau$.
So we enlarge it as
$
M_0 \left((0,0,0),1,\mu\right)
:=
\left<
F,
\tilde{M}\left((0,0,0),1,\mu\right)\right>_A \subset A^\tau,
$
then
$$
\begin{tikzcd}[arrow style=tikz,>=stealth,column sep = 20pt] 
S^{3\mu+4} \arrow[rrrrrrrr,
"\spmat{
x & y & z & 0 & 0 & 0 & 0
\\[2mm]
-\mathbf{e}_1 & 0 & 0 & z I_\mu & -y I_\mu & 0 & 0
\\[2mm]
0 & -\mathbf{e}_1 & 0 & 0 & x I_\mu & -z I_\mu & 0
\\[2mm]
0 & 0 & -\mathbf{e}_1 & - x J_\mu \left(1\right) & 0 & y I_\mu & -xy \mathbf{e}_\mu
}"
] & & & & & & & & S^{3\mu+1}
&&&&&&&&&&&&
\end{tikzcd}
$$
$$
\begin{tikzcd}[arrow style=tikz,>=stealth,column sep = 20pt] 
&&&&&
\arrow[rrrrrrrrrrr,
"\scriptstyle\left(
(xy+yz+zx)\mathbf{e}_1 \left | zx^{2} J_\mu(1) + x^{2}y I_\mu \left| \left(xy^{2} + y^{2}z\right) I_\mu \right| \left(yz^{2}+z^{2}x\right.\right) I_\mu
\right)
"
] & & & & & & & & & & &
M_0\left((0,0,0),1,\mu\right)_S \arrow[r] & 0
\end{tikzcd}
$$
is exact.
(One can check it using Lemma 9.7 in \cite{CJKR}.)

We can reduce matrices along three unit entries of the first matrix.
(See Lemma Lemma 9.8 in \cite{CJKR}.)
As a result,
for $\mu=1$,
we get a free resolution
$$
\begin{tikzcd}[arrow style=tikz,>=stealth,column sep = 20pt] 
0 \arrow[r] & S \arrow[rr, "-xyz"] && S \arrow[rrr, "xy+yz+zx"] &&& M_0\left((0,0,0),1,1\right)_S \arrow[r] & 0
\end{tikzcd}
$$
and for $\mu\ge2$,
we have
$$
\begin{tikzcd}[arrow style=tikz,>=stealth,column sep = 20pt] 
0 \arrow[r] &
S^{3\mu-2} \arrow[rrrrrrr,
"\spmat{
-zx\mathbf{e}_1^T & 0 & 0 & 0
\\[2mm]
z I_{\mu-1} & -y I_{\mu-1} & 0 & 0
\\[2mm]
0 & x I_{\mu-1} & -z I_{\mu-1} & 0
\\[2mm]
- x J_{\mu-1} \left(1\right) & 0 & y I_{\mu-1} & -xy \mathbf{e}_{\mu-1}
}"
] & & & & & &  & S^{3\mu-2}
&&&&&&&&&&&&
\end{tikzcd}
$$
$$
\begin{tikzcd}[arrow style=tikz,>=stealth,column sep = 20pt] 
&&&&&
\arrow[rrrrrrrrrrrr,
"
\scriptstyle
\spmat{
xy+yz+zx
&
zx^2 \mathbf{e}_1^T
&
0
&
0
\\[2mm]
0
&
zx^2 J_{\mu-1}(1) + x^2 y I_{\mu-1}
&
\left(xy^2 + y^2 z\right) I_{\mu-1}
&
\left(yz^2 + z^2 x\right) I_{\mu-1}
}
"
] & & & & & & & & & & & &
M_0\left((0,0,0),1,\mu\right)_S \arrow[r] & 0.
\end{tikzcd}
$$
Note that the left matrix is $\widetilde{\varphi_{\deg}}\left((2,2,2),1,\mu-1\right)$.
As it is a matrix factor of $xyz$, the induced map $S^{3\mu-2}\rightarrow S^{3\mu-2}$ is injective
and $M_0\left((0,0,0),1,\mu\right)$ is maximal Cohen-Macaulay,
implying that
it is the same as
the Macaulayfication
$
M\left((0,0,0),1,\mu\right)
$
$
=
$
$
\tilde{M}\left((0,0,0),1,\mu\right)^\dagger.
$
So we achieved the desired free resolution (\ref{eqn:FreeResolutionDegenerate}) of
$
M\left((0,0,0),1,\mu\right)_S
$
for any $\mu\ge1$.
\end{proof}

\begin{spacing}{0.979}
We finish this section with
a remark on the \textbf{periodic cases}:
We showed that the matrix factorizations corresponding to periodic loop data are decomposable
(Theorem \ref{thm:PeriodicDecomposable} for cylinder-free case
and Proposition \ref{prop:PeriodicDecomposableDegenerate} for non-cylinder-free case).
In non-degenerate cases,
they are mapped to maximal Cohen-Macaulay modules
corresponding to periodic band data
 (Theorem \ref{thm:MFCMCorrespondence}).
It yields the
decomposition
\begin{equation}\label{eqn:CMDecomposition}
M\left(w,\lambda,\mu\right)
\cong
\bigoplus_{k=0}^{N-1} M \left(\tilde{w}, \lambda_{k},\mu\right)
\end{equation}
in $\underline{\CM}(A)$,
where
$
\left(w,\lambda,\mu\right)
$
is a non-degenerate band datum
with a periodic band word
$w=\tilde{w}^{\hash N}$ for another band word $\tilde{w}$,
and $\lambda_{0},\dots,\lambda_{N-1}\in\field^\times$
are the $N$-th roots of $\lambda$.
In fact,
an investigation in the category $\Tri(A)$
proves that the decomposition (\ref{eqn:CMDecomposition})
is still valid in $\CM(A)$,
even for
non-degenerate
band data. 

Now
something tricky happens
in \textbf{periodic degenerate cases}:
For a
band datum
$
\left(w=(0,0,0)^{\hash \tau},\lambda=1,\mu\right),
$
the corresponding maximal Cohen-Macaulay module is decomposed as
$$
M\left(\left(0,0,0\right)^{\hash \tau},1,\mu\right)
\cong
\bigoplus_{k=0}^{\tau-1} M \left(\left(0,0,0\right), e^{2\pi i \cdot \frac{k}{\tau}},\mu\right).
$$
Note that only the first direct summand is still degenerate,
and the rank of its converted loop datum is shifted only in that piece.
Namely,
the corresponding matrix factorization and loop with a local system is
\begin{equation}\label{eqn:PeriodicDegenerateCaseMappedToMFFuk}
{\varphi_{\deg}}\left((2,2,2),1,\mu-1\right)
\oplus
\bigoplus_{k=1}^{\tau-1} \varphi \left((2,2,2), e^{2\pi i \cdot \frac{k}{\tau}}, \mu \right)
\quad\text{and}\quad
\mathcal{L}\left((2,2,2),-1,\mu-1\right)
\oplus
\bigoplus_{k=1}^{\tau-1}
\mathcal{L}\left((2,2,2),-e^{2\pi i \cdot \frac{k}{\tau}},\mu\right).
\end{equation}
Therefore,
the object in $\underline{\CM}(A)$ corresponding to a periodic degenerate band datum is mapped to
objects in $\underline{\MF}(xyz)$ or $\Fuk\left(\POP\right)$
that are decomposed into pieces having different geometric ranks.

Conversely,
for a loop datum
$
\left(w'=(2,2,2)^{\hash \tau},\primelambda=-1,\primemu\right)
$
or
$
\left(w'=(2,2,2)^{\hash \tau},\lambda=1,\primemu\right),
$
we observed in (\ref{eqn:PeriodicDegenerateCaseDecompositionFuk}) and (\ref{eqn:PeriodicDegenerateCaseDecompositionMF}) the decomposition of the corresponding loop with a local system
and matrix factorization.
Now we know that they correspond to
the decomposition of maximal Cohen-Macaulay module
\begin{equation}\label{eqn:PeriodicDegenerateCaseMappedToCM}
M\left((0,0,0),1,\primemu+1\right)
\oplus
\bigoplus_{k=1}^{\tau-1}
M\left((0,0,0),e^{2\pi i \cdot\frac{k}{\tau}},\primemu\right),
\end{equation}
where only the first direct summand has
a shifted
multiplicity.
\end{spacing}


\section{Applications}

\subsection{Flip of loops and dual of modules}

Generally speaking,
a symplectomorphism
(diffeomorphism preserving the symplectic form)
between symplectic manifolds
induces an equivalence on their Fukaya categories.
There are some obvious symmetries in our pair-of-pants surface $\POP$,
each of which induces a corresponding auto-equivalence on
$\Fuk\left(\POP\right)$,
and hence on $\underline{\MF}(xyz)$ and $\underline{\CM}(A)$.

In this subsection,
we take a look at the $\mathbb{Z}_2$-symmetry given by flipping $\POP$ back-and-forth,
which is
described in
Example \ref{ex:FlipDual}.
It is given by an orientation-reversing diffeomorphism
$\imath:\POP\rightarrow\POP$,
which is an anti-symplectomorphism
($\imath^*\omega=-\omega$).
Such a map defines a natural \emph{contravariant $\AI$-functor}
$
\imath:
\Fuk\left(\POP\right)
\rightarrow
\Fuk\left(\POP\right)
$
(\S \ref{sec:ContravariantAIFunctor}).
We also define the \emph{transpose functor} in $\MF_{\AI}(f)$
as a contravariant $\AI$-functor (\S \ref{sec:TransposeFunctor}),
and show that,
in our situation,
two $\AI$-functors are related under the localized mirror functor
(\S \ref{sec:FlipTranspose}).
It is also related with the \emph{duality functor}
$\Hom_A\left(-,A\right)$
in $\underline{\CM}(A)$
under Eisenbud's equivalence
(\S \ref{sec:FlipDual}).
We also give a description of these operations in terms of loop/band data
(\S \ref{sec:DualFlipCanonicalForms}).

\subsubsection{Anti-symplectomorphism and contravariant $\AI$-functor on Fukaya categories}
\label{sec:ContravariantAIFunctor}
We first recall some general algebraic notions
following \cite{S08}:

\begin{defn}
Given a $\mathbb{Z}_2$-graded $\AI$-category $\mathcal{A}$,
the \textnormal{\textbf{opposite $\AI$-category}} $\mathcal{A}^{\operatorname{op}}$
consists of the same class of objects
$
\operatorname{Ob}\left(\mathcal{A}^{\operatorname{op}}\right)
:=
\operatorname{Ob}\left(\mathcal{A}\right),
$
switched morphism spaces
$
\hom_{\mathcal{A}^{\operatorname{op}}}^\bullet\left(\mathcal{L}_0,\mathcal{L}_1\right)
:=
\hom_{\mathcal{A}}^\bullet\left(\mathcal{L}_1,\mathcal{L}_0\right)
$
($\bullet\in\mathbb{Z}_2$),
and $\AI$-operations $\left\{\operatorm_k^{\operatorname{op}}\right\}_{k\ge1}$ defined as
$$
\operatorm_k^{\operatorname{op}}\left(g_1,\dots,g_k\right)
:=
(-1)^{\left|g_1\right|+\cdots+\left|g_k\right|-k}
\operatorm_k\left(g_k,\dots,g_1\right)
$$
for
$
g_i\in
\hom_{\mathcal{A}^{\operatorname{op}}}^\bullet\left(\mathcal{L}_{i-1},\mathcal{L}_i\right)
=
\hom_{\mathcal{A}}^\bullet\left(\mathcal{L}_{i},\mathcal{L}_{i-1}\right)
$
($i\in\left\{1,\dots,k\right\}$, $\bullet\in\mathbb{Z}_2$).
\end{defn}

A straightforward calculation shows that $\mathcal{A}^{\operatorname{op}}$
is indeed an
$\AI$-category.

\begin{defn}\label{defn:AinftyContravariantFunctor}

A \textnormal{\textbf{contravariant $\AI$-functor}} $\mathcal{G}:=\left\{\mathcal{G}_k\right\}_{k\ge0}$ between two $\AI$-categories $\mathcal{A}$ and $\mathcal{B}$
is an $\AI$-functor from $\mathcal{A}^{\operatorname{op}}$ to $\mathcal{B}$.
Equivalently, it can be defined by giving a mapping
$$
\mathcal{G}_0:
\operatorname{Ob}\left(\mathcal{A}\right)
\rightarrow
\operatorname{Ob}\left(\mathcal{B}\right)
$$
and $\mathbb{k}$-linear maps $\left(k\ge1\right)$
$$\mathcal{G}_{k}
:
\hom_{\mathcal{A}}\left(\mathcal{L}_1,\mathcal{L}_0\right)
\otimes \cdots \otimes
\hom_{\mathcal{A}}\left(\mathcal{L}_{k},\mathcal{L}_{k-1}\right)
\rightarrow
\hom_{\mathcal{B}}\left(\mathcal{G}_{0}\left(\mathcal{L}_0\right),\mathcal{G}_{0}\left(\mathcal{L}_k\right)\right)$$
of degree $1-k$,
satisfying \emph{$\AI$-relations}
\begin{align}\label{eqn:AI-relationsContravariantFunctor}
\begin{split}
&\sum_{1\le k \le n}\sum_{1\le i_1<\cdots<i_k= n} \operatorm_{k}^{\mathcal{B}} \left(\mathcal{G}_{i_{1}} \left( g_{1}, \dots, g_{i_{1}} \right), \dots, \mathcal{G}_{i_{k}} \left( g_{i_{k-1} + 1}, \dots, g_{n} \right) \right)\\
&\hspace{10mm}
=\sum_{0\le i<j\le n} (-1)^{\left|g_{1}\right| + \cdots + \left|g_{j}\right| - j} \mathcal{G}_{n-j+i+1} \left (g_{1}, \dots, g_{i}, \operatorm_{j-i}^{\mathcal{A}} \left( g_{j}, \ldots, g_{i+1} \right), g_{j+1}, \dots, g_{n} \right)
\end{split}
\end{align}
for any fixed $n\in\mathbb{Z}_{\ge1}$
and morphisms
$
g_i\in
\hom_{\mathcal{A}}^\bullet\left(\mathcal{L}_{i},\mathcal{L}_{i-1}\right)
$($i\in\left\{1,\dots,n\right\}$, $\bullet\in\mathbb{Z}_2$).

It induces an ordinary contravariant functor
$
H^0\left(\mathcal{G}\right):H^0\left(\mathcal{A}\right)\rightarrow H^0\left(\mathcal{B}\right),
$
whose mapping on objects is $\mathcal{G}_0$
and action on morphisms is given by
$
\left[g\right]\mapsto\left[\mathcal{G}_1\left(g\right)\right]$.

\end{defn}


Now
let $\left(\Sigma,\omega\right)$ and $\left(\Sigma',\omega'\right)$ be $2$-dimensional symplectic manifolds
(possibly with boundary) of finite type
(as in \S \ref{sec:CptFukSurface})
and
$\imath:\Sigma\rightarrow\Sigma'$ an
anti-symplectomorphism
($\imath^*\omega'=-\omega$).
We will define a contravariant $\AI$-functor
$\imath:=\left\{\imath_k\right\}_{k\ge0}:\Fuk\left(\Sigma\right)\rightarrow\Fuk\left(\Sigma'\right)$ as follows:

Any object
$\mathcal{L}:=\left(L,E,\nabla\right)$
of $\Fuk\left(\Sigma\right)$
consists of
a loop
$L:S^1\rightarrow \Sigma$,
a finite-rank $\field$-vector bundle $E$ over $S^1$,
and a flat connection $\nabla$ on $E$.
We define its image under the functor $\imath$ as the triple
$$
\imath_0\left(\mathcal{L}\right):=\left(\imath\left(L\right),E^*,\nabla^*\right),
$$
where $\imath\left(L\right):=\imath\circ L:S^1\rightarrow\Sigma'$ is the image of $L$ under $\imath$,
$E^*$ is the dual vector bundle of $E$ over $S^1$,
and $\nabla^*$ is the dual connection of $\nabla$.

For two objects 
$\mathcal{L}_i:=\left(L_i,E_i,\nabla_i\right)$
($i\in\left\{0,1\right\}$),
note that there are bijections
$$
\chi^\bullet\left(L_1,L_0\right)
\xleftrightarrow{1:1}
\chi^\bullet\left(\imath\left(L_0\right),\imath\left(L_1\right)\right),
\quad
q\leftrightarrow\imath\left(q\right)
\quad
\left(\bullet\in\mathbb{Z}_2\right)
$$
as shown in Figure \ref{fig:MorphismsInversion}
for $\bullet=0$ case.
Therefore, we have
\begin{align*}
\hom^\bullet\left(\imath\left(\mathcal{L}_0\right),\imath\left(\mathcal{L}_1\right)\right)
&=
\bigoplus_{q'\in \chi^\bullet\left(\imath\left(L_0\right),\imath\left(L_1\right)\right)}
\Hom_{\field}\left(\left.E_0^*\right|_{q'},\left.E_1^*\right|_{q'}\right)
\\
&=
\bigoplus_{q\in \chi^\bullet\left(L_1,L_0\right)}
\Hom_{\field}\left(\left(\left.E_0\right|_{q}\right)^*,\left(\left.E_1\right|_{q}\right)^*\right)
\quad
\left(\bullet\in\mathbb{Z}_2\right).
\end{align*}

We define
$
\imath_1
:
\hom\left(\mathcal{L}_1,\mathcal{L}_0\right)
\rightarrow
\hom\left(\imath\left(\mathcal{L}_0\right),\imath\left(\mathcal{L}_1\right)\right)
$
by
\begin{equation}\label{eqn:MorphismSending}
g\in \Hom_{\field}\left(\left.E_1\right|_q,\left.E_0\right|_q\right)
\mapsto
(-1)^{\left|g\right|}
g^*\in\Hom_{\field}\left(\left(\left.E_0\right|_q\right)^*,\left(\left.E_1\right|_q\right)^*\right)
\end{equation}
for a base morphism $g$ over $q\in\chi^\bullet\left(L_1,L_0\right)$
($\bullet\in\mathbb{Z}_2$),
and then linearly extend it to any morphisms.
Higher components $\imath_{k\ge2}$
are defined to be zero.

\begin{figure}[H]
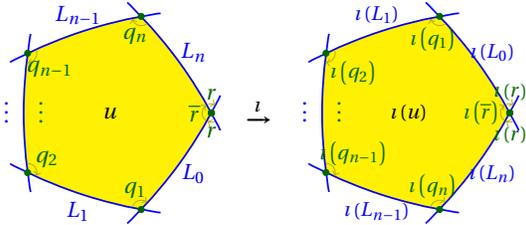

   \begin{minipage}{0.49\textwidth}
     \centering
$
\setlength\arraycolsep{2pt}

\end{matrix}
$
\captionsetup{width=1\linewidth}
\caption{A polygon and its image under $\imath$}
\label{fig:PolygonInversion}
   \end{minipage}\hfill
\end{figure}


\begin{prop}
The functor
$\imath:\Fuk\left(\Sigma\right)\rightarrow\Fuk\left(\Sigma'\right)$
defined above is indeed a contravariant $\AI$-functor.


\end{prop}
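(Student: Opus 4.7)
The plan is to verify the $\AI$-relations (\ref{eqn:AI-relationsContravariantFunctor}) for $\imath$ directly. Since $\imath_{k\ge 2}=0$ by definition, both sides collapse drastically and only one summand survives on each side for every $n$. On the left, the partition indices $i_1<\cdots<i_k=n$ must satisfy $i_j-i_{j-1}=1$ (otherwise some $\imath_{i_j-i_{j-1}}$ vanishes), forcing $k=n$ and leaving only $\operatorm_n^{\Fuk(\Sigma')}(\imath_1(g_1),\dots,\imath_1(g_n))$. On the right, $\imath_{n-j+i+1}$ is nonzero only when $n-j+i+1=1$, i.e.\ $i=0,\,j=n$, leaving only $(-1)^{|g_1|+\cdots+|g_n|-n}\,\imath_1\!\left(\operatorm_n^{\Fuk(\Sigma)}(g_n,\dots,g_1)\right)$. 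The task therefore reduces to establishing, for every $n\ge 1$,
\begin{equation*}
\operatorm_n^{\Fuk(\Sigma')}(\imath_1(g_1),\dots,\imath_1(g_n))=(-1)^{|g_1|+\cdots+|g_n|-n}\,\imath_1\!\left(\operatorm_n^{\Fuk(\Sigma)}(g_n,\dots,g_1)\right).
\end{equation*}

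The first step is to identify the relevant moduli spaces via $\imath$. The anti-symplectomorphism $\imath$ is an orientation-reversing diffeomorphism of surfaces, so it sends immersed polygons to immersed polygons but reverses the boundary orientation. For a polygon $u\in\mathcal{M}_\Sigma(q_n,\dots,q_1,\overline{r})$ whose boundary, read counterclockwise, lies successively on $L_n,L_{n-1},\dots,L_0$ with corners $q_n,\dots,q_1,\overline{r}$, the image $\imath(u)$ is a polygon in $\Sigma'$ whose counterclockwise reading of the boundary gives the reversed sequence of curves $\imath(L_0),\imath(L_1),\dots,\imath(L_n)$ and reversed sequence of corners $\imath(q_1),\imath(q_2),\dots,\imath(q_n),\overline{\imath(r)}$, as depicted in Figure \ref{fig:PolygonInversion}. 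This yields a canonical bijection
\begin{equation*}
\mathcal{M}_\Sigma(q_n,\dots,q_1,\overline{r})\ \xleftrightarrow{\,1{:}1\,}\ \mathcal{M}_{\Sigma'}(\imath(q_1),\dots,\imath(q_n),\overline{\imath(r)}),
\end{equation*}
and the two $\operatorm_n$ operations are expanded as sums over matching terms under this bijection.

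The second step is to match signs. Using formula (\ref{eqn:sign(u)}), $\sign(u)$ depends on the degrees of the corners together with the agreement/disagreement of the orientation of each $L_i$ with the boundary orientation of $u$. Since $\imath$ preserves the parameterization of each $L_i$ but reverses the ambient orientation, every orientation comparison flips, producing a uniform sign change whose exponent equals the number of boundary edges where the orientations previously agreed. Paired with the prefactor $(-1)^{|g_i|}$ in the definition $\imath_1(g_i)=(-1)^{|g_i|}g_i^*$ and the overall sign $(-1)^{|g_1|+\cdots+|g_n|-n}$ demanded by (\ref{eqn:AI-relationsContravariantFunctor}), these contributions conspire to cancel; the key combinatorial identity is that the parity of odd-degree corners on the boundary of $u$ dictates both sign modifications.

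The third step is to identify the holonomy contributions. For $\imath(\mathcal{L}_i)=(\imath(L_i),E_i^*,\nabla_i^*)$, the parallel transport along any oriented path is the inverse-transpose of the parallel transport of $(E_i,\nabla_i)$ along the same (reparameterized) path, because $\nabla_i^*$ is the dual connection. Substituting into the holonomy formula from Remark \ref{rmk:HolonomyComputing} and using the dualized morphisms $\imath_1(g_i)=(-1)^{|g_i|}g_i^*$, the linear operator obtained by composing parallel transports and dualized corner morphisms along $\partial\imath(u)$ is precisely the dual of the operator obtained from $(g_n,\dots,g_1)$ along $\partial u$. This identifies the coefficient of the output corner on the two sides.

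The main obstacle will be Step 2, the sign bookkeeping: three distinct sources of signs (the Koszul sign $(-1)^{|g_i|}$ baked into $\imath_1$, the global $(-1)^{|g_1|+\cdots+|g_n|-n}$ from the contravariant $\AI$-relations, and the per-edge orientation flips produced by the orientation-reversing $\imath$) must cancel uniformly across all polygons and all input degree configurations, including the degenerate cases $n=1$ (bigons, where the differential $\operatorm_1^{0,b}$-style signs come into play) and cases with self-intersections contributing corners of mixed degrees. Once the sign identity is pinned down in full generality, the matching of moduli and the dualization of holonomies from Steps 1 and 3 assemble the identity termwise, completing the verification of the $\AI$-relations.
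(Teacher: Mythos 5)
Your overall strategy is exactly the paper's: since $\imath_{k\ge2}=0$, the contravariant $\AI$-relations collapse to the single identity $\operatorm_n(\imath_1(g_1),\dots,\imath_1(g_n))=(-1)^{|g_1|+\cdots+|g_n|-n}\,\imath_1(\operatorm_n(g_n,\dots,g_1))$, which is then checked term by term via the bijections $\chi(L_n,L_0)\leftrightarrow\chi(\imath(L_0),\imath(L_n))$ and $\mathcal{M}(q_n,\dots,q_1,\overline{r})\leftrightarrow\mathcal{M}(\imath(q_1),\dots,\imath(q_n),\imath(\overline{r}))$, together with the observation that the dual connection's parallel transports dualize the holonomy operator. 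Steps 1 and 3 of your plan are correct and match the paper.

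The gap is Step 2, which you yourself flag as "the main obstacle" and then do not carry out: this is precisely the nontrivial content of the proof, and asserting that the signs "conspire to cancel" is not a proof. Moreover, the heuristic you offer is not the right quantity. The sign rule (\ref{eqn:sign(u)}) does not produce "a uniform sign change whose exponent equals the number of boundary edges where the orientations previously agreed"; it weights each orientation disagreement by the degree of the adjacent corner. What one actually needs is the identity $\sign(u)\,\sign(\imath(u))=(-1)^{\sum_{i=1}^n|q_i|}$, which follows from combining the fact that orientation agreement of $L_i$ with $\partial u$ becomes disagreement of $\imath(L_i)$ with $\partial(\imath(u))$ together with the two parity observations: $|q_i|=1$ iff $\mathbb{1}_{\operatorname{o}(L_{i-1})\ne\operatorname{o}(\partial u)}=\mathbb{1}_{\operatorname{o}(L_i)\ne\operatorname{o}(\partial u)}$, and $|r|=1$ iff $\mathbb{1}_{\operatorname{o}(L_0)\ne\operatorname{o}(\partial u)}\ne\mathbb{1}_{\operatorname{o}(L_n)\ne\operatorname{o}(\partial u)}$. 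That factor $(-1)^{\sum_i|q_i|}$ then cancels against the prefactors $(-1)^{|g_i|}$ built into $\imath_1$, and the remaining comparison with $(-1)^{|g_1|+\cdots+|g_n|-n}\,\imath_1(\cdot)=(-1)^{\sum_i|q_i|-n}(-1)^{|r|}(\cdot)^*$ closes using the degree count $|r|=2-n+\sum_i|q_i|$ for the output corner. Without this explicit computation (including the role of the output degree $|r|$, which your sketch never mentions) the proof is incomplete.
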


\begin{proof}

As $\imath_{k\ge2}=0$,
the required $\AI$-relations (\ref{eqn:AI-relationsContravariantFunctor})
simplify to
\begin{equation}\label{eqn:SimplifiedAIRelations}
\operatorm_n\left(\imath_1\left(g_1\right),\dots,\imath_1\left(g_n\right)\right)
=
(-1)^{
\left|g_1\right|+\cdots+\left|g_n\right|-n
}
\imath_1\left(\operatorm_n\left(g_n,\dots,g_1\right)\right)
\end{equation}
for
$
n\in\mathbb{Z}_{\ge1}
$
and
$
g_i\in
\hom_{\Fuk\left(\Sigma\right)}^\bullet\left(\mathcal{L}_{i},\mathcal{L}_{i-1}\right)
$
($i\in\left\{1,\dots,n\right\}$, $\bullet\in\mathbb{Z}_2$).

For
$q_i\in \chi^\bullet\left(L_{i},L_{i-1}\right)$
and
$g_i\in\Hom_{\field}\left(\left.E_{i}\right|_{q_i},\left.E_{i-1}\right|_{q_i}\right)$
($i\in\left\{1,\dots,n\right\}$, $\bullet\in\mathbb{Z}_2$),
the left side is
\begin{equation}\label{eqn:AIOperationDual}
\operatorm_n\left((-1)^{\left|g_1\right|}g_1^*,\dots,(-1)^{\left|g_n\right|}g_n^*\right)
=
\sum_{r'\in\chi\left(\imath\left(L_0\right),\imath\left(L_n\right)\right)}
\sum_{u'\in\mathcal{M}\left(\imath\left(q_1\right),\dots,\imath\left(q_n\right),\overline{r'}\right)}
(-1)^{\left|q_1\right|+\cdots+\left|q_n\right|}
\sign\left(u'\right)
\hol_{r'}\left(\partial u'\right)
\left(g_1^*,\dots,g_n^*\right).
\end{equation}

Note that there is a bijection between angles
$$
\chi\left(L_n,L_0\right)
\xleftrightarrow{1:1}
\chi\left(\imath\left(L_0\right),\imath\left(L_n\right)\right),
\quad
r\leftrightarrow\imath\left(r\right)
$$
and
between immersed polygons
$$
\mathcal{M}\left(q_n,\dots,q_1,\overline{r}\right)
\xleftrightarrow{1:1}
\mathcal{M}\left(\imath\left(q_1\right),\dots,\imath\left(q_n\right),\imath\left(\overline{r}\right)\right),
\quad
u\leftrightarrow\imath\left(u\right):=\imath\circ u
$$
as shown in Figure \ref{fig:PolygonInversion},
for each $r\in\chi\left(L_n,L_0\right)$.
Therefore,
the right side of (\ref{eqn:AIOperationDual}) is replaced by
\begin{equation}\label{eqn:AIOperationDual2}
\sum_{r\in\chi\left(L_n,L_0\right)}
\sum_{u\in\mathcal{M}\left(q_n,\dots,q_1,\overline{r}\right)}
(-1)^{\left|q_1\right|+\cdots+\left|q_n\right|}
\sign\left(\imath\left(u\right)\right)
\hol_{\imath(r)}\left(\partial \left(\imath(u)\right)\right)
\left(g_1^*,\dots,g_n^*\right).
\end{equation}

For each pair of $u$ and $\imath\left(u\right)$ in those sets,
from the sign rule (\ref{eqn:sign(u)}),
we have
{\allowdisplaybreaks
\begin{align}\label{eqn:SignDifference}
\begin{split}
\sign(u)\sign\left(\imath\left(u\right)\right)
&=
(-1)^\wedge
\left(
\sum_{i=1}^n
\left|q_i\right|
\mathbb{1}_{\operatorname{o}\left(L_{i-1}\right)\ne\operatorname{o}\left(\partial u\right)}
+
\left|r\right|
\mathbb{1}_{\operatorname{o}\left(L_0\right)\ne\operatorname{o}\left(\partial u\right)}
+
\sum_{i=1}^n
\left|\imath\left(q_i\right)\right|
\mathbb{1}_{\operatorname{o}\left(\imath\left(L_{i}\right)\right)\ne\operatorname{o}\left(\partial u\right)}
+
\left|\imath(r)\right|
\mathbb{1}_{\operatorname{o}\left(\imath\left(L_n\right)\right)\ne\operatorname{o}\left(\partial u\right)}
\right)
\\
&=
(-1)^\wedge
\left(
\sum_{i=1}^n
\left|q_i\right|
\left(
\mathbb{1}_{\operatorname{o}\left(L_{i-1}\right)\ne\operatorname{o}\left(\partial u\right)}
-
\mathbb{1}_{\operatorname{o}\left(L_{i}\right)\ne\operatorname{o}\left(\partial u\right)}
+1
\right)
+
\left|r\right|
\left(
\mathbb{1}_{\operatorname{o}\left(L_0\right)\ne\operatorname{o}\left(\partial u\right)}
-
\mathbb{1}_{\operatorname{o}\left(L_n\right)\ne\operatorname{o}\left(\partial u\right)}
+1
\right)
\right),
\end{split}
\end{align}
}
where we used the fact that
$
\left|q_i\right|=\left|\imath\left(q_i\right)\right|,
$
$
\left|r\right|=\left|\imath\left(r\right)\right|
$
($i\in\left\{1,\dots,n\right\}$)
and
that the orientation
of $L_i$ coincides with
that of $\partial u$
if and only if
the orientation
of $\imath\left(L_i\right)$
differs from that of $\partial\left(\imath\left(u\right)\right)$.
Note also that
$$
\left(
\left|q_i\right|=1
\quad\Leftrightarrow\quad
\mathbb{1}_{\operatorname{o}\left(L_{i-1}\right)\ne\operatorname{o}\left(\partial u\right)}
=
\mathbb{1}_{\operatorname{o}\left(L_{i}\right)\ne\operatorname{o}\left(\partial u\right)}
\right)
\quad\text{and}\quad
\left(
\left|r\right|=1
\quad\Leftrightarrow\quad
\mathbb{1}_{\operatorname{o}\left(L_{0}\right)\ne\operatorname{o}\left(\partial u\right)}
\ne
\mathbb{1}_{\operatorname{o}\left(L_{n}\right)\ne\operatorname{o}\left(\partial u\right)}
\right),
$$
which reduce (\ref{eqn:SignDifference}) to
$
\displaystyle(-1)^
{
\sum_{i=1}^n\left|q_i\right|
}.
$

On the other hand,
$
\hol_{\imath(r)}\left(\partial \left(\imath(u)\right)\right)
\left(g_1^*,\dots,g_n^*\right)
$
is
by definition given as
$$
P\left(\left(\partial \left(\imath(u)\right)\right)_0\right)
\circ
g_n^*
\circ
P\left(\left(\partial \left(\imath(u)\right)\right)_1\right)
\circ
g_{n-1}^*
\circ
\cdots
\circ
g_2^*
\circ
P\left(\left(\partial \left(\imath(u)\right)\right)_{n-1}\right)
\circ
g_1^*
\circ
P\left(\left(\partial \left(\imath(u)\right)\right)_n\right),
$$
where
each
$
P\left(\left(\partial\left(\imath\left(u\right)\right)\right)_i\right)
$
is the parallel transport with respect to $\nabla_i^*$
from
$\left.E_i^*\right|_{\imath\left(q_i\right)}$
to
$\left.E_i^*\right|_{\imath\left(q_{i+1}\right)}$,
which is the dual
$
P\left(\left(\partial u\right)_i\right)^*
$
of the parallel transport
$
P\left(\left(\partial u\right)_i\right)
$
with respect to $\nabla_i$ from $\left.E_i\right|_{q_{i+1}}$ to $\left.E_i\right|_{q_i}$.
Therefore,
we can replace the total composition by
$
\left(\hol_r\left(\partial u\right)\left(g_n,\dots,g_1\right)\right)^*.
$

Summing up,
we can rewrite
(\ref{eqn:AIOperationDual2})
as
$$
\sum_{r\in\chi\left(L_n,L_0\right)}
\sum_{u\in\mathcal{M}\left(q_n,\dots,q_1,\overline{r}\right)}
\sign(u)
\left(\hol_r\left(\partial u\right)\left(g_n,\dots,g_1\right)\right)^*,
$$
whose degree is
$
\left|r\right|=2-n + \left|q_1\right|+\cdots+\left|q_n\right|.
$
Therefore,
it is
the same as
$
(-1)^{\left|g_1\right|+\cdots+\left|g_n\right|-n}
\imath_1\left(\operatorm_n\left(g_n,\dots,g_1\right)\right)
$,
or the right side of (\ref{eqn:SimplifiedAIRelations}).
\end{proof}

\subsubsection{
Transpose functor on $\MF_{\AI}(f)$}
\label{sec:TransposeFunctor}
Let $S$ be the power series ring $\field[[x_1,\dots,x_m]]$ of $m$ variables,
and $f\in S$ its nonzero element.
Taking transpose of matrix factorizations of $f$
gives rise to
a
contravariant $\AI$-functor
$
-\operatorname{Tr}
:
\MF_{\AI}(f)\rightarrow \MF_{\AI}(f),
$
called the \textbf{(minus) transpose functor}
\footnote{
We put the minus sign here to match with the \emph{flip functor} in the next subsection.
However,
the functor with a minus sign and one without a minus sign are
\emph{($\AI$-)quasi-isomorphic} to each other.
},
which we now define:

For an object
$
\begin{tikzcd}[arrow style=tikz,>=stealth, sep=20pt, every arrow/.append style={shift left=0.5}]
   P^0
     \arrow{r}{\varphi}
   &
   P^1
     \arrow{l}{\psi}
\end{tikzcd}
$
with free $S$-modules $P^0$, $P^1$,
we define its
image
as
$
\begin{tikzcd}[arrow style=tikz,>=stealth, sep=20pt, every arrow/.append style={shift left=0.5}]
   \left(P^1\right)^*
     \arrow{r}{-\varphi^*}
   &
   \left(P^0\right)^*,
     \arrow{l}{-\psi^*}
\end{tikzcd}
$
where $P^*$ denotes the $S$-dual $\Hom_S\left(P,S\right)$ of an $S$-module  $P$
and $\varphi^*$, $\psi^*$ denote the natural pull-back maps.
It defines the functor $-\operatorname{Tr}:=\left\{-\operatorname{Tr}_k\right\}_{k\ge0}$
on the object level as
$$
-\operatorname{Tr}_0
:
\operatorname{Ob}\left(\MF_{\AI}(f)\right)
\rightarrow
\operatorname{Ob}\left(\MF_{\AI}(f)\right),
\quad
\left(\varphi,\psi\right)\rightarrow\left(-\varphi^*,-\psi^*\right).
$$

Given two matrix factorizations
$
\begin{tikzcd}[arrow style=tikz,>=stealth, sep=20pt, every arrow/.append style={shift left=0.5}]
   P_0^0
     \arrow{r}{\varphi_0}
   &
   P_0^1
     \arrow{l}{\psi_0}
\end{tikzcd}
$
and
$
\begin{tikzcd}[arrow style=tikz,>=stealth, sep=20pt, every arrow/.append style={shift left=0.5}]
   P_1^0
     \arrow{r}{\varphi_1}
   &
   P_1^1
     \arrow{l}{\psi_1}
\end{tikzcd}
$
of $f$
and
an {even-degree} morphism
$
\left(\alpha:P_0^0\rightarrow P_1^0,\
\beta:P_0^1\rightarrow P_1^1\right)
$
(resp. an odd-degree morphism
$
\big(\gamma:P_0^0\rightarrow P_1^1,\
\delta:P_0^1\rightarrow P_1^0\big)
$)
(see diagrams in (\ref{eqn:MFMorphismDiagram})),
we take dual of the
maps
to define its image under $-\operatorname{Tr}_1$:
$$
-\operatorname{Tr}_1
:
\hom\left(\left(\varphi_0,\psi_0\right),\left(\varphi_1,\psi_1\right)\right)
\rightarrow
\hom\left(\left(\varphi_1^*,\psi_1^*\right),\left(\varphi_0^*,\psi_0^*\right)\right),
\quad
\left(\alpha,\beta\right)
\mapsto
\left(\beta^*,\alpha^*\right)
\quad
\left(
\text{resp. }
\left(\gamma,\delta\right)
\mapsto
\left(\delta^*,\gamma^*\right)
\right).
$$
The higher components $\operatorname{Tr}_{k\ge2}$ are defined to be zero.
It is straightforward to check that $\operatorname{Tr}$ is
a contravariant $\AI$-functor.



\subsubsection{Flip of loops and transpose of matrix factorizations}
\label{sec:FlipTranspose}

Coming back to our specific situation,
the anti-symplectomorphism
$
\imath:\POP\rightarrow\POP
$
described in Example \ref{ex:FlipDual}
and the discussion so far yield the diagram:
\begin{equation}\label{eqn:FlipDualDiagram}
\begin{tikzcd}[arrow style=tikz,>=stealth,row sep=3em,column sep=3em] 
&
\Fuk\left(\POP\right)
  \arrow[r,"\LocalF"]
  \arrow[d,swap,"\textnormal{flip}"]
  \arrow[d,"\imath"]
&
\MF_{\AI}(xyz)
  \arrow[d,swap,"\textnormal{transpose}"]
  \arrow[d,"-\operatorname{Tr}"]
&
\\
&
\Fuk\left(\POP\right)
  \arrow[r,"\LocalF"]
&
\MF_{\AI}(xyz)
&
\end{tikzcd}
\end{equation}
In this subsection,
we will show that the diagram commutes,
in the sense that two $\AI$-functors
$
\LocalF
\circ\imath
$
and
$
-\operatorname{Tr}\circ\LocalF
$
are \emph{quasi-isomorphic}
\footnote{
Two $\AI$-functors are \emph{quasi-isomorphic} to each other
if there are $\AI$-natural transformations
between them satisfying some homotopy conditions.
Any $\AI$-natural transformation induces an (ordinary) natural transformation
between the induced ordinary functors.
See \cite{S08} for the details.
}
to each other.

It is based on the fact that our reference object $\mathbb{L}$
(Seidel Lagrangian)
is invariant under the flipping functor $\imath$.
Namely,
Figure \ref{fig:SeidelInversion}
shows that 
$\imath\left(\mathbb{L}\right)$
consists of the same underlying loop with $\mathbb{L}$
and its local system
is gauge equivalent to
that of $\mathbb{L}$.
Moreover,
one can easily check
(as we did in Proposition \ref{prop:WeakBoundingCochain})
that
$$
-\imath(b)
=
-x\imath(X)-y\imath(Y)-z\imath(Z)
\in\hom^1\left(\imath\left(\mathbb{L}\right),\imath\left(\mathbb{L}\right)\right)
$$
is a weak bounding cochain
with the disk potential $W^{\imath\left(\mathbb{L}\right)}=xyz$.
Therefore,
the pair
$
\left(\imath\left(\mathbb{L}\right),-\imath(b)\right)
$
defines a localized mirror functor
$
\mathcal{F}^{\imath\left(\mathbb{L}\right)}
:
\Fuk\left(\POP\right)
\rightarrow
\MF_{\AI}(xyz).
$

\begin{figure}[H]
   \begin{minipage}{0.5\textwidth}
     \centering
$
\setlength\arraycolsep{2pt}

\end{matrix}
$
\captionsetup{width=1\linewidth}
\caption{A deformed strip and its image under $\imath$\hspace*{-2mm}}
\label{fig:StripInversion}
   \end{minipage}\hfill
\end{figure}
Now we have two localized mirror functors
$\LocalF$ and $\mathcal{F}^{\imath\left(\mathbb{L}\right)}$,
but there is a trivial isomorphism
between
$\left(\mathbb{L},b\right)$
and
$\left(\imath\left(\mathbb{L}\right),-\imath(b)\right)$,
and
it has been already
proven (with much greater generality)
in \cite{CHL-glue}
that such isomorphic weak bounding cochains
induce quasi-isomorphic
$\AI$-functors:


\begin{prop}\cite[Theorem 4.7.(2)]{CHL-glue}
Two localized mirror functors
$\LocalF$
and
$\mathcal{F}^{\imath\left(\mathbb{L}\right)}$
are quasi-isomorphic.
\end{prop}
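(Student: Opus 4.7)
The plan is to reduce the statement to a direct invocation of \cite{CHL-glue}, Theorem 4.7.(2), which says that an isomorphism (in the category of weakly unobstructed objects) between two reference choices induces a quasi-isomorphism between the corresponding localized mirror functors. All the real work therefore consists in producing an explicit $\AI$-isomorphism $\eta : (\mathbb{L}, b) \xrightarrow{\cong} (\imath(\mathbb{L}), -\imath(b))$ in $\Fuk(\POP)$, and then citing the above result.

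First, I would build the closed degree-$0$ morphism $\eta \in \hom^0(\mathbb{L}, \imath(\mathbb{L}))$. By Figure \ref{fig:SeidelInversion}, $\mathbb{L}$ and $\imath(\mathbb{L})$ share the same underlying immersed curve with the same orientation; moreover both flat line bundles carry holonomy $-1$ concentrated at the marked point ${\small\color{gray}\bigstar}$. Choosing trivializations on $S^1\setminus{\small\color{gray}\bigstar}$ matched via $\imath$, I declare $\eta$ to be the sum of the ``identity'' contributions at each of the four even-degree intersection points $e_{\mathbb{L}}, \overline{X}, \overline{Y}, \overline{Z}$. A short check using the absence of bigons and the counting of Figure~\ref{fig:StripInversion} shows $\operatorm_1(\eta)=0$ and that $\eta$ induces an isomorphism on $H^0$; hence $\eta$ is an isomorphism in $H^0 \Fuk(\POP)$.

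Next, I would verify that $\eta$ intertwines the two weak bounding cochains, i.e.\ that
\begin{equation*}
\operatorm_2(\eta, b) \;=\; \operatorm_2(-\imath(b), \eta)
\end{equation*}
modulo exact terms, together with vanishing of the higher analogues $\sum_k \operatorm_k(-\imath(b),\dots,-\imath(b),\eta,b,\dots,b)$. The key point is that $\imath$ is orientation-reversing, so the sign convention \eqref{eqn:MorphismSending} introduces a $(-1)^{|g|}$ on odd generators, exactly compensating the differing orientations of the two deformed triangles bounded by $\mathbb{L}$ and by $\imath(\mathbb{L})$ at $X$, $Y$, $Z$. Because both pairs $(\mathbb{L},b)$ and $(\imath(\mathbb{L}),-\imath(b))$ already carry the common disk potential $xyz$ (Proposition \ref{prop:WeakBoundingCochain} together with the remark just above the statement), this step promotes $\eta$ to an isomorphism in the Maurer--Cartan category of weakly unobstructed pairs.

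Finally, I would feed this isomorphism into \cite{CHL-glue}, Theorem 4.7.(2). The theorem provides an $\AI$-natural transformation $\LocalF \Rightarrow \mathcal{F}^{\imath(\mathbb{L})}$ whose components are obtained by $\AI$-multiplication with $\eta$, and establishes that this transformation is a quasi-isomorphism because $\eta$ is one. I expect the principal obstacle to lie in Step~2: the bookkeeping of signs arising from the orientation-reversal $\imath$, from the $(-1)^{|g|}$ in \eqref{eqn:MorphismSending}, from $\sign(u)$ versus $\sign(\imath(u))$, and from the chosen trivializations of $E_{\mathbb{L}}$ must all align so that the intertwining identity becomes an honest equality of $\AI$-operations rather than merely of their leading terms. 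Once that is nailed down, the remainder of the proof is purely formal.
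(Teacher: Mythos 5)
Your proposal matches the paper's approach exactly: the paper simply observes that there is a (trivial) isomorphism between the weakly unobstructed pairs $\left(\mathbb{L},b\right)$ and $\left(\imath\left(\mathbb{L}\right),-\imath(b)\right)$ and then cites \cite{CHL-glue}, Theorem 4.7.(2), which states that isomorphic weak bounding cochains induce quasi-isomorphic localized mirror functors. Your extra work in Step 2 spelling out the Maurer--Cartan intertwining is exactly the content the paper leaves implicit in the word ``trivial,'' so nothing is missing or different in substance.
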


To show that the diagram (\ref{eqn:FlipDualDiagram}) commutes,
therefore,
it is enough to check
the following alternative:

\begin{prop}\label{prop:CommutativityReversingSwitching}
Two
$\AI$-functors
$
\mathcal{F}^{\imath\left(\mathbb{L}\right)}\circ\imath
$
and
$
-\operatorname{Tr}\circ\LocalF
$
are
the same.
\end{prop}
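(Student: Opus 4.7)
The plan is to compare the two contravariant $A_\infty$-functors componentwise, using the natural bijection $u \leftrightarrow \imath(u)$ between deformed polygons in $\POP$ and their images under $\imath$ (cf.\ Figures \ref{fig:PolygonInversion} and \ref{fig:StripInversion}). Since $\imath_{k\ge 2}=0$ and $-\operatorname{Tr}_{k\ge 2}=0$, the composition formula for $A_\infty$-functors collapses to a single nonzero term on each side. Thus for every $k\ge 0$ it suffices to identify
\[
\mathcal{F}^{\imath(\mathbb{L})}_k\bigl(\imath_1(f_1),\ldots,\imath_1(f_k)\bigr)
\quad\text{with}\quad
-\operatorname{Tr}_1\!\left(\mathcal{F}^{\mathbb{L}}_k(f_k,\ldots,f_1)\right),
\]
modulo the Koszul reorderings intrinsic to contravariant composition.

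I would begin with the object level ($k=0$). Proposition \ref{prop:ComponentOfDeformedDifferential} expresses each block of $\varPhi^{\mathbb{L}}(\mathcal{L})$ and $\varPsi^{\mathbb{L}}(\mathcal{L})$ as a sum over deformed strips bounded by $L$ and $\mathbb{L}$ with corners $p$, $X_1,\ldots,X_i$, $\overline{s}$. The matrix factorization $\mathcal{F}^{\imath(\mathbb{L})}(\imath(\mathcal{L}))$ is computed by the same formula applied to strips $\imath(u)$ bounded by $\imath(L)$ and $\imath(\mathbb{L})$, with $b$ replaced by $-\imath(b)$. Under the strip bijection the monomial $x_1\cdots x_i$ is unchanged, and the holonomy factor $P((\partial u)_0)$ along $L$ is automatically replaced by its $\field$-linear dual because $E$ is replaced by $E^*$ and $\nabla$ by $\nabla^*$. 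This dualization of parallel transport is precisely the transposition that defines $-\operatorname{Tr}$ at the matrix level.

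What remains is to check that the total signs on the two sides agree. Four sources contribute: (i) the $(-1)^i$ from inserting $-\imath(b)$ in place of $\imath(b)$; (ii) the $(-1)^{|g|}$ in the definition (\ref{eqn:MorphismSending}) of $\imath_1$; (iii) the discrepancy between $\operatorname{sign}(u)$ and $\operatorname{sign}(\imath(u))$, computed as in (\ref{eqn:SignDifference}) in terms of the orientations of the $L_i$'s and of $\partial u$; and, for $k\ge 1$, (iv) the Koszul signs produced by reversing the cyclic order of corners when passing to $\imath(u)$. For $k\ge 1$ the same polygon bijection (Figure \ref{fig:PolygonInversion}) governs the computation, and the fourth sign source aligns with those already present in the $A_\infty$-relations (\ref{eqn:AI-relationsContravariantFunctor}) for a contravariant functor; this is what ultimately upgrades the correspondence from a quasi-isomorphism to an actual equality of $A_\infty$-functors.

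The principal obstacle is exactly this coordinated sign bookkeeping across strips, polygons, corners, and Koszul reorderings; no new geometric or algebraic input is needed, since the moduli of polygons are in canonical bijection under $\imath$. I expect the actual proof to spell out the object-level case in detail and then to remark that the higher-$k$ components follow by the strict analogue, with the sign audit being tedious but mechanical once the four sources above are organized.
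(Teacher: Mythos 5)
Your proposal is correct and follows essentially the same route as the paper: the object-level identification via the strip bijection $u\leftrightarrow\imath(u)$, the automatic dualization of the parallel transport $P((\partial u)_0)$, and the cancellation of the $(-1)^i$ from $-\imath(b)$ against the orientation-indicator flip $\mathbb{1}_{\operatorname{o}(\mathbb{L})\ne\operatorname{o}(\partial u)}+\mathbb{1}_{\operatorname{o}(\imath(\mathbb{L}))\ne\operatorname{o}(\partial(\imath(u)))}=1$ to produce the overall minus sign of $-\operatorname{Tr}$. The paper likewise spells out only the $k=0$ case and defers the morphism-level check as straightforward, exactly as you anticipate.
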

\begin{proof}
Recall from \S \ref{sec:LMFComputationSubsection} that
$
\LocalF\left(\mathcal{L}\right)
=
\left(
\LocalPhi\left(\mathcal{L}\right),
\LocalPsi\left(\mathcal{L}\right)
\right)
$
is given as maps
$$
 \begin{tikzcd}[arrow style=tikz,>=stealth, sep=65pt, every arrow/.append style={shift left}]
   \hom^0\left(\mathcal{L},\mathbb{L}\right)
   =
   \displaystyle\bigoplus_{p\in\chi^0\left(L,\mathbb{L}\right)}\left(\left.E\right|_p\right)^*
     \arrow{r}{\LocalPhi\left(\mathcal{L}\right)=\operatorm_1^{0,b}} 
   &
   \displaystyle\bigoplus_{s\in\chi^1\left(L,\mathbb{L}\right)}\left(\left.E\right|_s\right)^*
   =
   \hom^1\left(\mathcal{L},\mathbb{L}\right),
     \arrow{l}{\LocalPsi\left(\mathcal{L}\right)=\operatorm_1^{0,b}} 
 \end{tikzcd}
$$
whose
$
\left(\left(\left.E\right|_{s}\right)^*,\left(\left.E\right|_{p}\right)^*\right)
$-component
$
\operatorm_1^{0,b}
:
\left(\left.E\right|_{p}\right)^*
\rightarrow
\left(\left.E\right|_{s}\right)^*
$
for each
$p$, $s\in\chi\left(L,\mathbb{L}\right)$
is 
\begin{equation*}
\sum_{
\begin{matrix}
\\[-6mm]
\scriptscriptstyle
\left(x_1,X_1\right),\dots,\left(x_i,X_i\right)
\\[-1mm]
\scriptscriptstyle\in\left\{(x,X),(y,Y),(z,Z)\right\}
\end{matrix}
}
x_1\cdots x_i\
\sum_{u\in\mathcal{M}\left(p,X_1,\dots,X_i,\overline{s}\right)}
(-1)^
{
(i+1)\mathbb{1}_{\operatorname{o}\left(\mathbb{L}\right)\ne\operatorname{o}\left(\partial u\right)}
+
\#\left(\partial u \cap {\small\color{gray}\bigstar}_{\mathbb{L}}\right)
}
P\left(\left(\partial u\right)_0\right)^*,
\end{equation*}
where
$
P\left(\left(\partial u\right)_0\right)
\in
\Hom_{\field}\left(\left.E\right|_s,\left.E\right|_p\right)
$
is the parallel transport from $\left.E\right|_s$ to $\left.E\right|_p$
along the side of $u$ lying in $L$.

Taking its dual yields the map
$
-\operatorname{Tr}\left(\LocalF\left(\mathcal{L}\right)\right)
=
\left(
-\LocalPhi\left(\mathcal{L}\right)^*,
-\LocalPsi\left(\mathcal{L}\right)^*
\right)
$
given by
$$
 \begin{tikzcd}[arrow style=tikz,>=stealth, sep=65pt, every arrow/.append style={shift left}]
   \left(\hom^1\left(\mathcal{L},\mathbb{L}\right)\right)^*
   =
   \displaystyle\bigoplus_{s\in\chi^1\left(L,\mathbb{L}\right)}\left.E\right|_s
     \arrow{r}{-\LocalPhi\left(\mathcal{L}\right)^*} 
   &
   \displaystyle\bigoplus_{p\in\chi^0\left(L,\mathbb{L}\right)}\left.E\right|_p
   =
   \left(\hom^0\left(\mathcal{L},\mathbb{L}\right)\right)^*,
     \arrow{l}{-\LocalPsi\left(\mathcal{L}\right)^*} 
 \end{tikzcd}
$$
whose
$
\left(\left.E\right|_{p},\left.E\right|_{s}\right)
$-component
$
\left.E\right|_{s}
\rightarrow
\left.E\right|_{p}
$
for each
$p$, $s\in\chi\left(L,\mathbb{L}\right)$
is
\begin{equation*}
-
\sum_{
\begin{matrix}
\\[-6mm]
\scriptscriptstyle
\left(x_1,X_1\right),\dots,\left(x_i,X_i\right)
\\[-1mm]
\scriptscriptstyle\in\left\{(x,X),(y,Y),(z,Z)\right\}
\end{matrix}
}
x_1\cdots x_i\
\sum_{u\in\mathcal{M}\left(p,X_1,\dots,X_i,\overline{s}\right)}
(-1)^
{
(i+1)\mathbb{1}_{\operatorname{o}\left(\mathbb{L}\right)\ne\operatorname{o}\left(\partial u\right)}
+
\#\left(\partial u \cap {\small\color{gray}\bigstar}_{\mathbb{L}}\right)
}
P\left(\left(\partial u\right)_0\right).
\end{equation*}

On the other hand,
the opposite side
$
\mathcal{F}^{\imath\left(\mathbb{L}\right)}\left(\imath\left(\mathcal{L}\right)\right)
=
\left(
\Phi^{\imath\left(\mathbb{L}\right)}\left(\imath\left(\mathcal{L}\right)\right),
\Psi^{\imath\left(\mathbb{L}\right)}\left(\imath\left(\mathcal{L}\right)\right)
\right)
$
is given by
\begin{equation}\label{eqn:DualMaps}
 \begin{tikzcd}[arrow style=tikz,>=stealth, sep=85pt, every arrow/.append style={shift left}]
   \hom^0\left(\imath\left(\mathcal{L}\right),\imath\left(\mathbb{L}\right)\right)
   =
   \displaystyle\bigoplus_{p'\in\chi^0\left(\imath\left(L\right),\imath\left(\mathbb{L}\right)\right)}\left(\left.E^*\right|_{p'}\right)^*
     \arrow{r}{\Phi^{\imath\left(\mathbb{L}\right)}\left(\imath\left(\mathcal{L}\right)\right)=\operatorm_1^{0,-\imath(b)}} 
   &
   \displaystyle\bigoplus_{s'\in\chi^1\left(\imath\left(L\right),\imath\left(\mathbb{L}\right)\right)}\left(\left.E^*\right|_{s'}\right)^*
   =
   \hom^1\left(\imath\left(\mathcal{L}\right),\imath\left(\mathbb{L}\right)\right),
     \arrow{l}{\Psi^{\imath\left(\mathbb{L}\right)}\left(\imath\left(\mathcal{L}\right)\right)=\operatorm_1^{0,-\imath(b)}} 
 \end{tikzcd}
\end{equation}
whose
$
\left(\left(\left.E^*\right|_{s'}\right)^*,\left(\left.E^*\right|_{p'}\right)^*\right)
$-component
$
\operatorm_1^{0,-\imath(b)}
:
\left(\left.E^*\right|_{p'}\right)^*
\rightarrow
\left(\left.E^*\right|_{s'}\right)^*
$
for each
$p'$, $s'\in\chi\left(\imath\left(L\right),\imath\left(\mathbb{L}\right)\right)$
is
%
\begin{equation}\label{eqn:DualComponent}
\sum_{
\begin{matrix}
\\[-6mm]
\scriptscriptstyle
\left(x_1',X_1'\right),\dots,\left(x_i',X_i'\right)
\\[-1mm]
\scriptscriptstyle\in\left\{(-x,\imath(X)),(-y,\imath(Y)),(-z,\imath(Z))\right\}
\end{matrix}
}
x_1'\cdots x_i'\
\sum_{u'\in\mathcal{M}\left(p',X_1',\dots,X_i',\overline{s'}\right)}
(-1)^
{
(i+1)\mathbb{1}_{\operatorname{o}\left(\imath\left(\mathbb{L}\right)\right)\ne\operatorname{o}\left(\partial u'\right)}
+
\#\left(\partial u' \cap \imath\left({\small\color{gray}\bigstar}_{\mathbb{L}}\right)\right)
}
P\left(\left(\partial u'\right)_0\right)^*.
\end{equation}

Under the bijection between angles
$$
\chi^{\bullet}\left(L,\mathbb{L}\right)
\xleftrightarrow{1:1}
\chi^{\bullet+1}\left(\imath\left(L\right),\imath\left(\mathbb{L}\right)\right),
\quad
p\leftrightarrow\imath\left(\overline{p}\right)
\quad
\left(\bullet\in\mathbb{Z}_2\right),
$$
we can put
$p'=\imath\left(\overline{s}\right)$
and
$s'=\imath\left(\overline{p}\right)$
for some $p$, $s\in\chi\left(L,\mathbb{L}\right)$.
Each $X_i'$ is $\imath\left(X_i\right)$ for some $X_i\in\left\{X,Y,Z\right\}$.
There is also
a bijection
between deformed strips
$$
\mathcal{M}\left(p,X_i,\dots,X_1,\overline{s}\right)
\xleftrightarrow{1:1}
\mathcal{M}\left(\imath\left(\overline{s}\right),\imath\left(X_1\right),\dots,\imath\left(X_i\right),\imath\left(p\right)\right),
\quad
u\leftrightarrow\imath\left(u\right):=\imath\circ u
$$
as shown in Figure \ref{fig:StripInversion}.
Also using the identifications
$
\left(\left.E^*\right|_{\imath\left(\overline{s}\right)}\right)^*
=
\left.E\right|_s
$
and
$
\left(\left.E^*\right|_{\imath\left(\overline{p}\right)}\right)^*
=
\left.E\right|_p,
$
we can rewrite
(\ref{eqn:DualMaps})
and
(\ref{eqn:DualComponent})
as
$$
 \begin{tikzcd}[arrow style=tikz,>=stealth, sep=85pt, every arrow/.append style={shift left}]
   \displaystyle\bigoplus_{s\in\chi^1\left(L,\mathbb{L}\right)}\left.E\right|_s
     \arrow{r}{\Phi^{\imath\left(\mathbb{L}\right)}\left(\imath\left(\mathcal{L}\right)\right)=\operatorm_1^{0,-\imath(b)}} 
   &
   \displaystyle\bigoplus_{p\in\chi^0\left(L,\mathbb{L}\right)}\left.E\right|_p
     \arrow{l}{\Phi^{\imath\left(\mathbb{L}\right)}\left(\imath\left(\mathcal{L}\right)\right)=\operatorm_1^{0,-\imath(b)}} 
 \end{tikzcd}
$$
where
the
$
\left(\left.E\right|_{p},\left.E\right|_{s}\right)
$-component
$
\left.E\right|_{s}
\rightarrow
\left.E\right|_{p}
$
is
\begin{equation}\label{eqn:DualComponents2}
\sum_{
\begin{matrix}
\\[-6mm]
\scriptscriptstyle
\left(x_1,X_1\right),\dots,\left(x_i,X_i\right)
\\[-1mm]
\scriptscriptstyle\in\left\{(x,X),(y,Y),(z,Z)\right\}
\end{matrix}
}
\left(-x_1\right)\cdots \left(-x_i\right)\
\sum_{u\in\mathcal{M}\left(p,X_i,\dots,X_1,\overline{s}\right)}
(-1)^
{
(i+1)\mathbb{1}_{\operatorname{o}\left(\imath\left(\mathbb{L}\right)\right)\ne\operatorname{o}\left(\partial \left(\imath\left(u\right)\right)\right)}
+
\#\left(\partial \left(\imath\left(u\right)\right) \cap \imath\left({\small\color{gray}\bigstar}_{\mathbb{L}}\right)\right)
}
P\left(\left(\partial \left(\imath\left(u\right)\right)\right)_0\right)^*.
\end{equation}

The obvious relations
$$
\mathbb{1}_{\operatorname{o}\left(\mathbb{L}\right)\ne\operatorname{o}\left(\partial u\right)}
+
\mathbb{1}_{\operatorname{o}\left(\imath\left(\mathbb{L}\right)\right)\ne\operatorname{o}\left(\partial \left(\imath\left(u\right)\right)\right)}
=
1
\quad\text{and}\quad
\#\left(\partial u \cap {\small\color{gray}\bigstar}_{\mathbb{L}}\right)
=
\#\left(\partial \left(\imath\left(u\right)\right) \cap \imath\left({\small\color{gray}\bigstar}_{\mathbb{L}}\right)\right)
$$
and the fact that
$
P\left(\left(\partial \left(\imath\left(u\right)\right)\right)_0\right)
\in
\Hom_{\field}\left(\left.E^*\right|_p,\left.E^*\right|_s\right)
$
is the dual of
$
P\left(\left(\partial u\right)_0\right)
\in
\Hom_{\field}\left(\left.E\right|_s,\left.E\right|_p\right)
$
replace (\ref{eqn:DualComponents2})
again into
$$
-
\sum_{
\begin{matrix}
\\[-6mm]
\scriptscriptstyle
\left(x_1,X_1\right),\dots,\left(x_i,X_i\right)
\\[-1mm]
\scriptscriptstyle\in\left\{(x,X),(y,Y),(z,Z)\right\}
\end{matrix}
}
x_1\cdots x_i\
\sum_{u\in\mathcal{M}\left(p,X_i,\dots,X_1,\overline{s}\right)}
(-1)^
{
(i+1)\mathbb{1}_{\operatorname{o}\left(\mathbb{L}\right)\ne\operatorname{o}\left(\partial u\right)}
+
\#\left(\partial u \cap {\small\color{gray}\bigstar}_{\mathbb{L}}\right)
}
P\left(\left(\partial u\right)_0\right).
$$

This shows that two functors
are the same on the object level.
It is also straightforward to check that they coincide on the morphism level.
\end{proof}

\newpage

\subsubsection{Flip of loops and dual of modules}
\label{sec:FlipDual}

The commutativity of diagram (\ref{eqn:FlipDualDiagram})
induces the commutativity
of the left square in the following diagram
of ordinary categories and functors:
\begin{equation}\label{eqn:FlipDualDiagramOrdinary}
\begin{tikzcd}[arrow style=tikz,>=stealth,row sep=3em,column sep=3em] 
&
H^0\Fuk\left(\POP\right)
  \arrow[r,"\LocalF"]
  \arrow[d,swap,"\textnormal{flip}"]
  \arrow[d,"\imath"]
&
\underline{\MF}(xyz)
  \arrow[r,"\smat{\cok\\ \simeq}"]
  \arrow[d,swap,"\textnormal{transpose}"]
  \arrow[d,"-\operatorname{Tr}"]
&
\underline{\CM}(A)
  \arrow[d,swap,"\textnormal{dual}"]
  \arrow[d,"{\Hom_A\left(-,A\right)}"]
&
\\
&
H^0\Fuk\left(\POP\right)
  \arrow[r,"\LocalF"]
&
\underline{\MF}(xyz)
  \arrow[r,"\smat{\cok\\ \simeq}"]
&
\underline{\CM}(A)
&
\end{tikzcd}
\end{equation}

We check the commutativity of the right square in the general setting:

\begin{prop}\label{prop:CommutativityTransposeDual}
Let $S$ be
the power series ring
$
\field[[x_1,\dots,x_m]]
$
of $m$ variables,
$f\in S$ its nonzero element
and $A:=\left.S\right/(f)$
the quotient ring.
Then the following diagram
is commutative,
that is,
two compositions of functors are naturally isomorphic to each other:
\begin{equation*}
\begin{tikzcd}[arrow style=tikz,>=stealth,row sep=3em,column sep=3em] 
&
\underline{\MF}(f)
  \arrow[r,"\smat{\cok\\ \simeq}"]
  \arrow[d,swap,"\textnormal{transpose}"]
  \arrow[d,"-\operatorname{Tr}"]
&
\underline{\CM}(A)
  \arrow[d,swap,"\textnormal{dual}"]
  \arrow[d,"{\Hom_A\left(-,A\right)}"]
&
\\
&
\underline{\MF}(f)
  \arrow[r,"\smat{\cok\\ \simeq}"]
&
\underline{\CM}(A)
&
\end{tikzcd}
\end{equation*}

\end{prop}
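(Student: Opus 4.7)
The plan is to construct a natural isomorphism $\eta : \cok \circ (-\operatorname{Tr}) \Rightarrow \Hom_A(-,A) \circ \cok$ between the two contravariant functors $\underline{\MF}(f) \to \underline{\CM}(A)$. The underlying idea is simple: the $S$-dual of a matrix factorization produces the periodic resolution of the $A$-dual of its cokernel module, which identifies the two compositions once one knows that $\cok\underline{\varphi}$ is maximal Cohen-Macaulay (Eisenbud's theorem).

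Fix $(\varphi,\psi) \in \MF(f)$ with $\varphi,\psi : S^n \to S^n$, and set $M := \cok\underline{\varphi}$. First I would observe that $\cok(-\underline{\varphi}^*) = \cok(\underline{\varphi}^*)$ canonically, since negating a map does not alter its image. For the remaining identification $\cok(\underline{\varphi}^*) \cong \Hom_A(M,A)$, use that the matrix factorization equations $\varphi\psi = \psi\varphi = f\cdot I$ imply that the two-periodic complex
\[
\cdots \to A^n \xrightarrow{\underline{\psi}} A^n \xrightarrow{\underline{\varphi}} A^n \to M \to 0
\]
is exact (Eisenbud's theorem). Since $A = S/(f)$ is Gorenstein and $M$ is maximal Cohen-Macaulay, $\operatorname{Ext}^i_A(M,A)=0$ for all $i>0$. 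Applying $\Hom_A(-,A)$ to the periodic resolution above therefore produces an exact sequence
\[
0 \to M^* \to A^n \xrightarrow{\underline{\varphi}^*} A^n \xrightarrow{\underline{\psi}^*} A^n \xrightarrow{\underline{\varphi}^*} A^n \to \cdots .
\]
A short diagram chase then yields $M^* = \ker\underline{\varphi}^* = \operatorname{im}\underline{\psi}^* \cong A^n/\ker\underline{\psi}^* = A^n/\operatorname{im}\underline{\varphi}^* = \cok\underline{\varphi}^*$, giving $\eta_{(\varphi,\psi)}$ on objects.

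Next I would check naturality. Given a chain map $(\alpha,\beta):(\varphi_0,\psi_0)\to(\varphi_1,\psi_1)$, its image under the cokernel functor is $\underline{\alpha}:M_0\to M_1$, whose $A$-dual $\Hom_A(\underline{\alpha},A)$ is realized (via the inclusion $M_i^* \subset A^n$ as $\ker \underline{\varphi_i}^*$) by restricting $\alpha^*:A^n\to A^n$. On the other side, $-\operatorname{Tr}$ sends $(\alpha,\beta)$ to $(\beta^*,\alpha^*)$, and the cokernel functor picks out the first component $\alpha^*$, inducing $\underline{\alpha^*}:\cok\underline{\varphi_1}^* \to \cok\underline{\varphi_0}^*$. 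Under $\eta$ both arrows arise from the same $S$-linear lift $\alpha^*$, so naturality follows. Descent to the stable categories is straightforward: a null-homotopy of $(\alpha,\beta)$ in $\MF(f)$ dualizes to a null-homotopy of the transpose, and $\underline{\alpha}$ factors through a free $A$-module iff its $A$-dual does.

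The main technical issue, if any, is the bookkeeping for naturality — verifying that the identification $M^*\cong\cok\underline{\varphi}^*$ is really induced by the same $S$-linear lift $\alpha^*$ as the one governing the transpose construction, rather than by some a priori independent isomorphism. Since every module and map involved arises from canonical operations — universal properties of cokernels, the long exact sequence of $\operatorname{Ext}$, and $S$-linear duality — the identifications are forced to be natural, and no ad hoc choices intervene, so the argument should go through cleanly.
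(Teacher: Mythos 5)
Your proposal is correct and follows essentially the same route as the paper: dualize the $2$-periodic resolution of $M=\cok\underline{\varphi}$ and read off $\Hom_A(M,A)\cong\ker\underline{\varphi}^*=\im\underline{\psi}^*\cong\cok\underline{\varphi}^*=\cok(-\underline{\varphi}^*)$. You are in fact slightly more careful than the paper in justifying the exactness of the dualized sequence via $\operatorname{Ext}^{>0}_A(M,A)=0$ and in spelling out naturality and descent to the stable categories, which the paper leaves implicit.
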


\begin{proof}

Recall that under Eisenbud's equivalence (Theorem \ref{thm:EisenbudTheorem}),
a matrix factorization
$
\begin{tikzcd}[arrow style=tikz,>=stealth, sep=20pt, every arrow/.append style={shift left=0.5}]
   P^0
     \arrow{r}{\varphi}
   &
   P^1
     \arrow{l}{\psi}
\end{tikzcd}
$
of $f$
corresponds to a maximal Cohen-Macaulay $A$-module
$M:=\cok\underline{\varphi}$,
which admits a $2$-periodic free resolution
given by
$$
\begin{tikzcd}[arrow style=tikz,>=stealth,row sep=5em,column sep=2em] 
\cdots
  \arrow[r]
&
P^0\otimes_S A
  \arrow[r,"\underline{\varphi}"]
&
P^1\otimes_S A
  \arrow[r,"\underline{\psi}"]
&
P^0\otimes_S A
  \arrow[r,"\underline{\varphi}"]
&
P^1\otimes_S A
  \arrow[r,""]
&
M
  \arrow[r]
&
0.
\end{tikzcd}
$$
Taking $\Hom_A\left(-,A\right)$
yields
a $2$-periodic free resolution of
$\Hom_A\left(M,A\right)$
as
$$
\begin{tikzcd}[arrow style=tikz,>=stealth,row sep=5em,column sep=2em] 
0
  \arrow[r]
&
\Hom_A\left(M,A\right)
  \arrow[r,""]
&
\Hom_S\left(P^1,A\right)
  \arrow[r,"\underline{\varphi^*}"]
&
\Hom_A\left(P^0,A\right)
  \arrow[r,"\underline{\psi^*}"]
&
\Hom_A\left(P^1,A\right)
  \arrow[r,""]
&
\cdots,
\end{tikzcd}
$$
where
$
\underline{\varphi^*}
=
\varphi^*\otimes \id_A
:
\left(P^1\right)^*\otimes_S A\rightarrow \left(P^0\right)^*\otimes_S A
$
is induced from
$\varphi^*:\left(P^1\right)^*\rightarrow \left(P^0\right)^*$.
It gives natural isomorphisms
$$
\Hom_A\left(M,A\right)
\cong
\ker\underline{\varphi^*}
=
\im\underline{\psi^*}
\cong
\left.\Hom_A\left(P^0,A\right)\right/\ker\underline{\psi^*}
=
\left.\Hom_A\left(P^0,A\right)\right/\im\underline{\varphi^*}
=
\cok\underline{\varphi^*}
=
\cok\left(\underline{-\varphi^*}\right).
$$
\end{proof}

\subsubsection{Correspondence of canonical forms}
\label{sec:DualFlipCanonicalForms}

Flipping a loop with a local system,
taking transpose of a matrix factorization,
and
taking dual of a maximal Cohen-Macaulay modules
can all be given explicitly
in terms of loop/band data.

\begin{itemize}
\item
In $\Fuk\left(\POP\right)$,
let
$
\left(L,E,\nabla\right)
:=
\mathcal{L}\left(w',\primelambda,\primemu\right)
$
be the loop with a local system
corresponding to a loop datum
$
\left(w',\primelambda,\primemu\right).
$
Its flip is given by
$\left(\imath\left(L\right),E^*,\nabla^*\right)$.
Recall that the free homotopy class of the loop $L\left(w'\right)$
corresponding to the given normal loop word
$
w'=\left(
l_1',m_1',n_1',
\dots,
l_{\tau}',m_{\tau}',n_{\tau}'
\right)\in\mathbb{Z}^{3\tau}
$
is given by
$$
\left[L\left(w'\right)\right]
=
\left[
\alpha^{l_1'}\beta^{m_1'}\gamma^{n_1'}
\cdots
\alpha^{l_\tau'}\beta^{m_\tau'}\gamma^{n_\tau'}
\right],
$$
where $\alpha$, $\beta$, $\gamma$ are generators of $\pi_1\left(\POP\right)$ (Figure \ref{fig:gen}).
It is easy to see that
the free homotopy class of the flipped loop $\imath\left(L\left(w'\right)\right)$
is
$$
\left[\imath\left(L\left(w'\right)\right)\right]
=
\left[
\alpha^{1-l_1'}\beta^{1-m_1'}\gamma^{1-n_1'}
\cdots
\alpha^{1-l_\tau'}\beta^{1-m_\tau'}\gamma^{1-n_\tau'}
\right].
$$
Defining
a new normal loop word $1-w'$ as the normal form of the loop word
\footnote{
It is not normal a priori in general,
but Proposition \ref{prop:normalnormal} ensures that
one can deform it to the unique normal loop word by performing five operations in Lemma \ref{lem:equimove}
finitely many times.
}
$$
\left(
1-l_1',1-m_1',1-n_1',
\dots,
1-l_{\tau}',1-m_{\tau}',1-n_{\tau}'
\right)
\in\mathbb{Z}^{3\tau},
$$two loops $\imath\left(L\left(w'\right)\right)$
and $L\left(1-w'\right)$ are freely homotopic to each other.

The holonomy of $\left(E,\nabla\right)$ at some point 
is represented by a matrix $J_\primemu\left(\primelambda\right)$
(up to conjugacy).
It changes to $\left(J_{\primemu}\left(\primelambda\right)^T\right)^{-1}$
(which is similar to $J_{\primemu}\left(\primelambda\right)^{-1}$
by discussion in the proof of Theorem \ref{thm:LagMFCorrespondenceNondegenerate})
for the dual local system $\left(E^*,\nabla^*\right)$.
This is because
the parallel transport from
$\left.E^*\right|_p$ to $\left.E^*\right|_q$ is
$\left(P\left(L_{p\rightarrow q}\right)^*\right)^{-1}$,
where
$P\left(L_{p\rightarrow q}\right)$
is the parallel transport in $\left(E,\nabla\right)$
from $\left.E\right|_p$ to $\left.E\right|_q$.

Thus,
the flipped loop with a local system
$\left(\imath\left(L\right),E^*,\nabla^*\right)$
is isomorphic to
the canonical form
$\mathcal{L}\left(1-w',\primelambda^{-1},\primemu\right)$,
that is,
they give isomorphic matrix factorizations in
$
\underline{\MF}(xyz)
$
by Theorem \ref{thm:CylinderFreeMFFinite}.

\item
In $\underline{\MF}(xyz)$ (or $\MF(xyz)$),
consider the canonical form
$\varphi\left(w',\lambda,\primemu\right)$
corresponding to a loop datum $\left(w',\lambda,\primemu\right)$.
Its transpose still remains in the canonical form
up to equivalence of loop words and
bases change
$
J_{\primemu}\left(\lambda^{-1}\right)^T
\sim
J_{\primemu}\left(\lambda^{-1}\right)
\sim
J_{\primemu}\left(\lambda\right)^{-1},
$
which results in the canonical form $\varphi\left(1-w',\lambda^{-1},\primemu\right)$.

\item
In $\underline{\CM}(A)$ (or $\CM(A)$),
the canonical form given in terms of Definition \ref{defn:CanonicalFormMCM}
does not directly show the relation with taking dual.
However,
in $\Tri(A)$,
one can handle it in an algebraic way and see that
the dual of the canonical form $M\left(w,\lambda,\mu\right)$ corresponding to a band datum
$\left(w,\lambda,\mu\right)$
is isomorphic to
$M\left(-w,\lambda^{-1},\mu\right)$,
where $-w$ is the band word given by multiplying $-1$ to every entry of $w$.

%

\end{itemize}
The above discussions summarize to the following mappings (up to isomorphism) under the diagram (\ref{eqn:FlipDualDiagramOrdinary}),
while two rows are consistent with our main correspondence (\ref{eqn:MainCorrespondenceDiagram}):
\begin{equation*}\label{eqn:FlipDualDiagramOrdinaryCorrespondence}
\begin{tikzcd}[arrow style=tikz,>=stealth,row sep=3em,column sep=3em] 
&
\mathcal{L}\left(w',\primelambda,\primemu\right)
  \arrow[r,mapsto,"\LocalF"]
  \arrow[d,swap,leftrightarrow,"\textnormal{flip}"]
  \arrow[d,leftrightarrow,"\imath"]
&
\varphi_{\operatorname{(deg)}}\left(w',\lambda,\primemu\right)
  \arrow[r,mapsto,"\smat{\cok\\ \simeq}"]
  \arrow[d,swap,leftrightarrow,"\textnormal{transpose}"]
  \arrow[d,leftrightarrow,"-\operatorname{Tr}"]
&
M\left(w,\lambda,\mu\right)
  \arrow[d,swap,leftrightarrow,"\textnormal{dual}"]
  \arrow[d,leftrightarrow,"{\Hom_A\left(-,A\right)}"]
&
\\
&
\mathcal{L}\left(1-w',\primelambda^{-1},\primemu\right)
  \arrow[r,mapsto,"\LocalF"]
&
\varphi_{\operatorname{(deg)}}\left(1-w',\lambda^{-1},\primemu\right)
  \arrow[r,mapsto,"\smat{\cok\\ \simeq}"]
&
M\left(-w,\lambda^{-1},\mu\right)
&
\end{tikzcd}
\end{equation*}




\begin{ex}\label{ex:FlipDual}

The following shows the correspondence of
loops with a local system
$
\mathcal{L}\left((3,-2,2),\primelambda,1\right)
\leftrightarrow
\mathcal{L}\left((-2,3,-1),\primelambda^{-1},1\right),
$
matrix factorizations
$
\varphi\left((3,-2,2),\lambda,1\right)
\leftrightarrow
\varphi\left((-2,3,-1),\lambda^{-1},1\right),
$
and
maximal Cohen-Macaulay modules
$
M\left((2,-3,1),\lambda,1\right)
\leftrightarrow
M\left((-2,3,-1),\lambda^{-1},1\right)
$
(in $\Tri(A)$)
up to isomorphism,
where $\lambda=-\primelambda$.
\end{ex}
\vspace{-3mm}
\begin{figure}[H]
\hspace{-12mm}
$
\setlength\arraycolsep{5pt}

$
  }
&\leftrightarrow&
  \adjustbox{scale=0.9}{
  \begin{tikzcd}[ampersand replacement=\&, arrow style=tikz,>=stealth, sep=6pt] 
    \&
    {\scriptstyle \mathbb{k}((t))}^{}
      \arrow[ld, swap, "\color{black}1"]
      \arrow[rd, "\color{black}t"]
    \&
    \\[0mm]
    {\scriptstyle \mathbb{k}((t))}^{}
    \&
    \&
    {\scriptstyle \mathbb{k}((t))}^{}
    \\[4mm]
    {\scriptstyle \mathbb{k}((t))}^{}
      \arrow[u, "\color{black} \lambda t^2"]
      \arrow[rd, swap, "\color{black}1"]
    \&
    \&
    {\scriptstyle \mathbb{k}((t))}^{}
      \arrow[u, swap, "\color{black}t^3"]
      \arrow[ld, "\color{black}1"]
    \\[0mm]
    \&
    {\scriptstyle \mathbb{k}((t))}^{}
    \&
  \end{tikzcd}
  }
\\[-2mm]
\hspace{25mm}
\begin{tikzcd}[arrow style=tikz,>=stealth]
    \arrow[d, leftrightarrow, "\text{ flip back and forth}"]
    \\[-1mm]
    \phantom{.}
\end{tikzcd}
& &
\hspace{9mm}
\begin{tikzcd}[arrow style=tikz,>=stealth]
    \arrow[d, leftrightarrow, "\text{ transpose}"]
    \\[-1mm]
    \phantom{.}
\end{tikzcd}
& &
\hspace{5mm}
\begin{tikzcd}[arrow style=tikz,>=stealth]
    \arrow[d, leftrightarrow, "\text{ dual}"]
    \\[-1mm]
    \phantom{.}
\end{tikzcd}
\\[-1mm]

$
  }
&\leftrightarrow&
  \adjustbox{scale=0.9}{
  \begin{tikzcd}[ampersand replacement=\&, arrow style=tikz,>=stealth, sep=6pt] 
    \&
    {\scriptstyle \mathbb{k}((t))}^{}
      \arrow[ld, swap, "\color{black}t"]
      \arrow[rd, "\color{black}1"]
    \&
    \\[0mm]
    {\scriptstyle \mathbb{k}((t))}^{}
    \&
    \&
    {\scriptstyle \mathbb{k}((t))}^{}
    \\[4mm]
    {\scriptstyle \mathbb{k}((t))}^{}
      \arrow[u, "\color{black}1"]
      \arrow[rd, swap, "\color{black}\lambda t^2"]
    \&
    \&
    {\scriptstyle \mathbb{k}((t))}^{}
      \arrow[u, swap, "\color{black}1"]
      \arrow[ld, "\color{black}t^3"]
    \\[0mm]
    \&
    {\scriptstyle \mathbb{k}((t))}^{}
    \&
  \end{tikzcd}
  }
\end{matrix}
$
\label{fig:FlipDual}
\end{figure}


\subsection{Reverse of loops and shift of modules}
Orientation-reversing of loops induces another auto-equivalence on $\Fuk\left(\Sigma\right)$
(\S \ref{sec:ReversingLoops}).
We also define the \emph{switching functor} in $\MF_{\AI}(f)$
(\S \ref{sec:MFShifting}),
and show that,
in our situation,
two $\AI$-functors are related under the localized mirror functor
(\S \ref{sec:ReversingShift}).
They boil down to shift functors of triangulated categories,
and therefore,
are also related with the shift functor of $\underline{\CM}(A)$
(\S \ref{sec:ReversingShiftModule}).
We give an algorithm to compute them in terms of loop/band data
(\S \ref{sec:ReversingShiftCanonicalForms}).

\subsubsection{Orientation-reversing of loops and auto-equivalence on the Fukaya category}
\label{sec:ReversingLoops}

Let $\left(\Sigma,\omega\right)$ be a $2$-dimensional symplectic manifolds
(possibly with boundary) of finite type
(as in \S \ref{sec:CptFukSurface}).
There is an obvious symmetry of objects in $\Fuk\left(\Sigma\right)$,
namely,
we can reverse the orientation of the underlying loop of every object.
It results in an auto-equivalence on $\Fuk\left(\Sigma\right)$
given by a (covariant) $\AI$-functor
$
\jmath
:=
\left\{\jmath_k\right\}_{k\ge0}
:
\Fuk\left(\Sigma\right)\rightarrow\Fuk\left(\Sigma\right)
$
we define
now:

Any object
$\mathcal{L}:=\left(L,E,\nabla\right)$
of $\Fuk\left(\Sigma\right)$
consists of
a loop
$L:S^1\rightarrow \Sigma$,
a finite-rank $\field$-vector bundle $E\rightarrow S^1$, 
and a flat connection $\nabla$ on $E$.
We define its image under the functor $\jmath$ as the triple
$$
\jmath_0\left(\mathcal{L}\right)
:=
\left(
\jmath\left(L\right):=
L\circ\kappa,
\kappa^* E,
\kappa^*\nabla
\right),
$$
where
$
\kappa:S^1\rightarrow S^1,
$
$
e^{2\pi it}\mapsto e^{-2\pi it}
$
denotes the orientation-reversing map.

Note that reversing the orientation of loops doesn't affect their (self-)intersections.
Therefore,
for two objects 
$\mathcal{L}_i:=\left(L_i,E_i,\nabla_i\right)$
($i\in\left\{0,1\right\}$),
the sets
$
\chi^\bullet\left(L_0,L_1\right)
$
and
$
\chi^\bullet\left(\jmath\left(L_0\right),\jmath\left(L_1\right)\right)
$
$
\left(\bullet\in\mathbb{Z}_2\right)
$
are identified,
as shown in Figure \ref{fig:MorphismsReverse}.
The fibers $\left.E_i\right|_p$
and
$\left.\kappa^* E_i\right|_p$
over the preimages (in $S^1$) of the point $p\in\Sigma$
under $L_i$ and $\jmath(L_i)$, respectively,
are also naturally identified.
So there is also a natural identification between
$$
\bigoplus_{p\in\chi^{\bullet}\left(L_0,L_1\right)}
\Hom_{\field}
\left(
\left.E_0\right|_p,
\left.E_1\right|_p
\right)
\quad\text{and}\quad
\bigoplus_{p\in\chi^{\bullet}\left(\jmath\left(L_0\right),\jmath\left(L_1\right)\right)}
\Hom_{\field}
\left(
\left.\kappa^* E_0\right|_p,
\left.\kappa^* E_1\right|_p
\right),
$$

We define
$
\jmath_1
:
\hom\left(\mathcal{L}_0,\mathcal{L}_1\right)
\rightarrow
\hom\left(\jmath\left(\mathcal{L}_0\right),\jmath\left(\mathcal{L}_1\right)\right)
$
by
\begin{equation}\label{eqn:MorphismSending}
f\in \Hom_{\field}\left(\left.E_0\right|_p,\left.E_1\right|_p\right)
\mapsto
(-1)^{\left|f\right|}
f\in\Hom_{\field}\left(\kappa^*\left.E_0\right|_p,\kappa^*\left.E_1\right|_p\right)
\end{equation}
for a base morphism $f$ over $p\in\chi^\bullet\left(L_0,L_1\right)$
($\bullet\in\mathbb{Z}_2$),
and then linearly extend it to any morphisms.
Higher components $\jmath_{k\ge2}$
are defined to be zero.


\begin{figure}[H]
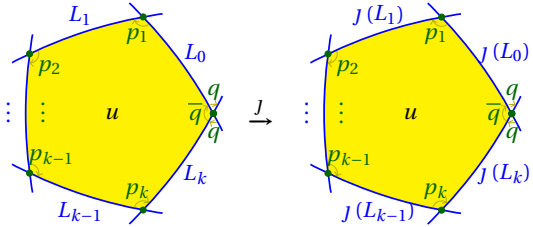

   \begin{minipage}{0.49\textwidth}
     \centering
$
\setlength\arraycolsep{2pt}

\end{matrix}
$
\captionsetup{width=1\linewidth}
\caption{A polygon $u\in\mathcal{M}\left(p_1,\dots,p_k,\overline{q}\right)$}
\label{fig:PolygonInversion}
   \end{minipage}\hfill
\end{figure}


\begin{prop}
The functor
$\jmath:\Fuk\left(\Sigma\right)\rightarrow\Fuk\left(\Sigma\right)$
defined above is indeed a covariant $\AI$-functor.


\end{prop}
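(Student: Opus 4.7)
The plan is to verify the $A_\infty$-functor relations for $\jmath$ directly, exploiting the fact that $\jmath_{k\ge2}=0$ forces the relations (in their covariant form, analogous to (\ref{eqn:AI-relationsContravariantFunctor}) but without the Koszul reversal) to collapse, for each $n\ge 1$, to a single equation of the shape
\begin{equation*}
\operatorm_n\bigl(\jmath_1(g_1),\ldots,\jmath_1(g_n)\bigr)
\;=\;
(-1)^{\star}\,\jmath_1\bigl(\operatorm_n(g_1,\ldots,g_n)\bigr),
\end{equation*}
for composable $g_i\in\hom(\mathcal{L}_{i-1},\mathcal{L}_i)$, where $\star$ is the standard sign arising from the conventions used throughout the paper. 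Indeed, for $n\ge 2$ the only summand on the right of the functor relation that involves a nonzero $\jmath_j$ is the one with $j=n$ and $i=0$, while the left collapses to the single term built from $\jmath_1$'s only.

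First I would establish a tautological bijection of moduli spaces
\begin{equation*}
\mathcal{M}(p_1,\ldots,p_n,\overline{q})
\;\xleftrightarrow{\;1:1\;}\;
\mathcal{M}^{\jmath}(p_1,\ldots,p_n,\overline{q}),
\end{equation*}
where the right side denotes the moduli of polygons bounded by $\jmath(L_0),\ldots,\jmath(L_n)$. Since $\jmath$ only reverses the parametrizations of the boundary curves, leaving their images in $\Sigma$, their transverse intersections, and the polygons they bound completely unchanged, every immersed polygon for the original tuple is literally the same immersed polygon for the reversed tuple. The same inspection that justifies Figure \ref{fig:MorphismsReverse} confirms that degrees of corners are preserved under this identification, i.e.\ $|p_i|$ and $|q|$ do not change.

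Second, I would compare the contribution of a single polygon $u$ on the two sides. The holonomy term $\hol_q(\partial u)(g_1,\ldots,g_n)$ is unchanged, because parallel transport along a geometric segment depends only on the $1$-form of the connection, and the connection on $\kappa^*E_i$ is the pullback $\kappa^*\nabla_i$ acting on the same fibers under the canonical identification. The sign, on the other hand, changes in a controlled way: using the general sign rule together with the identity $\mathbb{1}_{o(\jmath L_j)\ne o(\partial u)}=1-\mathbb{1}_{o(L_j)\ne o(\partial u)}$, the sign ratio $\sign(\jmath(u))/\sign(u)$ reduces, modulo $2$, to a sum of the degrees $|p_i|$ and $|q|$; the degree-shift relation $|q|\equiv \sum_i|p_i|+n+1\pmod 2$ (forced by rigidity of $u$ under the $A_\infty$-degree convention $|\operatorm_n|=2-n$) then turns this sum into a constant depending only on $n$. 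Combining this with the explicit signs $(-1)^{|g_i|}$ built into the definition of $\jmath_1$ and the degree-shift factor appearing when $\jmath_1$ acts on $\operatorm_n(g_1,\ldots,g_n)$ reproduces exactly the sign $\star$ demanded by the $A_\infty$-functor relation.

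The main obstacle, as in the contravariant case (Proposition \ref{prop:CommutativityReversingSwitching} and the preceding discussion), is not geometric but combinatorial: the careful bookkeeping of Koszul signs. The geometric content — preservation of polygon moduli, degrees, and parallel transport — is essentially immediate from $\jmath$ being the identity on the underlying surface, so the entire proof reduces to checking that the sign produced by reversing orientations of all bounding curves matches the sign prescribed by the $A_\infty$-functor relations. This matching proceeds along exactly the same lines as (\ref{eqn:SignDifference}), with the key simplification that all boundary orientation indicators flip simultaneously rather than alternately.
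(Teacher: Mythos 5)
Your proposal is correct and follows essentially the same route as the paper's proof: reduce (via $\jmath_{k\ge2}=0$) to the single relation $\operatorm_n\left(\jmath_1\left(f_1\right),\dots,\jmath_1\left(f_n\right)\right)=\jmath_1\left(\operatorm_n\left(f_1,\dots,f_n\right)\right)$ (which in the covariant case carries no extra sign, i.e.\ $\star=0$), observe that the identical polygons contribute to both sides with unchanged holonomy, and check that the sign ratio $(-1)^{\sum_i\left|p_i\right|+\left|q\right|}$ produced by flipping all boundary-orientation indicators in the sign rule exactly cancels the factors $(-1)^{\left|f_i\right|}$ and $(-1)^{\left|q\right|}$ introduced by $\jmath_1$ on inputs and output. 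The paper carries out this cancellation directly, so your detour through the degree relation (where the exponent should be $\left|q\right|\equiv\sum_i\left|p_i\right|+n$, not $n+1$) is unnecessary though harmless.
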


\begin{proof}

As $\jmath_{k\ge2}=0$,
the required $\AI$-relations (\ref{eqn:AI-relationsFunctor})
simplify to
\begin{equation}\label{eqn:SimplifiedAIRelationsCovariant}
\operatorm_n\left(\jmath_1\left(f_1\right),\dots,\jmath_1\left(f_n\right)\right)
=
\jmath_1\left(\operatorm_n\left(f_1,\dots,f_n\right)\right)
\end{equation}
for
$
n\in\mathbb{Z}_{\ge1}
$
and
$
f_i\in
\hom^\bullet\left(\mathcal{L}_{i-1},\mathcal{L}_{i}\right)
$
($i\in\left\{1,\dots,n\right\}$, $\bullet\in\mathbb{Z}_2$).
For 
$p_i\in \chi^\bullet\left(L_{i-1},L_{i}\right)$
and
$f_i\in\Hom_{\field}\left(\left.E_{i-1}\right|_{p_i},\left.E_{i}\right|_{p_i}\right)$
($i\in\left\{1,\dots,n\right\}$, $\bullet\in\mathbb{Z}_2$),
recalling the definition of $\operatorm_n$ in (\ref{eqn:OperatormDefinition}),
both the left and right sides are contributed by
the same polygons
$
u
\in
\mathcal{M}\left(p_1,\dots,p_k,\overline{q}\right)
$
for
$
q\in
\chi\left(L_0,L_k\right).
$
But for each polygon $u$,
its bounding loops have different orientations in both sides.
It plays a role only when we compute
$
\sign(u),
$
whose quotient
in both sides is given by
$
(-1)^{
\sum_{i=1}^k \left|p_i\right| + \left|q\right|
}
$
following the sign rule (\ref{eqn:sign(u)}).
It cancels all the signs that occur when taking $\jmath_1$,
which confirms (\ref{eqn:SimplifiedAIRelationsCovariant}).
%
%
%
%
%
\end{proof}

\subsubsection{
Switching functor on $\MF_{\AI}(f)$}
\label{sec:MFShifting}
Let $S$ be the power series ring $\field[[x_1,\dots,x_m]]$ of $m$ variables,
and $f\in S$ its nonzero element.
Switching two factors in
matrix factorizations of $f$
gives rise to
a
covariant $\AI$-functor
$
[1]
:
\MF_{\AI}(f)\rightarrow \MF_{\AI}(f),
$
called the \textbf{switching functor}
\footnote{
It boils down to the shift functor of the triangulated category $\underline{\MF}(f)$.
},
which we now define:

For an object
$
\begin{tikzcd}[arrow style=tikz,>=stealth, sep=20pt, every arrow/.append style={shift left=0.5}]
   P^0
     \arrow{r}{\varphi}
   &
   P^1
     \arrow{l}{\psi}
\end{tikzcd}
$
with free $S$-modules $P^0$, $P^1$,
we define its
image
as
$
\begin{tikzcd}[arrow style=tikz,>=stealth, sep=20pt, every arrow/.append style={shift left=0.5}]
   P^1
     \arrow{r}{\psi}
   &
   P^0.
     \arrow{l}{\varphi}
\end{tikzcd}
$
It defines the functor
$[1]:=\left\{[1]_k\right\}_{k\ge0}$
on the object level as
$$
[1]_0
:
\operatorname{Ob}\left(\MF_{\AI}(f)\right)
\rightarrow
\operatorname{Ob}\left(\MF_{\AI}(f)\right),
\quad
\left(\varphi,\psi\right)\rightarrow\left(\psi,\varphi\right).
$$

Given two matrix factorizations
$
\begin{tikzcd}[arrow style=tikz,>=stealth, sep=20pt, every arrow/.append style={shift left=0.5}]
   P_1^0
     \arrow{r}{\varphi1}
   &
   P_1^1
     \arrow{l}{\psi_1}
\end{tikzcd}
$
and
$
\begin{tikzcd}[arrow style=tikz,>=stealth, sep=20pt, every arrow/.append style={shift left=0.5}]
   P_0^0
     \arrow{r}{\varphi_0}
   &
   P_0^1
     \arrow{l}{\psi_0}
\end{tikzcd}
$
of $f$
and
an {even-degree} morphism
$
\left(\alpha:P_1^0\rightarrow P_0^0,\
\beta:P_1^1\rightarrow P_0^1\right)
$
(resp. an odd-degree morphism
$
\big(\gamma:P_1^0\rightarrow P_0^1,\
\delta:P_1^1\rightarrow P_0^0\big)
$),
we define its image under $[1]_1$ as
$$
[1]_1
:
\hom\left(\left(\varphi_1,\psi_1\right),\left(\varphi_0,\psi_0\right)\right)
\rightarrow
\hom\left(\left(\psi_1,\varphi_1\right),\left(\psi_0,\varphi_0\right)\right),
\quad
\left(\alpha,\beta\right)
\mapsto
\left(\beta,\alpha\right)
\quad
\left(
\text{resp. }
\left(\gamma,\delta\right)
\mapsto
\left(\delta,\gamma\right)
\right).
$$
The higher components $[1]_{k\ge2}$ are defined to be zero.
Then it is straightforward to check that $[1]$ is
a covariant $\AI$-functor.

\subsubsection{Orientation-reversing of loops and switching of matrix factorizations}
\label{sec:ReversingShift}

Now in our situation, we have the diagram of $\AI$-categories and functors: 
\begin{equation}\label{eqn:ReversingShiftingDiagram}
\begin{tikzcd}[arrow style=tikz,>=stealth,row sep=2.71em,column sep=3em] 
&
\Fuk\left(\POP\right)
  \arrow[r,"\LocalF"]
  \arrow[d,swap,"\smat{\textnormal{orientation}\\ \text{-reversing}}"]
  \arrow[d,"\jmath"]
&
\MF_{\AI}(xyz)
  \arrow[d,swap,"\smat{\textnormal{switching}\\ \textnormal{two factors}}"]
  \arrow[d,"{[1]}"]
&
\\
&
\Fuk\left(\POP\right)
  \arrow[r,"\LocalF"]
&
\MF_{\AI}(xyz)
&
\end{tikzcd}
\end{equation}
In this subsection,
we will show that the diagram commutes,
more precisely:

\begin{prop}\label{prop:CommutativityFlipTranspose}

Two $\AI$-functors
$
\LocalF
\circ\jmath
$
and
$
[1]\circ\LocalF
$
are the same.

\end{prop}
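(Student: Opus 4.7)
The plan is to mirror the strategy of the proof of Proposition \ref{prop:CommutativityReversingSwitching}: compare $\LocalF \circ \jmath$ and $[1] \circ \LocalF$ directly, first on objects and then on morphisms, by tracking how intersections, deformed strips, signs, and holonomies behave under reversal of the orientation of the underlying loop. Because $\jmath_{k\ge 2} = 0 = [1]_{k\ge 2}$, and because both $\jmath_1$ and $[1]_1$ act essentially as identities on the underlying vector space data (up to a sign $(-1)^{|f|}$ and a swap of the two slots), the equality of the two $\AI$-functors will reduce to a finite comparison on each intersection-point component.

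At the object level, for $\mathcal{L} = (L,E,\nabla)$, Figure \ref{fig:MorphismsReverse} shows that reversing the orientation of $L$ preserves the set $\chi(L,\mathbb{L})$ as a subset of $\POP$, but flips the $\mathbb{Z}_2$-degree of each intersection point, since the local sign at a transversal intersection is determined by comparing the orientations of the two branches. This yields the natural identifications
$$
\hom^0(\jmath(\mathcal{L}),\mathbb{L}) = \hom^1(\mathcal{L},\mathbb{L}), \qquad \hom^1(\jmath(\mathcal{L}),\mathbb{L}) = \hom^0(\mathcal{L},\mathbb{L}),
$$
which already implement the exchange of free $S$-modules required by $[1]$. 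I would then apply the formula (\ref{eqn:MatrixOfComponentOfDeformedDifferential}) for $\operatorm_1^{0,b}$ on $\jmath(\mathcal{L})$ and compare component by component with that for $\mathcal{L}$. The deformed strips are identical as topological polygons in $\POP$, since they do not depend on how $L$ is oriented; the sign $(-1)^{(i+1)\mathbb{1}_{\operatorname{o}(\mathbb{L})\ne\operatorname{o}(\partial u)}}$ depends only on $\mathbb{L}$ and is unchanged; and the parallel transport along the $L$-side is replaced by the parallel transport along $\jmath(L)$, which is its inverse. A careful check of the exponent $\#(\partial u \cap {\scriptstyle\bigstar}_L)\cdot(\mathbb{1}_{\operatorname{o}(L)=\operatorname{o}(\partial u)} - \mathbb{1}_{\operatorname{o}(L)\ne\operatorname{o}(\partial u)})$ shows that it flips sign, which combined with the holonomy identification exactly corresponds to swapping $(s,p)$- and $(p,s)$-components of the matrix factorization, i.e.\ to the action of $[1]$ on $(\varphi,\psi)$.

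At the morphism level, the required $\AI$-relation reduces to
$$
\operatorm_n\bigl(\jmath_1(f_1),\dots,\jmath_1(f_n)\bigr) = \jmath_1\bigl(\operatorm_n(f_1,\dots,f_n)\bigr)
$$
translated through $\LocalF$, so that the comparison becomes a match between the $A_\infty$-structures on $\MF_{\AI}(xyz)$ before and after applying $[1]_1$. For a fixed polygon $u$ contributing to both sides, only two things change: the boundary orientation of the $L_i$-sides and the insertion of $(-1)^{|f_i|}$ from $\jmath_1$. As in the corresponding step of the proof of Proposition \ref{prop:CommutativityReversingSwitching}, the sign discrepancy produced by (\ref{eqn:sign(u)}) equals $(-1)^{\sum_i|p_i| + |q|}$, which cancels precisely against the signs introduced by $\jmath_1$ on inputs and outputs.

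The main obstacle will be sign bookkeeping at the object level: verifying that the simultaneous flip of the holonomy-exponent and inversion of the parallel transport on the $L$-side combine coherently with the swap of degree-$0$ and degree-$1$ summands, so that the outcome is literally the pair $(\Psi^{\mathbb{L}}(\mathcal{L}), \Phi^{\mathbb{L}}(\mathcal{L}))$ rather than merely an isomorphic matrix factorization. Once this is settled on generators $f$ supported at a single intersection point, the extension to arbitrary morphisms and to the $A_\infty$-relations is formal.
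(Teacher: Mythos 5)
Your proposal follows essentially the same route as the paper's proof: identify $\chi^\bullet(\jmath(L),\mathbb{L})$ with $\chi^{\bullet+1}(L,\mathbb{L})$ via the degree flip, observe that the deformed strips and the $\mathbb{L}$-dependent sign $(-1)^{(i+1)\mathbb{1}_{\operatorname{o}(\mathbb{L})\ne\operatorname{o}(\partial u)}+\#(\partial u\cap{\scriptstyle\bigstar}_{\mathbb{L}})}$ are untouched by reversing $L$, and conclude that the components of $\LocalF(\jmath(\mathcal{L}))$ coincide with those of $\LocalF(\mathcal{L})[1]$. One small correction: the parallel transport $P((\partial u)_0)$ along the $L$-side of a strip is \emph{not} inverted — it is transport of the pull-back connection $\kappa^*\nabla$ along the same geometric path, hence literally the same map — so the cancellation you anticipate between an inverted transport and a sign-flipped holonomy exponent is only an artifact of working with the matrix representative of the holonomy at ${\scriptstyle\bigstar}$; in the coordinate-free formulation used in the paper no such bookkeeping is needed, and the expressions agree on the nose.
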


\begin{proof}

Recall from \S \ref{sec:LMFComputationSubsection} that
$
\LocalF\left(\mathcal{L}\right)
=
\left(
\LocalPhi\left(\mathcal{L}\right),
\LocalPsi\left(\mathcal{L}\right)
\right)
$
is given as maps
\begin{equation}\label{eqn:MFRecall}
 \begin{tikzcd}[arrow style=tikz,>=stealth, sep=65pt, every arrow/.append style={shift left}]
   \hom^0\left(\mathcal{L},\mathbb{L}\right)
   =
   \displaystyle\bigoplus_{p\in\chi^0\left(L,\mathbb{L}\right)}\left(\left.E\right|_p\right)^*
     \arrow{r}{\LocalPhi\left(\mathcal{L}\right)=\operatorm_1^{0,b}} 
   &
   \displaystyle\bigoplus_{s\in\chi^1\left(L,\mathbb{L}\right)}\left(\left.E\right|_s\right)^*
   =
   \hom^1\left(\mathcal{L},\mathbb{L}\right),
     \arrow{l}{\LocalPsi\left(\mathcal{L}\right)=\operatorm_1^{0,b}} 
 \end{tikzcd}
\end{equation}
whose
$
\left(\left(\left.E\right|_{s}\right)^*,\left(\left.E\right|_{p}\right)^*\right)
$-component
$
\operatorm_1^{0,b}
:
\left(\left.E\right|_{p}\right)^*
\rightarrow
\left(\left.E\right|_{s}\right)^*
$
for each
$p$, $s\in\chi\left(L,\mathbb{L}\right)$
is 
\begin{equation*}
\sum_{
\begin{matrix}
\\[-6mm]
\scriptscriptstyle
\left(x_1,X_1\right),\dots,\left(x_i,X_i\right)
\\[-1mm]
\scriptscriptstyle\in\left\{(x,X),(y,Y),(z,Z)\right\}
\end{matrix}
}
x_1\cdots x_i\
\sum_{u\in\mathcal{M}\left(p,X_1,\dots,X_i,\overline{s}\right)}
(-1)^
{
(i+1)\mathbb{1}_{\operatorname{o}\left(\mathbb{L}\right)\ne\operatorname{o}\left(\partial u\right)}
+
\#\left(\partial u \cap {\small\color{gray}\bigstar}_{\mathbb{L}}\right)
}
P\left(\left(\partial u\right)_0\right)^*,
\end{equation*}
where
$
P\left(\left(\partial u\right)_0\right)
\in
\Hom_{\field}\left(\left.E\right|_s,\left.E\right|_p\right)
$
is the parallel transport from $\left.E\right|_s$ to $\left.E\right|_p$
along the side of $u$ lying in $L$.

Switching positions of the left and right sides in
(\ref{eqn:MFRecall}),
$
\LocalF\left(\mathcal{L}\right)[1]
$
is given by maps
\begin{equation*}
 \begin{tikzcd}[arrow style=tikz,>=stealth, sep=65pt, every arrow/.append style={shift left}]
   \hom^1\left(\mathcal{L},\mathbb{L}\right)
   =
     \arrow{r}{\LocalPsi\left(\mathcal{L}\right)=\operatorm_1^{0,b}} 
   \displaystyle\bigoplus_{s\in\chi^1\left(L,\mathbb{L}\right)}\left(\left.E\right|_s\right)^*
   &
   \displaystyle\bigoplus_{p\in\chi^0\left(L,\mathbb{L}\right)}\left(\left.E\right|_p\right)^*
   =
   \hom^0\left(\mathcal{L},\mathbb{L}\right),
     \arrow{l}{\LocalPhi\left(\mathcal{L}\right)=\operatorm_1^{0,b}} 
 \end{tikzcd}
\end{equation*}
whose
$
\left(\left(\left.E\right|_{p}\right)^*,\left(\left.E\right|_{s}\right)^*\right)
$-component
$
\operatorm_1^{0,b}
:
\left(\left.E\right|_{s}\right)^*
\rightarrow
\left(\left.E\right|_{p}\right)^*
$
for each
$s$, $p\in\chi\left(L,\mathbb{L}\right)$
is 
\begin{equation*}
\sum_{
\begin{matrix}
\\[-6mm]
\scriptscriptstyle
\left(x_1,X_1\right),\dots,\left(x_i,X_i\right)
\\[-1mm]
\scriptscriptstyle\in\left\{(x,X),(y,Y),(z,Z)\right\}
\end{matrix}
}
x_1\cdots x_i\
\sum_{u\in\mathcal{M}\left(s,X_1,\dots,X_i,\overline{p}\right)}
(-1)^
{
(i+1)\mathbb{1}_{\operatorname{o}\left(\mathbb{L}\right)\ne\operatorname{o}\left(\partial u\right)}
+
\#\left(\partial u \cap {\small\color{gray}\bigstar}_{\mathbb{L}}\right)
}
P\left(\left(\partial u\right)_0\right)^*,
\end{equation*} 
where
$
P\left(\left(\partial u\right)_0\right)
\in
\Hom_{\field}\left(\left.E\right|_p,\left.E\right|_s\right)
$
is the parallel transport from $\left.E\right|_p$ to $\left.E\right|_s$
along the side of $u$ lying in $L$.

On the other hand,
the opposite side
$
\LocalF\left(\jmath\left(\mathcal{L}\right)\right)
=
\left(
\LocalPhi\left(\jmath\left(\mathcal{L}\right)\right),
\LocalPsi\left(\jmath\left(\mathcal{L}\right)\right)
\right)
$
is given by
\begin{equation}\label{eqn:DualMaps}
 \begin{tikzcd}[arrow style=tikz,>=stealth, sep=85pt, every arrow/.append style={shift left}]
   \hom^0\left(\jmath\left(\mathcal{L}\right),\mathbb{L}\right)
   =
   \displaystyle\bigoplus_{p'\in\chi^0\left(\jmath\left(L\right),\mathbb{L}\right)}\left(\left.\kappa^*E\right|_{p'}\right)^*
     \arrow{r}{\LocalPhi\left(\jmath\left(\mathcal{L}\right)\right)=\operatorm_1^{0,b}} 
   &
   \displaystyle\bigoplus_{s'\in\chi^1\left(\jmath\left(L\right),\mathbb{L}\right)}\left(\left.\kappa^*E\right|_{s'}\right)^*
   =
   \hom^1\left(\jmath\left(\mathcal{L}\right),\mathbb{L}\right),
     \arrow{l}{\LocalPsi\left(\jmath\left(\mathcal{L}\right)\right)=\operatorm_1^{0,b}} 
 \end{tikzcd}
\end{equation}
whose
$
\left(\left(\left.E^*\right|_{s'}\right)^*,\left(\left.E^*\right|_{p'}\right)^*\right)
$-component
$
\operatorm_1^{0,b}
:
\left(\left.\kappa^* E\right|_{p'}\right)^*
\rightarrow
\left(\left.\kappa^* E\right|_{s'}\right)^*
$
for each
$p'$, $s'\in\chi\left(\jmath\left(L\right),\mathbb{L}\right)$
is
\begin{equation}\label{eqn:ReversingComponents}
\sum_{
\begin{matrix}
\\[-6mm]
\scriptscriptstyle
\left(x_1,X_1\right),\dots,\left(x_i,X_i\right)
\\[-1mm]
\scriptscriptstyle\in\left\{(x,X),(y,Y),(z,Z)\right\}
\end{matrix}
}
x_1\cdots x_i\
\sum_{u\in\mathcal{M}\left(p',X_1,\dots,X_i,\overline{s'}\right)}
(-1)^
{
(i+1)\mathbb{1}_{\operatorname{o}\left(\mathbb{L}\right)\ne\operatorname{o}\left(\partial u\right)}
+
\#\left(\partial u \cap {\small\color{gray}\bigstar}_{\mathbb{L}}\right)
}
P\left(\left(\partial u\right)_0\right)^*,
\end{equation}
where
$
P\left(\left(\partial u\right)_0\right)
\in
\Hom_{\field}\left(\left.\kappa^* E\right|_{s'},\left.\kappa^* E\right|_{p'}\right)
$
is the parallel transport from $\left.\kappa^* E\right|_{s'}$ to $\left.\kappa^* E\right|_{p'}$
along the side of $u$ lying in $L$.


Note that reversing the orientation of $L$ doesn't affect its intersection with $\mathbb{L}$,
but their        degree changes.
Therefore,
we can replace
$p'\in\chi^0\left(\jmath\left(L\right),\mathbb{L}\right)$
and
$s'\in\chi^1\left(\jmath\left(L\right),\mathbb{L}\right)$
with
$s\in \chi^1\left(L,\mathbb{L}\right)$
and
$p\in \chi^0\left(L,\mathbb{L}\right)$,
respectively.
The fibers
$\left.\kappa^* E\right|_{p'}$
and
$\left.\kappa^* E\right|_{s'}$
are also identified with
$\left.E\right|_s$
and
$\left.E\right|_p$,
respectively.
So we can rewrite
(\ref{eqn:DualMaps})
and
(\ref{eqn:ReversingComponents})
as
\begin{equation*}
 \begin{tikzcd}[arrow style=tikz,>=stealth, sep=85pt, every arrow/.append style={shift left}]
   \displaystyle\bigoplus_{s\in \chi^1\left(L,\mathbb{L}\right)}\left(\left.E\right|_s\right)^*
     \arrow{r}{\LocalPhi\left(\jmath\left(\mathcal{L}\right)\right)=\operatorm_1^{0,b}} 
   &
   \displaystyle\bigoplus_{p\in \chi^0\left(L,\mathbb{L}\right)}\left(\left.E\right|_p\right)^*
     \arrow{l}{\LocalPsi\left(\jmath\left(\mathcal{L}\right)\right)=\operatorm_1^{0,b}} 
 \end{tikzcd}
\end{equation*}
whose
$
\left(\left(\left.E\right|_p\right)^*,\left(\left.E\right|_s\right)^*\right)
$-component
$
\operatorm_1^{0,b}
:
\left(\left.E\right|_s\right)^*
\rightarrow
\left(\left.E\right|_p\right)^*
$
is
\begin{equation*}
\sum_{
\begin{matrix}
\\[-6mm]
\scriptscriptstyle
\left(x_1,X_1\right),\dots,\left(x_i,X_i\right)
\\[-1mm]
\scriptscriptstyle\in\left\{(x,X),(y,Y),(z,Z)\right\}
\end{matrix}
}
x_1\cdots x_i\
\sum_{u\in\mathcal{M}\left(s,X_1,\dots,X_i,\overline{p}\right)}
(-1)^
{
(i+1)\mathbb{1}_{\operatorname{o}\left(\mathbb{L}\right)\ne\operatorname{o}\left(\partial u\right)}
+
\#\left(\partial u \cap {\small\color{gray}\bigstar}_{\mathbb{L}}\right)
}
P\left(\left(\partial u\right)_0\right)^*,
\end{equation*}
where
$
P\left(\left(\partial u\right)_0\right)
\in
\Hom_{\field}\left(\left.E\right|_p,\left.E\right|_s\right)
$
is the parallel transport from $\left.E\right|_p$ to $\left.E\right|_s$
along the side of $u$ lying in $L$.

Notice that it is the same expression with 
$
\LocalF\left(\mathcal{L}\right)[1],
$
especially because $\sign(u)$ is not relevant to the orientation of $L$.
This shows that two functors are the same on the object level.
It is also straightforward to check that they coincide on the morphism level.
\end{proof}

\subsubsection{Orientation-reversing of loops and shift of modules}
\label{sec:ReversingShiftModule}


The commutativity of diagram (\ref{eqn:ReversingShiftingDiagram}) induces
the commutativity of the left square in the following diagram of ordinary categories and functors:
\begin{equation}\label{eqn:ReversingTranslateDiagramOrdinary}
\begin{tikzcd}[arrow style=tikz,>=stealth,row sep=3em,column sep=3em] 
&
H^0\Fuk\left(\POP\right)
  \arrow[r,"\LocalF"]
  \arrow[d,swap,"\smat{\textnormal{orientation}\\ \textnormal{reversing}}"]
  \arrow[d,"\jmath"]
&
\underline{\MF}(xyz)
  \arrow[r,"\smat{\cok\\ \simeq}"]
  \arrow[d,swap,"\smat{\textnormal{switching}\\ \textnormal{two factors}}"]
  \arrow[d,"{\left[1\right]}"]
&
\underline{\CM}(A)
  \arrow[d,swap,"\textnormal{shift}"]
  \arrow[d,"{\left[1\right]}"]
&
\\
&
H^0\Fuk\left(\POP\right)
  \arrow[r,"\LocalF"]
&
\underline{\MF}(xyz)
  \arrow[r,"\smat{\cok\\ \simeq}"]
&
\underline{\CM}(A)
&
\end{tikzcd}
\end{equation}

We explain the commutativity of the right square in the general setting:

\begin{defn}\cite{Buc87}
Let $\left(A,\mathfrak{m}\right)$ be a Noetherian local ring
and $M\in\underline{\CM}(A)$ be a maximal Cohen-Macaulay module over $A$.
%
%
%
Choose an injection
$i:M\rightarrow Q$
of $M$ into a finitely generated projective $A$-module $Q$
such that its cokernel is still maximal Cohen-Macaulay.
We define
the \textnormal{\textbf{shift}}
\footnote{
It was called \textbf{translate} in \cite{Buc87},
and \textbf{AR translation} in \cite{Yo}. 
}
of $M$ as
$$
M[1]:=\cok(i),
$$
which is uniquely determined as an object of $\underline{\CM}(A)$ up to isomorphism.

\end{defn}

In the case of hypersurface singularities,
it can be explicitly given in terms of matrix factorizations
under Eisenbud's equivalence:

\begin{prop}\label{prop:CommutativitySwitchingShift}
Let $S$ be
the power series ring
$
\field[[x_1,\dots,x_m]]
$
of $m$ variables,
$f\in S$ its nonzero element
and $A:=\left.S\right/(f)$
the quotient ring.
Then the following diagram
is commutative,
that is,
two compositions of functors are naturally isomorphic to each other:
\begin{equation*}
\begin{tikzcd}[arrow style=tikz,>=stealth,row sep=3em,column sep=3em] 
&
\underline{\MF}(f)
  \arrow[r,"\smat{\cok\\ \simeq}"]
  \arrow[d,swap,"\smat{\textnormal{switching}\\ \textnormal{two factors}}"]
  \arrow[d,"{\left[1\right]}"]&
\underline{\CM}(A)
  \arrow[d,swap,"\textnormal{shift}"]
  \arrow[d,"{\left[1\right]}"]
&
\\
&
\underline{\MF}(f)
  \arrow[r,"\smat{\cok\\ \simeq}"]
&
\underline{\CM}(A)
&
\end{tikzcd}
\end{equation*}

\end{prop}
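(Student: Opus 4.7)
The plan is to exhibit, for each matrix factorization $(\varphi,\psi)$ of $f$, an explicit short exact sequence of $A$-modules that directly identifies $\cok\underline{\psi}$ with the cosyzygy of $M:=\cok\underline{\varphi}$. Writing $\varphi:P^0\to P^1$ and $\psi:P^1\to P^0$, the defining relations $\psi\varphi=f\cdot\id_{P^0}$ and $\varphi\psi=f\cdot\id_{P^1}$ become, after tensoring with $A=S/(f)$, $\underline{\psi}\,\underline{\varphi}=0$ and $\underline{\varphi}\,\underline{\psi}=0$, and Eisenbud's theorem promotes this to the $2$-periodic free resolution
\[
\cdots \xrightarrow{\underline{\psi}} P^0\otimes_S A \xrightarrow{\underline{\varphi}} P^1\otimes_S A \xrightarrow{\underline{\psi}} P^0\otimes_S A \xrightarrow{\underline{\varphi}} P^1\otimes_S A \longrightarrow M \longrightarrow 0.
\]
Truncating this resolution yields the short exact sequence
\[
0 \longrightarrow \cok\underline{\psi} \longrightarrow P^0\otimes_S A \longrightarrow M \longrightarrow 0,
\]
where the middle term is a free (hence projective-injective, in the Frobenius category $\CM(A)$) $A$-module.

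Next I would invoke Buchweitz's description of the triangulated structure on $\underline{\CM}(A)$: the shift functor $[1]$ is the cosyzygy, defined by choosing any embedding $M\hookrightarrow Q$ into a projective-injective with cokernel still maximal Cohen-Macaulay, and setting $M[1]:=Q/M$. The short exact sequence above provides exactly such an embedding (of $\cok\underline{\psi}$ into $P^0\otimes_SA$ with cokernel $M$), so unwinding the definition — equivalently using that $M[1]=M[-1]=\Omega M$ in the $\mathbb{Z}_2$-graded situation at hand — we conclude $\cok\underline{\psi}\cong M[1]$ in $\underline{\CM}(A)$. Since the switched matrix factorization $(\psi,\varphi)$ has Eisenbud image $\cok\underline{\psi}$, this gives the desired object-level isomorphism $\cok\circ[1]\cong[1]\circ\cok$.

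For naturality I would check that the isomorphism is compatible with morphisms of matrix factorizations: an even morphism $(\alpha,\beta):(\varphi_0,\psi_0)\to(\varphi_1,\psi_1)$ induces $\beta:\cok\underline{\varphi_0}\to\cok\underline{\varphi_1}$ on the cokernel side and $\alpha:\cok\underline{\psi_0}\to\cok\underline{\psi_1}$ on the switched side. The relations $\beta\varphi_0=\varphi_1\alpha$ and $\alpha\psi_0=\psi_1\beta$ ensure that the two induced maps fit into a commutative ladder of the two short exact sequences above, so that passing to cokernels commutes with the cosyzygy construction up to the usual homotopy ambiguity, which is exactly what is killed in $\underline{\CM}(A)$. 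The odd-morphism case is handled identically after inserting the appropriate sign and reading the ladder diagonally.

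The main obstacle I anticipate is not the construction of the isomorphism itself (which is forced by the $2$-periodic resolution) but rather the verification that it assembles into a \emph{natural} transformation of functors compatible with the triangulated structure, in particular that distinguished triangles on both sides are matched. This requires identifying the cone in $\underline{\MF}(f)$ (defined via the standard mapping cone of matrix factorizations) with the cone in $\underline{\CM}(A)$ (defined via the pushout/pullback construction from Frobenius exact categories), which is a well-known but tedious diagram chase that I would carry out by reducing both sides to the universal local case and then invoking functoriality of $\cok$.
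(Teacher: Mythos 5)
Your proof is correct and rests on the same two facts as the paper's: the $2$-periodic resolution from Eisenbud's theorem and the resulting identifications $\ker\underline{\varphi}=\im\underline{\psi}$, $\ker\underline{\psi}=\im\underline{\varphi}$. The only real difference is direction: you truncate the resolution to exhibit $\cok\underline{\psi}$ as the \emph{syzygy} of $M$ and then appeal to $2$-periodicity ($M[1]\cong M[-1]$) to flip it into the cosyzygy, whereas the paper avoids that detour by directly embedding $M\cong\ker\underline{\varphi}\hookrightarrow P^0\otimes_S A$ and computing $\cok(i)=(P^0\otimes_S A)/\im\underline{\psi}=\cok\underline{\psi}$, which is $M[1]$ on the nose. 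Your appeal to $[2]\cong\id$ is legitimate (it is itself a consequence of the $2$-periodicity of the resolution), but the direct route is cleaner and does not presuppose it. Two small points: the middle term of your truncated short exact sequence should be $P^1\otimes_S A$ (the surjection onto $M=\cok\underline{\varphi}$ comes from $P^1\otimes_S A$, and $\cok\underline{\psi}$ embeds into it via the map induced by $\underline{\varphi}$), not $P^0\otimes_S A$; and the closing paragraph about matching distinguished triangles is more than the proposition asks for -- only a natural isomorphism of the two composite functors is claimed, so the object-level isomorphism plus the ladder argument for morphisms already suffices.
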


\begin{proof}
Recall that under Eisenbud's equivalence (Theorem \ref{thm:EisenbudTheorem}),
a matrix factorization
$
\begin{tikzcd}[arrow style=tikz,>=stealth, sep=20pt, every arrow/.append style={shift left=0.5}]
   P^0
     \arrow{r}{\varphi}
   &
   P^1
     \arrow{l}{\psi}
\end{tikzcd}
$
of $f$
corresponds to a maximal Cohen-Macaulay $A$-module
$M:=\cok\underline{\varphi}$,
which admits a $2$-periodic free resolution
given by
$$
\begin{tikzcd}[arrow style=tikz,>=stealth,row sep=5em,column sep=2em] 
\cdots
  \arrow[r]
&
P^0\otimes_S A
  \arrow[r,"\underline{\varphi}"]
&
P^1\otimes_S A
  \arrow[r,"\underline{\psi}"]
&
P^0\otimes_S A
  \arrow[r,"\underline{\varphi}"]
&
P^1\otimes_S A
  \arrow[r,""]
&
M
  \arrow[r]
&
0.
\end{tikzcd}
$$
From this we have natural isomorphisms
$$
M
=
\cok\underline{\varphi}
=
\left.\left(P^1\otimes_S A\right)\right/\im\underline{\varphi}
=
\left.\left(P^1\otimes_S A\right)\right/\ker\underline{\psi}
\cong
\im\underline{\psi}
=
\ker\underline{\varphi},
$$
and hence there is a natural embedding of $M$
into a finitely generated free $A$-module
$P^0\otimes_S A$:
$$
i
:
M\cong\ker\underline{\varphi}
\rightarrow
P^0\otimes_S\ A.
$$
Taking its cokernel gives
$$
M[1]
=
\cok(i)
=
\left.\left(P^0\otimes_S A\right)\right/\ker\underline{\varphi}
\cong
\left.\left(P^0\otimes_S A\right)\right/\im\underline{\psi}
=
\cok\underline{\psi},
$$
which is also maximal Cohen-Macaulay,
being the image of the switched matrix factorization $\left(\psi,\varphi\right)$ under Eisenbud's equivalence.
\end{proof}

\subsubsection{Correspondence of canonical forms}
\label{sec:ReversingShiftCanonicalForms}

Orientation-reversing of a loop with a local system,
switching two factors of a matrix factorization,
and
the shift of a maximal Cohen-Macaulay modules
can all be given explicitly
in terms of loop/band data.

\begin{itemize}
\item
In $\Fuk\left(\POP\right)$,
let
$
\left(L,E,\nabla\right)
:=
\mathcal{L}\left(w',\primelambda,\primemu\right)
$
be the loop with a local system
corresponding to a loop datum
$
\left(w',\primelambda,\primemu\right).
$
Its orientation-reverse is given by
$\left(\jmath\left(L\right),\kappa^*E,\kappa^*\nabla\right)$.
Recall that the free homotopy class of the loop $L\left(w'\right)$
corresponding to the given normal loop word
$
w'=\left(
l_1',m_1',n_1',
\dots,
l_{\tau}',m_{\tau}',n_{\tau}'
\right)\in\mathbb{Z}^{3\tau}
$
is given by
$$
\left[L\left(w'\right)\right]
=
\left[
\alpha^{l_1'}\beta^{m_1'}\gamma^{n_1'}
\cdots
\alpha^{l_\tau'}\beta^{m_\tau'}\gamma^{n_\tau'}
\right],
$$
where $\alpha$, $\beta$, $\gamma$ are generators of $\pi_1\left(\POP\right)$ (Figure \ref{fig:gen}).
The free homotopy class of the orientation-reversed loop $\jmath\left(L\left(w'\right)\right)$
is
\begin{align*}
\left[\jmath\left(L\left(w'\right)\right)\right]
&=
\left[
\gamma^{-n_\tau'}\beta^{-m_\tau'}\alpha^{-l_\tau'}\cdots
\gamma^{-n_1'}\beta^{-m_1'}\alpha^{-l_1'}
\right].
\\
&=
\left[
\alpha^{-l_1'}\beta^0\gamma^{-n_\tau'}
\alpha^0\beta^{-m_\tau'}\gamma^0
\alpha^{-l_\tau'}\beta^0
\cdots
\gamma^{-n_1'}
\alpha^0\beta^{-m_1'}\gamma^0
\right].
\end{align*}
Define
a new normal loop word $w'[1]$, the \textbf{reverse} of $w'$,
as the normal form of the loop word
\footnote{
It is not normal a priori in general,
but Proposition \ref{prop:normalnormal} ensures that
one can deform it to the unique normal loop word by performing five operations in Lemma \ref{lem:equimove}
finitely many times.
}
\begin{equation}\label{eqn:ReversingLoopWord}
\left(-l_1',0,-n_\tau',0,-m_\tau',0,-l_\tau',0,\dots,-n_1',0,-m_1',0\right)
\in\mathbb{Z}^{6\tau},
\end{equation}
then two loops $\jmath\left(L\left(w'\right)\right)$
and $L\left(w'[1]\right)$ are freely homotopic to each other.

The holonomy of $\left(E,\nabla\right)$ at some point 
is represented by a matrix $J_\primemu\left(\primelambda\right)$
(up to conjugacy).
It changes to $J_{\primemu}\left(\primelambda\right)^{-1}$
for the pull-back local system $\left(\kappa^*,\kappa^*\nabla\right)$.

Thus,
the orientation-reversed loop with a local system
$\left(\jmath\left(L\right),\kappa^*E,\kappa^*\nabla\right)$
is isomorphic to
the canonical form
$\mathcal{L}\left(w'[1],\primelambda^{-1},\primemu\right)$,
that is,
they give isomorphic matrix factorizations in
$
\underline{\MF}(xyz)
$
by Theorem \ref{thm:CylinderFreeMFFinite}.

\item
In $\underline{\MF}(xyz)$,
it is just easy to switch two factors of
canonical form
$
\left(
\varphi\left(w',\lambda,\primemu\right),
\psi\left(w',\lambda,\primemu\right)
\right)
$
corresponding to a loop datum $\left(w',\lambda,\primemu\right)$,
but then $\psi\left(w',\lambda,\primemu\right)$
no longer appears in the canonical form.
Even worse,
it is never obvious how to change it into the canonical form
$\varphi\left(\tilde{w}',\tilde{\lambda},\tilde{\primemu}\right)$
for some another loop datum $\left(\tilde{w}',\tilde{\lambda},\tilde{\primemu}\right)$.
Example
\ref{ex:ReversingTranslate}
shows that the length $3\tau$ of the word $w'$ can be also changed.
(In general,
we have
$\frac{1}{2}\tau\le\tilde{\tau}\le2\tau$,
where $3\tilde{\tau}$ is the length of $\tilde{w}'$.)

\item
In $\underline{\CM}(A)$ or $\underline{\Tri}(A)$,
there is no easy way or formula to compute the shift of modules.

%

\end{itemize}

The above discussions say that so far the only way to compute the canonical form of
the shift of maximal Cohen-Macaulay modules
is to make a detour to use the geometric operation in the Fukaya category.
We summarize the procedure as follows:

\begin{prop}[Shift algorithm]\label{prop:ShiftAlgorithm}
Let
$
M\left(w,\lambda,\mu\right)
\in\underline{\CM}(A)
$
be the maximal Cohen-Macaulay module over $A$
corresponding to a band datum $\left(w,\lambda,\mu\right)$.
Its shift is given by
$$
M\left(w,\lambda,\mu\right)[1]
=
M\left(w[1],\pm\lambda^{-1},\mu\right),
$$
where the band datum
$
\left(w[1],\pm \lambda^{-1},\mu\right)
$
is computed in the following manner:

\begin{enumerate}
\item
Convert the band datum $\left(w,\lambda,\mu\right)$ into a loop datum $\left(w',\primelambda,\primemu\right)$,
following Definition \ref{def:ConversionFromBandtoLoop}.
\item
Compute the reverse $w'[1]$ of the
loop word $w'$ 
(i.e. find the normal form of
the loop word
(\ref{eqn:ReversingLoopWord})).
\item
Convert the loop datum $\left(w'[1],\primelambda^{-1},\primemu\right)$ again into a band datum
$
\left(w[1],\pm \lambda^{-1},\mu\right)
$,
following Definition \ref{def:ConversionFromLooptoBand}.

\end{enumerate}

\end{prop}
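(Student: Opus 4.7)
The plan is to read off the algorithm directly from the commutative diagram \eqref{eqn:ReversingTranslateDiagramOrdinary}, chaining together the two squares whose commutativity was already established, and then invoking the conversion formula to track objects through the canonical parametrizations. Concretely, starting with $M(w,\lambda,\mu)\in\underline{\CM}(A)$, Theorem \ref{thm:MFCMCorrespondenceIntro} identifies it (up to isomorphism) with $\cok\underline{\varphi_{(\deg)}}(w',\lambda,\primemu)$, where $(w',\primelambda,\primemu)$ is the loop datum obtained from $(w,\lambda,\mu)$ via Definition \ref{def:ConversionFromBandtoLoop}; and by Theorem \ref{thm:LagMFCorrespondence}, this matrix factorization is in turn isomorphic to $\LocalF(\mathcal{L}(w',\primelambda,\primemu))$.

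Next I would apply the shift functor $[1]$. Proposition \ref{prop:CommutativitySwitchingShift} tells us that the shift on $\underline{\CM}(A)$ is Eisenbud-equivalent to the switching functor on $\underline{\MF}(xyz)$, and Proposition \ref{prop:CommutativityFlipTranspose} tells us that switching corresponds, through $\LocalF$, to the orientation-reversing functor $\jmath$ on the Fukaya side. Thus $M(w,\lambda,\mu)[1]$ corresponds to $\jmath(\mathcal{L}(w',\primelambda,\primemu))$ under the chain of functors. From the geometric discussion in \S\ref{sec:ReversingShiftCanonicalForms}, $\jmath(\mathcal{L}(w',\primelambda,\primemu))$ is isomorphic in $\Fuk(\POP)$ (equivalently in $\underline{\MF}(xyz)$, by the homotopy invariance in Theorem \ref{thm:CylinderFreeMFFinite}) to the canonical form $\mathcal{L}(w'[1],\primelambda^{-1},\primemu)$, where $w'[1]$ is the normal loop word obtained by normalizing \eqref{eqn:ReversingLoopWord} (the inverse of the reversed product in $\pi_1(\POP)$), and the holonomy parameter is inverted because $\hol_{\small\color{red}\bigstar}(\kappa^*E)=\hol_{\small\color{red}\bigstar}(E)^{-1}$, while the rank $\primemu$ is unchanged.

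Finally, I would convert the loop datum $(w'[1],\primelambda^{-1},\primemu)$ back into a band datum using Definition \ref{def:ConversionFromLooptoBand}. By Proposition \ref{prop:ConversionFormulaInverseToEachOther} this gives a well-defined band datum $(w[1],\tilde\lambda,\mu)$, whose band word $w[1]$ is determined by the three-step algorithm of the proposition, whose multiplicity is $\mu=\primemu$ (in the non-degenerate case; the degenerate case $(w=(0,0,0),\lambda=1,\mu)$ is handled separately using Theorem \ref{thm:MFCMCorrespondenceDegenerate} and the fact that the rank/multiplicity shift is symmetric under the conversion), and whose eigenvalue is $\tilde\lambda=(-1)^{l_1[1]+\cdots+l_{\tilde\tau}[1]+\tilde\tau}\primelambda^{-1}=\pm\lambda^{-1}$, the sign being determined by the parity appearing in the conversion formula applied to $w'[1]$. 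A final appeal to Theorem \ref{thm:MFCMCorrespondenceIntro} then identifies the resulting object in $\underline{\CM}(A)$ with $M(w[1],\pm\lambda^{-1},\mu)$, completing the proof.

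The only step requiring genuine verification, as opposed to bookkeeping, is confirming that the normal form of the loop word \eqref{eqn:ReversingLoopWord} really represents the free homotopy class of $\jmath(L(w'))$; this is a direct consequence of the presentation $\pi_1(\POP)=\langle\alpha,\beta,\gamma\mid\alpha\beta\gamma=1\rangle$ together with Proposition \ref{prop:normalnormal}, which guarantees that the normalization process terminates and is unique up to shifting. Potential subtlety occurs in the degenerate case, where one must check that the non-periodic degenerate loop datum $(w'=(2,2,2),\primelambda=-1,\primemu)$ is sent by $\jmath$ to another object of the same form up to the rank/multiplicity shift encoded in Definition \ref{def:ConversionFromLooptoBand}; but this follows from the symmetry of the conversion formula and the fact that reversing the orientation of $L((2,2,2))$ produces a loop freely homotopic to $L((2,2,2))$ itself with inverted holonomy $(-1)^{-1}=-1$.
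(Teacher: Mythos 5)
Your main argument is correct and is essentially the paper's own proof: the proposition is stated there as a summary of the preceding discussion, and its justification is exactly the chain you describe — the commutativity of the two squares in (\ref{eqn:ReversingTranslateDiagramOrdinary}) (Proposition \ref{prop:CommutativityFlipTranspose} for $\LocalF\circ\jmath\simeq[1]\circ\LocalF$ and Proposition \ref{prop:CommutativitySwitchingShift} for switching versus shift), the geometric identification $\jmath(\mathcal{L}(w',\primelambda,\primemu))\cong\mathcal{L}(w'[1],\primelambda^{-1},\primemu)$ via the free-homotopy computation in $\pi_1(\POP)$ and inversion of holonomy, and the conversion formula together with Theorems \ref{thm:LagMFCorrespondence} and \ref{thm:MFCMCorrespondenceIntro} to translate back to band data.

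However, your final paragraph on the degenerate case contains a genuine error. Reversing the orientation of $L((2,2,2))$ does \emph{not} produce a loop freely homotopic to $L((2,2,2))$: by \S\ref{sec:ExceptionalCase}, $L((2,2,2))$ is freely homotopic to $\mathbb{L}^{\hash(-1)}$, so its orientation reversal is freely homotopic to $\mathbb{L}$, i.e.\ to $L((1,0,1,0,1,0))$ — a different (non-degenerate) normal loop word. Indeed the normal form of $(-2,0,-2,0,-2,0)$ is $(1,0,1,0,1,0)$, and the introduction already records that the shift of a degenerate object is in general \emph{not} degenerate (it is its image under the shift that is invariant under the duality, not the object class itself). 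As a consequence, for the degenerate input $(w,\lambda,\mu)=((0,0,0),1,\mu)$ the three-step algorithm passes through rank $\primemu=\mu-1$ and returns a non-degenerate band datum of multiplicity $\mu-1$, so one cannot argue, as you do, by "symmetry of the conversion formula" that the multiplicity comes back to $\mu$; the displayed formula with unchanged $\mu$ should be read as applying to the non-degenerate band data, with the degenerate case tracked separately through the rank/multiplicity shift of Definitions \ref{def:ConversionFromBandtoLoop} and \ref{def:ConversionFromLooptoBand}.
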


We have the following mappings (up to isomorphism)
under the diagram (\ref{eqn:ReversingTranslateDiagramOrdinary}),
while two rows are consistent with our main correspondence (\ref{eqn:MainCorrespondenceDiagram}):
\begin{equation*}\label{eqn:FlipDualDiagramOrdinaryCorrespondence}
\begin{tikzcd}[arrow style=tikz,>=stealth,row sep=3em,column sep=3em] 
&
\mathcal{L}\left(w',\primelambda,\primemu\right)
  \arrow[r,mapsto,"\LocalF"]
  \arrow[d,swap,leftrightarrow,"\smat{\textnormal{orientation}\\ \textnormal{reversing}}"]
  \arrow[d,leftrightarrow,"\jmath"]
&
\varphi_{\operatorname{(deg)}}\left(w',\lambda,\primemu\right)
  \arrow[r,mapsto,"\smat{\cok\\ \simeq}"]
  \arrow[d,swap,leftrightarrow,"\smat{\textnormal{switching}\\ \textnormal{two factors}}"]
  \arrow[d,leftrightarrow,"{[1]}"]
&
M\left(w,\lambda,\mu\right)
  \arrow[d,swap,leftrightarrow,"\textnormal{shift}"]
  \arrow[d,leftrightarrow,"{[1]}"]
&
\\
&
\mathcal{L}\left(w'[1],\primelambda^{-1},\primemu\right)
  \arrow[r,mapsto,"\LocalF"]
&
\varphi_{\operatorname{(deg)}}\left(w'[1],\pm\lambda^{-1},\primemu\right)
  \arrow[r,mapsto,"\smat{\cok\\ \simeq}"]
&
M\left(w[1],\pm\lambda^{-1},\mu\right)
&
\end{tikzcd}
\end{equation*}


\begin{ex}\label{ex:AlgorithmExample}

The following shows the computation of $w[1]$ from the band word $w$ in Example \ref{ex:ConversionExample}:

\newcolumntype{x}[1]{>{\centering\hspace{0pt}}p{#1}} 

\newcommand{\corcmidrule}[1][0.6pt]{\\[\dimexpr-\normalbaselineskip-\belowrulesep-\aboverulesep-#1\relax]} 

\begin{center}
\setlength{\tabcolsep}{-0.3pt}

\end{center}
%

\end{ex}


\begin{ex}\label{ex:ReversingTranslate}

The following shows the correspondence of
loops with a local system
$
\mathcal{L}\left((3,-2,2),\primelambda,1\right)
\leftrightarrow
\mathcal{L}\left((-2,1,-1,0,2,0),\primelambda^{-1},1\right),
$
matrix factorizations
$
\varphi\left((3,-2,2),\lambda,1\right)
\leftrightarrow
\varphi\left((-2,1,-1,0,2,0),-\lambda^{-1},1\right),
$
and
maximal Cohen-Macaulay modules
$
M\left((2,-3,1),\lambda,1\right)
\leftrightarrow
M\left((-2,1,-1,0,2,0),-\lambda^{-1},1\right)
$
(in $\Tri(A)$)
up to isomorphism,
where $\lambda=-\primelambda$.
(First two in the bottom row are not presented in the canonical forms.)
\end{ex}
\vspace{-3mm}
\begin{figure}[H]
\hspace{-12mm}
$
\setlength\arraycolsep{5pt}

$
  }
&\leftrightarrow&
  \adjustbox{scale=0.9}{
  \begin{tikzcd}[ampersand replacement=\&, arrow style=tikz,>=stealth, sep=6pt] 
    \&
    {\scriptstyle \mathbb{k}((t))}^{}
      \arrow[ld, swap, "\color{black}1"]
      \arrow[rd, "\color{black}t"]
    \&
    \\[0mm]
    {\scriptstyle \mathbb{k}((t))}^{}
    \&
    \&
    {\scriptstyle \mathbb{k}((t))}^{}
    \\[4mm]
    {\scriptstyle \mathbb{k}((t))}^{}
      \arrow[u, "\color{black} \lambda t^2"]
      \arrow[rd, swap, "\color{black}1"]
    \&
    \&
    {\scriptstyle \mathbb{k}((t))}^{}
      \arrow[u, swap, "\color{black}t^3"]
      \arrow[ld, "\color{black}1"]
    \\[0mm]
    \&
    {\scriptstyle \mathbb{k}((t))}^{}
    \&
  \end{tikzcd}
  }
\\[-2mm]
\hspace{26mm}
\begin{tikzcd}[arrow style=tikz,>=stealth]
    \arrow[d, leftrightarrow, "\text{ reverse orientation}"]
    \\[-1mm]
    \phantom{.}
\end{tikzcd}
& &
\hspace{19mm}
\begin{tikzcd}[arrow style=tikz,>=stealth]
    \arrow[d, leftrightarrow, "\text{ opposite factor}"]
    \\[-1mm]
    \phantom{.}
\end{tikzcd}
& &
\hspace{7mm}
\begin{tikzcd}[arrow style=tikz,>=stealth]
    \arrow[d, leftrightarrow, "\text{ shift}"]
    \\[-1mm]
    \phantom{.}
\end{tikzcd}
\\[-1mm]

$
  }
&\leftrightarrow&
  \adjustbox{scale=0.9}{
  \begin{tikzcd}[ampersand replacement=\&, arrow style=tikz,>=stealth, sep=6pt] 
    \&
    {\scriptstyle \mathbb{k}((t))}^{2}
      \arrow[ld, swap, "\color{black}\spmat{t&~~0 \\ 0&~~1}"]
      \arrow[rd, "\color{black}\spmat{1&~~0 \\ 0&~~1}"]
    \&
    \\[0mm]
    {\scriptstyle \mathbb{k}((t))}^{2}
    \&
    \&
    {\scriptstyle \mathbb{k}((t))}^{2}
    \\[4mm]
    {\scriptstyle \mathbb{k}((t))}^{2}
      \arrow[u, "\color{black}\spmat{0&~~1 \\ -\lambda^{-1}&~~0}"]
      \arrow[rd, swap, "\color{black}\spmat{t^2&~~0 \\ 0&~~1}"]
    \&
    \&
    {\scriptstyle \mathbb{k}((t))}^{2}
      \arrow[u, swap, "\color{black}\spmat{1&~~0 \\ 0&~~1}"]
      \arrow[ld, "\color{black}\spmat{t&~~0 \\ 0&~~t^2}"]
    \\[0mm]
    \&
    {\scriptstyle \mathbb{k}((t))}^{2}
    \&
  \end{tikzcd}
  }
\end{matrix}
$
\label{ReverseTranslation}
\end{figure}


\subsection{Higher rank/multiplicity and twisted complexes}

In this subsection,
we first recall the concept of
\emph{twisted complexes} in an $\AI$-category
and related notions,
based on \cite{S08,Boc21}
(\S \ref{sec:TwistedComplexes}).
Then we associate any twisted complex in $\MF_{\AI}(f)$
an equivalent matrix factorization
(\S \ref{sec:MFTwistedComplexes}).
We derive a formula for extending the localized mirror functor to twisted complexes
(\S \ref{sec:LMFTwistedComplexes}).
Finally,
we show that our canonical objects in $\underline{\MF}(xyz)$
as well as $\Fuk\left(\POP\right)$ of higher rank
are isomorphic to twisted complexes of lower rank objects
(\S \ref{sec:CanonicalFormMFTwistedComplexes},
\S \ref{sec:CanonicalFormFukTwistedComplexes}).

\subsubsection{Twisted complexes and twisted completion}
\label{sec:TwistedComplexes}

%
%
%
%
%
%
%

\begin{defn}\label{defn:TwistedComplex}
Let $\mathcal{A}$ be a $\mathbb{Z}_2$-graded $\AI$-category over a field $\mathbb{k}$.
An \textnormal{\textbf{abstract twisted complex}}
\footnote{
The terminology is intended to distinguish it from the \emph{rigid twisted complex} in $\MF_{\AI}(f)$ below (\ref{eqn:RigidTwistedComplex}).
}
in $\mathcal{A}$
is a pair $\left(\mathcal{L},\delta\right)$,
which consists of
\begin{itemize}
\item
a {\emph{direct sum of shifted objects}},
which is a formal expression of the form
$$
\mathcal{L}
:=
\bigoplus_{i=1}^N \mathcal{L}_i\left[k_i\right]
$$
for some $N\in\mathbb{Z}_{\ge1}$,
$\mathcal{L}_i \in\operatorname{Ob}\left(\mathcal{A}\right)$
and
$
k_i\in\mathbb{Z}_2
$
($i\in\left\{1,\dots,N\right\}$),
\item
a collection of morphisms
$$
\delta
:=
\big(\delta_{ij}
\in
\hom_{\mathcal{A}}^{1}
\left(\mathcal{L}_i,\mathcal{L}_j\right)\left[-k_i+k_j\right]
\footnote{
For a $\mathbb{Z}_2$-graded vector space $V:=V^0\oplus V^1$
and $k\in\mathbb{Z}_2$,
we denote by $V[k]$ its $k$-shift,
i.e.,
$V[k]^{\bullet}
=
V^{\bullet+k}$
for $\bullet\in\mathbb{Z}_2$.
}
\big)_{1\le i<j \le N}
$$
satisfying
the
\emph{Maurer-Cartan equation}
\begin{equation}\label{eqn:TwistedComplexMaurerCartanEquation}
\operatorm_0^\delta\left(\mathcal{L}\right)
:=
\sum_{n=1}^{\infty}
\operatorm_n
\left(
\delta,\dots,\delta
\right)
\footnote{
It is a finite sum because $\operatorm_n\left(\delta,\dots,\delta\right)$
vanishes for $n\ge N$.
}
=
0,
\end{equation}
where
$
\operatorm_n
\big(
\delta,\dots,\delta
\big)
$
is an element of
$
\displaystyle
\bigoplus_{1\le i,j\le N}
\hom_{\mathcal{A}}^{2}\left(\mathcal{L}_i,\mathcal{L}_j\right)\left[-k_i+k_j\right]
$
with components given by
\begin{equation}\label{eqn:MaurerCartanComponents}
\left(
\operatorm_{n}
\left(\delta,\dots,\delta\right)\right)_{ij}
:=
\sum_{
\smat{1\le n \le j-i\\ i<i_1<\cdots<i_{n-1}<j}
}
\operatorm_n
\left(
\delta_{ii_1},
\delta_{i_1 i_2},
\dots,
\delta_{i_{n-1}j}
\right)
\in\hom_{\mathcal{A}}^{2}\left(\mathcal{L}_i,\mathcal{L}_j\right)\left[-k_i+k_j\right].
\end{equation}

\end{itemize}

\end{defn}

\begin{defn}\label{defn:TwistedCompletion}
Given a $\mathbb{Z}_2$-graded $\AI$-category $\mathcal{A}$,
its \textnormal{\textbf{twisted completion}} 
$\operatorname{Tw}\mathcal{A}$
is a $\mathbb{Z}_2$-graded $\AI$-category defined as follows:
\begin{itemize}
\item
Its objects are the abstract twisted complexes in $\mathcal{A}$,
\item
The morphism space between
$
\left(\mathcal{L}_0=\bigoplus_{i=1}^{N_0}\mathcal{L}_{0i}\left[k_{0i}\right],\delta_0\right)
$
and
$
\left(\mathcal{L}_1=\bigoplus_{i=1}^{N_1}\mathcal{L}_{1i}\left[k_{1i}\right],\delta_1\right)
$
is
$$
\hom_{\operatorname{Tw}\mathcal{A}}^{\bullet}
\left(
\bigoplus_{i=1}^{N_0} \mathcal{L}_{0i} \left[k_{0i}\right],\
\bigoplus_{j=1}^{N_1} \mathcal{L}_{1j} \left[k_{1j}\right]
\right)
:=
\bigoplus_{i=1}^{N_0}
\bigoplus_{j=1}^{N_1}
\hom_{\mathcal{A}}^{\bullet}\left(\mathcal{L}_{0i},\mathcal{L}_{1j}\right)\left[-k_{0i}+k_{1j}\right]
\quad
\left(\bullet\in\mathbb{Z}_2\right),
$$
\item
The $\AI$-operations
$
\left\{\operatorm_k^{\operatorname{Tw}\mathcal{A}}\right\}_{k\ge1}
$
are defined as
$$
\hspace{10mm}
\operatorm_k^{\operatorname{Tw}\mathcal{A}}
:
\hom_{\operatorname{Tw}\mathcal{A}}\left(\left(\mathcal{L}_0,\delta_0\right),\left(\mathcal{L}_1,\delta_1\right)\right)
\otimes
\cdots
\otimes
\hom_{\operatorname{Tw}\mathcal{A}}\left(\left(\mathcal{L}_{k-1},\delta_{k-1}\right),\left(\mathcal{L}_k,\delta_k\right)\right)
\rightarrow
\hom_{\operatorname{Tw}\mathcal{A}}\left(\left(\mathcal{L}_0,\delta_0\right),\left(\mathcal{L}_k,\delta_k\right)\right),
$$
$$
\hspace{10mm}
\displaystyle
\left(f_1,\dots,f_k\right)
\mapsto
\operatorm_k^{\delta_0,\dots,\delta_k}\left(f_1,\dots,f_k\right)
:=
\sum_{
m_0,\dots,m_k\ge0
}
\operatorm_{k+m_0+\cdots+m_k}^{\mathcal{A}}
\big(
\underbrace{
\delta_0,\dots,\delta_0
}_{m_0
},
f_1,
\underbrace{
\delta_1,\dots,\delta_1
}_{m_1
},
f_2,
\dots,
f_k,
\underbrace{
\delta_k,\dots,\delta_k
}_{m_k
}
\big)
\footnote{
Each component is defined in the same manner as in (\ref{eqn:MaurerCartanComponents}).
}.
$$
for
$
f_i
\in
\hom_{\operatorname{Tw}\mathcal{A}}
\left(
\left(\mathcal{L}_{i-1},\delta_{i-1}\right),
\left(\mathcal{L}_i,\delta_i\right)
\right)
$
($i\in\left\{1,\dots,k\right\}$).
\end{itemize}
It
is a \emph{triangulated $\AI$-category},
and
its cohomological category
$
H^0\left(\operatorname{Tw}\mathcal{A}\right)
$
becomes
a triangulated
category
in the classical sense
(\cite[\S I.3]{S08}).
\end{defn}

Note that there is a natural embedding
$
\mathcal{A}
\hookrightarrow
\operatorname{Tw}\mathcal{A}
$
of $\AI$-categories
sending each object $\mathcal{L}\in\operatorname{Ob}(\mathcal{A})$
to the trivial abstract twisted complex
$\left(\mathcal{L}[0],0\right)$.

Taking the twisted completion of $\AI$-categories is \emph{functorial} in the following sense:

\begin{prop}\label{prop:TwistedCompletionInducedFunctor}
An $\AI$-functor 
$\mathcal{F}:=\left\{\mathcal{F}_k\right\}_{k\ge0}:\mathcal{A}\rightarrow\mathcal{B}$
between $\AI$-categories
induces
an $\AI$-functor
$
\operatorname{Tw}\mathcal{F}:=\left\{\left(\operatorname{Tw}\mathcal{F}\right)_k\right\}_{k\ge0}
:
\operatorname{Tw}\mathcal{A}
\rightarrow
\operatorname{Tw}\mathcal{B}
$
between their twisted completions
defined as follows:
%
\begin{itemize}
\item
An abstract twisted complex
$
\left(
\bigoplus_{i=1}^N \mathcal{L}_i[k_i],\delta\right)
$
in $\mathcal{A}$ is mapped to
the abstract twisted complex
in $\mathcal{B}$ given by
$$
\left(
\bigoplus_{i=1}^N
\mathcal{F}_0\left(\mathcal{L}_i\right)[k_i],\
\sum_{n=1}^{\infty}\mathcal{F}_n\left(\delta,\dots,\delta\right)
\right),
$$
where
$
\mathcal{F}_n
\big(
\delta,\dots,\delta
\big)
$
is an element of
$
\displaystyle
\bigoplus_{1\le i,j\le N}
\hom_{\mathcal{B}}^{1}\left(\mathcal{F}_0\left(\mathcal{L}_i\right),\mathcal{F}_0\left(\mathcal{L}_j\right)\right)\left[-k_i+k_j\right]
$
with
components
\begin{equation*}
\hspace{12mm}
\left(
\mathcal{F}_{n}
\left(\delta,\dots,\delta\right)\right)_{ij}
:=
\sum_{
\smat{1\le n \le j-i\\ i<i_1<\cdots<i_{n-1}<j}
}
\mathcal{F}_n
\left(
\delta_{ii_1},
\delta_{i_1 i_2},
\dots,
\delta_{i_{n-1}j}
\right)
\in\hom_{\mathcal{B}}^{1}\left(\mathcal{F}_0\left(\mathcal{L}_i\right),\mathcal{F}_0\left(\mathcal{L}_j\right)\right)\left[-k_i+k_j\right].
\end{equation*}

\item
Higher components
$
\left\{\left(\operatorname{Tw}\mathcal{F}\right)_k\right\}_{k\ge1}
$
are given by
$$
\hspace{-10mm}
\left(\operatorname{Tw}\mathcal{F}\right)_k
:
\hom_{\operatorname{Tw}\mathcal{A}}\left(\left(\mathcal{L}_0,\delta_0\right),\left(\mathcal{L}_1,\delta_1\right)\right)
\otimes
\cdots
\otimes
\hom_{\operatorname{Tw}\mathcal{A}}\left(\left(\mathcal{L}_{k-1},\delta_{k-1}\right),\left(\mathcal{L}_k,\delta_k\right)\right)
$$
$$
\hspace{87mm}
\rightarrow
\hom_{\operatorname{Tw}\mathcal{B}}\left(\left(\operatorname{Tw}\mathcal{F}\right)_0\left(\left(\mathcal{L}_0,\delta_0\right)\right),\left(\operatorname{Tw}\mathcal{F}\right)_0\left(\left(\mathcal{L}_k,\delta_k\right)\right)\right),
$$
$$
\hspace{30mm}
\displaystyle
\left(f_1,\dots,f_k\right)
\mapsto
\sum_{
m_0,\dots,m_k\ge0
}
\mathcal{F}_{k+m_0+\cdots+m_k}
\big(
\underbrace{
\delta_0,\dots,\delta_0
}_{m_0
},
f_1,
\underbrace{
\delta_1,\dots,\delta_1
}_{m_1
},
f_2,
\dots,
f_k,
\underbrace{
\delta_k,\dots,\delta_k
}_{m_k
}
\big).
$$

\end{itemize}
\end{prop}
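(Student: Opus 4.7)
The plan is to verify two things: (i) that $(\operatorname{Tw}\mathcal{F})_0$ really sends abstract twisted complexes in $\mathcal{A}$ to abstract twisted complexes in $\mathcal{B}$, i.e.\ that the collection $\tilde{\delta} := \sum_{n \geq 1} \mathcal{F}_n(\delta, \ldots, \delta)$ satisfies the Maurer-Cartan equation in $\mathcal{B}$, and (ii) that the family $\{(\operatorname{Tw}\mathcal{F})_k\}_{k \geq 0}$ so defined obeys the $A_\infty$-functor relations (\ref{eqn:AI-relationsFunctor}) with respect to the twisted operations $\operatorm_k^{\operatorname{Tw}\mathcal{A}}$ and $\operatorm_k^{\operatorname{Tw}\mathcal{B}}$.

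For (i), I would start from a twisted complex $(\bigoplus_{i=1}^N \mathcal{L}_i[k_i], \delta)$ and compute $\sum_{k\geq 1} \operatorm_k^{\mathcal{B}}(\tilde{\delta},\ldots,\tilde{\delta})$ by expanding each occurrence of $\tilde{\delta}$ and interchanging summations. The result is a sum indexed by $k$ and by tuples $(n_1, \ldots, n_k)$ of positive integers of expressions of the form $\operatorm_k^{\mathcal{B}}(\mathcal{F}_{n_1}(\delta,\ldots,\delta), \ldots, \mathcal{F}_{n_k}(\delta,\ldots,\delta))$. Grouping these by the total length $n = n_1 + \cdots + n_k$ and applying the $A_\infty$-functor relations for $\mathcal{F}$ to the word $(\delta, \ldots, \delta)$ of length $n$, this sum is transformed into a sum of terms of the form $\mathcal{F}_{\bullet}(\delta,\ldots,\delta, \operatorm_{j-i}^{\mathcal{A}}(\delta,\ldots,\delta), \delta,\ldots,\delta)$. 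Collecting by the position of the inner $\operatorm^{\mathcal{A}}$-term and factoring out, each fixed position contributes a copy of $\sum_{m\geq 1}\operatorm_m^{\mathcal{A}}(\delta, \ldots, \delta)$, which vanishes by the Maurer-Cartan equation for $\delta$. Hence $\tilde{\delta}$ satisfies (\ref{eqn:TwistedComplexMaurerCartanEquation}), and $(\operatorname{Tw}\mathcal{F})_0((\mathcal{L},\delta))$ is a legitimate object of $\operatorname{Tw}\mathcal{B}$.

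For (ii), given twisted complexes $(\mathcal{L}_0, \delta_0), \ldots, (\mathcal{L}_k, \delta_k)$ and morphisms $f_i$ between consecutive ones, I would substitute the definitions of $\operatorm_k^{\operatorname{Tw}\mathcal{B}}$ and $(\operatorname{Tw}\mathcal{F})_k$ into the $A_\infty$-functor relations for $\operatorname{Tw}\mathcal{F}$ and expand. Both sides become sums over all ways of inserting the $\delta_j$'s between the $f_i$'s inside nested $\operatorm^{\mathcal{B}}$- and $\mathcal{F}$-expressions. Each term corresponds to partitioning a long word of the form $(\underbrace{\delta_0, \ldots, \delta_0}_{m_0}, f_1, \underbrace{\delta_1, \ldots, \delta_1}_{m_1}, f_2, \ldots, f_k, \underbrace{\delta_k, \ldots, \delta_k}_{m_k})$ into consecutive blocks. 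The $A_\infty$-functor relations for $\mathcal{F}$ applied to this long word reduce the discrepancy between the two sides of the target relation to terms in which an inner $\operatorm^{\mathcal{A}}$ acts on a block consisting purely of $\delta_j$'s for some single $j$; each such cluster of terms assembles into a factor $\sum_m \operatorm_m^{\mathcal{A}}(\delta_j, \ldots, \delta_j) = 0$, so the two sides agree.

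The main obstacle, and essentially the only real one, is the sign bookkeeping: one must check that the Koszul signs produced by the $\mathbb{Z}_2$-grading together with the $(-1)^{\left|g_1\right|+\cdots+\left|g_j\right|-j}$ factors in (\ref{eqn:AI-relationsFunctor}) match on both sides after all insertions of $\delta_j$'s (each of shifted degree $1$) are accounted for. No new ideas beyond the $A_\infty$-relations for $\mathcal{F}$ and the individual Maurer-Cartan equations are needed; the argument is combinatorial and routine, and can alternatively be packaged by regarding $\operatorname{Tw}\mathcal{A}$ as the category of Maurer-Cartan elements in the $A_\infty$-category of formal direct sums of shifts in $\mathcal{A}$, after which the statement becomes the tautological fact that an $A_\infty$-morphism carries Maurer-Cartan elements to Maurer-Cartan elements and intertwines the resulting twisted operations.
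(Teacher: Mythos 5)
Your proposal is correct. The paper itself gives no proof of this proposition — it is stated as the standard functoriality of the twisted completion (as in Seidel's book), so there is nothing to compare against; your two-step verification (Maurer--Cartan preservation via the $\AI$-functor relations applied to the word $(\delta,\dots,\delta)$, then the functor relations for $\operatorname{Tw}\mathcal{F}$ via the same relations applied to the interleaved words) is exactly the canonical argument, and your final remark about Maurer--Cartan elements is the usual conceptual packaging. One small point in your favor that you could make explicit: the sign bookkeeping you flag as the only obstacle is especially mild here, since every $\delta_{ij}$ has (shifted) degree $1$, so the prefactors $(-1)^{|g_1|+\cdots+|g_i|-i}$ in the relations are unaffected by how many $\delta$'s precede a given slot; this is what lets you factor out $\sum_m \operatorm_m^{\mathcal{A}}(\delta_j,\dots,\delta_j)=0$ cleanly after grouping terms by the data of what lies before and after the inner $\operatorm^{\mathcal{A}}$-block. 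It is also worth stating (as is implicit in your outline) that all the infinite sums are finite because the $\delta_j$ are strictly upper triangular.
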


The induced $\AI$-functor
$
\operatorname{Tw}\mathcal{F}
:
\operatorname{Tw}\mathcal{A}
\rightarrow
\operatorname{Tw}\mathcal{B}
$
also boils down to an exact functor
$
H^0\left(\operatorname{Tw}\mathcal{F}\right)
:
H^0\left(\operatorname{Tw}\mathcal{A}\right)
\rightarrow
H^0\left(\operatorname{Tw}\mathcal{B}\right)
$
between classical triangulated categories.

\subsubsection{Twisted complexes of matrix factorizations}
\label{sec:MFTwistedComplexes}

Let $S$ be the power series ring $\field[[x_1,\dots,x_m]]$ of $m$ variables,
and $f\in S$ its nonzero element.
We will demonstrate
that
the $\AI$-category of matrix factorizations
$\MF_{\AI}(f)$
has an intrinsic notion of twisted complexes,
by constructing them
as actual objects in it.

In
\S \ref{sec:MFShifting},
we already defined the shift functor
$
[1]
:
\MF_{\AI}(f)\rightarrow \MF_{\AI}(f),
$
which simply switches the position of two matrices in a given matrix factorization.
More precisely,
given
a matrix factorization
$
\begin{tikzcd}[arrow style=tikz,>=stealth, sep=20pt, every arrow/.append style={shift left=0.5}]
   P^0
     \arrow{r}{\varphi}
   &
   P^1,
     \arrow{l}{\psi}
\end{tikzcd}
$
we define
its \textbf{$k$-shift}
($k\in\mathbb{Z}_2$)
as
$
\begin{tikzcd}[arrow style=tikz,>=stealth, sep=20pt, every arrow/.append style={shift left=0.5}]
   P[k]^0
     \arrow{r}{\varphi[k]}
   &
   P[k]^1,
     \arrow{l}{\psi[k[}
\end{tikzcd}
$
where
$P[k]^i:=P^{i+k}$ and
$$
\left(\varphi,\psi\right)[k]
:=
\left(\varphi[k],\psi[k]\right)
:=
\begin{cases}
\left(\varphi,\psi\right)
&\text{if }k=0,
\\
\left(\psi,\varphi\right)
&\text{if }k=1.
\end{cases}
$$

Now we
associate
any abstract twisted complex given in Definition \ref{defn:TwistedComplex}
an object in $\MF_{\AI}(f)$.
Suppose that 
we have
finitely many
shifted matrix factorizations
$
\begin{tikzcd}[arrow style=tikz,>=stealth, sep=25pt, every arrow/.append style={shift left=0.5}]
   P_i[k_i]^0
     \arrow{r}{\varphi_i[k_i]}
   &
   P_i[k_i]^1
     \arrow{l}{\psi_i[k_i]}
\end{tikzcd}
$
($N\in\mathbb{Z}_{\ge1}$, $i\in\left\{1,\dots,N\right\}$)
and
morphisms between them
$$
\left(\gamma_{ij},\delta_{ij}\right)
\in
\hom^1\left(\left(\varphi_j,\psi_j\right),\left(\varphi_i,\psi_i\right)\right)
[-k_j+k_i]
=
\Hom_S\left(P_j[k_j]^0,P_i[k_i]^1\right)
\times
\Hom_S\left(P_j[k_j]^1,P_i[k_i]^0\right)
$$
for $1\le i< j \le N$,
which form the following
left diagram
(not necessarily commutative):
\begin{equation}\label{eqn:TwMorphismDiagrams}
\setlength\arraycolsep{4mm}
\begin{matrix}
\begin{tikzcd}[arrow style=tikz,>=stealth,row sep=2.5em,column sep=2.5em] 
&
P_{j}[k_j]^0
  \arrow[r,"{\varphi_{j}[k_j]}"]
  \arrow[dl,swap,"\smat{\gamma_{ij}\\[-1mm]}"]
&
P_{j}[k_j]^1
  \arrow[r,"{\psi_{j}[k_j]}"]
  \arrow[dl,swap,"\smat{\delta_{ij}\\[-2mm]}"]
&
P_{j}[k_j]^0
  \arrow[dl,swap,"\smat{\gamma_{ij}\\[-2mm]}"]
\\
P_i\left[k_i\right]^1
  \arrow[r,"{\psi_i[k_i]}"]
&
P_i\left[k_i\right]^0
  \arrow[r,"{\varphi_i[k_i]}"]
&
P_i[k_i]^1
&
\end{tikzcd}
&\begin{tikzcd}[arrow style=tikz,>=stealth,row sep=2.5em,column sep=3em] 
&
P^0
  \arrow[r,"\varphi"]
  \arrow[dl,swap,"\smat{\gamma\\[-1mm]}"]
&
P^1
  \arrow[r,"\psi"]
  \arrow[dl,swap,"\smat{\delta\\[-2mm]}"]
&
P^0
  \arrow[dl,swap,"\smat{\gamma\\[-2mm]}"]
\\
P^1
  \arrow[r,"\psi"]
&
P^0
  \arrow[r,"\varphi"]
&
P^1
&
\end{tikzcd}
\end{matrix}
\end{equation}

We can arrange them into the block matrix form as follows:
\begin{align*}
\left(\varphi,\psi\right)
&:=
\left(
\bigoplus_{i=1}^N \varphi_i[k_i],\
\bigoplus_{i=1}^N \psi_i[k_i]
\right)
\\[2mm]
&\phantom{:}=
\adjustbox{scale=0.86}{
$
\left(
\begin{blockarray}{c @{\hspace{2pt}} c @{\hspace{7pt}} ccccc}
  &
  & \color{darkgreen} \small\text{$P_1\left[k_1\right]^0$}
  & \color{darkgreen} \small\text{$P_2\left[k_2\right]^0$}
  & \color{darkgreen} \small\text{$\cdots$}
  & \color{darkgreen} \small\text{$P_N\left[k_N\right]^0$}
  \\[1mm]
  \begin{block}{c @{\hspace{2pt}} c @{\hspace{7pt}} (ccccc)}
    & \color{darkgreen} \small\text{$P_1\left[k_1\right]^1$}
    & \varphi_1\left[k_1\right] & 0 & \cdots & 0
    \\[2mm]
    & \color{darkgreen} \small\text{$P_2\left[k_2\right]^1$}
    & 0 & \varphi_2\left[k_2\right] & \cdots & 0
    \\[0mm]
    & \color{darkgreen} \small\text{$\vdots$}
    & \vdots & \vdots & \ddots & \vdots
    \\[2mm]
    & \color{darkgreen} \small\text{$P_N\left[k_N\right]^1$}
    & 0 & 0 & \cdots & \varphi_N\left[k_N\right]
    \\
  \end{block}
\end{blockarray}
\ \ ,
\begin{blockarray}{c @{\hspace{2pt}} c @{\hspace{7pt}} ccccc}
  &
  & \color{darkgreen} \small\text{$P_1\left[k_1\right]^1$}
  & \color{darkgreen} \small\text{$P_2\left[k_2\right]^1$}
  & \color{darkgreen} \small\text{$\cdots$}
  & \color{darkgreen} \small\text{$P_N\left[k_N\right]^1$}
  \\[1mm]
  \begin{block}{c @{\hspace{2pt}} c @{\hspace{7pt}} (ccccc)}
    & \color{darkgreen} \small\text{$P_1\left[k_1\right]^0$}
    & \psi_1\left[k_1\right] & 0 & \cdots & 0
    \\[2mm]
    & \color{darkgreen} \small\text{$P_2\left[k_2\right]^0$}
    & 0 & \psi_2\left[k_2\right] & \cdots & 0
    \\[0mm]
    & \color{darkgreen} \small\text{$\vdots$}
    & \vdots & \vdots & \ddots & \vdots
    \\[2mm]
    & \color{darkgreen} \small\text{$P_N\left[k_N\right]^0$}
    & 0 & 0 & \cdots & \psi_N\left[k_N\right]
    \\
  \end{block}
\end{blockarray}
\
\right),
$
}
\end{align*}
\begin{align*}
\left(\gamma,\delta\right)
&:=
\left(\left(\gamma_{ij},\delta_{ij}\right)
\in
\hom^1\left(\left(\varphi_j,\psi_j\right),\left(\varphi_i,\psi_i\right)\right)
[-k_j+k_i]
\right)
_{1\le i < j \le N}
\\[2mm]
&\phantom{:}=
\adjustbox{scale=0.86}{
$
\left(
\begin{blockarray}{c @{\hspace{2pt}} c @{\hspace{7pt}} ccccc}
  &
  & \color{darkgreen} \small\text{$P_1\left[k_1\right]^0$}
  & \color{darkgreen} \small\text{$P_2\left[k_2\right]^0$}
  & \color{darkgreen} \small\text{$\cdots$}
  & \color{darkgreen} \small\text{$P_N\left[k_N\right]^0$}
  \\[1mm]
  \begin{block}{c @{\hspace{2pt}} c @{\hspace{7pt}} (ccccc)}
    & \color{darkgreen} \small\text{$P_1\left[k_1\right]^1$}
    & 0 & \gamma_{12} & \cdots & \gamma_{1N}
    \\[0mm]
    & \color{darkgreen} \small\text{$P_2\left[k_2\right]^1$}
    & 0 & 0 & \ddots & \vdots
    \\[0mm]
    & \color{darkgreen} \small\text{$\vdots$}
    & \vdots & \vdots & \ddots & \gamma_{(N-1)N}
    \\[2mm]
    & \color{darkgreen} \small\text{$P_N\left[k_N\right]^1$}
    & 0 & 0 & \cdots & 0
    \\
  \end{block}
\end{blockarray}
\ \ ,
\begin{blockarray}{c @{\hspace{2pt}} c @{\hspace{7pt}} ccccc}
  &
  & \color{darkgreen} \small\text{$P_1\left[k_1\right]^1$}
  & \color{darkgreen} \small\text{$P_2\left[k_2\right]^1$}
  & \color{darkgreen} \small\text{$\cdots$}
  & \color{darkgreen} \small\text{$P_N\left[k_N\right]^1$}
  \\[1mm]
  \begin{block}{c @{\hspace{2pt}} c @{\hspace{7pt}} (ccccc)}
    & \color{darkgreen} \small\text{$P_1\left[k_1\right]^0$}
    & 0 & \delta_{12} & \cdots & \delta_{1N}
    \\[0mm]
    & \color{darkgreen} \small\text{$P_2\left[k_2\right]^0$}
    & 0 & 0 & \ddots & \vdots
    \\[0mm]
    & \color{darkgreen} \small\text{$\vdots$}
    & \vdots & \vdots & \ddots & \delta_{(N-1)N}
    \\[2mm]
    & \color{darkgreen} \small\text{$P_N\left[k_N\right]^0$}
    & 0 & 0 & \cdots & 0
    \\
  \end{block}
\end{blockarray}
\
\right).
$
}
\end{align*}
Observe that $\left(\varphi,\psi\right)$ forms a new matrix factorization
$
\begin{tikzcd}[arrow style=tikz,>=stealth, sep=20pt, every arrow/.append style={shift left=0.5}]
   P^0
     \arrow{r}{
     \varphi
     }
   &
   P^1,
     \arrow{l}{
     \psi
     }
\end{tikzcd}
$
where $P^\bullet:=\bigoplus_{i=1}^N P_i[k_i]^\bullet$
($\bullet\in\mathbb{Z}_2$),
and
$\left(\gamma,\delta\right)$
defines a morphism
$
\left(\gamma,\delta\right)
\in
\hom^1\left(\left(\varphi,\psi\right),\left(\varphi,\psi\right)\right),
$
which form the
right diagram in
(\ref{eqn:TwMorphismDiagrams})
(not necessarily commutative).

The $\AI$-operations between them are computed as
$$
\operatorm_1\left(\left(\gamma,\delta\right)\right)
=
\left(
\psi\gamma+\delta\varphi,\
\varphi\delta+\gamma\psi
\right),
\quad
\operatorm_2
\left(\left(\gamma,\delta\right),\left(\gamma,\delta\right)\right)
=
\left(
\delta\gamma,\
\gamma \delta
\right)
\quad\text{and}\quad
\operatorm_{k\ge3}
\left(
\left(\delta,\gamma\right),
\dots,
\left(\delta,\gamma\right)
\right)
=
0,
$$
following
(\ref{eqn:MFAIOperations}).
It is easy to check that
their components are also compatible with the abstract definition in
(\ref{eqn:MaurerCartanComponents}).
Therefore,
the Maurer-Cartan equation
(\ref{eqn:TwistedComplexMaurerCartanEquation})
is phrased as
\begin{equation}\label{eqn:MFMaurerCartanEquation}
0
=
\operatorm_1
\left(\left(\gamma,\delta\right)\right)
+
\operatorm_2
\left(\left(\gamma,\delta\right),\left(\gamma,\delta\right)\right)
=
\left(
\psi\gamma+\delta\varphi+\delta\gamma,
\varphi\delta+\gamma\psi+\gamma\delta
\right),
\end{equation}
and an abstract twisted complex in $\MF_{\AI}(f)$
is equivalent to
a pair
$
\left(\left(\varphi,\psi\right),\left(\gamma,\delta\right)\right)
$
satisfying
(\ref{eqn:MFMaurerCartanEquation})
\footnote{
The pair is also an example of a \emph{bounding cochain}.
Compare it with the \emph{weak Maurer-Cartan equation} (\ref{eqn:wu}).
}.

We define the \textbf{rigid twisted complex} in $\MF_{\AI}(f)$ associated to such a pair
as
$
\begin{tikzcd}[arrow style=tikz,>=stealth, sep=25pt, every arrow/.append style={shift left=0.5}]
   P^0
     \arrow{r}{
     \varphi+\gamma
     }
   &
   P^1.
     \arrow{l}{
     \psi+\delta
     }
\end{tikzcd}
$
More precisely,
it is given as
\begin{align}\label{eqn:RigidTwistedComplex}
\begin{split}
\operatorname{Tw}\left(\left(\varphi,\psi\right),\left(\gamma,\delta\right)\right)
&:=
\left(\varphi+\gamma,\psi+\delta\right)
\\[2mm]
&
\hspace{-6mm}
\phantom{:}=
\adjustbox{scale=0.86}{
$
\left(
\begin{blockarray}{c @{\hspace{2pt}} c @{\hspace{7pt}} ccccc}
  &
  & \color{darkgreen} \small\text{$P_1\left[k_1\right]^0$}
  & \color{darkgreen} \small\text{$P_2\left[k_2\right]^0$}
  & \color{darkgreen} \small\text{$\cdots$}
  & \color{darkgreen} \small\text{$P_N\left[k_N\right]^0$}
  \\[1mm]
  \begin{block}{c @{\hspace{2pt}} c @{\hspace{7pt}} (ccccc)}
    & \color{darkgreen} \small\text{$P_1\left[k_1\right]^1$}
    & \varphi_1\left[k_1\right] & \gamma_{12} & \cdots & \gamma_{1N}
    \\[0mm]
    & \color{darkgreen} \small\text{$P_2\left[k_2\right]^1$}
    & 0 & \varphi_2\left[k_2\right] & \ddots & \vdots
    \\[0mm]
    & \color{darkgreen} \small\text{$\vdots$}
    & \vdots & \vdots & \ddots & \gamma_{(N-1)N}
    \\[2mm]
    & \color{darkgreen} \small\text{$P_N\left[k_N\right]^1$}
    & 0 & 0 & \cdots & \varphi_N\left[k_N\right]
    \\
  \end{block}
\end{blockarray}
\ \ ,
\begin{blockarray}{c @{\hspace{2pt}} c @{\hspace{7pt}} ccccc}
  &
  & \color{darkgreen} \small\text{$P_1\left[k_1\right]^1$}
  & \color{darkgreen} \small\text{$P_2\left[k_2\right]^1$}
  & \color{darkgreen} \small\text{$\cdots$}
  & \color{darkgreen} \small\text{$P_N\left[k_N\right]^1$}
  \\[1mm]
  \begin{block}{c @{\hspace{2pt}} c @{\hspace{7pt}} (ccccc)}
    & \color{darkgreen} \small\text{$P_1\left[k_1\right]^0$}
    & \psi_1\left[k_1\right] & \delta_{12} & \cdots & \delta_{1N}
    \\[0mm]
    & \color{darkgreen} \small\text{$P_2\left[k_2\right]^0$}
    & 0 & \psi_2\left[k_2\right] & \ddots & \vdots
    \\[0mm]
    & \color{darkgreen} \small\text{$\vdots$}
    & \vdots & \vdots & \ddots & \delta_{(N-1)N}
    \\[2mm]
    & \color{darkgreen} \small\text{$P_N\left[k_N\right]^0$}
    & 0 & 0 & \cdots & \psi_N\left[k_N\right]
    \\
  \end{block}
\end{blockarray}
\
\right).
$
}
\end{split}
\end{align}
It is indeed a matrix factorization of $f$,
as a direct consequence of the equation (\ref{eqn:MFMaurerCartanEquation}).

\begin{prop}
The embedding
$
i
:
\MF_{\AI}(f)
\hookrightarrow
\operatorname{Tw}\MF_{\AI}(f)
$
of $\AI$-categories is a quasi-equivalence.

\end{prop}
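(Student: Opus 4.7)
\medskip

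\noindent
The plan is to unpack the definition of a quasi-equivalence of $\AI$-categories into its two standard ingredients and verify each separately. Recall that $i:\MF_{\AI}(f)\hookrightarrow\operatorname{Tw}\MF_{\AI}(f)$ is a quasi-equivalence if and only if (a) it is \emph{quasi-fully-faithful}, i.e.\ for every pair of objects $\left(\varphi_0,\psi_0\right),\left(\varphi_1,\psi_1\right)\in\operatorname{Ob}\left(\MF_{\AI}(f)\right)$ the chain map on hom-complexes is a quasi-isomorphism, and (b) the induced ordinary functor $H^0(i)$ is \emph{essentially surjective}, i.e.\ every twisted complex is isomorphic in $H^0\left(\operatorname{Tw}\MF_{\AI}(f)\right)$ to the image of some matrix factorization.

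The first ingredient is immediate from the definitions. When both source and target are of the form $\left(\mathcal{L}_a[0],0\right)$, the twisting data $\delta_a$ vanish, so every sum in the formula for $\operatorm_k^{\delta_0,\dots,\delta_k}$ in Definition \ref{defn:TwistedCompletion} collapses to the single term with $m_0=\cdots=m_k=0$; that is, $\operatorm_k^{\operatorname{Tw}}$ restricts to $\operatorm_k^{\MF_{\AI}}$ on the image of $i$. Hence $i$ is fully faithful already at the chain level, which is stronger than quasi-fully-faithful.

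The substantive step is essential surjectivity. Given an abstract twisted complex $(\mathcal{L},\delta)$ encoded by a pair $\left(\left(\varphi,\psi\right),\left(\gamma,\delta\right)\right)$ as in \S\ref{sec:MFTwistedComplexes}, the Maurer-Cartan equation (\ref{eqn:MFMaurerCartanEquation}) is precisely the assertion that the block upper-triangular pair
$$
\operatorname{Tw}\left(\left(\varphi,\psi\right),\left(\gamma,\delta\right)\right)=\left(\varphi+\gamma,\psi+\delta\right)
$$
is an honest matrix factorization of $f$; this provides a canonical candidate $i\left(\operatorname{Tw}\left(\left(\varphi,\psi\right),\left(\gamma,\delta\right)\right)\right)\in\operatorname{Tw}\MF_{\AI}(f)$ lying in the essential image. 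The isomorphism between this candidate and $(\mathcal{L},\delta)$ is the identity $\operatorname{id}_{P^\bullet}$ of the common underlying graded free module $P^\bullet=\bigoplus_iP_i[k_i]^\bullet$, regarded as a degree-zero element of the hom-complex $\hom_{\operatorname{Tw}\MF_{\AI}(f)}^0\left((\mathcal{L},\delta),\left(\operatorname{Tw}\left(\left(\varphi,\psi\right),\left(\gamma,\delta\right)\right)[0],0\right)\right)$. I would verify directly from the formula in Definition \ref{defn:TwistedCompletion} that $\operatorm_1^{(\gamma,\delta),0}(\operatorname{id})=0$; the only contributing terms come from $\operatorm_1$ and $\operatorm_2$ (since $\operatorm_{\geq 3}=0$ in $\MF_{\AI}(f)$), and they cancel because the effect of $\operatorm_2((\gamma,\delta),\operatorname{id})$ is exactly to add $(\gamma,\delta)$ to the naive differential of $\operatorname{id}$ viewed as a map between $(\varphi,\psi)$ and $(\varphi+\gamma,\psi+\delta)$. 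Once closedness is established, the identity is invertible on the nose with inverse $\operatorm_1^{0,(\gamma,\delta)}$-closed by the symmetric argument, and the composition is the identity of the hom-complex, so the class $[\operatorname{id}]$ is an isomorphism in $H^0\left(\operatorname{Tw}\MF_{\AI}(f)\right)$.

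The main obstacle will be the bookkeeping of signs in the two verifications $\operatorm_1^{(\gamma,\delta),0}(\operatorname{id})=0$ and $\operatorm_1^{0,(\gamma,\delta)}(\operatorname{id})=0$, since these involve the Koszul signs baked into $\operatorm_1,\operatorm_2$ for $\MF_{\AI}(f)$ as well as the signs in the definition of $\operatorm_k^{\delta_0,\dots,\delta_k}$. However, since both objects share the same underlying graded module and the same effective total differential $(\varphi+\gamma,\psi+\delta)$, the identity map genuinely intertwines the two structures, so the signs are forced to cancel; I expect the calculation to be a short matrix identity once conventions are fixed.
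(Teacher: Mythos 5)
Your proposal is correct, but it packages the argument differently from the paper. The paper does not split the claim into quasi-fully-faithfulness plus essential surjectivity; instead it writes down an explicit quasi-inverse $\AI$-functor $\pi:\operatorname{Tw}\MF_{\AI}(f)\rightarrow\MF_{\AI}(f)$ with $\pi_0$ sending an abstract twisted complex $\left(\left(\varphi,\psi\right),\left(\gamma,\delta\right)\right)$ to its rigid twisted complex $\left(\varphi+\gamma,\psi+\delta\right)$, $\pi_1$ the tautological identification of the underlying graded hom spaces, and $\pi_{k\ge2}=0$. The key point the paper exploits is that the hom complex between two abstract twisted complexes, equipped with the twisted differential $\operatorm_1^{\delta_0,\delta_1}$, is \emph{literally equal} to the hom complex between the corresponding rigid twisted complexes with the ordinary differential of $\MF_{\AI}(f)$ (this is again the collapse of the sums to $\operatorm_1$ and $\operatorm_2$ terms that you use); so $\pi$ is an isomorphism on hom complexes and the check that $i$ and $\pi$ are mutually quasi-inverse reduces to the same identity-map computation you perform for essential surjectivity. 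What your route buys is that it isolates the Maurer--Cartan equation (\ref{eqn:MFMaurerCartanEquation}) as exactly the statement that $\left(\varphi+\gamma,\psi+\delta\right)$ is an honest matrix factorization and that $\operatorname{id}_{P^\bullet}$ is a closed, invertible morphism; what the paper's route buys is the functor $\pi$ itself, which is used implicitly later when higher-rank canonical forms are identified with rigid twisted complexes. The only loose end in your write-up is the sign verification you defer; as you note, the cancellation between $d(\operatorname{id})=\pm(\gamma,\delta)$ and $\operatorm_2\left(\left(\gamma,\delta\right),\operatorname{id}\right)$ is forced, and the composition of the two identity morphisms is $\pm$ the strict unit, which still represents an isomorphism in $H^0\left(\operatorname{Tw}\MF_{\AI}(f)\right)$.
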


\begin{proof}
The natural embedding $i$ sends
each matrix factorization
$
\left(\varphi,\psi\right)
$
to the abstract twisted complex
$
\left(\left(\varphi,\psi\right)[0],0\right).
$
The morphism space between such two complexes
$
\left(\left(\varphi_0,\psi_0\right)[0],0\right)
$
and
$
\left(\left(\varphi_1,\psi_1\right)[0],0\right)
$
is
identified with
the original hom space
$
\hom^{\mathbb{Z}_2}\left(\left(\varphi_1,\psi_1\right),\left(\varphi_0,\psi_0\right)\right).
$
On the morphism level,
the first component $i_1$ is the identity map on that space,
and the higher components $i_{k\ge2}$ are defined to be zero.

We define its quasi-inverse $\pi:\operatorname{Tw}\MF_{\AI}(f)\rightarrow \MF_{\AI}(f)$
by sending each abstract twisted complex
$$
\left(
\left(\varphi,\psi\right)
:=
\left(
\bigoplus_{i=1}^N \varphi_i[k_i],\
\bigoplus_{i=1}^N \psi_i[k_i]
\right),\
\left(\gamma,\delta\right)
:=
\left(\left(\gamma_{ij},\delta_{ij}\right)
\in
\hom^1\left(\left(\varphi_j,\psi_j\right),\left(\varphi_i,\psi_i\right)\right)
[-k_j+k_i]
\right)
_{1\le i < j \le N}
\right)
$$
to the rigid twisted complex
$
\operatorname{Tw}\left(\left(\varphi,\psi\right),\left(\gamma,\delta\right)\right).
$
Given two abstract twisted complexes
$
\left(\left(\varphi_0,\psi_0\right),\left(\gamma_0,\delta_0\right)\right)
$
and
$
\left(\left(\varphi_1,\psi_1\right),\left(\gamma_1,\delta_1\right)\right),
$
their hom space
\begin{align*}
\hom_{\operatorname{Tw}\MF_{\AI}(f)}^{\bullet}
\left(
\left(
\left(\varphi_0,\psi_0\right),
\left(\gamma_0,\delta_0\right)
\right),
\left(
\left(\varphi_1,\psi_1\right),
\left(\gamma_1,\delta_1\right)
\right)
\right)
&=
\bigoplus_{i=1}^{N_0}
\bigoplus_{j=1}^{N_1}
\hom
^\bullet
\left(\left(\varphi_{1j},\psi_{1j}\right),\left(\varphi_{0i},\psi_{0i}\right)\right)\left[-k_{0i}+k_{1j}\right]
\\[-1mm]
&
\hspace{-43mm}
=
\bigoplus_{i=1}^{N_0}
\bigoplus_{j=1}^{N_1}
\left(
\Hom_S\left(P_{1j}[k_{1j}]^0,P_{0i}[k_{0i}]^\bullet\right)
\times
\Hom_S\left(P_{1j}[k_{1j}]^1,P_{0i}[k_{0i}]^{1+\bullet}\right)
\right)
\quad
\left(\bullet\in\mathbb{Z}_2\right)
\end{align*}
is naturally identified with the hom space between rigid twisted complexes
\begin{align*}
\hom_{\MF_{\AI}(f)}^{\bullet}
\left(
\operatorname{Tw}
\left(\left(\varphi_0,\psi_0\right)
\left(\gamma_0,\delta_0\right)
\right),
\operatorname{Tw}
\left(
\left(\varphi_1,\psi_1\right),
\left(\gamma_1,\delta_1\right)
\right)
\right)
&
\\
&
\hspace{-58mm}
=
\Hom_S
\left(
\bigoplus_{i=1}^{N_1}
P_{1j}[k_{1j}]^0,\
\bigoplus_{j=1}^{N_0}
P_{0i}[k_{0i}]^\bullet
\right)
\times
\Hom_S
\left(
\bigoplus_{i=1}^{N_1}
P_{1j}[k_{1j}]^1,\
\bigoplus_{j=1}^{N_0}
P_{0i}[k_{0i}]^{1+\bullet}
\right)
\quad
\left(\bullet\in\mathbb{Z}_2\right).
\end{align*}

Therefore,
we can define
the first component
$\pi_1$ of the $\AI$-functor $\pi$
as the identity map on that space,
and the higher components $\pi_{k\ge2}$ as zero.
It is straightforward to check that $i$ and $\pi$ are indeed quasi-inverse to each other.
\end{proof}

\subsubsection{Twisted complexes under localized mirror functor}
\label{sec:LMFTwistedComplexes}
Combining above discussions,
we derive a formula for extending the localized mirror functor to twisted complexes.

\begin{prop}\label{prop:LMFTwistedComplex}

The image of
an abstract twisted complex
$
\left(
\mathcal{L}
:=
\bigoplus_{i=1}^N
\mathcal{L}_i
,\delta\right)
$
in $W\Fuk\left(\POP\right)$
under the induced localized mirror functor
$
\operatorname{Tw}\LocalF
:
\operatorname{Tw}W\Fuk\left(\POP\right)
\rightarrow
\MF_{\AI}(xyz)
$
is
the rigid twisted complex
$$
 \begin{tikzcd}[arrow style=tikz,>=stealth, sep=75pt, every arrow/.append style={shift left}]
   \displaystyle
   \bigoplus_{i=1}^N
   \hom^0\left(\mathcal{L}_i,\mathbb{L}\right)
     \arrow{r}{\LocalPhi\left(\left(\mathcal{L},\delta\right)\right)=\operatorm_1^{\delta,b}} 
   &
   \displaystyle
   \bigoplus_{j=1}^N
   \hom^1\left(\mathcal{L}_j,\mathbb{L}\right),
     \arrow{l}{\LocalPsi\left(\left(\mathcal{L},\delta\right)\right)=\operatorm_1^{\delta,b}} 
 \end{tikzcd}
$$
with components given by
\begin{equation}\label{eqn:RigidTwistedComplexComponents}
\left(
\operatorm_1^{\delta,b}
\right)_{ij}
:=
\sum_{
\smat{0\le n \le j-i\\ i<i_1<\cdots<i_{n-1}<j}
}
\sum_{k=0}^{\infty}
\operatorm_{n+1+k}
\big(\delta_{ii_1},\delta_{i_1 i_2},\dots,\delta_{i_{n-1} j},
-,
\underbrace{b,\dots,b}_{k}
\big)
\footnote{
It is just $\operatorm_{1+k}\left(-,b,\dots,b\right)$
if $n=0$.
}
:
\hom^\bullet\left(\mathcal{L}_j,\mathbb{L}\right)
\rightarrow
\hom^{\bullet+1}\left(\mathcal{L}_i,\mathbb{L}\right)
\end{equation}
for $1\le i \le j \le N$ and $\bullet\in\mathbb{Z}_2$.

In particular,
its diagonal components are given by
the original mirror
$\left(\LocalPhi\left(\mathcal{L}_i\right),\LocalPsi\left(\mathcal{L}_i\right)\right)$
(\ref{eqn:LMFImage})
of $\mathcal{L}_i$,
and its strictly upper triangular components
are
determined by inserting $(j-i)$-times of the twisting $\delta$.
\end{prop}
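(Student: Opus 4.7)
The plan is a two-step unpacking. First, I would apply Proposition \ref{prop:TwistedCompletionInducedFunctor} to describe the image of $(\mathcal{L},\delta)$ under $\operatorname{Tw}\LocalF$ as an abstract twisted complex in $\MF_{\AI}(xyz)$; second, I would pass through the rigidification procedure (\ref{eqn:RigidTwistedComplex}) developed in \S \ref{sec:MFTwistedComplexes} to present the result as an honest matrix factorization. The claimed formula (\ref{eqn:RigidTwistedComplexComponents}) should then emerge mechanically by substituting the explicit description of $\LocalF_n$ from Theorem \ref{thm:lmf} into the component formula of Proposition \ref{prop:TwistedCompletionInducedFunctor}.

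Concretely, I would first write down the image abstract twisted complex
$$\left(\bigoplus_{i=1}^N \LocalF_0(\mathcal{L}_i),\ \gamma := \sum_{n\ge 1}\LocalF_n(\delta,\ldots,\delta)\right),$$
whose $(i,j)$-component for $i<j$ reads
$$\gamma_{ij} = \sum_{\substack{1\le n \le j-i\\ i<i_1<\cdots<i_{n-1}<j}} \LocalF_n(\delta_{ii_1},\delta_{i_1 i_2},\ldots,\delta_{i_{n-1}j}).$$
Substituting Theorem \ref{thm:lmf}'s formula
$$\LocalF_n(f_1,\ldots,f_n)(-) = \sum_{k\ge 0}\operatorm_{n+1+k}(f_1,\ldots,f_n,-,\underbrace{b,\ldots,b}_{k})$$
into each $\gamma_{ij}$ produces precisely the $n\ge 1$ contributions to (\ref{eqn:RigidTwistedComplexComponents}).

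Next I would invoke the rigidification, which assembles the diagonal blocks $\LocalF_0(\mathcal{L}_i) = (\LocalPhi(\mathcal{L}_i),\LocalPsi(\mathcal{L}_i))$ together with the strictly upper-triangular $\gamma_{ij}$ into a single block matrix factorization in the form (\ref{eqn:RigidTwistedComplex}). The diagonal entries match the $n=0$ case of (\ref{eqn:RigidTwistedComplexComponents}), since in that case the index sum degenerates and only $\sum_{k\ge 0}\operatorm_{1+k}(-,b,\ldots,b) = \operatorm_1^{0,b}$ remains, which by definition is $\LocalPhi(\mathcal{L}_i)$ or $\LocalPsi(\mathcal{L}_i)$ on the $i$-th summand. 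Combining the diagonal and off-diagonal blocks recovers the unified expression in (\ref{eqn:RigidTwistedComplexComponents}).

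The main point I expect to need careful checking is that $\gamma$ satisfies the Maurer-Cartan equation (\ref{eqn:MFMaurerCartanEquation}), so that the rigidification produces a genuine matrix factorization of $xyz$; this is automatic from Proposition \ref{prop:TwistedCompletionInducedFunctor}, which guarantees $\operatorname{Tw}\LocalF$ sends abstract twisted complexes (solutions of the Maurer-Cartan equation in $W\Fuk(\POP)$) to abstract twisted complexes in $\MF_{\AI}(xyz)$, combined with the equivalence between abstract and rigid twisted complexes observed in \S \ref{sec:MFTwistedComplexes}. A minor bookkeeping obstacle will be tracking the degree shifts $[k_i]$ when the source complex is not concentrated in a single $\mathbb{Z}_2$-degree, but since $\LocalF$ is grading-preserving and the rigidification (\ref{eqn:RigidTwistedComplex}) already handles shifts explicitly via the $P_i[k_i]^\bullet$ decomposition, this should reduce to transcribing indices rather than new content.
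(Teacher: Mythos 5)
Your proposal is correct and follows essentially the same route as the paper's proof: apply Proposition \ref{prop:TwistedCompletionInducedFunctor} to get the abstract twisted complex $\bigl(\bigoplus_i \LocalF_0(\mathcal{L}_i), \sum_{n\ge1}\LocalF_n(\delta,\dots,\delta)\bigr)$, substitute the formula for $\LocalF_n$ from Theorem \ref{thm:lmf}, and rigidify via (\ref{eqn:RigidTwistedComplex}). The extra remarks on the Maurer--Cartan equation and degree shifts are sound but not needed beyond what the cited results already guarantee.
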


\begin{proof}

Recall from Proposition \ref{prop:TwistedCompletionInducedFunctor} that
the induced functor
$
\operatorname{Tw}\LocalF
:
\operatorname{Tw}W\Fuk\left(\POP\right)
\rightarrow
\operatorname{Tw}\MF_{\AI}(xyz)
$
maps the given abstract twisted complex to
the abstract twisted complex in $\MF_{\AI}(xyz)$ given by
\begin{equation}\label{eqn:AbstractTwistedComplexMF}
\left(
\bigoplus_{i=1}^N
\LocalF_0\left(\mathcal{L}_i\right),\
\sum_{n=1}^{\infty}\LocalF_n\left(\delta,\dots,\delta\right)
\right),
\end{equation}
where
$
\LocalF_n\left(\delta,\dots,\delta\right)$
is
an element of
$
\displaystyle
\bigoplus_{1\le i,j\le N}
\hom_{\MF_{\AI}(xyz)}^{1}\left(\LocalF_0\left(\mathcal{L}_i\right),\LocalF_0\left(\mathcal{L}_j\right)\right)
$
with
components
\begin{equation*}
\left(
\LocalF_{n}
\left(\delta,\dots,\delta\right)\right)_{ij}
:=
\sum_{
\smat{1\le n \le j-i\\ i<i_1<\cdots<i_{n-1}<j}
}
\LocalF_n
\left(
\delta_{ii_1},
\delta_{i_1 i_2},
\dots,
\delta_{i_{n-1}j}
\right)
\in\hom_{\MF_{\AI}(xyz)}^{1}\left(\LocalF_0\left(\mathcal{L}_i\right),\LocalF_0\left(\mathcal{L}_j\right)\right).
\end{equation*}

The definition of $\LocalF$ in Theorem \ref{thm:lmf}
identifies
those with
\begin{align*}
\LocalF_0\left(\mathcal{L}_i\right)
&=
\operatorm_1^{0,b}
:
\hom^\bullet\left(\mathcal{L}_i,\mathbb{L}\right)
\rightarrow
\hom^{\bullet+1}\left(\mathcal{L}_i,\mathbb{L}\right)
\quad\text{and}
\\
\LocalF_n
\left(
\delta_{ii_1},
\delta_{i_1 i_2},
\dots,
\delta_{i_{n-1}j}
\right)
&=
\operatorm_{n+1}^{0,\dots,0,b}
\left(
\delta_{ii_1},
\delta_{i_1 i_2},
\dots,
\delta_{i_{n-1}j},
-
\right)
:
\hom^\bullet\left(\mathcal{L}_j,\mathbb{L}\right)
\rightarrow
\hom^{\bullet+1}\left(\mathcal{L}_i,\mathbb{L}\right).
\end{align*}
Finally,
under (\ref{eqn:RigidTwistedComplex}),
the rigid twist complex
in $\MF_{\AI}(xyz)$
corresponding to
(\ref{eqn:AbstractTwistedComplexMF})
is given by
\begin{align*}
\bigoplus_{i=1}^N
\LocalF_0\left(\mathcal{L}_i\right)
+\sum_{n=1}^{\infty}\LocalF_n\left(\delta,\dots,\delta\right)
&=
\operatorm_1^{0,b}(-)
+
\sum_{n=1}^{\infty}
\operatorm_{n+1}^{0,\dots,0,b}
\big(
\underbrace{\delta,\dots,\delta}_{n},
-
\big)
=
\sum_{n=0}^{\infty}
\operatorm_{n+1}^{0,\dots,0,b}
\big(
\underbrace{\delta,\dots,\delta}_{n},
-
\big),
\end{align*}
which is the map
$
\displaystyle
\operatorm_1^{\delta,b}
:
\bigoplus_{i=1}^N
\hom^{\bullet}\left(\mathcal{L}_i,\mathbb{L}\right)
\rightarrow
\bigoplus_{i=1}^N
\hom^{\bullet+1}\left(\mathcal{L}_i,\mathbb{L}\right)
$
with the same components as given in (\ref{eqn:RigidTwistedComplexComponents}).
\end{proof}

\begin{remark}

One can also directly define the localized mirror functor
$\LocalF:\operatorname{Tw}W\Fuk\left(\POP\right)\rightarrow\MF_{\AI}(xyz)$
based on the above formula,
not passing through Proposition \ref{prop:TwistedCompletionInducedFunctor}.
For instance,
the identity
$\left(\operatorm_1^{\delta,b}\right)^2 = W^{\mathbb{L}} \cdot \id_{\hom\left(\mathcal{L},\mathbb{L}\right)}$
for any twisted complex
(as well as \emph{bounding cochain})
$
\left(
\mathcal{L}
:=
\bigoplus_{i=1}^N
\mathcal{L}_i
,\delta\right)
$
in $W\Fuk\left(\POP\right)$
follows from the same line of proof as in Lemma \ref{lem:MFIdentity}.
Then one can extend the definition given in Theorem \ref{thm:lmf}
by using $\operatorm_1^{\delta,b}$
and
$\operatorm_{k+1}^{\delta,\dots,\delta,b}$
instead of $\operatorm_1^{0,b}$
and
$\operatorm_{k+1}^{0,\dots,0,b}$,
respectively.

\end{remark}

\subsubsection{Canonical form of matrix factorizations viewed as twisted complexes}
\label{sec:CanonicalFormMFTwistedComplexes}

Now we show that our canonical form of matrix factorizations of $xyz$
defined in Definition \ref{defn:CanonicalFormMF}
with
a higher rank is expressed as a
twisted complex consisting of
several copies of the corresponding object with rank $1$.

\begin{prop}\label{prop:MFCanonicalFormTwistedComplex}
The canonical form
$\left(\varphi\left(w',\lambda,\primemu\right),\psi\left(w',\lambda,\primemu\right)\right)$
of matrix factorizations of $xyz$ corresponding to a non-degenerate loop datum
is quasi-isomorphic
in $\MF_{\AI}(xyz)$
(and isomorphic in $\MF(xyz)$)
to the rigid twisted complex
$$
\operatorname{Tw}
\left(
\left(
\varphi\left(w',\lambda,1\right)
^{\oplus \primemu},\
\psi\left(w',\lambda,1\right)
^{\oplus \primemu}
\right),
\left(\gamma,\delta\right)
\right)
$$
for some morphisms
$$
\left(\gamma,\delta\right)
:=
\left(\left(\gamma_{ij},\delta_{ij}\right)
\in
\hom^1\left(\left(\varphi\left(w',\lambda,1\right),\psi\left(w',\lambda,1\right)\right),\left(\varphi\left(w',\lambda,1\right),\psi\left(w',\lambda,1\right)\right)\right)
\right)
_{1\le i < j \le \primemu}.
$$
\end{prop}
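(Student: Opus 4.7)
The plan is to exploit the Kronecker product decomposition (\ref{eqn:CanonicalFormKroneckerProduct})
$$\varphi\left(w',\lambda,\primemu\right) = \varphi\left(w',0,1\right)\otimes I_{\primemu} - x^{l_1'-1} K_{3\tau}^{3\tau-1}\otimes J_{\primemu}(\lambda) - x^{-l_1'} J_{3\tau}^{3\tau-1}\otimes J_{\primemu}(\lambda)^{-1}$$
together with the perfect shuffle trick from Lemma \ref{lem:KroneckerProductSwitching}. The key observation is that $J_{\primemu}(\lambda)=\lambda I_{\primemu}+J_{\primemu}$ and, because $J_{\primemu}$ is nilpotent of order $\primemu$, the inverse $J_{\primemu}(\lambda)^{-1}=\sum_{k=0}^{\primemu-1}(-1)^k\lambda^{-k-1}J_{\primemu}^k$ are both upper triangular. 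So after reordering so that the $\primemu$-factor sits on the left of the Kronecker product, the matrix becomes block upper triangular.

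First I would apply the perfect shuffle similarity $S_{3\tau,\primemu}(A\otimes B)S_{\primemu,3\tau}=B\otimes A$ to $\varphi(w',\lambda,\primemu)$ and to $\psi(w',\lambda,\primemu)$ simultaneously. This transforms $\varphi(w',\lambda,\primemu)$ into
$$I_{\primemu}\otimes\varphi(w',0,1)\ -\ J_{\primemu}(\lambda)\otimes x^{l_1'-1}K_{3\tau}^{3\tau-1}\ -\ J_{\primemu}(\lambda)^{-1}\otimes x^{-l_1'}J_{3\tau}^{3\tau-1},$$
which, grouping the diagonal entries of $J_{\primemu}(\lambda)$ and $J_{\primemu}(\lambda)^{-1}$, splits as a block upper triangular matrix whose $\primemu$ diagonal blocks are each equal to $\varphi(w',0,1)-\lambda x^{l_1'-1}K_{3\tau}^{3\tau-1}-\lambda^{-1}x^{-l_1'}J_{3\tau}^{3\tau-1}=\varphi(w',\lambda,1)$. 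The strictly upper triangular blocks, coming from the nilpotent parts $J_{\primemu}^k$ ($k\ge 1$), supply the twisting data $\gamma_{ij}$; the analogous procedure for $\psi(w',\lambda,\primemu)$ yields $\delta_{ij}$.

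Next I would identify the resulting block upper triangular pair with the rigid twisted complex $\operatorname{Tw}((\varphi(w',\lambda,1)^{\oplus\primemu},\psi(w',\lambda,1)^{\oplus\primemu}),(\gamma,\delta))$ by direct comparison with the template (\ref{eqn:RigidTwistedComplex}). The Maurer-Cartan equation (\ref{eqn:MFMaurerCartanEquation}) requires no separate verification: since the similarity transformation is via an invertible $S$-linear map, the transformed pair is still a matrix factorization of $xyz$, and the block upper triangular form of the matrix factorization equation is precisely (\ref{eqn:MFMaurerCartanEquation}). Finally, the perfect shuffle similarity provides an explicit $\MF(xyz)$-isomorphism, which descends to a quasi-isomorphism in $\MF_{\AI}(xyz)$ via the embedding $i:\MF_{\AI}(xyz)\hookrightarrow\operatorname{Tw}\MF_{\AI}(xyz)$.

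The subtle point is ensuring that the upper triangular structure is preserved for the $J_{\primemu}(\lambda)^{-1}$ term, which relies on the power series expansion around $\lambda I_{\primemu}$; this is why the non-degeneracy hypothesis (so that $\lambda$ is invertible) is essential. The main bookkeeping obstacle will be identifying the explicit twisting morphisms $\gamma_{ij},\delta_{ij}$ in terms of $\varphi(w',0,1)$, $K_{3\tau}^{3\tau-1}$, $J_{3\tau}^{3\tau-1}$ and the $J_{\primemu}^k$-components, but since the proposition only asserts existence of such morphisms, this can be left implicit.
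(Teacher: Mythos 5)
Your proposal is correct and follows essentially the same route as the paper's proof: both start from the Kronecker decomposition (\ref{eqn:CanonicalFormKroneckerProduct}), peel off $\lambda I_{\primemu}$ and $\lambda^{-1}I_{\primemu}$ from $J_{\primemu}(\lambda)$ and $J_{\primemu}(\lambda)^{-1}$ so that only nilpotent upper-triangular parts remain, apply the perfect shuffle similarity of Lemma \ref{lem:KroneckerProductSwitching}, and read off the block upper-triangular form as the rigid twisted complex (\ref{eqn:RigidTwistedComplex}). The only cosmetic difference is that the paper writes out the resulting matrices explicitly in the two cases $l_1'\ge1$ and $l_1'\le0$, which you rightly note is unnecessary for the existence statement.
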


\begin{proof}
Recall from (\ref{eqn:CanonicalFormKroneckerProduct}) that the canonical form of matrix factorizations of $xyz$
corresponding to a non-degenerate loop datum
$\left(w',\lambda,\primemu\right)$
is written as
\begin{align*}
\varphi\left(w',\lambda,\primemu\right)
&=
\varphi\left(w',0,1\right)\otimes I_{\primemu}
-x^{l_1'-1} K_{3\tau}^{3\tau-1}\otimes J_{\primemu}\left(\lambda\right)
-x^{-l_1'} J_{3\tau}^{3\tau-1}\otimes J_{\primemu}\left(\lambda\right)^{-1}
\\
&=
\varphi\left(w',\lambda,1\right)\otimes I_{\primemu}
-x^{l_1'-1} K_{3\tau}^{3\tau-1}\otimes \left(J_{\primemu}\left(\lambda\right)-\lambda I_{\primemu}\right)
-x^{-l_1'} J_{3\tau}^{3\tau-1}\otimes \left(J_{\primemu}\left(\lambda\right)^{-1}-\lambda^{-1} I_{\primemu}\right)
\\
&=
\varphi\left(w',\lambda,1\right)\otimes I_{\primemu}
-x^{l_1'-1} K_{3\tau}^{3\tau-1}\otimes J_{\primemu}
-x^{-l_1'} J_{3\tau}^{3\tau-1}\otimes \left(-\lambda^{-2}J_{\primemu}+\lambda^{-3}J_{\primemu}^2-\cdots-\left(-\lambda\right)^{-\primemu}J_{\primemu}^{\primemu-1}\right).
\end{align*}
Using Lemma \ref{lem:KroneckerProductSwitching},
we know that it is similar to
$$
I_{\primemu}
\otimes
\varphi\left(w',\lambda,1\right)
-
x^{l_1'-1}
J_{\primemu}
\otimes
K_{3\tau}^{3\tau-1}
+
x^{-l_1'}
\left(\lambda^{-2}J_{\primemu}-\lambda^{-3}J_{\primemu}^2+\cdots+\left(-\lambda\right)^{-\primemu}J_{\primemu}^{\primemu-1}\right)
\otimes
J_{3\tau}^{3\tau-1},
$$
which is expressed as
\begin{equation}\label{eqn:MFHigherTwistedComplex}
\begin{psmallmatrix}
\varphi\left(w',\lambda,1\right) & -x^{l_1'-1} K_{3\tau}^{3\tau-1} & 0 & \cdots & 0
\\
0 & \varphi\left(w',\lambda,1\right) & -x^{l_1'-1} K_{3\tau}^{3\tau-1} & \cdots & 0
\\
0 & 0 & \varphi\left(w',\lambda,1\right) & \reflectbox{\rotatebox{45}{$\scriptstyle\cdots$}} & \rotatebox{90}{$\scriptstyle\cdots$}
\\
\rotatebox{90}{$\scriptstyle\cdots$} & \rotatebox{90}{$\scriptstyle\cdots$} & \rotatebox{90}{$\scriptstyle\cdots$} & \reflectbox{\rotatebox{45}{$\scriptstyle\cdots$}} & -x^{l_1'-1} K_{3\tau}^{3\tau-1}
\\
0 & 0 & 0 & \cdots & \varphi\left(w',\lambda,1\right)
\end{psmallmatrix}_{\primemu 3\tau\times \primemu 3\tau}
\text{and}\quad
\begin{psmallmatrix}
\varphi\left(w',\lambda,1\right) & \lambda^{-2}x^{-l_1'} J_{3\tau}^{3\tau-1} & -\lambda^{-3}x^{-l_1'} J_{3\tau}^{3\tau-1} & \cdots & \left(-\lambda\right)^{-\primemu}x^{-l_1'} J_{3\tau}^{3\tau-1}
\\
0 & \varphi\left(w',\lambda,1\right) & \lambda^{-2}x^{-l_1'} J_{3\tau}^{3\tau-1} & \cdots & \left(-\lambda\right)^{-\primemu+1}x^{-l_1'} J_{3\tau}^{3\tau-1}
\\
0 & 0 & \varphi\left(w',\lambda,1\right) & \reflectbox{\rotatebox{45}{$\scriptstyle\cdots$}} & \rotatebox{90}{$\scriptstyle\cdots$}
\\
\rotatebox{90}{$\scriptstyle\cdots$} & \rotatebox{90}{$\scriptstyle\cdots$} & \rotatebox{90}{$\scriptstyle\cdots$} & \reflectbox{\rotatebox{45}{$\scriptstyle\cdots$}} & \lambda^{-2}x^{-l_1'} J_{3\tau}^{3\tau-1}
\\
0 & 0 & 0 & \cdots & \varphi\left(w',\lambda,1\right)
\end{psmallmatrix}_{\primemu 3\tau\times \primemu 3\tau},
\end{equation}
in the case of $l_1'\ge1$ and $l_1'\le0$,
respectively.
This observation together with the definition (\ref{eqn:RigidTwistedComplex})
of rigid twisted complexes prove the proposition.
\end{proof}

\subsubsection{Canonical form of loops with a local system viewed as twisted complexes}
\label{sec:CanonicalFormFukTwistedComplexes}

In this subsection,
we will realize (\ref{eqn:MFHigherTwistedComplex}) as the image of
an abstract twisted complex
in $\Fuk\left(\POP\right)$
under the induced localized mirror functor
$
\operatorname{Tw}\LocalF
:
\operatorname{Tw}W\Fuk\left(\POP\right)
\rightarrow
\MF_{\AI}(xyz)
$,
as a direct consequence of
Proposition
\ref{prop:LMFTwistedComplex}.

We first take
the loop with a rank $1$ local system
$
\mathcal{L}:=\left(L,E,\nabla\right):=\mathcal{L}\left(w',\primelambda,1\right),
$
where $\left(w',\primelambda,1\right)$
is the non-degenerate loop datum
corresponding to $\left(w',\lambda,1\right)$
under Theorem \ref{thm:LagMFCorrespondenceNondegenerate}.
Its underlying loop
$L=L\left(w'\right)$
has a marked point $o_L\in\chi^1\left(L,L\right)$
that we assume is located nearby the point {\small$\color{red}\bigstar$}
as in Figure \ref{fig:HamiltonianPerturbation}.
We denote by
$
o:=\left.\id\right|_{o_L}
\in
\Hom_{\field}\left(\left.E\right|_{o_L},\left.E\right|_{o_L}\right)
\subseteq
\hom^1\left(\mathcal{L},\mathcal{L}\right).
$

Consider the abstract twisted complex
\begin{equation}\label{eqn:FukAbstractTwistedComplex}
\left(
\mathcal{L}\left(w',\primelambda,1\right)
^{\oplus \primemu},\
\delta
=
\begin{psmallmatrix}
0 & o_{12} & 0 & \cdots & 0
\\
0 & 0 & o_{23} & \cdots & 0
\\
0 & 0 & 0 & \reflectbox{\rotatebox{45}{$\scriptstyle\cdots$}} & \rotatebox{90}{$\scriptstyle\cdots$}
\\
\rotatebox{90}{$\scriptstyle\cdots$} & \rotatebox{90}{$\scriptstyle\cdots$} & \rotatebox{90}{$\scriptstyle\cdots$} & \reflectbox{\rotatebox{45}{$\scriptstyle\cdots$}} & o_{(N-1)N}
\\
0 & 0 & 0 & \cdots & 0
\end{psmallmatrix}
\right)
\end{equation}
that consists of
the direct sum of $\primemu$-copies of
$\mathcal{L}_i:=\mathcal{L}=\mathcal{L}\left(w',\primelambda,1\right)$
($i\in\left\{1,\dots,\primemu\right\}$)
and
a collection of morphisms
$$
\delta
:=
\big(\delta_{ij}
\in
\hom^{1}
\left(\mathcal{L}_i,\mathcal{L}_j\right)
\big)_{1\le i<j \le \primemu}
$$
where
$\delta_{ij}$
is nontrivial
only for $j=i+1$,
in which case
it
is
$
o_{i(i+1)}
:=
o
\in
\hom^1\left(\mathcal{L}_i,\mathcal{L}_{i+1}\right)
=
\hom^1\left(\mathcal{L},\mathcal{L}\right).
$

\begin{prop}\label{prop:LMFTwistedComplexComputation}
The mirror image of the abstract twisted complex (\ref{eqn:FukAbstractTwistedComplex})
in $\Fuk\left(\POP\right)$
is
quasi-isomorphic
in $\MF_{\AI}(xyz)$
(and isomorphic in $\MF(xyz)$)
to the rigid twisted complex
$$
J_{\primemu}\left(\primelambda\right)^{-1}\otimes\varphi_{-1}
+
I_{\primemu}\otimes\varphi_0
+
J_{\primemu}\left(\primelambda\right)\otimes\varphi_1
=
\begin{psmallmatrix}
\LocalPhi\left(\mathcal{L}\left(w',\primelambda,1\right)\right) & \varphi_1-\primelambda^{-2}\varphi_{-1} & \primelambda^{-3}\varphi_{-1} & \cdots & -(-\primelambda)^{-\primemu}\varphi_{-1}
\\
0 & \LocalPhi\left(\mathcal{L}\left(w',\primelambda,1\right)\right) & \varphi_1-\primelambda^{-2}\varphi_{-1} & \cdots & -(-\primelambda)^{-\primemu+1}\varphi_{-1}
\\
0 & 0 & \LocalPhi\left(\mathcal{L}\left(w',\primelambda,1\right)\right) & \reflectbox{\rotatebox{45}{$\scriptstyle\cdots$}} & \rotatebox{90}{$\scriptstyle\cdots$}
\\
\rotatebox{90}{$\scriptstyle\cdots$} & \rotatebox{90}{$\scriptstyle\cdots$} & \rotatebox{90}{$\scriptstyle\cdots$} & \reflectbox{\rotatebox{45}{$\scriptstyle\cdots$}} & \varphi_1-\primelambda^{-2}\varphi_{-1}
\\
0 & 0 & 0 & \cdots & \LocalPhi\left(\mathcal{L}\left(w',\primelambda,1\right)\right)
\end{psmallmatrix}_{\primemu 3\tau\times \primemu 3\tau}
$$
where
$
\LocalPhi\left(\mathcal{L}\left(w',\primelambda,1\right)\right)
=
\primelambda^{-1}\varphi_{-1}
+\varphi_0
+
\primelambda\varphi_1
$
for $\varphi_{-1}$, $\varphi_0$, $\varphi_1\in\field[[x,y,z]]^{3\tau\times3\tau}$
(from Proposition \ref{thm:MFFromLag}).

In particular,
it is also isomorphic to the
rigid twisted complex
(\ref{eqn:MFHigherTwistedComplex})
given in Proposition \ref{prop:MFCanonicalFormTwistedComplex}.
\end{prop}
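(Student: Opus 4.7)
The plan is to apply Proposition~\ref{prop:LMFTwistedComplex} directly to the abstract twisted complex $(\mathcal{L}(w',\primelambda,1)^{\oplus \primemu},\delta)$ and then recognize the resulting components matrix by matrix. Since $\delta_{ij}$ is nonzero only for $j=i+1$ (where it equals $o$), the only chain $i<i_1<\cdots<i_{n-1}<j$ contributing in formula (\ref{eqn:RigidTwistedComplexComponents}) is the consecutive one, so for each $i\le j$ with $n:=j-i$ we have
\[
\bigl(\operatorm_1^{\delta,b}\bigr)_{ij}
\;=\;\sum_{k=0}^{\infty}\operatorm_{n+1+k}\bigl(\underbrace{o,\dots,o}_{n},\,-,\,\underbrace{b,\dots,b}_{k}\bigr).
\]
The $n=0$ case is just the definition of $\LocalPhi(\mathcal{L}(w',\primelambda,1))=\primelambda^{-1}\varphi_{-1}+\varphi_0+\primelambda\varphi_1$, which puts the claimed diagonal entries in place. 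The remaining task, which is the technical heart of the argument, is to evaluate the off-diagonal operations for $n\ge 1$ and to identify their output with $(-1)^{n}\primelambda^{-n-1}\varphi_{-1}$ when $n\ge 2$ and with $\varphi_1-\primelambda^{-2}\varphi_{-1}$ when $n=1$.

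The first step will be to give a polygon interpretation of the insertions of $o$. Since $o_L$ is the odd-degree self-intersection of $L$ placed near $\color{red}\bigstar$, the operation $\operatorm_{n+1+k}(o,\dots,o,-,b,\dots,b)$ is governed by immersed polygons bounded by $L$ and $\mathbb{L}$ whose $L$-boundary passes through $o_L$ exactly $n$ times in the prescribed cyclic order, while the $\mathbb{L}$-boundary carries $k$ corner insertions among $X,Y,Z$. I would then refine the proof of Proposition~\ref{prop:ComponentOfDeformedDifferential} to track the number of times the boundary of a polygon $u$ winds through $\color{red}\bigstar$: writing $c(u):=\#(\partial u\cap\color{red}\bigstar_L)$ with signs, a polygon contributing to $\varphi_c$ is one with signed winding $c\in\{-1,0,1\}$. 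Since $o_L$ is placed adjacent to $\color{red}\bigstar$, inserting $n$ copies of $o$ amounts to marking $n$ of the crossings of $\partial u$ with $\color{red}\bigstar$ in order; this reduces the count to a signed binomial coefficient $\binom{c(u)}{n}$. Summing over $c\in\{-1,0,1\}$ and comparing with the generalized binomial expansion of $(1+\primelambda^{-1}J_{\primemu})^{-1}=\sum_{n\ge 0}(-1)^{n}\primelambda^{-n}J_{\primemu}^{n}$ will yield exactly the components predicted by $J_{\primemu}(\primelambda)^{-1}\otimes\varphi_{-1}+I_{\primemu}\otimes\varphi_{0}+J_{\primemu}(\primelambda)\otimes\varphi_{1}$.

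Once this matrix identification is done, the final assertion that the result is also isomorphic to the rigid twisted complex (\ref{eqn:MFHigherTwistedComplex}) of Proposition~\ref{prop:MFCanonicalFormTwistedComplex} is a purely algebraic matter: both matrices represent $\varphi(w',\lambda,\primemu)$ after an explicit change of basis built from the transition matrix diagonalizing $J_\primemu(\primelambda)$ into its companion-block presentation, exactly the conjugation used in the proof of Proposition~\ref{prop:MFCanonicalFormTwistedComplex} (via Lemma~\ref{lem:KroneckerProductSwitching}). This will also automatically upgrade the quasi-isomorphism in $\MF_{\AI}(xyz)$ to an honest isomorphism in $\MF(xyz)$.

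The principal obstacle I anticipate is the combinatorial step in the middle paragraph: justifying rigorously that a polygon with signed winding $c$ around $\color{red}\bigstar$ contributes with weight $\binom{c}{n}$ after $n$ insertions of the marked-point morphism $o$. The sign and ordering conventions governing the holonomy computation (Proposition~\ref{rmk:HolonomyComputing}) and the deformed strip sign rule (\ref{eqn:sign(u)}) must be handled carefully, especially to track the interaction between the contribution $(-1)^{\#(\partial u\cap\color{gray}\bigstar_{\mathbb{L}})}$ from $\mathbb{L}$'s twisted line bundle and the new signs introduced at each $o$-corner. Once this combinatorial lemma is in place, the rest of the argument is a straightforward matching of matrix entries.
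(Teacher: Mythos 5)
Your overall skeleton matches the paper's: apply Proposition \ref{prop:LMFTwistedComplex}, observe that only the consecutive chain of $\delta$'s contributes, identify the diagonal with $\LocalPhi(\mathcal{L}(w',\primelambda,1))$, and finish with a base change. The gap sits exactly in the step you flag as the principal obstacle, and the mechanism you propose there does not work as stated. The operations $\operatorm_{n+1+k}(o,\dots,o,-,b,\dots,b)$ with $n\ge 2$ insertions of the marked-point generator are not even defined until a perturbation scheme is fixed: $o_L$ is a marked point, not a transversal self-intersection, and with several copies of the same loop the relevant moduli spaces only make sense after the $\primemu$ copies of $L$ are perturbed off one another. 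The paper's proof hinges on a specific recursive perturbation near ${\color{red}\bigstar}$ (Figure \ref{fig:HamiltonianPerturbation}), under which the $o_{i(i+1)}$'s are nested in a definite cyclic order; this arrangement is what forces a polygon whose boundary orientation agrees with the $L_i$'s to meet at most one $o$-corner (hence no $\varphi_1$-type contribution to the $n\ge2$ superdiagonals), and forces a polygon with the opposite boundary orientation to sweep through all $n$ consecutive $o$-corners at once. Without fixing this perturbation your count is not determined.

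Moreover, the ``signed binomial coefficient $\binom{c(u)}{n}$ from marking $n$ of the crossings'' is numerology that reproduces the entries of $J_{\primemu}(\primelambda)^{\pm1}$ but is not a proof: for $c(u)=-1$ and $n\ge2$ there are no crossings to mark, yet there is exactly one polygon $u$ for each such $n$ (the one wrapping the wrong way through the nested corners), and the factor $(-1)^{n}\primelambda^{-(n+1)}$ arises from the sign rule (\ref{eqn:sign(u)}) at the $n$ odd-degree $o$-corners where $\partial u$ disagrees with the orientation of the loops, together with the holonomy from crossing ${\color{red}\bigstar}$ backwards $n+1$ times. The count is (one polygon) times (sign) times (holonomy), not a binomial count of polygons; making your argument rigorous amounts to carrying out the paper's two-case orientation analysis. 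Your final paragraph is fine in substance, though the relevant conjugation is the one from Theorem \ref{thm:LagMFCorrespondenceNondegenerate} combined with Lemma \ref{lem:KroneckerProductSwitching}, not a companion-block diagonalization of $J_{\primemu}(\primelambda)$.
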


\begin{proof}
According to
Proposition
\ref{prop:LMFTwistedComplex},
the mirror image is given in the form
$$
 \begin{tikzcd}[arrow style=tikz,>=stealth, sep=85pt, every arrow/.append style={shift left}]
   \displaystyle
   \hom^0\left(\mathcal{L},\mathbb{L}\right)
   ^{\oplus \primemu}
     \arrow{r}{\LocalPhi\left(\left(\mathcal{L}^{\oplus \primemu},\delta\right)\right)=\operatorm_1^{\delta,b}} 
   &
   \displaystyle
   \hom^1\left(\mathcal{L},\mathbb{L}\right)
   ^{\oplus \primemu}.
     \arrow{l}{\LocalPsi\left(\left(\mathcal{L}^{\oplus \primemu},\delta\right)\right)=\operatorm_1^{\delta,b}} 
 \end{tikzcd}
$$
Each diagonal component of $\LocalPhi\left(\left(\mathcal{L}^{\oplus \primemu},\delta\right)\right)$
is
$\LocalPhi\left(\mathcal{L}\right)$,
and strictly upper triangular components are
\begin{equation}\label{eqn:FukMFTwistedComponents}
\left(
\operatorm_1^{\delta,b}
\right)_{ij}
:=
\sum_{k=0}^{\infty}
\operatorm_{j-i+1+k}
\big(o_{i(i+1)},
\dots,o_{(j-1)j},
-,
\underbrace{b,\dots,b}_{k}
\big)
:
\hom^\bullet\left(\mathcal{L}_j,\mathbb{L}\right)
\rightarrow
\hom^{\bullet+1}\left(\mathcal{L}_i,\mathbb{L}\right)
\end{equation}
for $1\le i < j \le N$ and $\bullet\in\mathbb{Z}_2$.
Note that
it depends only on the difference $j-i$,
and hence we assume $i=1$
without loss of generality.
Substituting
$
f=\left.f\right|_p
\in
\hom^0\left(\mathcal{L}_j,\mathbb{L}\right)
$
for some
$p\in\chi^0\left(L_j,\mathbb{L}\right)$
into it becomes
$$
\sum_{
\begin{matrix}
\scriptscriptstyle
\left(x_1,X_1\right),\dots,\left(x_k,X_k\right)
\\
\scriptscriptstyle
\in\left\{(x,X),(y,Y),(z,Z)\right\}
\end{matrix}
}x_1\dots x_k\
\sum_{s\in\chi^1\left(L_1,\mathbb{L}\right)}
\sum_{u\in\mathcal{M}\left(o_{12},o_{23}\dots,o_{(j-1)j},p,X_1,\dots,X_k,\overline{s}\right)}
\sign(u)
\operatorname{hol}_s\left(\partial u\right)
\left(o_{12},
o_{23},
\dots,o_{(j-1)j},f,X_1,\dots,X_k\right),
$$
following the same procedure
as in \S \ref{sec:LMFComputationSubsection}.

We will associate each element $u$ in the moduli space
\begin{equation}\label{eqn:PerturbedModuliSpace}
\mathcal{M}\left(o_{12},o_{23},\dots,o_{(j-1)j},p,X_1,\dots,X_k,\overline{s}\right)
\end{equation}
with an element $u'$ in
$\mathcal{M}\left(p,X_1,\dots,X_k,\overline{s}\right)$,
by considering
recursive perturbations of $\primemu$-copies of $L_i=L=L\left(w'\right)$
($i\in\left\{1,\dots,\primemu\right\}$)
as described in Figure \ref{fig:HamiltonianPerturbation}.
\begin{figure}[H]
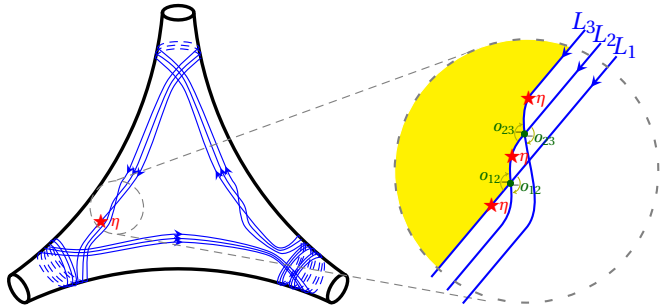

\centering
          \adjustbox{height=40mm}{

          }
\captionsetup{width=1\linewidth}
\caption{
Recursive perturbations of a loop $L$
}
\label{fig:HamiltonianPerturbation}
\end{figure}

\noindent
\emph{Case i) The boundary orientation of $u$ coincides with the orientations of $L_i$'s.}
Such a polygon $u$
cannot have angles at more than one of 
$
o_{12},o_{23},\dots,o_{(j-1)j}
$
in a consecutive manner
because of their arrangement as in Figure \ref{fig:HamiltonianPerturbation}.
Hence
it
exists only when $j=2$,
and it has an angle at $o_{12}$.
Ignoring $o_{12}$ and identifying $L_1$ and $L_2$,
$u$ has an obvious counterpart $u'$ in
$\mathcal{M}\left(p,X_1,\dots,X_k,\overline{s}\right)$
that passes through the point {\small$\color{red}\bigstar$}
and has the same boundary orientation with $L_i$'s.
In other words,
such polygons $u$  are identified with
the polygons $u'$ in
$\mathcal{M}\left(p,X_1,\dots,X_k,\overline{s}\right)$
such that the holonomy of $\partial u'$ contains one $\primelambda$-factor.

Under the correspondence,
it is easily checked that
$\sign(u)$ and $\sign(u')$ are the same,
and the holonomy of $\partial u$ and $\partial u'$
contain zero and one $\primelambda$-factor,
respectively,
because they pass through the point {\small$\color{red}\bigstar$}
$0$ and $1$ times,
respectively.
As those polygons $u'$ contribute to the factor $\primelambda\varphi_1$
in the decomposition of $\LocalPhi\left(\mathcal{L}\left(w',\primelambda,1\right)\right)$,
their corresponding polygons $u$ contribute $\varphi_1$ to the component (\ref{eqn:FukMFTwistedComponents})
for $j=i+1$.

\noindent
\emph{Case ii) The boundary orientation of $u$ differs from the orientations of $L_i$'s.}
Such a polygon $u$ now have angles at $o_{12},o_{23},\dots,o_{(j-1)j}$ consecutively
as drawn in yellow in Figure \ref{fig:HamiltonianPerturbation}.
Ignoring those angles and all perturbations,
$u$ has an obvious counterpart $u'$ in
$\mathcal{M}\left(p,X_1,\dots,X_k,\overline{s}\right)$
that passes through the point {\small$\color{red}\bigstar$}
and has the opposite boundary orientation with $L_i$'s.
In other words,
such polygons $u$  are identified with
the polygons $u'$ in
$\mathcal{M}\left(p,X_1,\dots,X_k,\overline{s}\right)$
such that the holonomy of $\partial u'$ contains one $\primelambda^{-1}$-factor.

Under the correspondence,
the quotient of $\sign(u)$ and $\sign(u')$ is given by
$(-1)^{j-i}$,
because $u$ have $j-i$ more angles of odd degree that $u'$,
where its boundary orientation is different from loops.
The holonomy of $\partial u$ and $\partial u'$ contain one
$\primelambda^{-(j-i+1)}$ and $\primelambda^{-1}$ factor,
respectively,
because they pass through the point {\small$\color{red}\bigstar$}
with the opposite orientation $j-i+1$ and $1$ times,
respectively.
As those polygons $u'$ contribute to the factor $\primelambda^{-1}\varphi_{-1}$
in the decomposition of $\LocalPhi\left(\mathcal{L}\left(w',\primelambda,1\right)\right)$,
their corresponding polygons $u$ contribute $-(-\primelambda)^{-(j-i+1)}\varphi_{-1}$ to the component (\ref{eqn:FukMFTwistedComponents}).

To summarize,
the polygons in Case i) and Case ii) contribute to
$J_{\primemu}\left(\primelambda\right)\otimes\varphi_1$
and
$J_{\primemu}\left(\primelambda\right)^{-1}\otimes\varphi_{-1}$
part of the given matrix factorization,
respectively
(while the $I_{\primemu}\otimes\varphi_0$ part is not relevant to the twisting $o_{i(i+1)}$'s).

The second statement follows from bases change as done in Theorem \ref{thm:LagMFCorrespondenceNondegenerate}.
(One can also use (\ref{eqn:CanonicalFormKroneckerProduct}).)
\end{proof}


\begin{remark}
Proposition \ref{prop:LMFTwistedComplexComputation}
also reveals that the canonical form
$\mathcal{L}\left(w',\primelambda,\primemu\right)$
of loops with a local system
corresponding to non-degenerate loop data $\left(w',\primelambda,\primemu\right)$
is quasi-isomorphic in $\operatorname{Tw}\Fuk\left(\POP\right)$
to the abstract twisted complex (\ref{eqn:FukAbstractTwistedComplex}),
which is made of $\primemu$-copies of $\mathcal{L}\left(w',\primelambda,1\right)$
and odd-degree morphism $o$'s between them.
In fact,
it can be also derived from purely Fukaya-categorical discussions,
not appealing to homological mirror symmetry.
It has been shown in \cite[Theorem 5.8]{B17} that
every higher rank local system over a loop can be realized as an abstract twisted complex of
rank $1$ local systems,
using de Rham version of Fukaya category.

But our specific realization of recursive perturbations as in Figure \ref{fig:HamiltonianPerturbation}
still suggests how we should perturb underlying loops
when we want to work with the perturbation method instead of de Rham version.
Especially when inputs of an $\AI$-operation involve multiple
$e_L$'s and $o_L$'s,
its definition becomes more tricky and unsymmetric in some sense,
which was not fully explained in Remark \ref{rmk:AIoperationwithInfinitesimalGenerators}.
Nevertheless,
some systematic recursive perturbations can be made
so that the $\AI$-relations remain valid.

This also gives an example where the mapping cone of two loops at their intersection is
not quasi-isomorphic to their surgery at that point,
contrary to the usual situation which has been explained
in many places in the literature including \cite[Lemma 5.4]{Ab08}, \cite[Theorem 4.1]{OPS18}
and \cite[Theorem 6.68]{Boc21}.
This happens because two perturbed loops cannot satisfy the minimality condition,
as also remarked in \cite[Lemma 2.25]{AS21}.
We hope our explicit construction of twisted complexes involving $o_L$'s
can be extended to realizing mapping cones of arbitrary morphisms
in Fukaya category as geometric objects.
We will come back to these points in another future work.

\end{remark}


\newpage

\appendix

\section{Relevant Categories}

\subsection{$\AI$-category}

Let us first recall the definition (and convention) of an \emph{$\AI$-category}
over a field $\mathbb{k}$ and related concepts.

\begin{defn}\label{defn:AinftyCategory}
A $\mathbb{Z}_2$-graded \textnormal{\textbf{$\AI$-category}} $\mathcal{A}$ over $\mathbb{k}$ consists of
a class of objects $\operatorname{Ob}\left(\mathcal{A}\right)$,
a $\mathbb{Z}_2$-graded $\mathbb{k}$-vector space
$
\hom(\mathcal{L}_0,\mathcal{L}_1)
=
\hom^0\left(\mathcal{L}^0,\mathcal{L}^1\right)
\oplus
\hom^1\left(\mathcal{L}^0,\mathcal{L}^1\right)
$
for $\mathcal{L}_0$, $\mathcal{L}_1 \in \operatorname{Ob}\left(\mathcal{A}\right)$,
and \emph{$\AI$-operations} $\left\{ \operatorm_{k} \right\}_{k \geq 1}$
given by $\mathbb{k}$-linear maps
$$
\operatorm_{k}
:
\hom\left(\mathcal{L}_0,\mathcal{L}_1\right)
\otimes \cdots \otimes
\hom\left(\mathcal{L}_{k-1},\mathcal{L}_k\right)
\rightarrow
\hom\left(\mathcal{L}_0,\mathcal{L}_k\right)
$$
of degree $2-k$
\footnote{
Here it is just $k$ as we are using the $\mathbb{Z}_2$-grading.
}
satisfying \emph{$\AI$-relations}
\begin{equation}\label{eqn:AI-relation}
\sum_{0\le i<j\le n} (-1)^{\left|f_{1}\right| + \cdots + \left|f_{i}\right| - i} \operatorm_{n-j+i+1} \left (f_{1},\ldots,f_{i}, \operatorm_{j-i} \left( f_{i+1}, \ldots, f_{j} \right),f_{j+1}, \ldots, f_{n} \right)=0
\end{equation}
for any fixed $n\in\mathbb{Z}_{\ge1}$
and morphisms
$
f_i\in\hom^\bullet\left(\mathcal{L}_{i-1},\mathcal{L}_{i}\right)
$
($i\in\left\{1,\dots,n\right\}$, $\bullet\in\mathbb{Z}_2$).


It is called (strictly) \textnormal{\textbf{unital}} if each object $\mathcal{L}$ has a \emph{unit}
$
\id_{\mathcal{L}}
\in
\hom\left(\mathcal{L},\mathcal{L}\right)
$
satisfying
\begin{equation}\label{eqn:Unit}
\operatorm_2\left(\id_{\mathcal{L}},f\right)
=
f,
\quad
\operatorm_2\left(g,\id_{\mathcal{L}}\right)
=
(-1)^{\left|g\right|}
g
\quad\text{and}\quad
\operatorm_k\left(\dots,\id_{\mathcal{L}},\dots\right) = 0
\quad
\text{if }k\ne2
\end{equation}
for any $\mathcal{L}'\in\operatorname{Ob}\left(\mathcal{A}\right)$, $f\in\hom\left(\mathcal{L},\mathcal{L}'\right)$ and $g\in\hom^\bullet\left(\mathcal{L}',\mathcal{L}\right)$ ($\bullet\in\mathbb{Z}_2$).

\end{defn}

\begin{defn}\label{defn:AinftyFunctor}

An \textnormal{\textbf{$\AI$-functor}} $\mathcal{F}:=\left\{\mathcal{F}_k\right\}_{k\ge0}$ between two $\AI$-categories $\mathcal{A}$ and $\mathcal{B}$ consists of
a mapping
$$\mathcal{F}_{0} : \operatorname{Ob}(\mathcal{A}) \to \operatorname{Ob}(\mathcal{B})$$
and $\mathbb{k}$-linear maps ($k\ge1$)
$$
\mathcal{F}_{k}
:
\hom_{\mathcal{A}}\left(\mathcal{L}_0,\mathcal{L}_1\right)
\otimes \cdots \otimes
\hom_{\mathcal{A}}\left(\mathcal{L}_{k-1},\mathcal{L}_k\right)
\rightarrow
\hom_{\mathcal{B}}\left(\mathcal{F}_{0}\left(\mathcal{L}_0\right),\mathcal{F}_{0}\left(\mathcal{L}_k\right)\right)
$$
of degree $1-k$,
satisfying \emph{$\AI$-relations}
\begin{align}\label{eqn:AI-relationsFunctor}
\begin{split}
&\sum_{1\le k \le n}\sum_{1\le i_1<\cdots<i_k= n} \operatorm_{k}^{\mathcal{B}} \left(\mathcal{F}_{i_{1}} \left( f_{1}, \dots, f_{i_{1}} \right), \dots, \mathcal{F}_{n-i_{k-1}} \left( f_{i_{k-1} + 1}, \dots, f_{n} \right) \right)\\
&\hspace{10mm}
=\sum_{0\le i<j\le n} (-1)^{\left|f_{1}\right| + \cdots + \left|f_{i}\right| - i} \mathcal{F}_{n-j+i+1} \left (f_{1}, \dots, f_{i}, \operatorm_{j-i}^{\mathcal{A}} \left( f_{i+1}, \ldots, f_{j} \right), f_{j+1}, \dots, f_{n} \right)
\end{split}
\end{align}
for any fixed $n\in\mathbb{Z}_{\ge1}$
and morphisms
$
f_i\in\hom_{\mathcal{A}}^{\bullet}\left(\mathcal{L}_{i-1},\mathcal{L}_{i}\right)
$
($i\in\left\{1,\dots,n\right\}$, $\bullet\in\mathbb{Z}_2$).

The \textnormal{\textbf{composition}}
$
\mathcal{G}\circ\mathcal{F}
:=
\left\{
\left(
\mathcal{G}\circ\mathcal{F}
\right)_k
\right\}_{k\ge0}
$
of two $\AI$-functors
$
\mathcal{F}:\mathcal{A}\rightarrow\mathcal{B}
$
and
$
\mathcal{G}:\mathcal{B}\rightarrow\mathcal{C}
$
is given by
$$
\left(\mathcal{G}\circ\mathcal{F}\right)_n
\left(f_1,\dots,f_n\right)
:=
\sum_{1\le k \le n}\sum_{1\le i_1<\cdots<i_k= n} \mathcal{G}_k \left(\mathcal{F}_{i_{1}} \left( f_{1}, \dots, f_{i_{1}} \right), \dots, \mathcal{F}_{n-i_{k-1}} \left( f_{i_{k-1} + 1}, \dots, f_{n} \right) \right).
$$

An $\AI$-functor $\mathcal{F}:\mathcal{A}\rightarrow\mathcal{B}$ is called \textnormal{\textbf{unital}} if $\mathcal{A}$, $\mathcal{B}$ are unital and
$$
\mathcal{F}_1\left(\id_{\mathcal{L}}\right)
=
\id_{\mathcal{F}_0\left(\mathcal{L}\right)}
\quad\text{and}\quad
\mathcal{F}_k\left(\dots,\id_{\mathcal{L}},\dots\right) = 0
\quad\text{if }k\ge2
$$
for any $\mathcal{L}\in\operatorname{Ob}\left(\mathcal{A}\right)$.

\end{defn}

The $\mathbb{Z}_2$-graded $\mathbb{k}$-vector space
$
\hom\left(\mathcal{L}_0,\mathcal{L}_1\right)
=
\hom^0\left(\mathcal{L}_0,\mathcal{L}_1\right)
\oplus
\hom^1\left(\mathcal{L}_0,\mathcal{L}_1\right)
$
has a degree $1$ map
$
\operatorm_1:
\hom\left(\mathcal{L}_0,\mathcal{L}_1\right)
\rightarrow
\hom\left(\mathcal{L}_0,\mathcal{L}_1\right)
$
satisfying $\operatorm_1^2=0$
by the $\AI$-relation for $n=1$.
Therefore,
it becomes a cochain complex equipped with a \textbf{differential} $\operatorm_1$.
Taking its cohomology yields an ordinary category:

\begin{defn}\label{defn:CohomologicalCategory}

For a unital $\AI$-category $\mathcal{A}$,
its \textnormal{\textbf{cohomological category}} $H^0\left(\mathcal{A}\right)$ is an ordinary category
whose objects are the same as $\mathcal{A}$ and the morphism space between two objects $\mathcal{L}_0$, $\mathcal{L}_1$ is given by
$$
\Hom_{H^0\left(\mathcal{A}\right)}\left(\mathcal{L}_0,\mathcal{L}_1\right)
:=
H^0\left(\hom\left(\mathcal{L}_0,\mathcal{L}_1\right), \operatorm_1\right).
$$

A unital $\AI$-functor $\mathcal{F}:\mathcal{A}\rightarrow\mathcal{B}$
induces an ordinary functor
$
H^0\left(\mathcal{F}\right):H^0\left(\mathcal{A}\right)\rightarrow H^0\left(\mathcal{B}\right),
$
whose mapping on objects is $\mathcal{F}_0$
and action on morphisms is given by
$
\left[f\right]\mapsto\left[\mathcal{F}_1\left(f\right)\right].
$

\end{defn}

\subsection{Compact Fukaya category of a surface}\label{sec:CptFukSurface}

We establish our geometric setup of the Fukaya category.
We refer 
to
\cite{FOOO,S08,AJ}
for
its
general definitions and properties,
\cite{Ab08,Se}
for Fukaya category of surfaces,
and
\cite{auroux2014beginner,B17,konstantinov2017higher}
for higher rank vector bundles in Fukaya category.

\subsubsection{Objects}


Let $\left(\Sigma,\omega\right)$ be a $2$-dimensional symplectic manifold (possibly with boundary)
of finite type.
That is, $\Sigma$
is a connected oriented smooth surface (with boundary) of finite type
and $\omega$ is an area form on it.
Then any smooth curve
$
L$
in $\Sigma$
automatically satisfies the Lagrangian conditions
$
\left(
\left.\omega\right|_L=0,\
\dim L = \frac{1}{2}\dim \Sigma
\right)
$
and hence is a Lagrangian submanifold in $\Sigma$.

Consider an immersed
oriented smooth loop
$
L:S^1\rightarrow \Sigma\setminus \partial \Sigma
$
having only transversal self-intersections.
We assign two distinct \textbf{marked points}
$e_L, o_L$
\footnote{
Equivalently,
we can assign a Morse function $f_L:S^1\rightarrow \mathbb{R}$ on the domain of $L$
which has a minimum at $e_L$ and a maximum at $o_L$
so that they are all critical points of $f_L$.
Graph of its differential $df_L$ induces a $C^0$-small Hamiltonian perturbation $\phi_H\left(L\right)$ in a neighborhood of $L$
so that they make transversal intersections at $e_L$ and $o_L$.
}
on the image of $L$ away from its self-intersections,
and call the triple
$
\left(L,e_L,o_L\right)
$
a \textbf{marked loop} in $\Sigma$.
When there is no need to specify marked points,
we will call a marked loop just a \textbf{loop},
and denote it shortly as $L$.

\begin{defn}\label{defn:RegularSet}
A set $\mathcal{O}$ consisting of some marked loops in $\Sigma$ is called \emph{\textbf{transversal}} if it satisfies the following:
\begin{itemize}
\item
Any two distinct loops in $\mathcal{O}$ meet transversally.
\item
There are no triple intersections among loops in $\mathcal{O}$.
\item
Marked points of each loop in $\mathcal{O}$ do not lie on any intersection of itself or any other loop in $\mathcal{O}$.
\end{itemize}

\end{defn}

\begin{defn}
(1)
A loop $L:S^1\rightarrow \Sigma$ is called \emph{\textbf{obstructed}}
if it bounds an immersed disk or `fish-tale'.
This means that there is an immersion $i:D^2\rightarrow\Sigma$
which satisfies $i\left(e^{2\pi it}\right)=L\left( \imath (t) \right)$
for some immersion $\imath:[0,1]\rightarrow S^1$.
Otherwise, $L$ is called \emph{\textbf{unobstructed}}.
A transversal set $\mathcal{O}$ of marked loops in $\Sigma$ is called \emph{\textbf{unobstructed}}
if all of its elements are unobstructed
\footnote{
We need this condition for the $\AI$-relations (without $\operatorm_0$-terms) to hold.
See \cite{Ab08}.
}.

\noindent
(2)
A transversal set $\mathcal{O}$ of marked loops in $\Sigma$ is called \emph{\textbf{full}}
if it contains at least one element in each primitive free homotopy class other than the null-homotopic one.

\end{defn}


Given a full unobstructed set $\mathcal{O}$ of marked loops in $\Sigma$,
we define a $\mathbb{Z}_2$-graded \textbf{compact Fukaya category}
$
\Fuk\left(\Sigma\right)
=
\Fuk_{\field}^{\mathbb{Z}_2}\left(\Sigma;\mathcal{O}\right)
$
with respect to $\mathcal{O}$
over
$\field$.
\begin{defn}
An \textnormal{\textbf{object}} of
$
\Fuk\left(\Sigma\right)
$
is given by a triple
$
\mathcal{L}
=
\left(L,E,\nabla\right),
$
which consists of the following:
\begin{itemize}
\item
a marked loop $L:S^1\rightarrow \Sigma$ in $\mathcal{O}$,
\item
a $\field$-vector bundle $E$ of finite rank $\primemu$ over the domain of $L$,
and
\item
a flat
\footnote{
In fact,
every connection is flat in this case because $\dim S^1 = 1$,
but we still stick to the terminology to emphasize that it defines a local system.
}
connection $\nabla$ on $E$.
\end{itemize}
\end{defn}


\begin{figure}[H]
          \adjustbox{height=37.5mm}{

          }
\centering
\captionsetup{width=1\linewidth}
\caption{Objects in $\Fuk\left(\POP\right)$ where $\POP=S^2\setminus
\left\{3\text{ points}\right\}
$}
\label{fig:FukObjects}
\end{figure}




For a computational purpose,
we want to
(not globally)
trivialize $E$
so that the parallel transport between two points is trivial
away from a special point on $L$.
For that,
we choose a point {\small$\color{red}\bigstar$} on (the domain of) $L$
avoiding any intersection points (also with other loops)
and denote by
$$
\hol_{\small\color{red}\bigstar}(E)
\in
\Hom\left(\left.E\right|_{\small\color{red}\bigstar},\left.E\right|_{\small\color{red}\bigstar}\right)
$$
the holonomy of $(E,\nabla)$ along $L$ at {\small$\color{red}\bigstar$}
\footnote{
It is the only invariant of flat bundles under gauge equivalence.
Two gauge equivalent flat bundles $\left(E_1,\nabla_1\right)$ and $\left(E_2,\nabla_2\right)$
define quasi-isomorphic objects in the Fukaya category.
See, for example,
Proposition 4.9 in \cite{B17}.
}.
We choose an identification 
$
\left.E\right|_{\small\color{red}\bigstar}
\cong
\field^{\primemu}
$
where $\primemu$ is the rank of $E$,
then
$
\hol_{\small\color{red}\bigstar}(E)
$
is represented by some matrix
$
\primeLambda
\in
\operatorname{GL}_{\primemu}\left(\field\right).
$
(We simply say that the object $(L,E,\nabla)$ has a holonomy $\primeLambda$
(at {\small$\color{red}\bigstar$}).)

\begin{prop}\label{rmk:ChooseTrivialization}\label{rmk:HolonomyComputing}
There is a trivialization
$$
\left.E\right|_{S^1\setminus{\small\color{red}\bigstar}}
\cong
\left(S^1\setminus{\small\color{red}\bigstar}\right)
\times
\field^{\primemu}
$$
of $E$ over $S^1\setminus{\small\color{red}\bigstar}$
satisfying the following:
For any two points $p$ and $q$ on (the domain of) $L$ other than {\small$\color{red}\bigstar$},
the parallel transport
$$
P\left(L_{p\rightarrow q}\right)\in \Hom\left(\left.E\right|_p,\left.E\right|_q\right)
$$
from $\left.E\right|_p$ to $\left.E\right|_q$ along $L$
(in the shortest way from $p$ to $q$ following the orientation of $L$)
is represented by

(1) $\primeLambda$ if there is {\small$\color{red}\bigstar$} in the way from $p$ to $q$, and

(2) $I_{\primemu}$
(the identity matrix) otherwise,

\noindent
with respect to the induced identifications
$\left.E\right|_p\cong\field^{\primemu}$
and
$\left.E\right|_q\cong\field^{\primemu}$.
\end{prop}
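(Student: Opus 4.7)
The plan is to use the fact that $S^1 \setminus {\small\color{red}\bigstar}$ is diffeomorphic to an open interval, hence contractible, and therefore any $\field$-vector bundle over it is trivial. The given identification $\left.E\right|_{\small\color{red}\bigstar} \cong \field^{\primemu}$ together with the flat connection $\nabla$ will pin down one canonical trivialization with the desired properties.

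First I would pick a parameterization of $S^1$ so that ${\small\color{red}\bigstar}$ corresponds to $t = 0 \sim 1$, and view $S^1 \setminus {\small\color{red}\bigstar}$ as the open interval $(0,1)$. I would then choose a base point $p_{0} \in S^1 \setminus {\small\color{red}\bigstar}$ located just after ${\small\color{red}\bigstar}$ along the orientation of $L$, and transport the chosen identification $\left.E\right|_{\small\color{red}\bigstar} \cong \field^{\primemu}$ across a small neighborhood of ${\small\color{red}\bigstar}$ on the outgoing side to obtain an identification $\left.E\right|_{p_{0}} \cong \field^{\primemu}$. Then, for every $r \in S^1 \setminus {\small\color{red}\bigstar}$, I would define the isomorphism $\left.E\right|_{r} \cong \field^{\primemu}$ to be $P\left(L_{p_{0}\rightarrow r}\right)$, where the path from $p_{0}$ to $r$ runs along $L$ without crossing ${\small\color{red}\bigstar}$. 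Since $S^1 \setminus {\small\color{red}\bigstar}$ is simply connected and $\nabla$ is flat, this assignment depends only on $r$ and not on the choice of path, and it varies smoothly in $r$, giving the desired global trivialization.

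Next I would verify the two stated properties of parallel transport. Fix $p$ and $q$ on the domain of $L$. If the shortest oriented arc from $p$ to $q$ avoids ${\small\color{red}\bigstar}$, then by functoriality of parallel transport along $L$ in the trivialized region, $P\left(L_{p_{0}\to q}\right) = P\left(L_{p\to q}\right) \circ P\left(L_{p_{0}\to p}\right)$, so under the induced trivializations $\left.E\right|_{p} \cong \left.E\right|_{q} \cong \field^{\primemu}$ the map $P\left(L_{p\to q}\right)$ becomes the identity $I_{\primemu}$. If the shortest oriented arc does pass through ${\small\color{red}\bigstar}$, I would factor it as the composition of the arc from $p$ to the incoming side of ${\small\color{red}\bigstar}$, the holonomy jump $\hol_{\small\color{red}\bigstar}(E)$, and the arc from the outgoing side to $q$; by construction the two outer pieces act as $I_{\primemu}$ in the trivialization while the middle piece is represented by $\primeLambda$, so the composite equals $\primeLambda$.

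I do not expect any deep obstruction; the proof is essentially a bookkeeping argument about trivializing a flat bundle on a contractible space. The one place that requires care is to make sure the conventions are consistent, namely (i) that the identification at $p_{0}$ is obtained from $\left.E\right|_{\small\color{red}\bigstar}$ via the outgoing side, so that the ``jump across ${\small\color{red}\bigstar}$'' in the trivialization is exactly $\hol_{\small\color{red}\bigstar}(E) = \primeLambda$ rather than its inverse, and (ii) that ``the shortest way from $p$ to $q$'' is interpreted consistently (oriented arc on $S^1$ of length $< 1$), so that the dichotomy ``passes through ${\small\color{red}\bigstar}$ or not'' is unambiguous for all $p \neq q$ in $S^1 \setminus {\small\color{red}\bigstar}$.
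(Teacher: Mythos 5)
Your proposal is correct and follows essentially the same route as the paper: the paper trivializes $\left.E\right|_{S^1\setminus{\small\color{red}\bigstar}}$ via the parallel transports $P\left(L_{{\small\color{red}\bigstar}\rightarrow p}\right)$ out of the marked point (your base point $p_0$ just past ${\small\color{red}\bigstar}$ plays the same role), and then checks both cases by factoring $P\left(L_{p\rightarrow q}\right)$ through $E_{\small\color{red}\bigstar}$, with the holonomy $\primeLambda$ appearing exactly when the oriented arc crosses ${\small\color{red}\bigstar}$. Your attention to the orientation convention at the jump is the same care the paper implicitly takes.
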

\begin{proof}
Note that the linear isomorphisms
$
P\left(L_{{\small\color{red}\bigstar}\rightarrow p}\right)
:
E_{\small\color{red}\bigstar}
\xrightarrow{\cong}
E_p
$
for each $p\in S^1\setminus{\small\color{red}\bigstar}$
yield
the bundle isomorphism
$$
\begin{matrix}
\left(S^1\setminus{\small\color{red}\bigstar}\right)
\times
E_{\small\color{red}\bigstar}
\xrightarrow{\cong}
\left.E\right|_{S^1\setminus{\small\color{red}\bigstar}}.
\\[2mm]
\hspace{20.5mm}
\left(p,w\right)
\mapsto
P\left(L_{{\small\color{red}\bigstar}\rightarrow p}\right)
\left(w\right)
\end{matrix}
$$

In case (1),
$$
P\left(L_{p\rightarrow q}\right)
=
P\left(L_{{\small\color{red}\bigstar}\rightarrow p}\right)^{-1}
\circ
\hol_{\small\color{red}\bigstar}(E)
\circ
P\left(L_{{\small\color{red}\bigstar}\rightarrow q}\right)
:
E_p\xrightarrow{\cong}
E_{\small\color{red}\bigstar}
\rightarrow
E_{\small\color{red}\bigstar}
\xrightarrow{\cong}
E_q
$$
coincides with the map
$
\hol_{\small\color{red}\bigstar}(E)
:
E_{\small\color{red}\bigstar}
\rightarrow
E_{\small\color{red}\bigstar}
$
under the identifications of $E_p$ and $E_q$ with $E_{\small\color{red}\bigstar}$.

In case (2),
$$
P\left(L_{p\rightarrow q}\right)
=
P\left(L_{{\small\color{red}\bigstar}\rightarrow p}\right)^{-1}
\circ
P\left(L_{{\small\color{red}\bigstar}\rightarrow q}\right)
:
E_p\xrightarrow{\cong}
E_{\small\color{red}\bigstar}
\xrightarrow{\cong}
E_q
$$
is just the identity on
$
E_{\small\color{red}\bigstar}
$
under the same identifications.
\end{proof}


\subsubsection{Morphisms}

Given two objects
$
\mathcal{L}_0
=
\left(L_0,E_0,\nabla_0\right)
$
and
$
\mathcal{L}_1
=
\left(L_1,E_1,\nabla_1\right)
$
in $\Fuk\left(\Sigma\right)$,
roughly speaking,
their \textbf{morphism space}
$
\hom\left(\mathcal{L}_0,\mathcal{L}_1\right)
$
is defined as
a direct sum of $\mathbb{Z}_2$-graded $\field$-vector spaces
attached to each (self-)intersection of underlying curves $L_0$ and $L_1$.
We will explain the attached vector spaces below
by dividing it into two cases:
\label{page:TwoCasesOfHom}


\textbf{In the case $L_0 \ne L_1$},
as they are compact,
they have finitely many intersection points.
We define the set
$$
\chi\left(L_0,L_1\right)
:=
L_0 \cap L_1,
$$
which is divided into an \emph{even}-part and an \emph{odd}-part according to orientations of two curves at each element.
An element $p\in \chi\left(L_0,L_1\right)$ has an \emph{even}-degree if
the orientation of $T_p L_1 \oplus T_p L_0$ agrees with that of $T_p \Sigma$,
and an \emph{odd}-degree otherwise.
Both situations are compared in
Figure \ref{fig:Morphisms}.
We use $+$ or $-$ signs to indicate that $p$ is even or odd, respectively, writing $\left|p\right|=0$ or $1$.
We denote by
$
\chi^0\left(L_0,L_1\right)
$
(the even-part) and
$
\chi^1\left(L_0,L_1\right)
$
(the odd-part)
the subsets of
$
\chi\left(L_0,L_1\right)
$
consisting of even-degree and odd-degree elements,
respectively.

Each intersection of $L_0$ and $L_1$ contributes one element to each of $\chi\left(L_0,L_1\right)$ and $\chi\left(L_1,L_0\right)$.
We distinguish them by denoting one by $p$ and the other by $\overline{p}$.
Note that
$\left|p\right| + \left|\overline{p}\right| = 1$ always holds.
It is convenient to
view $p\in\chi\left(L_0,L_1\right)$ as a pair of clockwise opposite angles from $L_0$ to $L_1$,
and $\overline{p}\in\chi\left(L_1,L_0\right)$ as the angles from $L_1$ to $L_0$,
as described in Figure \ref{fig:Morphisms}.

\begin{figure}[H]
   \begin{minipage}{0.65\textwidth}
     \centering
\begin{subfigure}{0.45\textwidth}
\adjustbox{height=40mm}{

          }
\centering
\caption{
$p\in\chi^0\left(L,L\right)$,
$\overline{p}\in\chi^1\left(L,L\right)$
}
\label{fig:SelfIntersections}
\end{subfigure}
\centering
\captionsetup{width=1\linewidth}
\caption{Angles $p$, $\overline{p}$
$\in \chi\left(L,L\right)$
}
\label{fig:SelfMorphisms}
   \end{minipage}
\end{figure}


\textbf{In the case $L:=L_0=L_1$},
it has finitely many self-intersections.
In a small neighborhood of such a self-intersection $p$,
there are two pieces
$\tilde{L}_a$, $\tilde{L}_b$ of $L$ meeting at $p$.
We may assume that the orientation of
$T_p \tilde{L}_b \oplus T_p \tilde{L}_a$
agrees with that of $T_p\Sigma$.
As in Figure \ref{fig:SelfMorphisms},
the pair of clockwise opposite convex angles from $\tilde{L}_a$ to $\tilde{L}_b$
is denoted by $p$
and has an even-degree.
Its adjacent pair of clockwise opposite convex angles from $\tilde{L}_b$ to $\tilde{L}_a$
is denoted by $\overline{p}$
and has an odd-degree.
Two (pair-of-)angles $p$ and $\overline{p}$ will have different meaning 
when we count polygons involving them.

We consider the marked points $e_L$ and $o_L$ to have even and odd degrees, respectively.
We define the set
$$
\chi\left(L,L\right)
:=
\left\{
e_L, o_L
\right\}
\cup
\left\{
p,\overline{p}
\left|
p:
\text{a self-intersection point of $L$}
\right.
\right\},
$$
which is divided into two subsets
$\chi^0\left(L,L\right)$
and
$\chi^1\left(L,L\right)$
as before.

\textbf{In both cases},
we define the $A_\infty$-morphism spaces as
\begin{equation}\label{eqn:DefinitionOfHom}
\hom^{\bullet}\left(\mathcal{L}_0,\mathcal{L}_1\right)
:=
\bigoplus_{p\in \chi^{\bullet}\left(L_0,L_1\right)}
\Hom_{\field}\left(\left.E_0\right|_p,\left.E_1\right|_{p}\right)
\quad
\left(\bullet\in\mathbb{Z}_2\right)
\end{equation}
where
$\left.E_0\right|_p$ and $\left.E_1\right|_{p}$
are the fibers of $E_0$ and $E_1$ over
the preimages (in $S^1$) of the point $p\in\Sigma$ under $L_0$ and $L_1$
(or under different branches $\tilde{L}_a$ and $\tilde{L}_b$ of $L$ in the case $L:=L_0 = L_1$),
respectively.
They yield a $\mathbb{Z}_2$-graded morphism space
$$
\hom\left(\mathcal{L}_0,\mathcal{L}_1\right)
:=
\hom^0\left(\mathcal{L}_0,\mathcal{L}_1\right)
\oplus
\hom^1\left(\mathcal{L}_0,\mathcal{L}_1\right).
$$
An element $f\in\Hom_{\field}\left(\left.E_0\right|_p,\left.E_1\right|_p\right)$
for each $p\in\chi\left(L_0,L_1\right)$
is called a \emph{base morphism} over $p$,
and we denote it by $\left.f\right|_p$
to specify that fact.

\begin{remark}\label{rmk:TrivializingHom}
According to Proposition \ref{rmk:ChooseTrivialization},
we have identifications
$\left.E_0\right|_{p} \cong \field^{\primemu_0}$
and
$\left.E_1\right|_{p} \cong \field^{\primemu_1}$,
which yield
$
\Hom_{\field}\left(\left.E_0\right|_p,\left.E_1\right|_{p}\right)
\cong
\field^{\primemu_1 \times \primemu_0}.
$
Denoting by $p_{ab}$ the generator corresponding to the $\primemu_1\times\primemu_0$ matrix
whose the only nonzero entry is $1$ in the $(a,b)$-th position,
we have
$$
\hom^\bullet\left(\mathcal{L}_0,\mathcal{L}_1\right)
\cong
\bigoplus_{p\in \chi^\bullet\left(L_0,L_1\right)}
\bigoplus_{\smat{1\le a \le \primemu_1 \\ 1\le b \le \primemu_0}}
\Span\left\{p_{ab}\right\}
\quad
(\bullet\in\mathbb{Z}_2).
$$
In the simplest case $\primemu_0=\primemu_1=1$,
we just write
$$
\hom^\bullet\left(\mathcal{L}_0,\mathcal{L}_1\right)
\cong
\bigoplus_{p\in \chi^\bullet\left(L_0,L_1\right)}
\Span\left\{p\right\}
=
\Span
\left(
\chi^\bullet\left(L_0,L_1\right)
\right)
\quad
(\bullet\in\mathbb{Z}_2).
$$

\end{remark}

\subsubsection{$\AI$-operations}

The \textbf{$A_{\infty}$-operations} $\left\{\operatorm_k\right\}_{k\ge1}$
on base morphisms between objects
$
\mathcal{L}_i
=
\left(L_i,E_i,\nabla_i\right)
$
($i\in\left\{0,\dots,k\right\}$)
are defined as
\begin{align}\label{eqn:OperatormDefinition}
\begin{split}
\operatorm_{k}
:
\hom\left(\mathcal{L}_0,\mathcal{L}_1\right)
\otimes \cdots \otimes
\hom\left(\mathcal{L}_{k-1},\mathcal{L}_k\right)
&\rightarrow
\hom\left(\mathcal{L}_0,\mathcal{L}_k\right)
\\
\left(\left.f_1\right|_{p_1},\dots,\left.f_k\right|_{p_k}\right)
&\mapsto
\sum_{q\in
\chi\left(L_0,L_k\right)
}
\sum_{u\in\mathcal{M}\left(p_1,\dots,p_k,\overline{q}\right)}
\sign\left(u\right)
\hol_q\left(\partial u\right) \left(f_1,\dots,f_k\right)
\end{split}
\end{align}
for any
$
p_i\in\chi\left(L_{i-1},L_i\right)
$
other than $e_L$ or $o_L$
\footnote{
If $e_L$ or $o_L$'s are involved as inputs, the definition of $\AI$-operation becomes much complicated.
See Remark \ref{rmk:AIoperationwithInfinitesimalGenerators} for some cases.
},
such that
$p_{i+1}\ne \overline{p_i}$,
and
$
f_i=\left.f_i\right|_{p_i}
\in
\Hom\left(\left.E_{i-1}\right|_{p_i},\left.E_i\right|_{p_i}\right).
$
Then it is linearly extended to other morphisms.
We will explain the meaning of each component one by one below.

First,
the \textbf{moduli space}
$
\mathcal{M}\left(p_1,\dots,p_k,\overline{q}\right)
$
is the set of immersed $(k+1)$-gons bounded by
$
L_0, L_1, \dots, L_k
$
whose angles consist of $p_1, \dots, p_k, \overline{q}$ in counter-clockwise order.
To be precise,
it means a continuous map
$u:D^2\rightarrow\Sigma$ 
together with $k+1$ points
$
z_1,\dots,z_k,z_0
\in\partial D^2
$
(in counterclockwise order)
such that
the segment of $\partial D^2$ between $z_i$ and $z_{i+1}$ is mapped to $L_i$
($i\in\mathbb{Z}_{k+1}$),
the image of $u$ has a convex corner $p_i$ at
$u\left(z_i\right)=p_i$
($p_{k+1}:=\overline{q}$),
and
$u$ is an orientation-preserving immersion on
$D^2\setminus\left\{z_1,\dots,z_k,z_0\right\}$.
We consider such maps up to automorphisms of the domain $D^2$,
that is,
$\left(\left\{z_1,\dots,z_k,z_0\right\},u\right)$ and $\left(\left\{z_1',\dots,z_k',z_0'\right\},u'\right)$ define the same element in the moduli space
if and only if there is a homeomorphism $\phi:D^2\rightarrow D^2$ such that
$\phi\left(z_i\right) = z_i'$ ($i\in\mathbb{Z}_{k+1}$),
$u$ is a diffeomorphism on $D^2\setminus\left\{z_1,\dots,z_k,z_0\right\}$,
and
$u'=u\circ \phi$.

\begin{figure}[H]
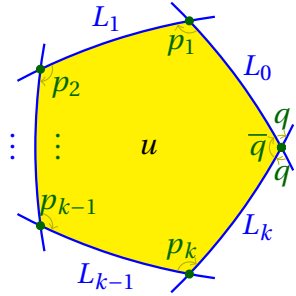

\centering
\adjustbox{height=40mm}{

          }
\captionsetup{width=1\linewidth}
\caption{An element $u$ of $\mathcal{M}\left(p_1,\dots,p_k,\overline{q}\right)$
}
\label{fig:m_kOperator}
\end{figure}




%
%
%

Second, to determine the \textbf{sign} of $u$,
we follow the sign rule established and illustrated in \cite{S08,Se}:
Consider the boundary orientation on $\partial u$ as usual,
that is,
it is given in such a way that $u$ lies on the left along it.
The orientation of $L_0$ is irrelevant.
For $1\le i\le k$,
whenever the orientation of $L_k$ does not match the orientation of $\partial u$,
$(-1)^{\left|p_i\right|}$ is contributed to $\sign\left(u\right)$,
which changes the sign only when the angle $p_i$ from $L_{i-1}$ to $L_i$ has odd-degree.
In addition,
if the orientation of $L_k$ differs from that of $\partial u$,
the sign $(-1)^{\left|q\right|}$ of the output angle $q$ from $L_0$ to $L_k$ is also contributed.
Summing up,
we have
\begin{align}\label{eqn:sign(u)}
\begin{split}
\sign\left(u\right)
&:=
\prod_{
       \smat{
             1\le i \le k,
             \\
             \text{orientation of $L_i$}
             \\
             \text{$\ne$ orientation of $\partial u$}
             }
      }
(-1)^{\left|p_i\right|}
\times
\prod_{
       \smat{
             \text{if orientation of $L_k$}
             \\
             \text{$\ne$ orientation of $\partial u$}
             }
      }
(-1)^{\left|q\right|}
\\
&=
(-1)^\wedge
\left(
\sum_{i=1}^k
\left|p_i\right|
\mathbb{1}_{\operatorname{o}\left(L_i\right)\ne\operatorname{o}\left(\partial u\right)}
+
\left|q\right|
\mathbb{1}_{\operatorname{o}\left(L_k\right)\ne\operatorname{o}\left(\partial u\right)}
\right),
\end{split}
\end{align}
where $\operatorname{o}(L)$ denotes the orientation of a curve $L$
and
$
\mathbb{1}_{\text{statement}}
$
is $1$ if the statement is true and $0$ otherwise.



Third,
the \textbf{holonomy operation} of $\partial u$ at $q$ is defined as
\begin{multline*}
\hol_q\left(\partial u\right)
:
\Hom_{\field}\left(\left.E_0\right|_{p_1},\left.E_1\right|_{p_1}\right)
\otimes \cdots \otimes
\Hom_{\field}\left(\left.E_{k-1}\right|_{p_k},\left.E_{k}\right|_{p_k}\right)
\rightarrow
\Hom_{\field}\left(\left.E_0\right|_{p_0},\left.E_k\right|_{p_0}\right)
\\
\left(f_1,\dots,f_k\right)
\mapsto
P\left(\left(\partial u\right)
_k
\right)
\ \circ \
f_k
\ \circ \
P\left(\left(\partial u\right)
_{k-1}
\right)
\ \circ \
f_{k-1}
\ \circ \
\cdots
\ \circ \
f_2
\ \circ \
P\left(\left(\partial u\right)
_1
\right)
\ \circ \
f_1
\ \circ \
P\left(\left(\partial u\right)
_0
\right)
\end{multline*}
where
$$
P\left(\left(\partial u\right)
_i
\right)
\in
\Hom_{\field}\left(\left.E_i\right|_{
p_i
},\left.E_i\right|_{p_{i+1}}\right)
$$
is the parallel transport
with respect to $\nabla_i$
from $\left.E_i\right|_{
p_i
}$ to $\left.E_i\right|_{p_{i+1}}$
along the side of $u$
(following the boundary orientation)
lying in $L_i$.


%
%
%
%

\begin{remark}\label{rmk:AIoperationwithInfinitesimalGenerators}

When $o_{L_i}$ or $e_{L_i}$ are involved
in
the moduli space
$\mathcal{M}\left(p_1,\dots,p_k,p_{k+1}\right)$,
an appropriate perturbation scheme can be introduced.
Here we follow \cite{Se} and explain
some special cases where we have only one of them in input or output of the $\AI$-operations,
which will be enough for our purpose
\footnote{
But see also Figure \ref{fig:HamiltonianPerturbation}
for the case where many $o_{L_i}$'s are involved.
}.

\noindent
(1)
If $\mathcal{L}_{i-1} = \mathcal{L}_i$
$\left(i\in\left\{1,\dots,k\right\}\right)$
(and hence $L_{i-1}=L_i$ is a loop)
and an input $p_i$ is
$
o_{L_i}
$
from $L_{i-1}=L_i$ to itself,
then elements of
$
\mathcal{M}\left(p_1,\dots,p_{i-1},o_{L_i},p_{i+1},\dots,p_k,\overline{q}\right)
$
are polygons
which have convex corners at $p_1,\dots,p_{i-1}$,
pass through the point $o_{L_i}$,
and then again have convex corners at
$p_{i+1},\dots,p_k,\overline{q}$ in counterclockwise order.

\noindent
(2)
If $\mathcal{L}_{i-1} = \mathcal{L}_i$
$\left(i\in\left\{1,\dots,k\right\}\right)$
and $p_i$ is $e_{L_{i-1}}=e_{L_i}$,
the set
$
\mathcal{M}\left(p_1,\dots,e_{L_i},\dots,p_k,\overline{q}\right)
$
is empty unless $k=2$,
which yields
\begin{equation}\label{eqn:IdentityEquation1}
\operatorm_k\left(\dots,\left.\id\right|_{e_{L_i}},\dots\right)
=
0
\quad
\text{if }k\ne2.
\end{equation}
If $k=2$,
given two objects
$
\mathcal{L}
=
\left(L,E,\nabla\right)
$
and
$
\mathcal{L}'
=
\left(L',E',\nabla'\right),
$
we regard a segment
$
\left(

\right)
$
of $L$ from $e_L$ to any point $p\in \chi\left(L,L'\right)$
(which doesn't pass through $o_L$)
as an `infinitesimal triangle'
whose angles are $e_L$, $p$ and $\overline{p}$.
It provides the unique element of
$
\mathcal{M}\left(e_L,p,\overline{p}\right)
$
and
$
\mathcal{M}\left(\overline{p},e_L,p\right),
$
which yields
\begin{equation}\label{eqn:IdentityEquation2}
\operatorm_2\left(\left.\id\right|_{e_L},\left.f\right|_p\right)
=
\left.f\right|_p
\quad\text{and}\quad
\operatorm_2\left(\left.g\right|_{\overline{p}},\left.\id\right|_{e_L}\right)
=
(-1)^{\left|\overline{p}\right|}
\left.g\right|_{\overline{p}}
\end{equation}
for any
$
\left.f\right|_p
\in\hom\left(\mathcal{L},\mathcal{L}'\right)
$
and
$
\left.g\right|_{\overline{p}}
\in\hom\left(\mathcal{L}',\mathcal{L}\right).
$
Equations (\ref{eqn:IdentityEquation1}) and (\ref{eqn:IdentityEquation2})
imply that 
$\id_{\mathcal{L}} := \left.\id\right|_{e_L}$ is a unit of $\mathcal{L}$.

\noindent
(3)
If
$\mathcal{L}_0 = \mathcal{L}_k =: \mathcal{L}$
and the output $q$ is $e_L$,
elements of
$
\mathcal{M}\left(p_1,\dots,p_k,\overline{e_L}\right)
$
are polygons which have convex corners at
$
p_1,\dots,p_k
$
and then pass through the point $e_L$
in counterclockwise order.

\end{remark}

\begin{remark}
In fact,
the definition given in (\ref{eqn:OperatormDefinition}) involves a crucial problem.
That is,
sometimes there are infinitely many elements in the moduli space $\mathcal{M}\left(p_1,\dots,p_k,\overline{q}\right)$.
Therefore, a priori, we must work over the Novikov field
$$
\Lambda := \left\{ \left. \sum_{i=0}^\infty a_i T^{\lambda_i} \right| a_i \in \field,~\lambda_i \in \mathbb{R},~\lim_{i \to \infty} \lambda_i = \infty \right\}
$$
instead of the base field $\field$
(which makes the hom spaces $\Lambda$-vector spaces)
and define the $\AI$-operations as
\begin{align*}\label{eqn:OperatormDefinitionFiltrated}
\begin{split}
\operatorm_{k}
:
\hom\left(\mathcal{L}_0,\mathcal{L}_1\right)
\otimes \cdots \otimes
\hom\left(\mathcal{L}_{k-1},\mathcal{L}_k\right)
&\rightarrow
\hom\left(\mathcal{L}_0,\mathcal{L}_k\right)
\\
\left(\left.f_1\right|_{p_1},\dots,\left.f_k\right|_{p_k}\right)
&\mapsto
\sum_{q\in
\chi\left(L_0,L_k\right)
}
\sum_{u\in\mathcal{M}\left(p_1,\dots,p_k,\overline{q}\right)}
\sign\left(u\right)
T^{\omega\left(u\right)}
\hol_q\left(\partial u\right) \left(f_1,\dots,f_k\right)
\end{split}
\end{align*}
instead of (\ref{eqn:OperatormDefinition}).
But we demonstrated in \cite{CJKR} that we can evaluate $T=1$
when we compute the matrix factorizations corresponding to cylinder-free loops
under the localized mirror functor
(See Definition \ref{defn:CylinderFree} and Theorem \ref{thm:CylinderFreeMFFinite}.).
In this paper, therefore, we still work over $\field$ and use the definition (\ref{eqn:OperatormDefinition})
when computing the mirror images of cylinder-free loops.
%
%
%
\end{remark}

\subsection{Categories of matrix factorizations}
\label{sec:MFCategories}

Let $S$ be
the formal power series ring
$
\field[[x_1,\dots,x_m]]
$
of $m$ variables, 
and $f\in S$ its nonzero element.
\begin{defn}\label{defn:MatrixFactorization}
A \textnormal{\textbf{matrix factorization}} of $f$ (in $S$) is a pair $\left(\varphi,\psi\right)$
of $S$-module homomorphisms
$
\begin{tikzcd}[arrow style=tikz,>=stealth, sep=20pt, every arrow/.append style={shift left=0.5}]
   P^0
     \arrow{r}{\varphi}
   &
   P^1
     \arrow{l}{\psi}
\end{tikzcd}
$
between two
finite-rank
free $S$-modules $P^0$ and $P^1$
that satisfy
$$
\psi\varphi = f\cdot \id_{P^0}
\quad\text{and}\quad
\varphi\psi = f\cdot \id_{P^1}
\footnote{
These conditions imply that $P^0$ and $P^1$ have the same rank.
Moreover,
$\psi$ is completely determined by $\varphi$ and vice versa.
}.
$$
\end{defn}


There are several versions of
the \emph{categories of matrix factorizations} of $f$.
They all have matrix factorizations of $f$ as objects,
but morphism spaces are different,
which we will now explain:


Given two matrix factorizations
$
\begin{tikzcd}[arrow style=tikz,>=stealth, sep=20pt, every arrow/.append style={shift left=0.5}]
   P_0^0
     \arrow{r}{\varphi_0}
   &
   P_0^1
     \arrow{l}{\psi_0}
\end{tikzcd}
$
and
$
\begin{tikzcd}[arrow style=tikz,>=stealth, sep=20pt, every arrow/.append style={shift left=0.5}]
   P_1^0
     \arrow{r}{\varphi_1}
   &
   P_1^1
     \arrow{l}{\psi_1}
\end{tikzcd}
$
of $f$,
an \emph{even-degree} or \emph{odd-degree} \textbf{morphism} between them
is
a pair
of $S$-module maps
$
\left(\alpha:P_0^0\rightarrow P_1^0,\
\beta:P_0^1\rightarrow P_1^1\right)
$
or
$
\big(\gamma:P_0^0\rightarrow P_1^1,\
\delta:P_0^1\rightarrow P_1^0\big)
$,
respectively.
They consist the \emph{even-part} and \emph{odd-part} of the morphism space defined as
$$
\hom^\bullet\left(\left(\varphi_0,\psi_0\right),\left(\varphi_1,\psi_1\right)\right)
:=
\Hom_S\left(P_0^0,P_1^\bullet\right)
\times
\Hom_S\left(P_0^1,P_1^{1+\bullet}\right)
\quad
(\bullet\in\mathbb{Z}_2),
$$
and form the following diagram but are \emph{not} required to commute it:
\begin{equation}\label{eqn:MFMorphismDiagram}
\setlength\arraycolsep{10mm}
\begin{matrix}
\begin{tikzcd}[arrow style=tikz,>=stealth,row sep=2em,column sep=2em] 
P_0^0
  \arrow[r,"\varphi_0"]
  \arrow[d,swap,"\alpha"]
&
P_0^1
  \arrow[r,"\psi_0"]
  \arrow[d,swap,"\beta"]
&
P_0^0
  \arrow[d,swap,"\alpha"]
\\
P_1^0
  \arrow[r,"\varphi_1"]
&
P_1^1
  \arrow[r,"\psi_1"]
&
P_1^0
\end{tikzcd}
&
\begin{tikzcd}[arrow style=tikz,>=stealth,row sep=2em,column sep=2em] 
&
P_0^0
  \arrow[r,"\varphi_0"]
  \arrow[dl,swap,"\smat{\gamma\\[-1mm]}"]
&
P_0^1
  \arrow[r,"\psi_0"]
  \arrow[dl,swap,"\smat{\delta\\[-2mm]}"]
&
P_0^0
  \arrow[dl,swap,"\smat{\gamma\\[-2mm]}"]
\\
P_1^1
  \arrow[r,"\psi_1"]
&
P_1^0
  \arrow[r,"\varphi_1"]
&
P_1^1
&
\end{tikzcd}
\end{matrix}
\end{equation}


The \textbf{$\mathbb{Z}_2$-graded morphism space}
is a $\mathbb{Z}_2$-graded $\field$-vector space
defined as
$$
\hom^{\mathbb{Z}_2}\left(\left(\varphi_0,\psi_0\right),\left(\varphi_1,\psi_1\right)\right)
:=
\hom^0\left(\left(\varphi_0,\psi_0\right),\left(\varphi_1,\psi_1\right)\right)
\oplus
\hom^1\left(\left(\varphi_0,\psi_0\right),\left(\varphi_1,\psi_1\right)\right).
$$
We define a $\field$-linear map $d$ of degree $1$ on it as
$$\hom^0\left(\left(\varphi_0,\psi_0\right),\left(\varphi_1,\psi_1\right)\right)
\xrightarrow{d}
\hom^1\left(\left(\varphi_0,\psi_0\right),\left(\varphi_1,\psi_1\right)\right)
\quad\text{and}\quad
\hom^1\left(\left(\varphi_0,\psi_0\right),\left(\varphi_1,\psi_1\right)\right)
\xrightarrow{d}
\hom^0\left(\left(\varphi_0,\psi_0\right),\left(\varphi_1,\psi_1\right)\right).
$$
$$
\hspace{13mm}
\left(\alpha,\beta\right)
\mapsto
\left(\varphi_1\circ\alpha-\beta\circ\varphi_0,
\
\psi_1\circ\beta-\alpha\circ\psi_0\right)
\hspace{19mm}
\left(\gamma,\delta\right)
\mapsto
\left(\psi_1\circ\gamma+\delta\circ\varphi_0,
\
\varphi_1\circ\delta+\gamma\circ\psi_0\right)
$$
It satisfies $d^2 = 0$,
making
the morphism space
a $2$-periodic cochain complex
with a \textbf{differential} $d$.

\begin{defn}\label{def:CategoriesOfMatrixFactorizations}
We define four different \textnormal{\textbf{categories of matrix factorizations}} of $f$.
Their objects are matrix factorizations of $f$.
The \textnormal{\textbf{morphism space}} between two matrix factorizations
$\left(\varphi_0,\psi_0\right)$
and $\left(\varphi_1,\psi_1\right)$ 
will be defined,
while
the composition of morphisms and the identity morphisms are given in the obvious way.

\noindent
(1)
In \textnormal{\textbf{differential $\mathbb{Z}_2$-graded category
$\MF_{\operatorname{dg}}(f)$}},
it is given by
$
\left(\hom^{\mathbb{Z}_2}\left(\left(\varphi_0,\psi_0\right),\left(\varphi_1,\psi_1\right)\right),d\right),
$
which is a $\mathbb{Z}_2$-graded $\field$-vector space
equipped with the differential $d$.

\noindent
(2)
In \textnormal{\textbf{(ordinary) category}}
$\MF(f)$,
it is given by
$
Z^0\left(\hom^{\mathbb{Z}_2}\left(\left(\varphi_0,\psi_0\right),\left(\varphi_1,\psi_1\right)\right),d\right),
$
which consists of even-degree morphisms
$
\left(\alpha,\beta\right)
\in
\hom^0\left(\left(\varphi_0,\psi_0\right),\left(\varphi_1,\psi_1\right)\right)
$
that commute the left diagram in (\ref{eqn:MFMorphismDiagram}).

\noindent
(3)
In \textnormal{\textbf{homotopy category}}
$\underline{\MF}(f)$,
it is given by
$
H^0\left(\hom^{\mathbb{Z}_2}\left(\left(\varphi_0,\psi_0\right),\left(\varphi_1,\psi_1\right)\right),d\right).
$
This category is
the same as the
\textnormal{\textbf{stable category}}
$\underline{\MF}(f):=\left.\MF(f)\right/\left\{(1,f),(f,1)\right\}$
(Definition \ref{def:StableCategories}).

\noindent
(4)
In ($\mathbb{Z}_2$-graded) \textnormal{\textbf{$\AI$-category}}
$\MF_{\AI}(f)$,
it is given by
the $\mathbb{Z}_2$-graded $\field$-vector space
$
\hom^{\mathbb{Z}_2}\left(\left(\varphi_1,\psi_1\right),\left(\varphi_0,\psi_0\right)\right).
$
(Note that it is reversed.)
The $\AI$-operations
$\left\{\operatorm_k\right\}_{k\ge1}$ are defined as:
\begin{equation}\label{eqn:MFAIOperations}
\operatorm_1:=d,
\quad
\operatorm_2\left(\left(\alpha_0,\beta_0\right),\left(\alpha_1,\beta_1\right)\right)
:=
(-1)^{\bullet-1}
\left(\alpha_0,\beta_0\right)
\circ
\left(\alpha_1,\beta_1\right),
\quad\text{and}\quad
\operatorm_{k\ge3}:=0
\end{equation}
for
$
\left(\alpha_0,\beta_0\right)
\in
\hom^\bullet\left(\left(\varphi_1,\psi_1\right),\left(\varphi_0,\psi_0\right)\right)
$
($\bullet\in\mathbb{Z}_2$)
and
$
\left(\alpha_1,\beta_1\right)
\in
\hom^{\mathbb{Z}_2}\left(\left(\varphi_2,\psi_2\right),\left(\varphi_1,\psi_1\right)\right).
$

\end{defn}

%
%
%
%
%
%
%
%
%
%
%
%
%
%

\subsection{Stable categories and Eisenbud's equivalence}
\label{sec:EisenbudEquivalence}
\footnote{
All discussions in this section
(except for Proposition \ref{prop:StableHomotopySame})
can be found in Chapter 7 in \cite{Yo}.
}
Let $S$ be
the power series ring
$
\field[[x_1,\dots,x_m]]
$
of $m$ variables,
$f\in S$ its nonzero element
and $A:=\left.S\right/(f)$
the quotient ring.
In this case,
we can relate the category $\CM(A)$ of maximal Cohen-Macaulay modules over $A$
(Definition \ref{defn:MCMModules})
and
the category $\MF(f)$ of matrix factorizations of $f$
(Definition \ref{def:CategoriesOfMatrixFactorizations}(2))
in a \emph{stable} sense.
We need some preparation:

\begin{defn}\label{def:QuotientCategory}
Let $\mathcal{A}$ be a category whose $\Hom$-sets are abelian groups,
and $P$ a set of some objects in $\mathcal{A}$.
For any two objects $M$, $N$ in $\mathcal{A}$,
we denote by $I\left(M,N\right)$ the subgroup of $\Hom_{\mathcal{A}}(M,N)$
generated by all morphisms from $M$ to $N$ that factor through a direct sum of objects in $P$.
We define the \emph{quotient} of $\mathcal{A}$ by $P$ as the category $\left.\mathcal{A}\right/P$
whose objects are the same as $\mathcal{A}$, and the morphism spaces are defined as
$$
\Hom_{\left.\mathcal{A}\right/P}\left(M,N\right)
:=
\left.
\Hom_{\mathcal{A}}\left(M,N\right)
\right/
I\left(M,N\right).
$$
Note that any objects in $P$ are zero objects in the category $\left.\mathcal{A}\right/P$,
and it is the largest quotient of $\mathcal{A}$ with this property.
\end{defn}


Definition \ref{def:QuotientCategory}
enables us
to define the \emph{stable categories} of maximal Cohen-Macaulay modules over $A$
and
matrix factorizations of $f$
\footnote{
In general,
one can take the quotient of a \emph{Frobenius category} by its projective objects
to make it a triangulated category,
called the \emph{stable category}.
See \S 3.3 in \cite{Krause}.
}:

\begin{defn}\label{def:StableCategories}
(1) The \textnormal{\textbf{stable category of maximal Cohen-Macaulay modules}} over $A$
is defined as
$$
\underline{\CM}(A)
:=
\left.\CM(A)\right/\left\{A\right\}.
$$

\noindent
(2) The \textnormal{\textbf{stable category of matrix factorizations}} of $f$
is defined as
$$
\underline{\MF}(f)
:=
\left.\MF(f)\right/\left\{(1,f),(f,1)\right\}.
$$
\end{defn}

The following
justifies our use of the same notation as the homotopy category $\underline{\MF}(f)$
in Definition \ref{def:CategoriesOfMatrixFactorizations}:

\begin{prop}\label{prop:StableHomotopySame}
The stable category and the homotopy category of matrix factorizations of $f$ are the same.
\end{prop}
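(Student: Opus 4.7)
The plan is to prove that, for any two matrix factorizations $(\varphi_0,\psi_0)$ and $(\varphi_1,\psi_1)$ of $f$, the two subgroups of $\Hom_{\MF(f)}((\varphi_0,\psi_0),(\varphi_1,\psi_1))$ being quotiented out in the two categories actually coincide. Concretely, denote by $d$ the differential from Definition \ref{def:CategoriesOfMatrixFactorizations}. Let $K_1$ be the group of coboundaries $\operatorname{im}(d)$ and let $K_2$ be the group of morphisms that factor through a direct sum of copies of $(1,f)$ and $(f,1)$. I will show $K_1=K_2$, which forces the two quotient categories to have identical Hom-sets (with the obvious induced compositions), and hence to be the same.

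The easier inclusion $K_2\subseteq K_1$ I would prove by exhibiting explicit null-homotopies of the identity endomorphisms of $(1,f)$ and $(f,1)$. Indeed, with $\varphi=1,\psi=f$ one has $d(0,1)=(f\cdot 0+1,1+0\cdot f)=(1,1)=\id_{(1,f)}$, and similarly $d(1,0)=\id_{(f,1)}$. Consequently, if $(\alpha,\beta)=p\circ i$ factors through a direct sum $T$ of $(1,f)$'s and $(f,1)$'s, choose an odd $h$ with $d(h)=\id_T$; then $(\alpha,\beta)=p\circ d(h)\circ i=d(p\circ h\circ i)$ lies in $K_1$.

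For the harder inclusion $K_1\subseteq K_2$, given a coboundary $(\alpha,\beta)=d(\gamma,\delta)=(\psi_1\gamma+\delta\varphi_0,\,\varphi_1\delta+\gamma\psi_0)$ with $\gamma:P_0^0\to P_1^1$ and $\delta:P_0^1\to P_1^0$, I would construct an explicit factorization through the trivial matrix factorization
\[
T=\left(\begin{pmatrix} 1_{P_0^1}&0\\ 0&f\cdot 1_{P_0^0}\end{pmatrix},\ \begin{pmatrix} f\cdot 1_{P_0^1}&0\\ 0&1_{P_0^0}\end{pmatrix}\right)\ \cong\ (1,f)^{\oplus\operatorname{rk}P_0^1}\oplus (f,1)^{\oplus\operatorname{rk}P_0^0}.
\]
I would define the embedding $i:(\varphi_0,\psi_0)\to T$ by $i_\varphi=\begin{pmatrix}\varphi_0\\ 1\end{pmatrix}$ and $i_\psi=\begin{pmatrix} 1\\ \psi_0\end{pmatrix}$, and the projection $p:T\to(\varphi_1,\psi_1)$ by $p_\varphi=(\delta,\ \psi_1\gamma)$ and $p_\psi=(\varphi_1\delta,\ \gamma)$. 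A short direct calculation (using only $\psi_0\varphi_0=f=\varphi_0\psi_0$ and the analogous identities for $(\varphi_1,\psi_1)$) verifies that both $i$ and $p$ commute with the differentials of their respective matrix factorizations, and that $p\circ i=(\alpha,\beta)$.

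The only step that requires real thought is the guess for $i$ and $p$; once written down, the verification is a routine $2\times 2$ block matrix computation. The key conceptual observation behind the guess is that the null-homotopy data $(\gamma,\delta)$ encodes precisely the off-diagonal entries needed so that the composition $p\circ i$ recovers the two components of $d(\gamma,\delta)$, while the unit entries in the matrices defining $T$ are exactly what makes $T$ reducible—via the matrix reduction Lemma~\ref{lem:MatrixReducingProcess}—to a direct sum of $(1,f)$'s and $(f,1)$'s, closing the loop with the stable-category description.
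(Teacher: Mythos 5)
Your proof is correct and follows essentially the same strategy as the paper's: both establish the claim by showing that, for each pair of objects, the subgroup of morphisms factoring through trivial matrix factorizations coincides with the subgroup of coboundaries $\operatorname{im}(d)$. Your implementation is slightly slicker in places — you deduce the inclusion $K_2\subseteq K_1$ from the contractibility of $(1,f)$ and $(f,1)$ together with the Leibniz rule rather than writing the homotopy explicitly, and you handle a general homotopy $(\gamma,\delta)$ with a single factorization instead of first reducing to $\delta=0$ — but the underlying argument is the same.
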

\begin{proof}
A more general statement and proof for the identification of
the stable category and the homotopy category of cochain complexes
in an additive category
can be found in \S 4.1 in \cite{Krause}.
Here we provide an explicit proof for our setting:

We actually show that their Hom-sets are the same.
For that,
it is enough to show
$$
I\left(\left(\varphi,\psi\right),\left(\varphi',\psi'\right)\right)
=
B^0\left(\hom^{\mathbb{Z}_2}\left(\left(\varphi,\psi\right),\left(\varphi',\psi'\right)\right),d\right)
$$
for any matrix factorizations
$
\begin{tikzcd}[arrow style=tikz,>=stealth, sep=20pt, every arrow/.append style={shift left=0.5}]
   P^0
     \arrow{r}{\varphi}
   &
   P^1
     \arrow{l}{\psi}
\end{tikzcd}
$
and
$
\begin{tikzcd}[arrow style=tikz,>=stealth, sep=20pt, every arrow/.append style={shift left=0.5}]
   P'^0
     \arrow{r}{\varphi'}
   &
   P'^1
     \arrow{l}{\psi'}
\end{tikzcd}
$
of $f$.

First let
$
\left(
\alpha:P^0\rightarrow P'^0,\
\beta:P^1\rightarrow P'^1
\right)
$
be a generator of
$
I\left(\left(\varphi,\psi\right),\left(\varphi',\psi'\right)\right),
$
i.e.,
it factors through a direct sum of $(1,f)$'s and $(f,1)$'s,
which is written as
$
\begin{tikzcd}[arrow style=tikz,>=stealth, sep=35pt, every arrow/.append style={shift left=0.5}]
   S^a\oplus S^b
     \arrow{r}{\spmat{I_a & 0 \\ 0 & fI_b}}
   &
   S^a\oplus S^b
     \arrow{l}{\spmat{fI_a & 0 \\ 0 & I_b}}
\end{tikzcd}
$
for some $a,b\in\mathbb{Z}_{\ge0}$
without loss of generality.
That is,
$\left(\alpha,\beta\right)$
is a composition
$
\left(
\alpha''\circ\alpha',\
\beta''\circ\beta'
\right)
$
of some morphisms
that are described in (and commute the squares in) the following diagram:
$$
\begin{tikzcd}[arrow style=tikz,>=stealth,row sep=3em,column sep=4em] 
P^0
  \arrow[r,"\varphi"]
  \arrow[d,swap,"\alpha':=\spmat{\alpha_1'\\ \alpha_2'}"]
&
P^1
  \arrow[r,"\psi"]
  \arrow[ddl,color=pink,swap,"\smat{\alpha_1''\circ\beta_1'\hspace{4mm}\\[-13mm] }"]
  \arrow[d,swap,"\beta':=\spmat{\beta_1'\\ \beta_2'}"]
&
P^0
  \arrow[d,swap,"\alpha':=\spmat{\alpha_1'\\ \alpha_2'}"]
  \arrow[ddl,color=pink,swap,"\smat{\beta_2''\circ\alpha_2'\hspace{4mm}\\[-13mm] }"]
\\
S^a\oplus S^b
  \arrow[r,"\spmat{I_a & 0 \\ 0 & fI_b}"]
  \arrow[d,swap,"\alpha'':=\spmat{\alpha_1''&\alpha_2''}"]
&
S^a\oplus S^b
  \arrow[r,"\spmat{fI_a & 0 \\ 0 & I_b}"]
  \arrow[d,swap,"\beta'':=\spmat{\beta_1''&\beta_2''}"]
&
S^a\oplus S^b
  \arrow[d,swap,"\alpha'':=\spmat{\alpha_1''&\alpha_2''}"]
\\
P'^0
  \arrow[r,"\varphi'"]
&
P'^1
  \arrow[r,"\psi'"]
&
P'^0
\end{tikzcd}
$$
Now
we know that $\left(\alpha,\beta\right)$ is \emph{null-homotopic}
by checking that it is
the differential of
an odd-degree morphism
$
\left(
\beta_2''\circ\alpha_2':P^0 \rightarrow P'^1,\
\alpha_1''\circ\beta_1':P^1 \rightarrow P'^0
\right).
$
This shows
$
\left(\alpha,\beta\right)
\in
B^0\left(\hom^{\mathbb{Z}_2}\left(\left(\varphi,\psi\right),\left(\varphi',\psi'\right)\right),d\right).
$

Conversely,
an element of 
$
B^0\left(\hom^{\mathbb{Z}_2}\left(\left(\varphi,\psi\right),\left(\varphi',\psi'\right)\right),d\right)
$
is
by definition
given by the differential of
an odd-degree morphism
$
\left(
\gamma:P^0\rightarrow P'^1,\
\delta:P^1\rightarrow P'_0
\right).
$
As the differential is additive,
we may assume $\delta=0$ without loss of generality.
Now the differential of 
$\left(\gamma,0\right)$
is
$
\left(
\psi'\circ\gamma,
\gamma\circ\psi
\right),
$
and the following commutative diagram shows that it factors through the direct sum
of
$
\begin{tikzcd}[arrow style=tikz,>=stealth, sep=25pt, every arrow/.append style={shift left=0.5}]
   P^0
     \arrow{r}{\id}
   &
   P^0
     \arrow{l}{f\cdot\id}
\end{tikzcd}
$
and
$
\begin{tikzcd}[arrow style=tikz,>=stealth, sep=25pt, every arrow/.append style={shift left=0.5}]
   P'^1
     \arrow{r}{f\cdot\id}
   &
   P'^1
     \arrow{l}{\id}
\end{tikzcd}
$,
being an element of
$
I\left(\left(\varphi,\psi\right),\left(\varphi',\psi'\right)\right):
$
$$
\begin{tikzcd}[arrow style=tikz,>=stealth,row sep=3em,column sep=4em] 
P^0
  \arrow[r,"\varphi"]
  \arrow[d,swap,"\spmat{f\cdot\id\\[1mm] \gamma}"]
&
P^1
  \arrow[r,"\psi"]
  \arrow[ddl,color=pink,swap,"\smat{0\hspace{4mm}\\[-13mm] }"]
  \arrow[d,swap,"\spmat{\psi\\[1mm] \gamma\circ\psi}"]
&
P^0
  \arrow[d,swap,"\spmat{f\cdot\id\\[1mm] \gamma}"]
  \arrow[ddl,color=pink,swap,"\smat{\gamma\hspace{4mm}\\[-13mm] }"]
\\
P^0\oplus P'^1
  \arrow[r,"\spmat{\id & 0 \\ 0 & f\cdot\id}"]
  \arrow[d,swap,"\spmat{0 & \psi'}"]
&
P^0\oplus P'^1
  \arrow[r,"\spmat{f\cdot\id & 0 \\ 0 & \id}"]
  \arrow[d,swap,"\spmat{0 & \id}"]
&
P^0\oplus P'^1
  \arrow[d,swap,"\spmat{0 & \psi'}"]
\\
P'^0
  \arrow[r,"\varphi'"]
&
P'^1
  \arrow[r,"\psi'"]
&
P'^0
\end{tikzcd}
\vspace{-6mm}
$$
\end{proof}

Now we can state \emph{Eisenbud's equivalence} between two stable categories:




\begin{thm}[Eisenbud's matrix factorization theorem \cite{E80,Yo}]\label{thm:EisenbudTheorem}

A matrix factorization
$
\begin{tikzcd}[arrow style=tikz,>=stealth, sep=20pt, every arrow/.append style={shift left=0.5}]
   P^0
     \arrow{r}{\varphi}
   &
   P^1
     \arrow{l}{\psi}
\end{tikzcd}
$
of $f$
defines a $2$-periodic acyclic chain complex of $A$-modules
$$
\begin{tikzcd}[arrow style=tikz,>=stealth,row sep=5em,column sep=2em] 
\cdots
  \arrow[r]
&
P^0 \otimes_S A
  \arrow[r,"\underline{\varphi}"]
&
P^1 \otimes_S A
  \arrow[r,"\underline{\psi}"]
&
P^0 \otimes_S A
  \arrow[r,"\underline{\varphi}"]
&
P^1 \otimes_S A
  \arrow[r,"\underline{\psi}"]
&
\cdots,
\end{tikzcd}
$$
where
$\underline{\varphi}:=\varphi\otimes \id_A$
and
$\underline{\psi}:=\psi \otimes\id_A$.
Taking the cokernel of $\underline{\varphi}$ yields a maximal Cohen-Macaulay $A$-module
$M:=\cok\underline{\varphi}$
\footnote{
Note that $M$ admits $2$-periodic free resolution
$
\cdots
\rightarrow
P^0 \otimes_S A
\xrightarrow{\underline{\varphi}}
P^1 \otimes_S A
\xrightarrow{\underline{\psi}}
P^0 \otimes_S A
\xrightarrow{\underline{\varphi}}
P^1 \otimes_S A
\xrightarrow{}
M=\cok\underline{\varphi}
\rightarrow
0.
$
}.
It defines a functor
$
\cok:\MF(f)\rightarrow\CM(A),
$
which also induces a functor between stable categories
$$
\cok:\underline{\MF}(f)\rightarrow\underline{\CM}(A).
$$

Conversely,
a maximal Cohen-Macaulay $A$-module $M$,
regarded as an $S$-module $M_S$,
admits a (not unique) free resolution
$$
\begin{tikzcd}[arrow style=tikz,>=stealth,row sep=5em,column sep=2em] 
0
  \arrow[r]
&
S^n
  \arrow[r,"{\varphi}"]
&
S^n
  \arrow[r,""]
&
M_S
  \arrow[r,""]
&
0.
\end{tikzcd}
$$
It determines
another map $\psi:S^n\rightarrow S^n$
such that
$\varphi\psi=\psi\varphi=f\cdot\id_{S^n}$,
yielding a matrix factorization $\left(\varphi,\psi\right)$ of $f$.
This process defines a quasi-inverse to the above induced functor,
giving an equivalence of stable categories
$
\underline{\MF}(f)
$
and
$
\underline{\CM}(A)
$.

\end{thm}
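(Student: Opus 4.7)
The plan is to follow the classical proof of Eisenbud, which splits naturally into three pieces: showing the $\cok$ functor lands in $\CM(A)$ and descends to stable categories, constructing a quasi-inverse via minimal free resolutions over $S$, and verifying that the two composites are isomorphic to the identity stably. Throughout I use that $S = \field[[x_1,\dots,x_m]]$ is a regular local ring with $f \neq 0$ a nonzerodivisor, so $A$ is Cohen-Macaulay of dimension $m-1$.

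First I would verify that $\cok\underline{\varphi}$ is maximal Cohen-Macaulay. The identity $\psi\varphi = f \cdot \id_{P^0}$ combined with $f$ being a nonzerodivisor in the domain $S$ shows that $\varphi$ is injective, so the underlying $S$-module $M_S := \cok\varphi$ fits into a short exact sequence $0 \to P^0 \xrightarrow{\varphi} P^1 \to M_S \to 0$. Hence $\mathrm{pd}_S M_S \le 1$, and by Auslander--Buchsbaum $\mathrm{depth}_S M_S \ge m-1$. Since $f \cdot M_S = 0$, depth over $A$ equals depth over $S$, so $\mathrm{depth}_A M = m-1 = \dim A$, proving $M \in \CM(A)$. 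Tensoring the exact sequence with $A$ and chasing the snake lemma (using the injectivity of $\varphi$ on each copy of $P^\bullet$) yields the $2$-periodic acyclic complex with differentials $\underline{\varphi}, \underline{\psi}$. For descent to stable categories, I would check that the trivial factorizations $(1,f)$ and $(f,1)$ have cokernels $0$ and $A$ respectively, so any morphism factoring through a direct sum of them lands in a morphism factoring through a free $A$-module, i.e., zero in $\underline{\CM}(A)$.

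Second, I would construct the quasi-inverse. Given $M \in \CM(A)$, viewed as $S$-module $M_S$, Auslander--Buchsbaum gives $\mathrm{pd}_S M_S = m - (m-1) = 1$, so there exists a free $S$-resolution $0 \to S^n \xrightarrow{\varphi} S^n \to M_S \to 0$; the two ranks match because $M_S$ is $f$-torsion and hence has generic rank zero over $S$, forcing both free modules to have rank equal to the minimal number of generators of $M_S$. Since $f \cdot \id_{M_S} = 0$, the comparison theorem lifts multiplication by $f$ on $M_S$ to a map $\psi: S^n \to S^n$ with $\varphi\psi = f \cdot \id$; applying $\varphi$ to $\psi\varphi - f\cdot \id$ and using injectivity of $\varphi$ forces $\psi\varphi = f\cdot\id$, producing a matrix factorization $(\varphi,\psi)$.

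Finally, I would establish the equivalence. One composite is immediate: $\cok$ applied to the matrix factorization built from a resolution of $M$ recovers $M$ up to canonical isomorphism. The other composite requires showing that starting from $(\varphi,\psi)$, any free $S$-resolution of $\cok\varphi$ produces a matrix factorization isomorphic to $(\varphi,\psi)$ modulo the trivial $(1,f)$ and $(f,1)$ summands, since the original exact sequence $0 \to P^0 \to P^1 \to M_S \to 0$ is itself such a resolution. The main obstacle I anticipate is precisely this last step: comparing two different free $S$-resolutions of the same MCM module and showing that the ambiguity in lifting $f \cdot \id$ is absorbed exactly by direct sums with trivial matrix factorizations. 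This is handled by the standard fact that any two free resolutions over a local ring differ by a direct sum with a split-exact (``trivially acyclic'') complex, combined with the observation that split-exact $2$-periodic complexes carrying an $f$-factorization structure must decompose as direct sums of copies of $(1,f)$ and $(f,1)$—a careful but classical argument using Nakayama's lemma and the explicit form of a splitting.
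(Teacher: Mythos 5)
The paper does not prove this theorem: it is stated in the appendix as a classical result and attributed to Eisenbud \cite{E80} and Yoshino \cite{Yo}. Your argument is exactly the standard proof from those references (injectivity of $\varphi$ from $\psi\varphi=f\cdot\id$, Auslander--Buchsbaum to get maximal Cohen--Macaulayness and $\mathrm{pd}_S M_S=1$, lifting $f\cdot\id_{M_S}=0$ to produce $\psi$, and absorbing the resolution ambiguity into $(1,f)\oplus(f,1)$ summands), and it is correct.
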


\bibliographystyle{amsalpha}
\bibliography{IndecomposableXYZHigherRank}

\end{document}